\documentclass[11pt,a4paper]{article}

\usepackage{amsmath,amssymb,amsthm}
\usepackage{bbm,color,dcolumn,enumerate}
\usepackage{fancybox,fancyhdr,graphicx,multirow}
\usepackage{psfrag,pgfplots,relsize,subcaption,times}
\usepackage{tikz,tikz-3dplot}
\usepackage[affil-it]{authblk}
\usepackage[sc]{mathpazo}  
\usepackage{setspace}  
\usepackage[sort,numbers]{natbib}

\usepackage{array}  
\newcolumntype{L}[1]{>{\raggedright\let\newline\\\arraybackslash\hspace{0pt}}m{#1}}
\newcolumntype{C}[1]{>{\centering\let\newline\\\arraybackslash\hspace{0pt}}m{#1}}
\newcolumntype{R}[1]{>{\raggedleft\let\newline\\\arraybackslash\hspace{0pt}}m{#1}}

\usepackage{environ}
\NewEnviron{eq}{%
\begin{equation}\begin{split}
 \BODY
\end{split}\end{equation}
}

\usepackage[linktocpage=true]{hyperref}
\hypersetup{
 colorlinks,
 linkcolor=blue,          
 citecolor=red,       
 filecolor=black,   
 urlcolor=black, 
 pdftitle={},
 pdfauthor={},
 pdfcreator={},
 pdfsubject={},
 pdfkeywords={}
}

\linespread{1.1}

\pgfplotsset{compat=1.14} 
\tikzstyle{doublearr} = [latex-latex,red, line width=0.5pt]
\tikzstyle{doublearr2} = [latex-latex,green!80!black, line width=0.5pt]
\tikzstyle{mybox} = [draw=black, thick, minimum height=.6cm, rectangle, text centered]
\tikzstyle{fancytitle} = [fill=blue, text=white]

\def\ee{{\rm e}}

\def\E{{\mathbb{E}}}
\def\P{{\mathbb{P}}}
\def\R{{\mathbb{R}}}

\newcommand{\dd}{\boldsymbol{\delta}}

\newcommand{\N}{\mathbbm{N}}

\newcommand{\GG}{\mathbf{G}}

\newcommand{\qq}{\mathbf{q}}
\newcommand{\QQ}{\mathbf{Q}}

\newcommand{\cS}{\mathcal{S}}

\newcommand{\CJSQ}{\mathrm{ CJSQ}}

\newcommand{\ERRG}{\mathrm{ERRG}}
\newcommand{\full}{\mathrm{full}}
\newcommand{\idle}{\mathrm{idle}}
\newcommand{\JSQ}{ \mathrm{ JSQ}}
\newcommand{\MJSQ}{\mathrm{ MJSQ}}

\newcommand{\e}{{\rm e}}
\newcommand{\oo}{{\rm o}}
\newcommand{\OO}{{\rm O}}

\newcommand{\jsq}{\rm JSQ}

\newcommand{\bQ}{\bar{Q}}

\newcommand{\hQ}{\hat{Q}}

\newcommand{\dis}{\text{\textsc{dis}}}
\newcommand{\com}{\text{\textsc{com}}}

\newcommand{\dif}{\ensuremath{\mbox{d}}}  

\newcommand{\pto}{\ensuremath{\xrightarrow{\mathbbm{P}}}}  

\newcommand{\expect}[1]{{\mathbb E}[#1]}
\newcommand\ind[1]{\ensuremath{\mathbbm{1}_{\left[#1\right]}}} 

\newcommand\Pro[1]{\mathbbm{P}\left(#1\right)} 

\providecommand{\href}[2]{#2}


\newtheorem{theorem}{Theorem}[section]

\newtheorem{corollary}[theorem]{Corollary}
\newtheorem{definition}[theorem]{Definition}

\newtheorem{lemma}[theorem]{Lemma}

\newtheorem{proposition}[theorem]{Proposition}
\newtheorem{remark}[theorem]{Remark}

\theoremstyle{definition}

\definecolor{mygray}{rgb}{0.7,0.7,0.7}

\definecolor{col1}{rgb}{0.8, 0.01568, 0.}
\definecolor{col2}{rgb}{0.9372549019607843, 0.62745, 0.1}
\definecolor{col3}{rgb}{0.9686274509803922, 0.87, 0.}
\definecolor{col4}{rgb}{0.3176470588235294, 0.58, 0.0784313}
\definecolor{col5}{rgb}{0.22745098, 0.2392156862,0.43}
\definecolor{col6}{rgb}{0.5019607843137255, 0.16, 0.4470}

\makeatletter
\renewcommand{\fnum@figure}{\small\textbf{\figurename~\thefigure}}
\makeatother

\makeatletter
\renewcommand{\fnum@table}{\small\textbf{\tablename~\thetable}}
\makeatother

\numberwithin{equation}{section}
\numberwithin{theorem}{section}

\graphicspath{{./},{./images/}}

\setlength{\oddsidemargin}{0pt}
\setlength{\topmargin}{-12mm}
\setlength{\textwidth}{162mm}
\setlength{\textheight}{235mm}
\setlength{\parindent}{5mm}
\setlength{\bibsep}{5pt}

\begin{document}

\title{Scalable load balancing in networked systems: \\
A survey of recent advances\footnote{This survey extends the short review presented at ICM 2018~\cite{BBLM18}, and Section 6 provides a synopsis of a SIGMETRICS 2018 conference paper published in the Proceedings of the ACM on Measurement and Analysis of Computing Systems~\cite{MBL17}}
\footnote{The work is partially supported by the National Science Foundation under Grant No.~2113027 and The Netherlands Organization for Scientific Research (NWO) [Gravitation Grant NETWORKS 024.002.003].}}
\author[1]{Mark van der Boor}
\author[1]{Sem C.~Borst}
\author[1,2]{\\ Johan S.H.~van Leeuwaarden,}
\author[3]{Debankur Mukherjee}
\affil[1]{Eindhoven University of Technology, The Netherlands}
\affil[2]{Tilburg University, The Netherlands}
\affil[3]{Georgia Institute of Technology, Atlanta GA, USA}

\date{\today}

\maketitle

\begin{abstract}{
In this survey we provide an overview of recent advances on scalable load balancing schemes which provide favorable delay performance and yet require minimal implementation overhead. The basic load balancing scenario involves a single dispatcher where tasks arrive that must immediately be forwarded to one of $N$~single-server queues. The Join-the-Shortest-Queue (JSQ) policy yields vanishing delays as $N$ grows large, as in a centralized queueing arrangement, but involves a prohibitive communication burden. In contrast, JSQ($d$) schemes that assign an incoming task to a server with the shortest queue among $d$~servers selected uniformly at random require little communication, but lead to constant delays. In order to examine this fundamental trade-off between delay performance and implementation overhead, we discuss a body of recent research on JSQ($d(N)$) schemes where the diversity parameter $d(N)$ depends on~$N$ and investigate the growth rate of $d(N)$ required to match the optimal JSQ performance on fluid and diffusion scale.
 
Stochastic coupling techniques and scaling limits play an instrumental role in establishing this asymptotic optimality. We demonstrate how this methodology carries over to infinite-server settings, finite buffers, multiple dispatchers, servers arranged on graph topologies, and token-based load balancing schemes such as Join-the-Idle-Queue (JIQ), thus providing a broad overview of the main trends in the field.}
\end{abstract}

\newpage

\setcounter{tocdepth}{2} 
{\small \tableofcontents}

\newpage

\section{Introduction}
\label{sec:intro}

In this survey we review scalable load balancing algorithms (LBAs)
which achieve excellent delay performance in large-scale systems
and yet have a low implementation overhead.
LBAs play a critical role in distributing service requests or tasks
(e.g.~computing jobs, data base look-ups, file transfers) among servers
or distributed resources in parallel-processing systems.
The analysis and design of LBAs has attracted significant attention in recent
years, mainly spurred by crucial scalability challenges arising
in cloud networks and data centers with massive numbers of servers.
LBAs can be broadly categorized as static, dynamic, or some intermediate
blend, depending on the amount of feedback or state information
(e.g.~congestion levels) that is used in allocating tasks.
The use of state information naturally allows dynamic policies
to achieve better delay performance, but also involves higher
implementation complexity and a substantial communication burden.
The latter issue is particularly pertinent in cloud networks and data centers
with immense numbers of servers handling a huge influx of service requests.
In order to understand the large-system characteristics, we examine
scalability properties through the prism of asymptotic scalings
where the system size grows large, and identify LBAs which strike
a balance between delay performance and implementation overhead.

The most basic load balancing scenario consists of $N$~identical
parallel servers and a dispatcher where tasks arrive sequentially.
Arriving tasks must immediately be forwarded to one of the servers.
Tasks are assumed to have unit-mean exponentially distributed service
requirements, and the service discipline at each server is supposed
to be oblivious to the actual service requirements.
These assumptions, in conjunction with a Poisson arrival process,
permit a Markovian state description for the evolution
of the queue length process.
Moreover, the symmetry arising from the homogeneity of tasks
and exchangeability of the servers provides a particularly convenient
basis for stochastic coupling arguments and scaling limits.
In the early parts of this survey we will focus on this basic setup
which has been prevalent in the literature, but in later sections
of the paper we will also discuss graph-based versions
where the servers are no longer statistically identical.
In addition, we will touch on scenarios with heterogeneous tasks,
extensions to general service requirement distributions and situations
where advance knowledge of the service requirements is available.

In the above-described basic setup, the celebrated Join-the-Shortest-Queue
(JSQ) policy has several important stochastic optimality properties.
In particular, the JSQ policy achieves the minimum mean overall
delay among all non-anticipating policies that do not have any
advance knowledge of the service requirements \cite{EVW80,Winston77}.
In order to implement the JSQ policy however, a dispatcher requires
instantaneous knowledge of all the queue lengths, which may involve
a prohibitive communication burden with a large number of servers~$N$.
This poor scalability has motivated consideration of JSQ($d$) policies,
where an incoming task is assigned to a server with the shortest queue
among $d \geq 2$ servers selected uniformly at random.
Note that this involves an exchange of $2 d$ messages per task,
irrespective of the number of servers~$N$.
Seminal results in~\cite{Mitzenmacher01,VDK96} imply that even sampling
as few as $d = 2$ servers yields significant performance enhancements
over purely random assignment ($d = 1$) as $N$ grows large, which is commonly
referred to as the \emph{power-of-two} or \emph{power-of-choice} effect.
In particular, when tasks arrive at rate $\lambda N$, the queue length
distribution at each individual server exhibits super-exponential
decay for any fixed $\lambda < 1$ as $N$ grows large, a considerable
improvement compared to exponential decay for purely random assignment.

The diversity parameter~$d$ thus induces a fundamental trade-off
between the amount of communication overhead and the delay performance.
Specifically, a random assignment policy does not entail any
communication burden, but the mean waiting time remains \emph{constant}
as $N$ grows large for any fixed $\lambda > 0$.
In contrast, a nominal implementation of the JSQ policy (without
maintaining state information at the dispatcher) involves $2 N$
messages per task, but the mean waiting time \emph{vanishes}
as $N$ grows large for any fixed $\lambda < 1$.
Although JSQ($d$) policies with $d \geq 2$ yield major performance
improvements over purely random assignment while reducing the
communication burden by a factor O($N$) compared to the JSQ policy,
the mean waiting time \emph{does not vanish} in the limit.
Hence, no fixed value of~$d$ will provide asymptotically optimal delay
performance.
This is evidenced by powerful results~\cite{GTZ16,GTZ18,GTZ20}
indicating that in the absence of any memory at the dispatcher
the communication overhead per task \emph{must increase} with~$N$
in order for any scheme to achieve a zero mean waiting time in the limit.

In the context of JSQ($d$) policies, scalability specifically
pertains to the intrinsic trade-off between delay performance
and communication overhead as governed by the diversity parameter~$d$,
in conjunction with the relative load~$\lambda$.
In this survey we review scaling results which offer detailed
insight in the latter trade-off in a regime where not only
the overall task arrival rate is assumed to grow with~$N$,
but also the diversity parameter is allowed to depend on~$N$.
We write $\lambda(N)$ and $d(N)$ to explicitly reflect that,
and provide a sketch of the analysis in~\cite{MBLW16-3}
which identifies the growth rate of $d(N)$ required in order
to achieve a zero mean waiting time in the limit,
depending on the scaling of $\lambda(N)$.
This involves both fluid-scaled and diffusion-scaled versions
of the queue length process in regimes
where $\lambda(N) / N \to \lambda < 1$
and $(N - \lambda(N)) / \sqrt{N} \to \beta > 0$ as $N \to \infty$,
respectively, see Section~\ref{defi} for definitions of these objects.
As we will be discussed in detail there, the limiting processes
are insensitive to the exact growth rate of $d(N)$,
as long as the latter is sufficiently fast,
and in particular coincide with the limiting processes for the JSQ policy.
This demonstrates that the optimality of the JSQ policy can asymptotically
be preserved while dramatically lowering the communication overhead.

As mentioned above, we will also consider network scenarios
where the $N$~servers are assumed to be inter-connected
by some underlying graph topology $G_N$.
Tasks arrive at the various servers as independent Poisson processes
of rate~$\lambda$, and each incoming task is assigned to whichever server
has the shortest queue amongst the one where it appears and its neighbors
in $G_N$.
Such network scenarios are not only of theoretical interest,
but also of major practical relevance since they emerge in modeling
so-called affinity relations and compatibility constraints
between tasks and servers.
Such features are increasingly common in data centers and cloud networks
due to heterogeneity and data locality issues,
and also relate to scalability considerations, see Section~\ref{networks}
for a further discussion and specific literature pointers.
In case $G_N$ is a clique (fully connected graph), each incoming task
is assigned to the server with the shortest queue across the entire system,
and the behavior is equivalent to that under the JSQ policy.
The stochastic optimality properties of the JSQ policy thus imply that
the queue length process in a clique will be `better'
than in an arbitrary graph~$G_N$.
We will present sufficient conditions formulated in~\cite{MBL17}
for the fluid-scaled and diffusion-scaled versions
of the queue length process in an arbitrary graph to be equivalent
to the limiting processes in a clique as $N \to \infty$.
The conditions demonstrate that the optimality of a clique can
be asymptotically preserved while dramatically reducing the number
of connections, provided the graph $G_N$ is `suitably random',
see Section~\ref{networks} for a more formal statement.

While a zero waiting time can be achieved in the limit by sampling
only $d(N) \ll N$ servers, the amount of communication overhead
in terms of $d(N)$ must still grow with~$N$.
This may be explained from the fact that a large number of servers
need to be sampled for each incoming task to ensure that at least one
of them is found idle with high probability.
This can be avoided by introducing memory at the dispatcher,
in particular maintaining a record of vacant servers,
and assigning tasks to idle servers, if there are any.
This so-called Join-the-Idle-Queue (JIQ) scheme \cite{BB08,LXKGLG11}
has gained huge popularity recently, and can be implemented through
a simple token-based mechanism generating at most one message per task.
The JIQ scheme is thus quite appealing from a scalability perspective,
which raises the question what the corresponding delay performance is
in large-scale systems.
We will therefore also review results implying that not only the
fluid-scaled queue length process under the JIQ scheme asymptotically
coincides with that under the JSQ policy as shown in~\cite{Stolyar15},
but that this equivalence property extends to the diffusion-scaled
queue length process as established in~\cite{MBLW16-1}.
Thus, the use of memory allows the JIQ scheme to achieve asymptotically
optimal delay performance with minimal communication overhead
(at least in the idealized setting with statistically identical servers
and homogeneous tasks).
In particular, ensuring that tasks are assigned to idle servers
whenever available is sufficient to achieve asymptotic optimality,
and using any additional queue length information yields no meaningful
performance benefits on the fluid or diffusion levels.
It is worth pointing out though that the JIQ scheme is \emph{not} optimal
in certain asymptotic regimes such as the non-degenerate slow-down (NDS) regime,
see Section~\ref{asym} for a formal definition.
In~\cite{GW19} it was shown that a minor modification of the JIQ scheme,
called Idle-One-First, which besides idle servers also keeps track
of queues of length one is asymptotically optimal,
see Section~\ref{nondegenerate} for a detailed discussion.

On a methodological note, it is worth observing that a direct derivation
of the fluid limits and diffusion limits in the above scenarios
is quite challenging.
Instead, the asymptotic equivalence results in \cite{MBLW16-1,MBLW16-3,MBL17}
are derived by relating the relevant system occupancy processes to the
corresponding processes under a JSQ policy, and showing that the deviation
between these processes is asymptotically negligible on either fluid scale
or diffusion scale under suitable assumptions on $d(N)$ or $G_N$.
The known fluid and diffusion limits for the JSQ policy thus yield
the corresponding limit process for the JSQ($d(N)$) policy,
a load balancing graph $G_N$ and the JIQ scheme as by-products.

In this survey we highlight the stochastic coupling techniques that
played an instrumental role in proving the asymptotic equivalence results
in \cite{MBLW16-1,MBLW16-3,MBL17}.
Although the stochastic coupling concepts provide an effective
and overarching approach, they defy a systematic recipe and involve
some degree of ingenuity and customization.
Indeed, the specific coupling arguments that were developed
in \cite{MBLW16-1,MBLW16-3,MBL17} are different from those that were originally
used in establishing the stochastic optimality properties of the JSQ policy.
Moreover, the specific coupling approaches differ in sometimes subtle
but critical ways between a JSQ($d(N)$) policy, a load balancing graph $G_N$
and the JIQ scheme, which all require the arguments to be suitably tailored.
We also review further stochastic coupling constructions that were devised
in~\cite{MBLW16-4} for scenarios with infinite-server dynamics.

While the results for load balancing graphs illustrate that the
stochastic coupling techniques can be applied in `asymmetric' situations,
it is fair to say that this approach is at its strongest in scenarios
where all the servers are exchangeable, and the evolution of the system
occupancy can be represented in terms of a density-dependent Markov process.
In these cases, the approach is particularly powerful in analyzing the
system occupancy process on fluid or diffusion scale, where for many
policies the behavior can be shown to asymptotically coincide with that
of JSQ, for which fairly explicit characterizations are known.

Stochastic coupling does not seem to provide a directly useful approach
for other functionals of the system occupancy process,
such as the the maximum queue length, where asymptotic equivalence
with JSQ on fluid or diffusion scale does not provide any information,
and in fact even asymptotically the behavior for many schemes is different.
Applying stochastic coupling techniques in highly heterogeneous settings
is also difficult since the lack of symmetry tends to break its underpinnings,
and establishing scaling results for such scenarios remains
as a particularly challenging subject for further research,
as further discussed in Section~\ref{sec:ext}.

A final caveat is in order.
Load balancing is a broad subject which has been actively pursued
for decades and has been investigated from a variety of perspectives
in several communities (algorithm design, applied probability,
complexity theory, performance evaluation).
While this survey aims to touch on many of these aspects, reflect
historical developments and connect various threads, it is impossible
to exhaustively cover the load balancing literature in full detail.
Rather than provide an encyclopedic overview, we therefore focus
on scalability in terms of delay performance and implementation overhead
in large-scale systems as the overarching theme, and highlight the
combined power of stochastic coupling methods and scaling limits.

The survey is organized as follows.
In Section~\ref{spec} we discuss various LBAs and evaluate their
scalability properties.
In Section~\ref{sec:powerofd} we introduce some useful preliminary
concepts, and then review fluid and diffusion limits for the JSQ
policy as well as JSQ($d$) policies with a fixed value of~$d$.
In Section~\ref{univ} we discuss the trade-off between delay
performance and communication overhead as a function of the diversity
parameter~$d$, in conjunction with the relative load.
In particular, we formulate asymptotic universality properties
for JSQ($d$) policies, which are extended to systems with server pools
and network scenarios in Sections~\ref{bloc} and~\ref{networks},
respectively.
Section~\ref{token} is devoted to asymptotic optimality properties
for the JIQ scheme.
We discuss somewhat related redundancy policies and alternative scaling
regimes and performance metrics in Section~\ref{miscellaneous}.
The survey is concluded in Section~\ref{sec:ext} with a discussion
of yet further extensions and several open problems and emerging
research directions.

\section{Scalability spectrum}
\label{spec}

In this section we review a wide spectrum of LBAs and examine
scalability properties in terms of their delay performance vis-a-vis
associated implementation overhead in large-scale systems.

\subsection{Basic model}

Throughout this section and most of the paper, we focus on a basic
scenario with $N$~parallel single-server infinite-buffer queues
and a single dispatcher where tasks arrive as a Poisson process
of rate~$\lambda(N)$, as depicted in Figure~\ref{figJSQ}.
Arriving tasks cannot be queued at the dispatcher,
and must immediately be forwarded to one of the servers.
This canonical setup is commonly dubbed the \emph{supermarket model},
in loose analogy with the daily-life situation of choosing
between parallel check-out lanes in supermarkets.
Tasks are assumed to have unit-mean exponentially distributed service
requirements, and the service discipline at each server is supposed
to be oblivious to the actual service requirements.

When tasks do not get served and never depart but simply accumulate,
the above setup corresponds to a so-called balls-and-bins model,
and we will further elaborate on the connections and differences
with work in that domain in Section~\ref{ballsbins}.

\begin{figure}
\begin{minipage}{\textwidth}
\begin{center}
\hfill\parbox{0.48\textwidth}{\centering
\begin{tikzpicture}[scale=0.5]
\node[above] at (3.750,5) {\small$\lambda(N)$};
\draw[thick,black,->] (2.75,5)--(4.75,5);
\draw[thick,black,fill=black](5,5) circle [radius=0.1];
\draw[thick,black,->,dash pattern=on 2 off 1] (5,5)--(7,5);
\draw[thick,black,->,dash pattern=on 2 off 1] (5,5)--(7,6.5);
\draw[thick,black,->,dash pattern=on 2 off 1] (5,5)--(7,8);
\draw[thick,black,->,dash pattern=on 2 off 1] (5,5)--(7,3.5);
\draw[thick,black,->,dash pattern=on 2 off 1] (5,5)--(7,2);
\foreach \i in {1,...,4}
{
\draw[thick,black,fill=mygray](13-\i,8) circle [radius=0.4];
}
\foreach \i in {1,...,2}
{
\draw[thick,black,fill=mygray](13-\i,6.5) circle [radius=0.4];
}
\foreach \i in {1,...,3}
{
\draw[thick,black,fill=mygray](13-\i,5) circle [radius=0.4];
}
\foreach \i in {1,...,5}
{
\draw[thick,black,fill=mygray](13-\i,2) circle [radius=0.4];
}
\draw[thick,black](13.2,8) circle [radius=0.5];
\draw[thick,black](13.2,6.5) circle [radius=0.5];
\draw[thick,black](13.2,5) circle [radius=0.5];
\draw[thick,black](13.2,2) circle [radius=0.5];
\draw[thick,black,->] (13.7,8)--(14.5,8);
\draw[thick,black,->] (13.7,6.5)--(14.5,6.5);
\draw[thick,black,->] (13.7,5)--(14.5,5);
\draw[thick,black,->] (13.7,2)--(14.5,2);
\node at (13.2,8) {\small 1};
\node at (13.2,6.5) {\small 2};
\node at (13.2,5) {\small 3};
\node at (13.2,3.5) {$\vdots$};
\node at (13.2,2) {\small $N$};
\end{tikzpicture}
}
\parbox{0.48\textwidth}{\centering
\begin{tikzpicture}[scale=.60]
\foreach \x in {10, 9,...,1}
    \draw (\x,6)--(\x,0)--(\x+.7,0)--(\x+.7,6);
\foreach \x in {10, 9,...,1}
    \draw (\x+.35,-.45) node[circle,inner sep=0pt, minimum size=10pt,draw,thick] {{{\tiny $\mathsmaller{\x}$}}} ;
\foreach \y in {0, .5}
    \draw[fill=mygray,mygray] (1.15,.1+\y) rectangle (1.55,.5+\y);
\foreach \y in {0, .5, 1, 1.5}
    \draw[fill=mygray,mygray] (2.15,.1+\y) rectangle (2.55,.5+\y);
\foreach \y in {0, .5, 1, 1.5}
    \draw[fill=mygray,mygray] (3.15,.1+\y) rectangle (3.55,.5+\y);
\foreach \y in {0, .5, 1, 1.5, 2, 2.5}
    \draw[fill=mygray,mygray] (4.15,.1+\y) rectangle (4.55,.5+\y);
\foreach \y in {0, .5, 1, 1.5, 2, 2.5, 3}
    \draw[fill=mygray,mygray] (5.15,.1+\y) rectangle (5.55,.5+\y);
\foreach \y in {0, .5, 1, 1.5, 2, 2.5, 3, 3.5, 4}
    \draw[fill=mygray,mygray] (6.15,.1+\y) rectangle (6.55,.5+\y);
\foreach \y in {0, .5, 1, 1.5, 2, 2.5, 3, 3.5, 4}
    \draw[fill=mygray,mygray] (7.15,.1+\y) rectangle (7.55,.5+\y);
\foreach \y in {0, .5, 1, 1.5, 2, 2.5, 3, 3.5, 4, 4.5, 5}
    \draw[fill=mygray,mygray] (8.15,.1+\y) rectangle (8.55,.5+\y);
\foreach \y in {0, .5, 1, 1.5, 2, 2.5, 3, 3.5, 4, 4.5, 5}
    \draw[fill=mygray,mygray] (9.15,.1+\y) rectangle (9.55,.5+\y);
\foreach \y in {0, .5, 1, 1.5, 2, 2.5, 3, 3.5, 4, 4.5, 5}
    \draw[fill=mygray,mygray] (10.15,.1+\y) rectangle (10.55,.5+\y);

\draw[thick] (.9,0) rectangle (10.8,.5);
\draw[thick] (.9,.5) rectangle (10.8,1);
\draw[thick] (3.9,2.5) rectangle (10.8,3);

\draw  (12, .2) node {{\scriptsize $\leftarrow Q_1=10$}};
\draw  (12, .9) node {{\scriptsize $\leftarrow Q_2=10$}};

\draw  (11.85, 1.3) node {{\tiny $\cdot$}};
\draw  (11.85, 1.8) node {{\tiny $\cdot$}};
\draw  (11.85, 2.3) node {{\tiny $\cdot$}};
\draw  (12, 2.7) node {{\scriptsize $\leftarrow Q_6=7$}};
\draw  (11.85, 3.3) node {{\tiny $\cdot$}};
\draw  (11.85, 3.8) node {{\tiny $\cdot$}};
\draw  (11.85, 4.3) node {{\tiny $\cdot$}};
\end{tikzpicture}
}\\[1ex]
\parbox{0.48\textwidth}{\centering
\captionof{figure}{Tasks arrive at the dispatcher as a Poisson process
of rate $\lambda(N)$, and are forwarded to one of the $N$ servers
according to some specific load balancing algorithm.
\label{figJSQ}}
}
\hfill\parbox{0.48\textwidth}{\centering
\captionof{figure}{The value of $Q_i$ represents the width of the
$i$-th row, when the servers are arranged in non-descending order
of their queue lengths.
\label{figB}}
}
\end{center}
\end{minipage}
\end{figure}

\subsection{Asymptotic scaling regimes}
\label{asym}

An exact analysis of the delay performance is quite involved,
if not intractable, for all but the simplest LBAs.
Numerical evaluation or simulation are not straightforward either,
especially for high load levels and large system sizes.
A common approach is therefore to consider various limit regimes,
which not only provide mathematical tractability and illuminate the
fundamental properties, but are also natural in view of the typical
conditions in which cloud networks and data centers operate.
One can distinguish several asymptotic scalings that have been used
for these purposes:

\begin{enumerate}[{\normalfont (i)}]
\item In the classical heavy-traffic regime, $\lambda(N) = \lambda N$
with a fixed number of servers~$N$ and a relative load~$\lambda$
that tends to one in the limit.
\item In the conventional large-capacity or many-server regime,
the relative load $\lambda(N) / N$ approaches a constant $\lambda < 1$
as the number of servers~$N$ grows large.
\item The popular Halfin-Whitt regime, named after the authors of the
seminal paper~\cite{HW81} where this was introduced and first analyzed,
combines heavy traffic with a large capacity, with
\begin{equation}
\label{eq:HW}
\frac{N - \lambda(N)}{\sqrt{N}} \to \beta > 0 \mbox{ as } N \to \infty,
\end{equation}
so the relative capacity slack behaves as $\beta / \sqrt{N}$
as the number of servers~$N$ grows large.
\item The so-called non-degenerate slow-down regime~\cite{Atar12,GW19}
involves $N - \lambda(N) \to \gamma > 0$, so the relative capacity
slack shrinks as $\gamma / N$ as the number of servers~$N$ grows large.
\end{enumerate}

The term non-degenerate slow-down refers to the fact that
in the context of a centralized multi-server queue (where load balancing
between servers occurs implicitly), the mean waiting time
in regime (iv) tends to a strictly positive constant as $N \to \infty$,
and is thus of similar magnitude as the mean service requirement.
In contrast, in regimes (ii) and (iii), the mean waiting time
in a multi-server queue decays exponentially fast in~$N$
or is of the order~$1 / \sqrt{N}$, respectively as $N \to \infty$,
while in regime~(i) the mean waiting time grows arbitrarily large
relative to the mean service requirement.

In the context of a centralized M/M/$N$ queue, scalings (ii), (iii)
and (iv) are commonly referred to as Quality-Driven (QD),
Quality-and-Efficiency-Driven (QED) and Efficiency-Driven (ED) regimes.
These terms reflect that (ii) offers excellent service quality
(vanishing waiting time), (iv) provides high resource efficiency
(utilization approaching one), and (iii) achieves a combination
of these two, providing the best of both worlds.

In the remainder of the paper we will focus on scalings (ii) and (iii),
and refer to these as fluid and diffusion scalings, since it is natural
to analyze the relevant system occupancy processes on fluid scale ($1 / N$)
and diffusion scale ($1 / \sqrt{N}$) in these regimes, respectively.
In line with the central theme of this survey, we will not provide
a detailed account of scalings~(i) and (iv), which do not capture the
large-scale perspective and do not allow for low delays, respectively.
However, we will briefly mention some results for these regimes
in Sections~\ref{classical} and~\ref{nondegenerate}.

\subsection{Basic load balancing algorithms}

\subsubsection{Random assignment: $N$ independent M/M/1 queues}
\label{random}

One of the most basic LBAs is to assign each arriving task to a server
selected uniformly at random.
In that case, the various queues collectively behave as
$N$~independent M/M/1 queues, each with arrival rate $\lambda(N) / N$
and unit service rate.
In particular, at each of the queues, the total number of tasks
in stationarity has a geometric distribution with parameter
$\lambda(N) / N$.
By virtue of the PASTA property, the probability that an arriving task
incurs a non-zero waiting time is $\lambda(N) / N$.
The mean number of waiting tasks (excluding the possible task in service)
at each of the queues is $\frac{\lambda(N)^2}{N (N - \lambda(N))}$,
so the total mean number of waiting tasks is
$\frac{\lambda(N)^2}{N - \lambda(N)}$, which by Little's law implies
that the mean waiting time is $\frac{\lambda(N)}{N - \lambda(N)}$.
In particular, when $\lambda(N) = N \lambda$, the probability that
a task incurs a non-zero waiting time is $\lambda$,
and the mean waiting time of a task is $\frac{\lambda}{1 - \lambda}$,
independent of~$N$, reflecting the independence of the various queues.

As we will see later, a broad range of queue-aware LBAs can deliver
a probability of a non-zero waiting time and a mean waiting time that
vanish asymptotically.
While a random assignment policy is evidently not competitive
with such queue-aware LBAs, it still plays a relevant role due to the
strong degree of mathematical tractability.
For example, the queue process under purely random assignment can be
shown to provide an upper bound (in a stochastic majorization sense)
for various more involved queue-aware LBAs for which even stability
may be difficult to establish directly, yielding conservative
performance bounds and stability guarantees.

A slightly better LBA is to assign tasks to the servers
in a Round-Robin manner, dispatching every $N$-th task to the same server.
In the fluid regime~(ii), the inter-arrival time of tasks at each given
queue will then converge to a constant $1 / \lambda$ as $N \to \infty$.
Thus each of the queues will behave as a D/M/1 queue in the limit,
and the probability of a non-zero waiting time and the mean waiting
time will be somewhat lower than under purely random assignment.
However, both the probability of a non-zero waiting time and the mean
waiting time will still tend to strictly positive values and not vanish
as $N \to \infty$.

\subsubsection{Join-the-Shortest Queue (JSQ)}
\label{ssec:jsq}

Under the Join-the-Shortest-Queue (JSQ) policy, each arriving task is
assigned to the server with the currently shortest queue
(ties are broken arbitrarily).
In the basic model described above, the JSQ policy has several
stochastic optimality properties, and yields the `most balanced
and smallest' queue process among all non-anticipating policies
that do not have any advance knowledge of the service requirements
\cite{EVW80,Winston77}.

\subsubsection{Join-the-Smallest-Workload (JSW): centralized M/M/N queue}
\label{ssec:jsw}

Under the Join-the-Smallest-Workload (JSW) policy, each arriving task
is assigned to the server with the currently smallest workload.
Note that this is an anticipating policy, since it requires advance
knowledge of the service requirements of all the tasks in the system.
Further observe that this policy (myopically) minimizes the waiting
time for each incoming task, and mimicks the operation of a centralized
$N$-server queue with a FCFS discipline.
The equivalence with a centralized $N$-server queue with a FCFS
discipline yields an additional optimality property of the JSW policy:
The vector of joint workloads at the various servers observed
by each incoming task is smaller in the Schur convex sense
than under any alternative admissible policy~\cite{FC01}.

It is worth observing that the above optimality properties in fact
do not rely on Poisson arrival processes or exponential service
requirement distributions.
At the same time, these optimality properties do not imply that
the JSW policy minimizes the mean stationary waiting time.
In our setting with Poisson arrivals and exponential service
requirements, however, it can be shown through direct means that the
total number of tasks under the JSW policy is stochastically smaller
than under the JSQ policy.
Indeed, in view of the equivalence with a centralized M/M/$N$ queue,
the total service completion rate under the JSW policy is given
by $\min\{L, N\}$ when there are $L$~tasks in total in the system,
while under the JSQ policy the total service completion rate is at most
equal to $\min\{L, N\}$, and may be lower than that when some servers
are idle while tasks are queued up at other servers.
Even though the JSW policy requires a similar excessive communication
overhead, aside from its anticipating nature, the above-mentioned
equivalence in fact means that the total number of tasks behaves
as a birth-death process, which renders it far more tractable
than the JSQ policy.
Specifically, it follows from textbook results for the centralized
M/M/$N$ queue that, given that all the servers are busy, the total number
of waiting tasks is geometrically distributed with parameter
$\lambda(N) / N$.
The total mean number of waiting tasks is then
$\Pi_W(N, \lambda(N)) \frac{\lambda(N)}{N - \lambda(N)}$,
and the mean waiting time is
$\Pi_W(N, \lambda(N)) \frac{1}{N - \lambda(N)}$,
with $\Pi_W(N, \lambda(N))$ denoting the probability of the total occupancy
in an M/M/$N$ queue being $N$ or larger, i.e., the probability
of all servers being occupied and a task incurring a non-zero waiting time.
The probability $\Pi_W(N, \lambda(N))$ can be obtained from the stationary
distribution of the birth-death process representing the system occupancy,
and is described by the so-called Erlang-C formula as function
of the load and number of servers.
The latter function can be expressed in semi-explicit well approximated
`closed form’ in terms of a normalizing constant
which is the sum of an explicit infinite series.
Standard results for the M/M/1 queue imply that the mean waiting time
is $\frac{\lambda(N)}{N - \lambda(N)}$ for the random assignment policy
considered in Section~\ref{random}.
Thus it can immediately be concluded that the mean waiting time
under the JSW policy is smaller by at least a factor $\lambda(N)$.

In the fluid regime $\lambda(N) = N \lambda$, it can be shown
that the probability $\Pi_W(N, \lambda(N))$ of a non-zero waiting time
decays exponentially fast in~$N$, see for instance~\cite{HW81},
and hence so does the mean waiting time.
The pivotal results in~\cite{HW81} further demonstrate that in the
diffusion regime~\eqref{eq:HW}, the probability $\Pi_W(N, \lambda(N))$
of a non-zero waiting time converges to a finite constant $\Pi_W^*(\beta)$.
This implies that the mean waiting time of is of the order $1 / \sqrt{N}$,
and hence vanishes as $N \to \infty$.

\subsubsection{Power-of-$d$ load balancing (JSQ($d$))}
\label{ssec:powerd}

We have seen that the Achilles heel of the JSQ policy is its excessive
communication overhead in large-scale systems.
This poor scalability has motivated consideration of so-called
JSQ($d$) policies, where an incoming task is assigned to a server
with the shortest queue among $d$~servers selected uniformly at random.
The seminal results in~\cite{Mitzenmacher01,VDK96} demonstrate that
in the fluid regime~(ii), the stationary probability that there are
$i$ or more tasks at a given queue is proportional
to $\lambda^{(d^i - 1)/(d - 1)}$ as $N \to \infty$,
and thus exhibits super-exponential decay as opposed to exponential decay
for the random assignment policy considered in Section~\ref{random}.

As alluded to in Section~\ref{sec:intro}, the diversity parameter~$d$
thus induces a fundamental trade-off between the amount of communication
overhead and the performance in terms of queue lengths and delays.
A rudimentary implementation of the JSQ policy ($d = N$, without
replacement) involves $\OO(N)$ communication overhead per task,
but it can be shown that the probability of a non-zero waiting time
and the mean waiting \emph{vanish} as $N \to \infty$
in both the fluid and the diffusion regime,
see Sections~\ref{ssec:jsqfluid} and \ref{ssec:diffjsq}.
Although JSQ($d$) policies with a fixed parameter $d \geq 2$ yield
major performance improvements over purely random assignment
as implied by the results in~\cite{Mitzenmacher01,VDK96},
these results at the same time show that even in the fluid regime,
the probability of a non-zero waiting time and the mean waiting time
\emph{do not vanish} as $N \to \infty$.

\subsubsection{Token-based mechanisms: Join-the-Idle-Queue (JIQ)}
\label{ssec:jiq}

While a zero waiting time can be achieved in the limit by sampling
only $d(N) \ll N$ servers, the amount of communication overhead
in terms of $d(N)$ must still grow with~$N$.
This can be countered by introducing memory at the dispatcher,
in particular maintaining a record of vacant servers,
and assigning tasks to idle servers as long as there are any,
or to a uniformly at random selected server otherwise.
This so-called Join-the-Idle-Queue (JIQ) scheme \cite{BB08,LXKGLG11}
has received keen interest recently, and can be implemented through
a simple token-based mechanism.
Specifically, idle servers send tokens to the dispatcher to advertise
their availability, and when a task arrives and the dispatcher has
tokens available, it assigns the task to one of the corresponding servers
(and disposes of the token).
Note that a server only issues a token when a task completion leaves
its queue empty, thus generating at most one message per task.
Surprisingly, the mean waiting time and the probability of a non-zero
waiting time vanish under the JIQ scheme in both the fluid and the
diffusion regime, as we will further discuss in Section~\ref{token}.
Thus, the use of memory allows the JIQ scheme to achieve asymptotically
optimal delay performance with minimal communication overhead.

\subsection{Performance comparison}
\label{ssec:perfcomp}

We now present some simulation results to compare the above-described
LBAs in terms of delay performance.
Specifically, we evaluate the mean waiting time and the probability
of a non-zero waiting time in both a fluid regime ($\lambda(N) = 0.9 N$)
and a diffusion regime ($\lambda(N) = N - \sqrt{N}$).
The simulations are conducted for $N=10,20,\hdots,200$ servers,
and run for 10000 time units.
Each simulation starts with an empty system, but only jobs that leave
after 2500 time units are counted in order to avoid transient effects.
The probability of waiting and mean waiting time are computed using
the empirical averages over all jobs that leave after 2500 time units.
This procedure is repeated 20 times, and the results
in Figure~\ref{differentschemes} show the mean waiting time
and probability of waiting averaged across these 20 runs.
An overview of the asymptotic delay performance and overhead
associated with various LBAs is provided in Table~\ref{table}.

\begin{figure}
\includegraphics[width=\linewidth]{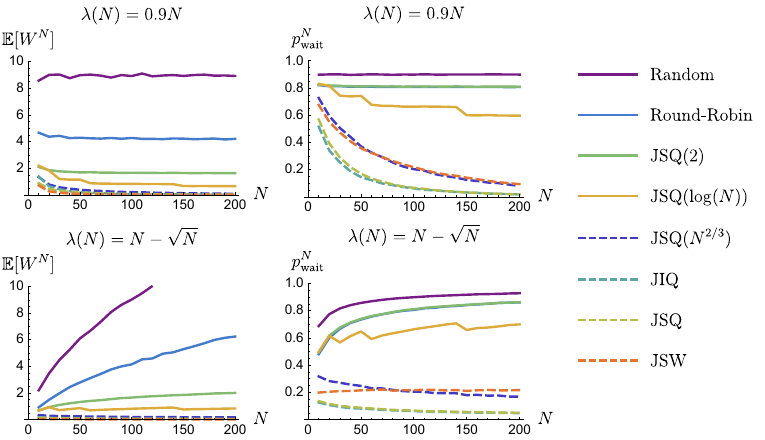}
\caption{Simulation results for mean waiting time $\mathbb{E}[W^N]$
and probability of a non-zero waiting time $p_{\textup{wait}}^N$,
for both a fluid and a diffusion regime.}
\label{differentschemes}
\end{figure}

We are specifically interested in distinguishing two classes of LBAs --
the ones delivering a mean waiting time and probability of a non-zero
waiting time that vanish asymptotically, and the ones that fail to do so
-- and relating that dichotomy to the associated communication
overhead and memory requirement at the dispatcher.
We give these classifications for both the fluid and the diffusion regime.

\paragraph{JSQ, JIQ, and JSW.}
As mentioned earlier, JSQ, JIQ and JSW have vanishing mean waiting times
in both the fluid and the diffusion regime, and this is supported by the figures,
which further reflect the optimality of JSW in terms of mean waiting time.
We can also observe a crucial difference, however, between JSW and JSQ/JIQ.
Somewhat surprisingly, the probability of a positive wait does not vanish
for JSW in the diffusion regime, while it does vanish for JSQ/JIQ.
Since the mean waiting time for JSW is smaller than for JSQ/JIQ,
this implies that the mean of all non-zero waiting times
(i.e., the mean waiting time conditional on having to wait)
is an order-of-magnitude larger in JSQ/JIQ compared to JSW.
This difference can be explained from the fact that JSW uses knowledge
of the service requirements, whereas JSQ/JIQ do not.
Indeed, when a task is placed in a queue under JSQ/JIQ, it will need to wait
for a `normal' residual service time, whereas JSW exploits knowledge
of that residual service time being relatively short among all $N$~queues.
Or taking the equivalent view of JSW as a centralized M/M/N queue,
a task that needs to wait may find several tasks ahead of it in the queue,
but this queue is served by $N$~servers combined,
whereas in JSQ/JIQ each queue is handled by just a single server.
Conversely, when there are $N$ or more tasks in the system in total,
an arriving task will need to wait under JSW,
while in JSQ/JIQ some of the servers may have several tasks in queue,
and the arriving task may still find an idle server with high probability.
We will revisit the comparison between JSQ and a centralized M/M/$N$ queue
in Section~\ref{ssec:diffjsq}.

\paragraph{Random and Round-Robin.}
The mean waiting time does not vanish for Random and Round-Robin
in the fluid regime, as already mentioned in Section~\ref{random}.
Moreover, the mean waiting time grows without bound in the diffusion regime
for these two schemes.
This is because the system can still be decomposed into single-server queues,
and the loads of the individual M/M/1 and D/M/1 queues tend to~1.

\paragraph{JSQ($d$) policies.}
Three versions of JSQ($d$) are included in Figure~\ref{differentschemes};
$d(N) = 2$, $d(N) = \lfloor \log(N) \rfloor \to \infty$ and $d(N) = N^{2/3}$
for which $\frac{d(N)}{\sqrt{N} \log(N)} \to \infty$.
Note that the graph for JSQ($\log(N)$), where the diversity parameter
grows logarithmically in~$N$, shows kneepoints due to the slow growth rate
of $\log(N)$ and the fact that the actual integer value
$d(N) = \lfloor \log(N) \rfloor$ occasionally jumps by~$1$.
As can be seen in Figure~\ref{differentschemes}, the choices
for which $d(N) \to \infty$ have vanishing wait in the fluid regime,
while $d = 2$ has not.
Overall, we see that JSQ($d$) policies clearly outperform Random
and Round-Robin dispatching, while JSQ, JIQ, and JSW are better
in terms of mean wait.

\begin{table}\centering
\def\arraystretch{1.5}
\begin{tabular}{|C{3cm}|C{3cm}|C{2.5cm}|C{3cm}|C{1.75cm}|}
\hline
Scheme & Queue length & Waiting time (fixed $\lambda < 1$) &
Waiting time ($1 - \lambda \sim 1 / \sqrt{N}$) & Overhead per task \\
\hline\hline
Random & $q_i^\star = \lambda^i$ &  $\frac{\lambda}{1 - \lambda}$ &
$\Theta(\sqrt{N})$ & 0 \\
\hline
JSQ($d$) & $q_i^\star = \lambda^{(d^i - 1)/(d - 1)}$ &
$\Theta$(1) & $\Omega(\log{N})$ & $2 d$ \\
\hline
\mbox{$d(N)$} \mbox{$\to \infty$} & same as JSQ & same as JSQ & ?? & $2d(N)$ \\
\hline
$\frac{d(N)}{\sqrt{N} \log(N)}\to \infty$ &
same as JSQ & same as JSQ & same as JSQ & $2d(N)$ \\
\hline
JSQ & $q_1^\star = \lambda$, $q_2^\star =$ o(1) & o(1) &
$\Theta(1 / \sqrt{N})$ & $2 N$ \\
\hline\hline
JIQ & same as JSQ & same as JSQ & same as JSQ & $\leq 1$ \\
\hline
\end{tabular}
\caption{Queue length distribution, waiting times, and communication
overhead for various~LBAs.}
\label{table}
\end{table}

\section{Preliminaries, JSQ policy, and power-of-$d$ algorithms}
\label{sec:powerofd}

In this section we first introduce some notation and preliminary concepts,
and then review fluid and diffusion limits for the JSQ policy
as well as JSQ($d$) policies with a fixed value of~$d$.

\subsection{Definitions, limit sequences and convergence issues}
\label{defi}

We continue to focus on the basic scenario where all the servers are
homogeneous, the service requirements are exponentially distributed,
and the service discipline at each server is oblivious of the actual
service requirements.
Moreover, most of the LBAs under consideration do not distinguish
between servers with equal queue lengths.
Consequently, the queue-length process is Markov on an enlarged filtration,
allowing for random draws to resolve ties.
In order to obtain a Markovian state description, it therefore
suffices to only track the number of tasks, and in fact we do not need
to keep record of the number of tasks at each individual server,
but only count the number of servers with a given number of tasks.
Specifically, we represent the state of the system by a vector
\[
\QQ(t) := \left(Q_1(t), Q_2(t), \dots\right)
\]
with $Q_i(t)$ denoting the number of servers with $i$ or more tasks
at time~$t$, including the possible task in service, $i = 1, 2 \dots$.
Note that if we represent the queues at the various servers as (vertical)
stacks, and arrange these from left to right in ascending order,
then the value of $Q_i$ corresponds to the width of the $i$-th (horizontal)
row, as depicted in the schematic diagram in Figure~\ref{figB}.

In order to examine the fluid and diffusion limits in regimes
where the number of servers~$N$ grows large, we consider a sequence
of systems indexed by~$N$, and attach a superscript~$N$ to the
associated state variables.
The fluid-scaled occupancy state is denoted by
$\qq^N(t) := (q_1^N(t), q_2^N(t), \dots)$, with $q_i^N(t) = Q_i^N(t) / N$
representing the fraction of servers in the $N$-th system with $i$
or more tasks as time~$t$, $i = 1, 2, \dots$.
Let $$\cS = \big\{\qq \in [0, 1]^\infty: q_i \leq q_{i-1}\
\forall i = 2, 3, \dots, \text{ and }\sum_{i=1}^\infty q_i <\infty\big\}$$
be the set of all possible fluid-scaled states equipped with the $\ell_1$ topology.
Any (weak) limit $\qq(\cdot)$ of the sequence of processes
$\{\qq^N(t)\}_{t \geq 0}$ in the conventional large capacity regime~(ii)
as $N \to \infty$ (in a suitable topology on the space of functions
on $[0, T]$ taking values in~$\cS$) is called a \textit{fluid limit}.
In some frameworks in the literature this is also commonly referred to
a \textit{mean-field limit} when the occupancy process is viewed
as the (density-dependent) state evolution of a population of randomly
interacting nodes or particles \cite{BLB08,DN08,Kurtz78,LBMDM07}.
Whenever we consider fluid limits, we assume the sequence of initial
states is such that $\qq^N(0) \to \qq^\infty \in \cS$ as $N \to \infty$.

The diffusion-scaled occupancy state is defined as
$\bar{\QQ}^N(t) = (\bar{Q}_1^N(t), \bar{Q}_2^N(t), \dots)$, with
\begin{equation}
\label{eq:diffscale}
\bar{Q}_1^N(t) = - \frac{N - Q_1^N(t)}{\sqrt{{N}}}, \qquad
\bar{Q}_i^N(t) = \frac{Q_i^N(t)}{\sqrt{{N}}}, \quad i = 2,3, \dots,
\end{equation}
where we include a minus sign in the definition of $\bar{Q}_1^N(t)$
so as to adhere to the notation adopted in~\cite{EG15} which is the basis
for the results that will be presented in Section~\ref{ssec:diffjsq}.
Any (weak) limit $\QQ(\cdot)$ of the sequence of processes
$\{\QQ^N(t)\}_{t \geq 0}$ in the Halfin-Whitt heavy-traffic regime
(iii) as $N \to \infty$, once again in a suitable topology,
is called a \textit{diffusion limit}.
Note that $-\bar{Q}_1^N(t)$ corresponds to the number of vacant servers,
normalized by $\sqrt{N}$.
The reason why $Q_1^N(t)$ is centered around~$N$ while $Q_i^N(t)$,
$i = 2,3, \dots$, are not, is that for the scalable LBAs we consider
the fraction of servers with exactly one task tends to one, whereas the
fraction of servers with two or more tasks tends to zero as $N \to \infty$.
For convenience, we will assume that each server has an infinite-capacity
buffer, but all the results extend to the finite-buffer case,
see for instance \cite{EG15,MBLW16-1,MBLW16-3,MBLW16-4,MBLW2016-5}. \\

We conclude this subsection with a discussion of two important
convergence issues associated with the above-defined scaling limits. 

\paragraph{Accuracy of asymptotic approximations.}
A critical issue in the context of scaling limits is the rate
of convergence and the accuracy for finite-size systems.
Some interesting results for the accuracy of mean-field approximations
for interacting-particle systems including load balancing models may be
found in~\cite{Gast17,Ying16,Ying17}.
These results can be leveraged to develop refined approximations
and improve the accuracy by adding expansion terms as demonstrated
in~\cite{GH18,GBT19,GLM18}.

\paragraph{Global asymptotic stability, stationary distributions,
and interchange of limits.}
A further crucial issue in the context of scaling limits is whether
limit processes that arise as $N \to \infty$ itself have (unique)
subsequential limits or limiting distributions as $t \to \infty$,
and if so, how the stationary distributions of the pre-limit processes
(assuming that exists) relate to those limits.
For fluid limits, which are usually described in terms of a system
of differential equations, the first question translates to the
existence of a unique invariant point (fixed point) of these equations.
While in most cases of practical interest such a unique invariant point
tends to exist, this may be non-trivial to prove, and the existence
of multiple invariant points can not a priori be ruled out in general.
In fact, existence of multiple invariant points has been shown
in specific scenarios, and is an indication of oscillatory behavior
and so-called bi-stability issues in the original stochastic process
for large~$N$ \cite{GHK90,MR18}.
Even when it can be established that a unique invariant point exists, the
next question pertains to global attraction or global asymptotic stability.
Specifically, the invariant point is said to be a global attractor,
or globally asymptotically stable, if the fluid limit process
converges to this point for any initial condition.
Global asymptotic stability has been established for various
particular model instances, including the supermarket model with JSQ($d$)
load balancing strategies \cite{VH19,Mitzenmacher01,VDK96}.
Common proof methodologies involve Lyapunov constructions \cite{BDFR15,FG16,Gast15},
monotonicity properties \cite{Mitzenmacher96,TX11,VH19,VDK96}
and reversibility concepts \cite{LeBoudec13}, but there is no
systematic recipe available, and the specific proof arguments tend to
be highly tailored to the particular system under consideration.
If global asymptotic stability of the invariant point can be established,
then along with tightness this ensures that the sequence of stationary
distributions of the pre-limit process (assuming these exist) converge
to this point, see for instance~\cite{BLB11}, with some of the key ideas
and results dating back to much earlier work \cite{Whitt85,Hunt95}.
This provides justification for an interchange of the large-scale
($N \to \infty$) and stationary ($t \to \infty$) limits,
indicating that the invariant point provides a suitable approximation
for the stationary distribution of the original stochastic process
for sufficiently large values of~$N$.
In addition, the interchange of limits tends to furnish asymptotic
independence among any finite subset of the queues~\cite{Graham00}.
Related results, convergence rates and error probabilities are established
in~\cite{LM07,LN13}.
Somewhat similar issues and observations apply for diffusion limits
\cite{GZ06,KR12}.

\subsection{Fluid limit for JSQ(d) policies}

We first consider the fluid limit for JSQ($d$) policies
with an arbitrary but fixed value of~$d$ as characterized
by the seminal results in \cite{Mitzenmacher96,VDK96}.
The result below is paraphrased from \cite{Mitzenmacher96, VDK96}.

\paragraph{Fluid limit for JSQ($d$).}
{\em The sequence of processes $\{\qq^N(t)\}_{t \geq 0}$ has a weak
limit $\{\qq(t)\}_{t \geq 0}$ that satisfies the system
of differential equations}
\begin{equation}
\label{fluid:standard}
\frac{\dif q_i(t)}{\dif t} =
\lambda (q_{i-1}^d(t) - q_i^d(t)) - (q_i(t) - q_{i+1}(t)),
\quad i = 1, 2, \dots,
\end{equation}
with $q_0(t) \equiv 1$ for all $t \geq 0$.
The fluid-limit equations may be interpreted as follows.
The first term represents the rate of increase in the fraction
of servers with $i$ or more tasks due to arriving tasks that are
assigned to a server with exactly $i - 1$ tasks.
Note that the latter occurs in fluid state $\qq \in \cS$ with probability
$q_{i-1}^d - q_i^d$, i.e., the probability that all $d$~sampled servers
have $i - 1$ or more tasks, but not all of them have $i$ or more tasks.
The second term corresponds to the rate of decrease in the fraction
of servers with $i$ or more tasks due to service completions from servers
with exactly $i$ tasks, and the latter rate is given by $q_i - q_{i+1}$.
The system in~\eqref{fluid:standard} characterizes the functional law
of large numbers (FLLN) behavior of systems in regime (ii) under the JSQ($d$) scheme.
Weak convergence of the diffusion-scaled variation around the fluid-limit path
to a certain Ornstein-Ulenbeck process in the same load regime
(both the transient behavior and in steady state) was shown in~\cite{Graham05},
establishing a functional central limit theorem (FCLT) result.
Strong approximations for systems under the JSQ($d$) scheme on any finite
time interval by the deterministic system in~\eqref{fluid:standard},
a certain infinite-dimensional jump process, and a diffusion
approximation were established in~\cite{LN05}.

Now, assume $\lambda\in (0,1)$ for ergodicity of the queue-length process.
When the derivatives in~\eqref{fluid:standard} are set equal to zero
for all~$i$, the unique fixed point for any $d \geq 2$ is obtained as
\cite{Mitzenmacher96,VDK96}
\begin{equation}
\label{eq:fixedpoint1}
q_i^* = \lambda^{\frac{d^i-1}{d-1}}.
\quad i = 1, 2, \dots.
\end{equation}
It can be shown that the fixed point is asymptotically stable in the
sense that $\qq(t) \to \qq^*$ as $t \to \infty$ for any initial fluid
state $\qq^\infty$ with $\sum_{i = 1}^{\infty} q_i^\infty < \infty$.
As mentioned earlier, the fixed point reveals that the stationary
queue length distribution at each individual server exhibits
\emph{super-exponential} decay as $N \to \infty$,
as opposed to exponential decay for a random assignment policy.
As described above, this involves an interchange of the many-server
($N \to \infty$) and stationary ($t \to \infty$) limits.
The justification is provided by the asymptotic stability of the fixed
point along with a few further technical conditions.

\subsection{Fluid limit for JSQ policy}
\label{ssec:jsqfluid}

We now turn to the fluid limit for the ordinary JSQ policy,
which rather surprisingly was not rigorously established until fairly
recently in~\cite{MBLW16-3}, leveraging martingale functional limit
theorems and time-scale separation arguments~\cite{HK94}.

In order to state the fluid limit starting from an arbitrary
fluid-scaled occupancy state, we first introduce some additional notation.
For any fluid state $\qq \in \cS$,
denote by $m(\qq) = \min\{i\geq 0: q_{i + 1} < 1\}$ the minimum queue length
among all servers.
Now if $m(\qq) = 0$, then define $p_0(\qq) = 1$ and $p_i(\qq) = 0$
for all $i = 1, 2, \ldots$.
Otherwise, in case $m(\qq) > 0$, define
\begin{equation}
\label{eq:fluid-gen}
p_i(\qq) =
\begin{cases}
\min\big\{(1 - q_{m(\qq) + 1})/\lambda,1\big\} & \quad\mbox{ for }\quad i=m(\qq)-1, \\
1 - p_{m(\qq) - 1}(\qq) & \quad\mbox{ for }\quad i=m(\qq), \\
0&\quad \mbox{ otherwise.}
\end{cases}
\end{equation}
The fluid-limit result below is paraphrased from \cite{MBLW16-3}.
\paragraph{Fluid limit of JSQ.}
{\em For $\lambda\in (0,1)$, the weak limit of the sequence of processes
$\{\qq^N(t)\}_{t \geq 0}$ is given by a deterministic system $\{\qq(t)\}_{t \geq 0}$
that satisfies the system of differential equations
\begin{equation}
\label{eq:fluid}
\frac{\dif^+ q_i(t)}{\dif t} =
\lambda p_{i-1}(\qq(t)) - (q_i(t) - q_{i+1}(t)),
\quad i = 1, 2, \dots,
\end{equation}
where $\dif^+/\dif t$ denotes the right-derivative.}
The reason we have used derivative in~\eqref{fluid:standard},
and right-derivative in~\eqref{eq:fluid} is that the limiting trajectory
for the JSQ policy may not be differentiable at all time points.
In fact, one of the major technical challenges in proving the fluid limit
for the JSQ policy is that the drift of the process is not continuous,
which leads to non-smooth limiting trajectories,
see~\cite{MBLW16-3} for further details.
The uniqueness of the above weak limit was not established in~\cite{MBLW16-3},
but follows from the recent result in~\cite[Theorem 2.1]{BBD20}.

The fluid-limit trajectory in~\eqref{eq:fluid} can be interpreted as follows.
The coefficient $p_i(\qq)$ represents the instantaneous fraction
of incoming tasks assigned to servers with a queue length of exactly~$i$
in the fluid state $\qq \in \mathcal{S}$.
Note that a strictly positive fraction $1 - q_{m(\qq) + 1}$ of the
servers have a queue length of exactly~$m(\qq)$.
Clearly the fraction of incoming tasks that get assigned to servers
with a queue length of $m(\qq) + 1$ or larger is zero:
$p_i(\qq) = 0$ for all $i = m(\qq) + 1, \dots$.
Also, tasks at servers with a queue length of exactly~$i$ are completed
at (normalized) rate $q_i - q_{i + 1}$, which is zero for all
$i = 0, \dots, m(\qq) - 1$, and hence the fraction of incoming tasks
that get assigned to servers with a queue length of $m(\qq) - 2$ or less
is zero as well: $p_i(\qq) = 0$ for all $i = 0, \dots, m(\qq) - 2$.
This only leaves the fractions $p_{m(\qq) - 1}(\qq)$
and $p_{m(\qq)}(\qq)$ to be determined.
Now observe that the fraction of servers with a queue length of exactly
$m(\qq) - 1$ is zero.
If $m(\qq)=0$, then clearly the incoming tasks will join an empty queue, 
and thus, $p_{m(\qq)} = 1$, and $p_i(\qq) = 0$ for all $i \neq m(\qq)$.
Furthermore, if $m(\qq) \geq 1$, since tasks at servers with a queue
length of exactly $m(\qq)$ are completed at (normalized) rate
$1 - q_{m(\qq) + 1} > 0$, incoming tasks can be assigned to servers
with a queue length of exactly $m(\qq) - 1$ at that rate.
We thus need to distinguish between two cases, depending on whether the
normalized arrival rate $\lambda$ is larger than $1 - q_{m(\qq) + 1}$ or not.
If $\lambda < 1 - q_{m(\qq) + 1}$, then all the incoming tasks can be
assigned to a server with a queue length of exactly $m(\qq) - 1$,
so that $p_{m(\qq) - 1}(\qq) = 1$ and $p_{m(\qq)}(\qq) = 0$.
On the other hand, if $\lambda > 1 - q_{m(\qq) + 1}$, then not all
incoming tasks can be assigned to servers with a queue length of
exactly $m(\qq) - 1$ active tasks, and a positive fraction will be
assigned to servers with a queue length of exactly $m(\qq)$:
$p_{m(\qq) - 1}(\qq) = (1 - q_{m(\qq) + 1}) / \lambda$
and $p_{m(\qq)}(\qq) = 1 - p_{m(\qq) - 1}(\qq)$.

In case $\lambda \in (0,1)$, the unique fixed point
$\qq^\star = (q_1^\star, q_2^\star, \ldots)$ of the dynamical system
in~\eqref{eq:fluid} is given by
\begin{equation}
\label{eq:fpjsq}
q_i^* = \left\{\begin{array}{ll} \lambda, & i = 1, \\
0, & i = 2, 3,\dots. \end{array} \right.
\end{equation}
Note that the fixed point naturally emerges when $d \to \infty$ in the
fixed point expression~\eqref{eq:fixedpoint1} for fixed~$d$.
However, the process-level results in \cite{Mitzenmacher01,VDK96}
for fixed~$d$ cannot be readily used to handle joint scalings of~$d$ and~$N$,
and do not yield the entire fluid-scaled sample path for arbitrary
initial states as given by~\eqref{eq:fluid}.
The fixed point in~\eqref{eq:fpjsq}, in conjunction with an interchange
of limits argument, indicates that in stationarity the fraction
of servers with a queue length of two or larger under the JSQ policy
is negligible as $N \to \infty$.

\subsection{Diffusion limit for JSQ policy}
\label{ssec:diffjsq}

We next describe the diffusion limit for the JSQ policy in the
Halfin-Whitt heavy-traffic regime \eqref{eq:HW}, as derived in~\cite{EG15}. 
The statement below is paraphrased from \cite{EG15}.
Recall the centered and diffusion-scaled processes in~\eqref{eq:diffscale}.

\paragraph{Diffusion limit for JSQ.}
{\em For suitable initial conditions, the sequence of processes
$\big\{\bar{\QQ}^N(t)\big\}_{t \geq 0}$ converges weakly to the limit
$\big\{\bar{\QQ}(t)\big\}_{t \geq 0}$, 
where 
$(\bar{Q}_1(t), \bar{Q}_2(t),\ldots)$
is the unique solution to the following system of SDEs
\begin{equation}
\label{eq:diffusionjsq}
\begin{split}
\dif\bar{Q}_1(t) &= \sqrt{2} \dif W(t) - \beta \dif t - \bar{Q}_1(t) + \bar{Q}_2(t) - \dif U_1(t), \\
\dif\bar{Q}_2(t) &= \dif U_1(t) -  \bar{Q}_2(t), 
\end{split}
\end{equation}
and $\bQ_i(t) = 0$, $i \geq 3$, for $t \geq 0$, where $W$ is standard Brownian motion
and $U_1$ is the unique continuous non-decreasing non-negative process 
satisfying $\int_0^\infty \mathbbm{1}_{[\bar{Q}_1(t) < 0]} \dif U_1(t) = 0$ and $U_1(0) = 0$.}

The diffusion-limit characterization in~\eqref{eq:diffusionjsq}
may be interpreted as follows.
First of all, recall that $- \bar{Q}_1$ corresponds to the number
of vacant servers (normalized by~$\sqrt{N}$), and observe that this number
is governed by the number of arriving tasks on the one hand
(as long as the number of vacant servers is non-zero),
with associated exponential rate $\lambda(N)$, and on the other hand
the number of service completions at servers with exactly one task,
with associated exponential rate $Q_1^N - Q_2^N$.
Noting that $(N - \lambda(N)) / \sqrt{N} \to \beta$,
$\bar{Q}_1^N = - (N - Q_1^N) / \sqrt{N}$ and $\bar{Q}_2^N = Q_2^N / \sqrt{N}$,
we recognize that these dynamics are reflected in the equation for $\dif\bar{Q}_1(t)$,
with $\sqrt{2} \dif W(t)$ an additional diffusion term corresponding
to the variation in the number of arrivals and service completions
around the drift terms and $\dif U_1(t)$ a reflection term
accounting for the fact that the number of vacant servers cannot be negative.
More specifically, the term $\dif U_1(t)$ tracks the number of arriving tasks
assigned to busy servers when there are no vacant servers,
which explains why the derivative can only be positive when $\bar{Q}_1 < 0$.
Now observe that, for suitable initial conditions, since $\beta < 0$,
it is highly unlikely for all servers to have two or more tasks,
and the number of servers with three or more tasks is negligible
on diffusion scale, as reflected in the fact that $\bar{Q}_i = 0$, $i \geq 3$.
Also, the dynamics of the number of servers with two or more tasks are governed
by the assignment of tasks to busy servers captured by the term $\dif U_1(t)$
and the service completions at servers with exactly two tasks, which is equal
to $\bar{Q}_2$ on diffusion scale since the number of servers with three
or more tasks is negligible, explaining the equation for $\dif\bar{Q}_2(t)$.

The above convergence of the scaled occupancy measure was established
in~\cite{EG15} only for any finite time interval.
The tightness of the sequence of diffusion-scaled steady-state
occupancy measures $\{(\bQ_1^N(\infty), \bQ_2^N(\infty))\}_{N\geq 1}$,
the ergodicity of the limiting diffusion process~\eqref{eq:diffusionjsq},
and hence the interchange of limits were open until \cite{Braverman20}
further established that the weak-convergence result extends
to the steady state as well, i.e., $\bar{\QQ}^N(\infty)$ converges weakly
to the random variable $(\bQ_1(\infty), \bQ_2(\infty), 0, 0, \ldots)$
as $N \to \infty$, where $(\bQ_1(\infty), \bQ_2(\infty))$
has the stationary distribution of the process $(\bQ_1, \bQ_2)$.
Thus, the steady state of the diffusion process
in~\eqref{eq:diffusionjsq} is proved to capture the asymptotic behavior
of large-scale systems under the JSQ policy.
 
In~\cite{Braverman20} a Lyapunov function is obtained via a generator
expansion framework using Stein's method, which establishes exponential
ergodicity of $(\bQ_1, \bQ_2)$.
Although this approach gives a good handle on the rate of convergence
to stationarity, it sheds little light on the form of the stationary distribution
of the limiting diffusion process~\eqref{eq:diffusionjsq} itself. 
In two companion papers \cite{BM19a,BM19b} the authors perform
a detailed analysis of the steady state of this diffusion process.
Using a classical regenerative process construction of the diffusion
process in~\eqref{eq:diffusionjsq}, \cite{BM19a} establishes that
$\bQ_1(\infty)$ has a Gaussian tail, and the tail exponent is uniformly
bounded by constants which do not depend on~$\beta$,
whereas $\bQ_2(\infty)$ has an exponentially decaying tail,
and the coefficient in the exponent is linear in~$\beta$.
More precisely, for any $\beta>0$ there exist positive constants
$C_1, C_2, D_1, D_2$ not depending on~$\beta$ and positive constants
$C^l(\beta)$, $C^u(\beta)$, $D^l(\beta)$, $D^u(\beta)$, $C_R(\beta)$,
$D_R(\beta)$ depending only on~$\beta$ such that 
\begin{equation}
\label{eq:statail}
\begin{split}
C^l(\beta) \ee^{- C_1 x^2} \le \P(\bQ_1(\infty) < -x) \le
C^u(\beta) \ee^{- C_2 x^2}, \ \ x \ge C_R(\beta) \\
D^l(\beta) \ee^{- D_1 \beta y} \le \P(\bQ_2(\infty) > y) \le
D^u(\beta) \ee^{- D_2 \beta y}, \ \ y \ge D_R(\beta).
\end{split}
\end{equation}
It was further shown in~\cite{BM19a} that there exists
a positive constant $\mathcal{C^*}$ not depending on~$\beta$
such that almost surely along any sample path:
\[
\begin{split}
- 2 \sqrt{2} &\le \liminf_{t \rightarrow \infty} \frac{\bQ_1(t)}{\sqrt{\log t}} \le -1,\\
\frac{1}{\beta} &\le \limsup_{t \rightarrow \infty} \frac{\bQ_2(t)}{\log t} \le \frac{2}{\mathcal{C^*} \beta}.
\end{split}
\]
Notice that the width of fluctuation of $\bQ_1$ does not depend on the value
of~$\beta$, whereas that of $\bQ_2$ is linear in $\beta^{-1}$. 

Since the $N$-th system is ergodic and its arrival rate is
$N - \beta\sqrt{N}$, it is straightforward to see that
$\E(\bQ_1^N(\infty)) = - \beta$ for all~$N$, and hence,
it can also be derived from the evolution of the limiting diffusion
process that $\E(\bQ_1(\infty)) = - \beta$.
Thus, intuitively, for large enough~$\beta$, the system has mostly
many idle servers, and the number of servers with queue length
at least two diminishes.
But the manner $\bQ_2(\infty)$ scales as $\beta$ becomes large,
is highly non-trivial.
Specifically, it was shown in~\cite{BM19b} that there exists
$\beta_0 \ge 1$ and positive constants $C_1, C_2, D_1, D_2$ such that
for all $\beta \ge \beta_0$,
\begin{eq}
\label{eq:large-beta}
\e^{-C_1\beta^2} \le \mathbb{E}\left(\bQ_2(\infty)\right) \le \e^{-C_2\beta^2},\\
\P\Big(\bQ_2(\infty) \ge \e^{-\e^{D_1\beta^2}}\Big) \le \e^{-D_2\beta^2},
\end{eq}
i.e., the steady-state mean is of order $\e^{- C \beta^2}$, but most
of the steady-state mass concentrates at a much smaller scale
$\e^{- \e^{D_1 \beta^2}}$.
This suggests \emph{intermittency} in the behavior of the $\bQ_2$ process,
namely, $\bQ_2$ is typically of order $\e^{- \e^{D_1 \beta^2}}$,
but during rare events when it achieves higher values, it takes a long time
to decay.
However, for small enough~$\beta$, the behavior is qualitatively different.
Since $\E(\bQ_1(\infty)) = - \beta$, the system is expected to become more
congested as $\beta$ becomes smaller.
As a result, intuitively, $\bQ_2$ should increase in the distributional sense.
In this regime as well, $\bQ_2$ exhibits some striking behavior.
Specifically, it was shown in~\cite{BM19b} that there exist positive
constants $\beta^*$, $M_1$ and $M_2$ such that for all $\beta \le \beta^*$
\begin{eq}
\label{eq:small-beta}
\frac{M_1}{\beta} \le \mathbb{E}(\bQ_2(\infty)) \le \frac{M_2}{\beta}.
\end{eq}

\paragraph{Comparison with M/M/$N$ queue.}
The M/M/$N$ queue in the Halfin-Whitt heavy-traffic regime has been studied
quite extensively (see \cite{GG13a,GG13b,HW81,LK11,LK12,FKL14,LMZ17},
and the references therein).
In this case, the centered and scaled total number of tasks in the system
$(\bar{S}^N(t) - N)/\sqrt{N}$ converges weakly to a diffusion process
$\{\bar{S}(t)\}_{t \geq 0}$ \cite[Theorem 2]{HW81} with
\begin{equation}
\label{eq:diffusionmmn}
\dif\bar{S}(t) =
\sqrt{2} \dif W(t) - \beta \dif t - \dif\bar{S}(t) \ind{\bar{S}(t) \leq 0},
\end{equation}
where $W$ is the standard Brownian motion.
As reflected in~\eqref{eq:diffusionjsq} and~\eqref{eq:diffusionmmn},
the JSQ policy and the M/M/$N$ system share some striking
similarities in terms of the qualitative behavior of the total number
of tasks in the system.
In particular, both the number of idle servers and the number
of waiting tasks are of the order $\Theta(\sqrt{N})$.
This shows that despite the distributed queueing operation
a suitable load balancing policy can deliver a similar combination
of excellent service quality and high resource utilization efficiency
in the QED (Quality-and-Efficiency-Driven) regime
(recall from Section~\ref{asym}) as in a centralized queueing arrangement.
Moreover, the interchange of limits result in~\cite{Braverman20}
implies that for systems under the JSQ policy,
$\bQ_{tot}^N(\infty) := \sum_{i = 1}^{\infty} Q_i^N(\infty)$ converges
weakly to $\bQ_1(\infty) + \bQ_2(\infty)$, which has an exponential
upper tail (large positive deviation) and a Gaussian lower tail
(large negative deviation), see~\eqref{eq:statail}.
This is again reminiscent of the corresponding tail asymptotics
for the M/M/$N$ queue.
Note that since $\bar{S}(\cdot)$ is a simple combination of a Brownian
motion with a negative drift (when all servers are fully occupied)
and an Ornstein Uhlenbeck (OU) process (when there are idle servers),
the steady-state distribution $\bar{S}(\infty)$ can be computed explicitly,
and is indeed a combination of an exponential distribution
and a Gaussian distribution.

There are, however, some clear differences between the diffusion
in~\eqref{eq:diffusionjsq} and~\eqref{eq:diffusionmmn}:
\begin{enumerate}[{\normalfont (i)}]
\item Observe that in case of M/M/$N$ systems, whenever there are
waiting tasks (equivalent to $Q_2$ being positive in our case),
the queue length has a constant negative drift towards zero.
This leads to the exponential upper tail of $\bar{S}(\infty)$,
by comparing with the stationary distribution of a reflected Brownian
motion with constant negative drift.
In the JSQ case, however, the rate of decrease of $Q_2$ is always
proportional to itself, which makes it somewhat counter-intuitive that
its stationary distribution has an exponential tail.
\item In the M/M/$N$ system, the number of idle servers can be non-zero
only when the number of waiting tasks is zero.
Thus, the dynamics of both the number of idle servers and the number
of waiting tasks are completely captured by the one-dimensional process
$\bar{S}^N$ and by the one-dimensional diffusion  $\bar{S}$ in the limit. 
But in the JSQ case, $\bQ_2$ is never zero, and the dynamics
of $(\bQ_1, \bQ_2)$ are truly two-dimensional (although the diffusion
is non-elliptic) with $\bQ_1$ and $\bQ_2$ interacting with each other
in an intricate manner.
\item From~\eqref{eq:diffusionjsq} we see that $\bQ_2$ \emph{never}
hits zero. 
Thus, in steady state, there is no mass at $\bQ_2 = 0$, and the system
always has waiting tasks. 
This is in sharp contrast with the M/M/$N$ case, where the system has
no waiting tasks in steady state with positive probability. 
\item In the M/M/$N$ system, a positive fraction of the tasks incur
a non-zero waiting time as $N \to \infty$, but a non-zero waiting time
is only of length $1 / (\beta \sqrt{N})$ in expectation. 
In contrast, in the JSQ case, it is easy to see that $\bQ_1$ (the limit
of the scaled number of idle servers) spends zero time at the origin,
i.e., in steady state the fraction of arriving tasks that find all servers
busy vanishes in the large-$N$ limit (in fact, this is of order $1/\sqrt{N}$,
see~\cite{Braverman20}).
However, such tasks will have to wait for the duration of a residual
service time, implying that a non-zero waiting time is of the order $\OO(1)$
and does \emph{not} vanish.
\item As $\beta \to 0$, \cite[Proposition 2]{HW81} implies that
$\beta \bar{S}(\infty)$ for the M/M/$N$ queue converges weakly
to a unit-mean exponential distribution. 
In contrast, results in~\cite{BM19b} show that
$\beta (\bQ_1(\infty)+\bQ_2(\infty))$ converges weakly to a Gamma$(2)$
random variable.
This indicates that despite similar order of performance, due to the
distributed operation, in terms of the number of waiting tasks JSQ is
a factor~$2$ worse in expectation than the corresponding centralized system.
\end{enumerate}

\subsection{JSQ($d$) policies in heavy-traffic regime}

Finally, we briefly discuss the behavior of JSQ($d$) policies with a fixed value
of~$d$ in the Halfin-Whitt heavy-traffic regime~\eqref{eq:HW}.
While a complete characterization of the occupancy process
for fixed~$d$ has remained elusive so far, significant partial results
were obtained in~\cite{EG16}.
In order to describe the transient asymptotics, introduce the
following rescaled processes
\begin{equation}
\label{eq:HTtransient}
\bQ_i^N(t) := \frac{N - Q_i^N(t)}{\sqrt{N}}, \quad i = 1, 2,\ldots.
\end{equation}
Note that in contrast to~\eqref{eq:diffscale}, in~\eqref{eq:HTtransient}
\emph{all} components are centered by~$N$. 
Also note that the sign of the first coordinate in~\eqref{eq:HTtransient}
is the opposite of that in~\eqref{eq:diffscale}.
The statement below is paraphrased from~\cite{EG16}.

\paragraph{Process-level limit of JSQ($d$) policy in Halfin-Whitt regime.}
\emph{Assuming that the initial states converge
with respect to the product topology under the above scaling,
\cite[Theorem~2]{EG16} establishes that on any finite time interval,
$\bar{\QQ}^N(\cdot)$ converges weakly to a deterministic system
$\bar{\QQ}(\cdot)$ that satisfies the system of ODEs}
\[
\dif \bQ_i(t) = - d (\bQ_i(t) - \bQ_{i-1}(t)) + \bQ_{i+1}(t) - \bQ_i(t),
\quad i = 1, 2, \ldots
\]
with the convention that $\bQ_0(t) \equiv 0$.
It is noteworthy that the scaled occupancy process loses its diffusive
behavior for fixed~$d$.
It is further shown in~\cite{EG16} that with high probability the
steady-state fraction of queues with length at least
$\log_d(\sqrt{N} / \beta) - \oo(1)$ tasks approaches unity,
which in turn implies that with high probability the steady-state delay is
\emph{at least} $\log_d(\sqrt{N} / \beta) - \OO(1)$ as $N \to \infty$.
The diffusion approximation of the JSQ($d$) policy in the Halfin-Whitt
regime~\eqref{eq:HW}, starting from a different initial state,
has been studied in~\cite{BF17}.

In~\cite{Ying17} a broad framework involving Stein's method was introduced
to analyze the rate of convergence of the stationary distribution
under the JSQ$(2)$ policy, in a heavy-traffic regime,
where $(N - \lambda(N)) / \eta(N) \to \beta > 0$ as $N \to \infty$,
with $\eta(N)$ a positive function diverging to infinity as $N \to \infty$.
Note that the case $\eta(N)= \sqrt{N}$ corresponds to the Halfin-Whitt
heavy-traffic regime~\eqref{eq:HW}.
Using this framework, it was proved that when $\eta(N) = N^\alpha$
with some $4/5<\alpha\leq 1$, 
\begin{equation}
\label{eq:po2ht}
\mathbb{E}\Big(\sum_{i=1}^\infty \Big|q_i^N(\infty) - q_i^{N,\star}\Big|\Big)\leq\frac{1}{N^{2\alpha -1-\xi}},\qquad\mbox{where}\qquad q_i^{N,\star} = \Big(\frac{\lambda(N)}{N}\Big)^{2^i-1},
\end{equation}
and $\xi > 0$ is an arbitrarily small constant.
Equation~\eqref{eq:po2ht} not only shows that asymptotically
the stationary occupancy measure concentrates at $\qq^{N,\star}$,
but also provides the rate of convergence.

\section{Universality of JSQ(d) policies}
\label{univ}

In this section we will further explore the trade-off between delay
performance and communication overhead as a function of the diversity
parameter~$d$, in conjunction with the relative load.
The latter trade-off will be examined in an asymptotic regime
where not only the total task arrival rate $\lambda(N)$ grows with~$N$,
but also the diversity parameter depends on~$N$, and we write $d(N)$
to explicitly reflect this dependence.
We will specifically investigate what growth rate of $d(N)$ is required,
depending on the scaling behavior of $\lambda(N)$,
in order to asymptotically match the optimal performance of the JSQ policy
and achieve a zero mean waiting time in the limit.
The results presented in the remainder of the section are based
on~\cite{MBLW16-3} where also the full proofs are provided,
unless specified otherwise.

\begin{theorem}[Universality of fluid limit for JSQ($d(N)$)]
\label{fluidjsqd}
If $d(N) \to \infty$ as $N \to \infty$, then any fluid limit of the
JSQ$(d(N))$ scheme coincides with that of the ordinary JSQ policy,
and in particular, satisfies the system of differential equations in~\eqref{eq:fluid}.
Consequently, the stationary occupancy states converge to the unique fixed point
as in~\eqref{eq:fpjsq}.
\end{theorem}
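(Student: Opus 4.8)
The plan is to establish the result via a stochastic coupling argument that shows the JSQ$(d(N))$ scheme and the ordinary JSQ policy differ negligibly on the fluid scale when $d(N)\to\infty$. The key intuition is that under JSQ the arriving task is always routed to a server with the minimum queue length $m(\qq)$, whereas under JSQ$(d(N))$ it is routed to the shortest among $d(N)$ randomly sampled servers. The two schemes produce the same routing decision \emph{unless} none of the $d(N)$ sampled servers has queue length equal to the current minimum. When the fluid-scaled fraction of servers at the minimum level is bounded away from zero (say equal to $1 - q_{m(\qq)+1} > 0$), the probability that all $d(N)$ samples miss this set is at most $(1 - (1 - q_{m(\qq)+1}))^{d(N)} = q_{m(\qq)+1}^{d(N)}$, which vanishes as $d(N)\to\infty$. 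First I would make this precise by constructing a coupling on a common probability space so that, as long as the sampled set contains a minimum-length server, the two systems assign the incoming task identically; the discrepancy is controlled by the (rare) event that the sample entirely misses the minimum level.

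Second, I would translate this per-arrival discrepancy bound into a statement about the fluid-scaled occupancy processes. The natural route is to show that the sequence $\{\qq^N(t)\}_{t\ge 0}$ under JSQ$(d(N))$ is tight (so that weak limits exist) and that any such limit must satisfy the same differential equations~\eqref{eq:fluid} as the JSQ fluid limit. Tightness follows from standard arguments: the occupancy process has uniformly bounded jump rates after fluid scaling, and one can appeal to the relative compactness criteria for the sequence of scaled processes in the Skorokhod space, exactly as in the JSQ analysis of~\cite{MBLW16-3}. The semimartingale decomposition of $q_i^N(t)$ yields a drift term of the form $\lambda \, r_{i-1}^N(\qq^N(t)) - (q_i^N(t) - q_{i+1}^N(t))$, where $r_{i-1}^N(\qq)$ is the instantaneous probability that a task is routed to a server with exactly $i-1$ tasks under JSQ$(d(N))$. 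The crux is to show that $r_{i-1}^N(\qq)\to p_{i-1}(\qq)$ uniformly on the relevant state space as $N\to\infty$, where $p_{i-1}$ is the JSQ routing fraction defined in~\eqref{eq:fluid-gen}.

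The convergence $r_{i-1}^N\to p_{i-1}$ is where the coupling bound enters quantitatively. In a fluid state with $m(\qq) = m$, the JSQ$(d(N))$ sampling routes to level $m$ unless all $d(N)$ samples land among servers with queue length $\ge m+1$, which occurs with probability $q_{m+1}^{d(N)}\to 0$ whenever $q_{m+1}<1$; otherwise it routes to level $m$, recovering the JSQ behavior to leading order. The subtle point is that the fraction $1 - q_{m+1}$ of servers at the minimum level may itself be small, so one must verify that the sampling still concentrates correctly; the split between $p_{m-1}$ and $p_m$ in~\eqref{eq:fluid-gen} arises precisely from the balance between the arrival rate $\lambda$ and the service-completion rate $1 - q_{m+1}$ at the minimum level, and this same balance governs the limiting routing fractions for JSQ$(d(N))$. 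I expect the main obstacle to be handling the non-smoothness and discontinuity of the drift: because $m(\qq)$ jumps as $\qq$ crosses boundaries where some $q_i$ hits $1$ or $0$, the routing probabilities $r_{i-1}^N$ are not continuous in the limit, so one cannot simply pass to the limit inside the drift. This is the same difficulty noted in the excerpt for the JSQ fluid limit itself, and I would address it by the time-scale separation technique of~\cite{HK94, MBLW16-3}: on the fast time scale the queue lengths at and below the minimum level equilibrate, and averaging over this fast dynamics produces the correct effective drift~\eqref{eq:fluid}. Having identified all weak limits with the unique solution of~\eqref{eq:fluid}, the stated convergence follows, and the fixed-point statement~\eqref{eq:fpjsq} then follows from the asymptotic stability of the JSQ fluid limit together with a standard interchange-of-limits argument.
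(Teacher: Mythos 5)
There is a genuine gap, and it sits in the very first step. Your coupling claim---that JSQ and JSQ$(d(N))$ make the same routing decision unless all $d(N)$ samples miss the minimum-level servers, an event of probability $q_{m(\qq)+1}^{d(N)}$---conflates the \emph{fluid-state} minimum $m(\qq)$ with the \emph{prelimit} minimum, and it fails precisely in the regime that makes the JSQ fluid limit \eqref{eq:fluid} nontrivial. Consider a fluid state with $q_m = 1$, $q_{m+1} < 1$ and $\lambda > 1 - q_{m+1}$, so that \eqref{eq:fluid-gen} prescribes $p_{m-1}(\qq) = (1-q_{m+1})/\lambda \in (0,1)$. Servers with queue length exactly $m-1$ carry \emph{zero} fluid mass: they are created one at a time by departures from level-$m$ servers and, under JSQ, each is refilled essentially instantly, so only $o(N)$ of them exist at any prelimit instant---yet JSQ routes the positive fraction $p_{m-1}$ of all arrivals to them. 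Under JSQ$(d(N))$ with $d(N) = o(N)$, the population $E$ of such servers self-organizes to size $\Theta(N/d(N))$: it grows until the sampling probability $1-(1-E/N)^{d(N)}$ balances the creation rate, i.e.\ until a fraction $(1-q_{m+1})/\lambda$ of arrivals find one in their sample. Hence a \emph{constant} fraction of arrivals is assigned to a different queue-length level by the two schemes; the per-arrival discrepancy is $\Theta(1)$, not $q_{m+1}^{d(N)}$. The fluid limits do coincide, but through this self-regulation (the deviations never accumulate beyond $\Theta(N/d(N)) = o(N)$), not because the decisions agree, so no bound on the number of disagreements with JSQ can drive your argument. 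Your fallback---tightness plus drift identification via time-scale separation---is the right spirit but is not a routine appeal to \cite{HK94,MBLW16-3}: the routing fraction $r_{i-1}^N$ you want to converge is not a function of the fluid state $\qq$ at all; in the critical regime it depends on the microscopic population at levels of zero fluid mass, i.e.\ on a fast process whose quasi-equilibrium itself depends on $d(N)$, so proving the averaged drift equals \eqref{eq:fluid} is essentially a new proof rather than a cited lemma.

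The paper avoids both difficulties by never comparing the decisions of JSQ$(d(N))$ with those of JSQ directly. Closeness to JSQ is measured by the \emph{rank} of the receiving server, not its queue length: any scheme in the class $\CJSQ(n(N))$, which assigns each task to one of the $n(N)+1$ lowest-ordered servers, is shown to be fluid-equivalent to JSQ whenever $n(N)/N \to 0$, via the stack-based stochastic sandwich between JSQ and $\MJSQ(n(N))$ (Proposition~\ref{prop:stoch-ord} and Corollary~\ref{cor:bound}), which is an ordering argument, not a per-decision coupling. Then JSQ$(d(N))$ is compared with the hybrid JSQ$(n(N),d(N)) \in \CJSQ(n(N))$, and only here does a per-decision argument appear: these two schemes differ at an arrival only if the best sampled server falls outside the $n(N)+1$ lowest-ordered ones, which has probability $(1-n(N)/N)^{d(N)}$ (Proposition~\ref{prop:differ})---and this can be made $o(1)$ for some $n(N) = o(N)$ precisely when $d(N) \to \infty$. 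Proposition~\ref{prop:stoch-ord2} then converts the $\op(N)$ differing decisions into an $\op(N)$ bound on the occupancy states. If you want to salvage your outline, you would need to replace your step one with this rank-based intermediate class (or carry out the full $d(N)$-dependent averaging analysis), since the level-based coupling as stated cannot work.
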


\begin{theorem}[Universality of diffusion limit for JSQ($d(N)$)]
\label{diffusionjsqd}
If $d(N) /( \sqrt{N} \log N) \to \infty$ as $N \to \infty$,
then for suitable initial conditions the weak limit of the sequence
of processes $\big\{\bar{\QQ}^{N}(t)\big\}_{t \geq 0}$, under the JSQ($d(N)$) policy,
coincides with that of the ordinary JSQ policy, and in particular,
is given by the system of SDEs in~\eqref{eq:diffusionjsq}.
\end{theorem}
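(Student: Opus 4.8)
The goal is to prove Theorem~\ref{diffusionjsqd}: under the condition $d(N)/(\sqrt{N}\log N)\to\infty$, the diffusion-scaled occupancy process under JSQ($d(N)$) converges weakly to the same limit~\eqref{eq:diffusionjsq} as under ordinary JSQ. My plan is to follow the stochastic-coupling philosophy advertised in the introduction: rather than analyze the JSQ($d(N)$) process directly, I would construct a coupling that sandwiches it between the JSQ process and a slightly perturbed version of it, and then show that on the diffusion scale the discrepancy introduced by sampling only $d(N)$ servers (rather than all $N$) is asymptotically negligible precisely when $d(N)$ grows faster than $\sqrt{N}\log N$.

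\textbf{Coupling construction.} First I would set up a common probability space carrying the JSQ process and the JSQ($d(N)$) process driven by the same arrival and service Poisson clocks. The crucial observation is that the two policies differ only in \emph{where} an arriving task is routed, and they agree whenever the $d(N)$ sampled servers happen to include a server of the minimal queue length. The plan is to build an intermediate scheme --- call it a coupling with the JSQ process from above and below by related policies (the ``$\bCJSQ$'' / $\CJSQ$ constructions suggested by the macros in the preamble) --- so that the JSQ($d(N)$) occupancy state is stochastically dominated by the ordinary JSQ state and dominates a JSQ state run with a marginally reduced effective sampling guarantee. Concretely, on each arrival one compares the minimal queue length $m(\QQ)$ seen by JSQ with the minimal length among the $d(N)$ sampled servers; the routing decisions coincide unless the sample misses every server attaining the system minimum.

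\textbf{Controlling the deviation probability.} The heart of the argument is a large-deviations estimate for how often the $d(N)$-sample fails to contain a minimal-length server. On the diffusion scale the number of idle servers (servers with queue length $0$ or $1$ relative to the centering) is of order $\sqrt{N}$. If a fraction $\Theta(\sqrt{N}/N)=\Theta(1/\sqrt{N})$ of servers are the ``good'' ones to hit, then the probability that a single $d(N)$-sample misses all of them is roughly $(1-c/\sqrt{N})^{d(N)} \approx \exp(-c\, d(N)/\sqrt{N})$. For this to be summably small after a union bound over the $\Theta(N\log N)$ (or more) relevant arrival events on a finite time horizon --- each arrival contributing one chance to deviate, with $\Theta(N)$ arrivals per unit time --- I need $d(N)/\sqrt{N}$ to dominate $\log N$, which is exactly the hypothesis $d(N)/(\sqrt{N}\log N)\to\infty$. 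This is the step I expect to be the \textbf{main obstacle}: making the sandwiching rigorous requires carefully tracking that the relevant ``target set'' of servers stays of size $\Theta(\sqrt{N})$ \emph{throughout} the interval (not just at time zero), which forces an interplay between the coupling bound and the a priori diffusion-scale control of $\bar{Q}_1^N$; one must close a bootstrap/self-consistency loop rather than invoke the estimate once.

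\textbf{From negligible deviation to weak convergence.} Once the total number of ``mismatched'' routing decisions on $[0,T]$ is shown to be $\op(\sqrt{N})$ --- i.e. negligible on the diffusion scale --- the coupled JSQ($d(N)$) and JSQ processes differ in their diffusion-scaled occupancy states by a quantity converging to zero in probability, uniformly on compact time intervals. I would combine this with the already-established diffusion limit for JSQ, namely weak convergence of $\{\bar{\QQ}^N(t)\}$ to the SDE system~\eqref{eq:diffusionjsq}, via a standard converging-together / Slutsky argument in the relevant path space (Skorokhod $J_1$ topology on $D([0,T],\ell_1$-type state space), plus tightness inherited from the JSQ side). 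The ``suitable initial conditions'' clause lets me start both processes from the same diffusion-scaled state, so no additional initial-layer analysis is needed. The conclusion then follows: the weak limit of $\{\bar{\QQ}^{d(N)}(t)\}$ coincides with that of JSQ and hence solves~\eqref{eq:diffusionjsq}.
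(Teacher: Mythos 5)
There is a genuine gap, and it sits exactly where you flagged it. Your core estimate takes the target set to be the servers attaining the \emph{current minimum queue length} and assumes its size is $\Theta(\sqrt{N})$ throughout $[0,T]$, giving a per-arrival miss probability $\approx \e^{-c\,d(N)/\sqrt{N}}$. But in the Halfin-Whitt regime the relevant set is the set of idle servers, whose (scaled) size $-\bar{Q}_1^N$ has a reflecting boundary at $0$: the limit process accumulates local time at $0$, so on any finite horizon the system repeatedly passes through states with $o(\sqrt{N})$, or even $O(1)$, idle servers. During such excursions the miss probability is not exponentially small --- it is of order one (e.g.\ $(1-1/N)^{d(N)}\approx 1$ when a single server is idle and $d(N)\ll N$) --- so the deviation events are not rare uniformly in the state, and no union bound of the form you describe can close the ``bootstrap'' loop. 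This is not a technicality: a direct mismatch count between JSQ and JSQ($d(N)$) genuinely fails on the diffusion scale, which is why the paper does not argue this way.

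The missing idea is the paper's $\CJSQ(n(N))$ device, which replaces the \emph{random, state-dependent} target set by a \emph{deterministic-size} one: the $n(N)+1$ lowest-ordered servers. The intermediate scheme JSQ$(n(N),d(N))$ deviates from JSQ$(d(N))$ only when the $d(N)$-sample misses this set, which happens with the state-independent probability $(1-n(N)/N)^{d(N)}$ (Proposition~\ref{prop:differ}); combined with Proposition~\ref{prop:stoch-ord2} this makes the mismatch count $\op(\sqrt{N})$ once $n(N)d(N)/N - \tfrac12\log N \to\infty$, which is achievable with $n(N)=o(\sqrt{N})$ precisely under the hypothesis $d(N)/(\sqrt{N}\log N)\to\infty$. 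The price of this sloppiness is a second, separate step that your proposal omits: one must show that \emph{every} scheme in $\CJSQ(n(N))$ with $n(N)/\sqrt{N}\to 0$ has the JSQ diffusion limit. This is done via the two-sided S-coupling bounds of Proposition~\ref{prop:stoch-ord} and Corollary~\ref{cor:bound}, sandwiching $\CJSQ(n(N))$ between JSQ and the scheme $\MJSQ(n(N))$, and it requires establishing the diffusion limit for $\MJSQ(n(N))$ itself (a perturbed Eschenfeldt--Gamarnik analysis) --- it does not come for free from tightness of the JSQ side or a converging-together argument. In short: your step 3 would be fine if your step 2 worked, but step 2 as formulated cannot be repaired without introducing the deterministic-size intermediate class, at which point the proof becomes the paper's three-step argument.
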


The above universality properties indicate that the JSQ overhead can
be lowered by almost a factor O($N$) and O($\sqrt{N} / \log N$)
while retaining fluid- and diffusion-level optimality, respectively.
In other words, Theorems~\ref{fluidjsqd} or~\ref{diffusionjsqd}
reveal that it is sufficient for $d(N)$ to grow at any rate,
or faster than $\sqrt{N} \log N$, in order to observe similar scaling
benefits as in a pooled system with $N$~parallel single-server queues
on fluid scale and diffusion scale, respectively.
The stated conditions are in fact close to necessary, in the sense
that if $d(N)$ is uniformly bounded or $d(N) / (\sqrt{N} \log N) \to 0$
as $N \to \infty$, then respectively, the fluid-limit and diffusion-limit
paths under the JSQ($d(N)$) scheme \emph{differ} from those
under the ordinary JSQ policy.
In particular, if $d(N)$ is uniformly bounded, the mean steady-state delay
does not vanish as $N \to \infty$.

\begin{remark}
\normalfont
One implication of Theorem~\ref{fluidjsqd} is that in the subcritical
regime any growth rate of $d(N)$ is enough to achieve asymptotically
vanishing steady-state probability of wait. 
This result is complemented by the results in~\cite{LY19,BFL18},
where the steady-state analysis is extended in the heavy-traffic regime
with $N^\alpha (1 - \lambda(N) / N) \to \beta>0$ as $N \to \infty$
with $\alpha \in (0, 1/2)$.
Note that the system approaches heavy traffic as the number of servers~$N$
grows large but that the load is lighter than that in the Halfin-Whitt regime,
which corresponds to $\alpha = 1/2$.
Specifically, it is established in~\cite{LY19} that the steady-state
probability of wait for the JSQ($d(N)$) policy
with $d(N) \geq\frac{1}{\beta} N^\alpha \log N$ vanishes as $N \to \infty$.
The results of~\cite{BFL18} imply that when $\beta = 1$
and $d(N) = \lfloor N^\gamma \rfloor$ with $\alpha, \gamma \in (0, 1]$,
$k = \lceil (1 - \alpha)/ \gamma \rceil$, and $2 \alpha + \gamma (k-1) > 1$,
with probability tending to~$1$ as $N \to \infty$, the proportion
of queues with queue length equal to~$k$ is at least
$1 - 2 N^{- 1 + \alpha + (k-1) \gamma}$ and there are no longer queues.
A crucial distinction between the result stated
in Theorem~\ref{diffusionjsqd} and the results in~\cite{BFL18,LY19}
is that the former analyzes the system on diffusion scale
(and describes its behavior in terms of a limiting diffusion process),
whereas \cite{BFL18,LY19} analyze the system on fluid-scale
(and characterize its behavior in terms of limiting fluid-scaled
occupancy state).
Much less is known when the asymptotic load is higher than the 
Halfin-Whitt regime, that is,
when $N^\alpha (1 - \lambda(N) / N) \to \beta>0$ as $N \to \infty$
with $\alpha \in (1/2, 1)$.
This is also known as the \emph{super-Halfin-Whitt regime}.
In this regime, when the system has a finite buffer capacity,
\cite{LY19b} identifies a broad class of load balancing policies
including   the JSQ policy, idle-one-first (I1F) policy,
and the JSQ($d(N)$) policy with $d(N) \geq N^\alpha\log^2N$, for which,
in steady state, $\E(Q_2^N(\infty))$ is $O\big(N^{\alpha}\log N\big)$
and $\E(Q_3^N(\infty))$ is $O\big(N^{-r(1-\alpha) -1}\big)$,
where $r>0$ can be any constant independent of~$N$.
Further,~\cite{ZBM21} analyzes the process-level and steady-state
limits of the occupancy process under the JSQ policy in the
super-Halfin-Whitt regime and in particular, shows that
$Q_2^N(\infty)/N^{\alpha}$ converges weakly to a Gamma$(2, \beta)$
distribution (sum of two independent Exponential$(\beta)$ distributions). 
Results in~\cite{BBD20} allow for arbitrary growth rate of $d(N)$
in the analysis of JSQ($d(N)$) policy in the heavy-traffic regime. 
In this paper, the authors establish a process-level diffusion limit
of the occupancy process under the JSQ($d(N)$) policy
for certain ranges of $\lambda(N)$ that depend on $d(N)$.
In particular, they include an alternative proof of the universality
result in Theorem~\ref{diffusionjsqd}.
\end{remark}

\subsection{High-level outline of proof approach}
\label{ssec:pf-idea-1}

The proofs of both Theorems~\ref{fluidjsqd} and~\ref{diffusionjsqd}
rely on a stochastic coupling construction to bound the difference
in the queue length processes between the JSQ policy and a scheme
with an arbitrary value of $d(N)$.  
This coupling is then exploited to obtain the fluid and diffusion
limits of the JSQ($d(N)$) policy, along with the associated fixed point,
under the conditions stated in Theorems~\ref{fluidjsqd}
and~\ref{diffusionjsqd}.
Moreover, we will also allow the possibility that the servers have
a finite buffer capacity~$B$.
In that case, whenever a task is assigned to a server that has
$B$~tasks in the queue (including the one currently in service),
that task is lost forever.
For an LBA $\Pi$, we will denote the total number of tasks lost
up to time $t$ by $L^\Pi(t)$.

A direct comparison between the JSQ$(d(N))$ scheme and the ordinary JSQ
policy is not straightforward, which is why the $\CJSQ(n(N))$ class of schemes
is introduced as an intermediate scenario to establish the universality results.
Just like the JSQ$(d(N))$ scheme, the schemes in the class
$\CJSQ(n(N))$ may be thought of as ``sloppy'' versions of the JSQ policy,
in the sense that tasks are not necessarily assigned to a server
with the shortest queue length but to one of the $n(N) + 1$ lowest
ordered servers, as graphically illustrated in Figure~\ref{fig:sfigCJSQ}.
In particular, for $n(N) = 0$, the class only includes the ordinary JSQ policy. 
Note that the JSQ$(d(N))$ scheme is guaranteed to identify the lowest
ordered server, but only among a randomly sampled subset of $d(N)$ servers.
In contrast, a scheme in the $\CJSQ(n(N))$ class only guarantees that
one of the $n(N)+1$ lowest ordered servers is selected, but across the
entire pool of $N$ servers. 
It is worthwhile to note that $\CJSQ(n(N))$ is a class of policies,
and that \emph{any} policy which ensures that tasks are always assigned
to one of the $n(N)+1$ lowest ordered servers,
no matter what the exact mechanism of the policy is, belongs to this class.
The proof of the universality results in Theorems~\ref{fluidjsqd}
and~\ref{diffusionjsqd} has two parts, as further described below. 
The proof strategy is schematically represente
in Figure~\ref{fig:sfigRelation}.

\begin{figure}
\begin{center}
\begin{subfigure}{.4\textwidth}
  \centering
  \includegraphics[scale=.55]{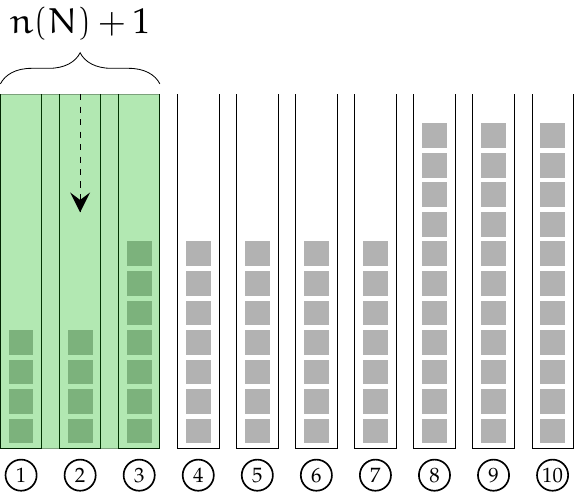}
  \caption{$\CJSQ(n(N))$ scheme\vspace{16pt}}
  \label{fig:sfigCJSQ}
\end{subfigure}
\begin{subfigure}{.6\textwidth}
  \centering
  \includegraphics[scale=1.1]{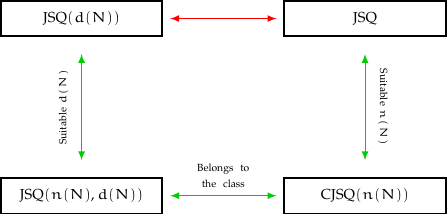}
  \caption{Asymptotic equivalence relations}
  \label{fig:sfigRelation}
\end{subfigure}
\caption{(a) High-level view of the $\CJSQ(n(N))$ class of schemes,
where as in Figure~\ref{figB}, the servers are arranged
in nondecreasing order of their queue lengths, and the arrival
must be assigned through the  green shaded region on the left.
(b) The equivalence structure is depicted for various intermediate
load balancing schemes to facilitate the comparison
between the JSQ$(d(N))$ scheme and the ordinary JSQ policy.
}
\label{fig:strategy}
\end{center}
\end{figure}

\paragraph{Step~1. Performance of schemes in CJSQ($n(N)$) class.}
The first step is to show that for sufficiently small $n(N)$,
\emph{any} scheme from the class $\CJSQ(n(N))$ is still `close'
to the ordinary JSQ policy. 
To achieve this, another type of sloppiness will be introduced.
Let MJSQ$(n(N))$ be a particular scheme that always assigns incoming
tasks to precisely the $(n(N)+1)$-th ordered server. 
Notice that this scheme is effectively the JSQ policy when the system
always maintains $n(N)$ idle servers, or equivalently, uses only
$N - n(N)$ servers, and $\MJSQ(n(N)) \in \CJSQ(n(N))$\footnote{Reviewer:
The sentence `Notice . . . when the system always maintains n(N) idle servers . . . ’ is ambiguous and unclear.}. 
For brevity, we will often suppress $n(N)$ in the notation
where it is clear from the context.
We call any two systems \emph{S-coupled}, if they have synchronized
arrival clocks and departure clocks of the $k$-th longest queue,
for $1 \leq k \leq N$ (`S' in the name of the coupling stands for `Server').
Note that the S-coupling between two systems with identical arrival
and service rates always exists.
Indeed, since tasks have identically and exponentially distributed
service time requirements, synchronizing the departure clocks
of the $k$-th longest queue, for $k = 1, \ldots, N$,
preserves the marginal dynamics of each system.
Consider three S-coupled systems following respectively the JSQ policy,
any scheme from the class $\CJSQ$, and the $\MJSQ$ scheme. 
Recall that $Q_i^\Pi(t)$ is the number of servers with at least
$i$~tasks at time~$t$ and $L^\Pi(t)$ is the total number of lost tasks
up to time~$t$, for the schemes $\Pi=$ JSQ, $\CJSQ$, $\MJSQ$. 
The following proposition provides a stochastic ordering for any scheme
in the class CJSQ with respect to the ordinary JSQ policy and the MJSQ scheme.

\begin{proposition}
\label{prop:stoch-ord}
Fix any $N\geq 1$, $1\leq B\leq \infty$ and $0\leq n(N)\leq N-1$.
Then, in the joint probability space constructed by the S-coupling
of the three systems under respectively \emph{JSQ}, \emph{MJSQ},
and any scheme from the class \emph{CJSQ}, the following ordering
is preserved almost surely throughout the sample path:
for all $1\leq m \leq B$ and $t\geq 0$,
\begin{enumerate}[{\normalfont(i)}] 
\item\label{item:jsq-cjsq} $
\left\{\sum_{i=m}^{B} Q_i^{\JSQ}(t) + L^{\JSQ}(t)\right\}_{t \geq 0} \leq
\left\{\sum_{i=m}^{B} Q_i^{\CJSQ}(t) + L^{\CJSQ}(t)\right\}_{t \geq 0},
$
\item\label{item:cjsq-mjsq} $\left\{\sum_{i=m}^{B} Q_i^{\CJSQ}(t) +
L^{\CJSQ}(t)\right\}_{t \geq 0}\leq
\left\{\sum_{i=m}^{B} Q_i^{\MJSQ}(t) + L^{\MJSQ}(t)\right\}_{t \geq 0},$
\end{enumerate}
provided the inequalities hold at time $t = 0$.
\end{proposition}

\begin{corollary}
\label{cor:bound}
Under the conditions of Proposition~\ref{prop:stoch-ord},
for all $1\leq m \leq B$ and $t\geq 0$,
\begin{enumerate}[{\normalfont(i)}]
\item $Q_m^{\CJSQ}(t) \geq \sum_{i=m}^{B} Q_i^{\JSQ}(t) -
\sum_{i=m+1}^{B} Q_i^{\MJSQ}(t) + L^{\JSQ}(t) - L^{\MJSQ}(t)$,
\item $Q_m^{\CJSQ}(t) \leq \sum_{i=m}^{B} Q_i^{\MJSQ}(t) -
\sum_{i=m+1}^{B} Q_i^{\JSQ}(t) + L^{\MJSQ}(t)-L^{\JSQ}(t),$
\end{enumerate}
provided the inequalities hold at time $t = 0$.
\end{corollary}
\footnote{Reviewer: I would personally make inequalities (i) and (ii) of Prop 4.4
into one big inequality, but this is a matter of personal taste.
It is true that it is awkward to do similarly with inequalities (i) and (ii) of Cor 4.5.}
It can be shown that if $n(N) / N \to 0$ as $N \to \infty$,
then the MJSQ$(n(N))$ scheme has the same fluid limit
along any subsequence as the ordinary JSQ policy, whenever the latter exists.
Corollary~\ref{cor:bound} then implies that as long as $n(N) / N \to 0$,
\emph{any} scheme from the class $\CJSQ(n(N))$ has the same fluid limit
along any subsequence as the ordinary JSQ policy, whenever the latter exists.

\paragraph{Step 2. JSQ($d(N)$) has same limit as a particular scheme
in CJSQ($n(N)$).}
The next step is to prove that for sufficiently large $d(N)$ relative to $n(N)$,
one can construct a scheme belonging to the $\CJSQ(n(N))$ class,
which differs `negligibly' from the JSQ$(d(N))$ scheme. 
Specifically, consider the JSQ$(n(N),d(N))$ scheme with $n(N), d(N) \leq N$,
which is an intermediate blend between the CJSQ$(n(N))$ schemes
and the JSQ$(d(N))$ scheme.
At its first step, just as in the JSQ$(d(N))$ scheme, the JSQ$(d(N),n(N))$
scheme first chooses the shortest of $d(N)$~random candidates
but only sends the arriving task to that server's queue
if it is one of the $n(N)+1$ shortest queues. 
If it is not, then at the second step it picks any of the $n(N)+1$ shortest
queues uniformly at random and then sends the task to that server's queue. 
Note that by construction, JSQ$(d(N),n(N))$ is a scheme in CJSQ$(n(N))$.
Consider two S-coupled systems with a JSQ$(d(N))$ and a JSQ$(n(N), d(N))$ scheme.
Assume that at some specific arrival epoch, the incoming task is dispatched
to the $k$-th ordered server in the system under the JSQ($d(N)$) scheme.
If $k \in \{1, 2, \ldots, n(N) + 1\}$, then the system under the JSQ$(n(N),d(N))$
scheme also assigns the arriving task to the $k$-th ordered server. 
Otherwise, it dispatches the arriving task uniformly at random
amongst the first $(n(N)+1)$ ordered servers.

Next, it is established that if $d(N) \to \infty$,
then for \emph{some} $n(N)$ with $n(N) / N \to 0$, the JSQ$(d(N))$ scheme
and the JSQ$(n(N), d(N))$ scheme have the same fluid limit.
Theorem~\ref{fluidjsqd} then follows by Step~1 and observing that the
JSQ$(n(N),d(N))$ scheme belongs to the class $\CJSQ(n(N))$. \\

The proof of Theorem~\ref{diffusionjsqd} follows the same arguments,
but uses the candidate $n(N) / \sqrt{N}\to 0$ (instead of $n(N) / N \to 0$)
in Step~1, and the candidate $d(N) / (\sqrt{N} \log(N)) \to \infty$
(instead of $d(N) \to \infty$) in Step~2.

\subsection{Extension to batch arrivals}

Consider an extension of the model in which tasks arrive in batches.
We assume that the batches arrive as a Poisson process of rate
$\lambda(N) / \ell(N)$, and have fixed size $\ell(N) > 0$,
so that the effective total task arrival rate remains $\lambda(N)$. 
We will show that for any growing batch size fluid-level optimality
can be achieved with O($1$) communication overhead per task. 
For that, we define the JSQ($d(N)$) scheme adapted for batch arrivals:
When a batch arrives, the dispatcher samples $d(N) \geq \ell(N)$
servers without replacement, and assigns the tasks to the $\ell(N)$
servers with the smallest queue lengths among the sampled servers.

\begin{theorem}[Batch arrivals]
\label{th:batch}
Consider the batch arrival scenario with growing batch size
$\ell(N)\to\infty$ and $\lambda(N)/N\to\lambda<1$ as $N\to\infty$. 
For the $\jsq(d(N))$ scheme with $d(N)\geq \ell(N)/(1-\lambda-\varepsilon)$
for any fixed $\varepsilon>0$, if $q^{N}_1(0)\to q_1(0)\leq \lambda$,
and $q_i^{N}(0)\to 0$ for all $i\geq 2$, then any (subsequential)
weak limit of the sequence of processes $\big\{\qq^{N}(t)\big\}_{t\geq 0}$
coincides with that of the ordinary JSQ policy, and in particular,
is given by the system in~\eqref{eq:fluid}.  
\end{theorem}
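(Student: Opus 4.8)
\noindent\emph{Proof proposal.} The plan is to show that under the stated conditions the fluid-scaled occupancy process $\qq^{\sss d(N)}(\cdot)$ converges in probability, uniformly on compact time intervals, to the deterministic trajectory determined by~\eqref{eq:fluid}; since that limit is deterministic, this yields the claimed weak convergence. First I would note that, because $q_1(0)\le\lambda<1$ and $q_i(0)=0$ for $i\ge 2$, the JSQ solution of~\eqref{eq:fluid} keeps $m(\qq(t))=0$ for all $t$, so it reduces to $q_1'(t)=\lambda-q_1(t)$ with $q_i(t)\equiv 0$ for $i\ge 2$; in particular $q_1(t)\le\lambda$ and a fraction at least $1-\lambda$ of the servers stays idle throughout. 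It therefore suffices to prove two things: (i) $\sup_{t\le T}\sum_{i\ge 2}q_i^{\sss d(N)}(t)\convp 0$, and (ii) $q_1^{\sss d(N)}(\cdot)$ converges to the solution of $q_1'=\lambda-q_1$.

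The crux is a concentration estimate for the sampling step, paralleling the role of the coupling in the $d(N)\to\infty$ proof. Fix $\delta\in(0,\eps)$ and set $c_0=(\eps-\delta)/(1-\lambda-\eps)>0$. If a batch is dispatched while at least $(1-\lambda-\delta)N$ servers are idle, then the number $I$ of idle servers among the $d(N)$ sampled ones is hypergeometric with mean at least $(1-\lambda-\delta)d(N)\ge(1+c_0)\ell(N)$, since $d(N)\ge\ell(N)/(1-\lambda-\eps)$. A Chernoff bound for sampling without replacement then gives $\P(I<\ell(N))\le\e^{-c\ell(N)}$ for some $c=c(c_0)>0$. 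The key point, which copes with \emph{arbitrarily slowly} growing $\ell(N)$, is that a naive union bound over the roughly $\lambda NT/\ell(N)$ batches in $[0,T]$ would fail, so instead I would control the \emph{aggregate} misplacement. On $\{I\ge\ell(N)\}$ all $\ell(N)$ tasks land on idle servers, exactly as JSQ would place them; on $\{I<\ell(N)\}$ at most $\ell(N)$ tasks are misplaced onto busy servers. Hence the cumulative number of misplaced tasks $\mathcal{M}_N(T)=\sum_{\text{batches}}(\ell(N)-I)^+$ satisfies, provided every batch sees at least $(1-\lambda-\delta)N$ idle servers, $\E[\mathcal{M}_N(T)]\le(\lambda NT/\ell(N))\cdot\ell(N)\e^{-c\ell(N)}=\lambda NT\,\e^{-c\ell(N)}$, so $\E[\mathcal{M}_N(T)]/N\le\lambda T\e^{-c\ell(N)}\to 0$.

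To discharge the proviso I would localise via the stopping time $\tau_N=\inf\{t:Q_1^{\sss d(N)}(t)>(\lambda+\delta)N\}$. For $t<\tau_N$ the idle fraction exceeds $1-\lambda-\delta$, so the estimate applies; since every server at level $\ge 2$ is created by a misplaced task and departures only reduce $\sum_{i\ge2}Q_i$, one gets $\sup_{t\le T\wedge\tau_N}\sum_{i\ge2}Q_i^{\sss d(N)}(t)\le\sum_{i\ge2}Q_i^{\sss d(N)}(0)+\mathcal{M}_N(T\wedge\tau_N)=\op(N)$, which is (i) on $[0,T\wedge\tau_N]$. On the same interval the increments of $Q_1^{\sss d(N)}$ are driven by arrivals to idle servers at rate $\lambda N-\op(N)$ and departures at rate $Q_1^{\sss d(N)}-Q_2^{\sss d(N)}$, so a standard martingale/Gronwall argument shows $q_1^{\sss d(N)}$ stays within $\op(1)$ of the solution of $q_1'=\lambda-q_1$, which never exceeds $\lambda<\lambda+\delta$. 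This forces $\P(\tau_N\le T)\to 0$, closing the self-consistency loop and extending (i)--(ii) to all of $[0,T]$.

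Finally I would assemble the pieces: tightness of $\{\qq^{\sss d(N)}(\cdot)\}$ in the Skorokhod space follows from the representation as a Poisson-driven jump process whose increment per batch is $O(\ell(N)/N)=o(1)$ (using $\ell(N)=o(N)$, as holds for arbitrarily slowly growing $\ell(N)$) and whose rates are $O(N)$; combining tightness with the identification of the limit through (i)--(ii) gives $\qq^{\sss d(N)}(\cdot)\convp\qq(\cdot)$, the JSQ fluid solution of~\eqref{eq:fluid}. I expect the main obstacle to be precisely this aggregate-misplacement bound for slowly growing $\ell(N)$: one cannot demand that \emph{no} batch ever spills onto busy servers, and the argument hinges on the observation that each bad batch damages the fluid-scaled higher levels by only $\ell(N)/N$, so the per-batch damage and the bad-batch probability combine to the vanishing bound $\lambda T\e^{-c\ell(N)}$, uniformly over states with enough idle servers, with the stopping time guaranteeing that this last condition is self-consistently maintained.
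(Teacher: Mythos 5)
Your route is sound, and it is worth noting that the survey itself states Theorem~\ref{th:batch} \emph{without} proof (it is imported from~\cite{MBLW16-3}); the machinery the survey does exhibit for the neighbouring results (the S-coupling, the $\CJSQ(n(N))$ sandwich, Propositions~\ref{prop:stoch-ord2}--\ref{prop:differ}) is not what you use, and for good reason: to realize the batch scheme as a member of $\CJSQ(n(N))$ with $n(N)=o(N)$, the $d(N)$ sampled servers would have to contain $\ell(N)$ of the $n(N)+1$ lowest-ordered ones, which forces $d(N)\gtrsim \ell(N)N/n(N)\gg\ell(N)$ and is unavailable when $d(N)=O(\ell(N))$. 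This is exactly why Theorem~\ref{th:batch}, unlike Theorems~\ref{fluidjsqd} and~\ref{diffusionjsqd}, carries the restricted initial condition $q_1(0)\leq\lambda$, $q_i(0)\to 0$ for $i\geq 2$: along the target trajectory optimality only requires that (almost) all tasks land on \emph{idle} servers, not on the globally shortest queues. Your two key ingredients --- the hypergeometric Chernoff bound giving $\P(I<\ell(N))\leq \e^{-c\ell(N)}$ whenever the idle fraction exceeds $1-\lambda-\delta$, and, crucially, bounding the \emph{expected aggregate} misplacement $\E[\mathcal{M}_N(T\wedge\tau_N)]\leq \lambda N T\e^{-c\ell(N)}=o(N)$ instead of union-bounding over the $\Theta(N/\ell(N))$ batches (which indeed fails for slowly growing $\ell(N)$) --- combined with the stopping-time bootstrap, are the right ones, and they reproduce in a self-contained way what the source paper's coupling estimates accomplish.

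Two repairs are needed. First, step (i) bounds $\sup_{t}\sum_{i\geq2}Q_i^{d(N)}(t)$ by $\sum_{i\geq2}Q_i^{d(N)}(0)+\mathcal{M}_N(\cdot)$, but the hypothesis only gives coordinate-wise convergence $q_i^{d(N)}(0)\to 0$, \emph{not} $\sum_{i\geq2}q_i^{d(N)}(0)\to 0$ (consider $o(N)$ servers with diverging initial queue lengths). Since the claimed limit is convergence of $\qq^{d(N)}$ in the product topology, it suffices --- and is all the hypothesis supports --- to argue per coordinate: each misplaced task increments exactly one $Q_j$ with $j\geq 2$, so $Q_i^{d(N)}(t)\leq Q_i^{d(N)}(0)+\mathcal{M}_N(t)$ for each fixed $i\geq 2$, which also supplies the $Q_2^{d(N)}=\op(N)$ input needed for the $Q_1$ drift in step (ii). Second, the argument (and the theorem itself) requires $\ell(N)=o(N)$: the stated hypotheses allow $\ell(N)=\Theta(N)$ with $d(N)\leq N$, in which case batches cause macroscopic jumps in $q_1$ and the limit cannot be the ODE system~\eqref{eq:fluid}. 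You invoke $\ell(N)=o(N)$ only parenthetically in the tightness step; it should be promoted to a standing assumption, as is implicit in the ``arbitrarily slowly growing batch size'' framing of the result.
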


Observe that for a fixed $\varepsilon>0$, the communication overhead per task
is on average given by $(1-\lambda-\varepsilon)^{-1}$ which is O($1$).
Thus Theorem~\ref{th:batch} ensures that in case of batch arrivals
with growing batch size, fluid-level optimality can be achieved
with O($1$) communication overhead per task.
The result for the fluid-level optimality in stationarity can also be
obtained indirectly by exploiting the fluid-limit result in~\cite{YSK15}.
Specifically, it can be deduced from the result in~\cite{YSK15} that
for batch arrivals with growing batch size, the JSQ$(d(N))$ scheme
with suitably growing $d(N)$ yields the same fixed point of the fluid
limit as described in~\eqref{eq:fpjsq}.

\section{Blocking and infinite-server dynamics}
\label{bloc}
\footnote{Reviewer: The terminology `infinite-server’ is misleading;
more appropriately, `many-server loss’.}

The basic scenario that we have focused on so far involved
single-server queues.
In this section we turn attention to a system with parallel server pools,
each with a fixed number $B$~servers,
where $B$ can possibly be either finite or infinite.
As before, tasks arrive at a single dispatcher and must immediately
be forwarded to one of the server pools, but also directly start execution
or be discarded otherwise.
As before, under the JSQ($d$) policy, at each task arrival,
the dispatcher selects $d$~random server pools and assigns the task
to the one with the least number of active tasks.
When $B$ is finite, a task that happens to land on a server pool
with $B$~active tasks is lost forever.
In that case, the maximum total rate at which tasks can be processed
in the system is $B N$, which we assume to be higher
than the total arrival rate $\lambda(N)$.
In other words, when $\lambda(N)/N=\lambda \in \R_+$, we assume $\lambda< B$.
The execution times are assumed to be exponentially distributed,
and do not depend on the number of other tasks receiving service simultaneously.
In order to distinguish it from the single-server queueing dynamics
as considered earlier, the current scenario will henceforth be referred
to as the `infinite-server dynamics'.

As it turns out, the JSQ policy has similar stochastic optimality
properties as in the case of single-server queues, and in particular
stochastically minimizes the cumulative number of discarded tasks
\cite{STC93,J89,M87,MS91}.
However, the JSQ policy also suffers from a similar scalability issue
due to the excessive communication overhead in large-scale systems,
which can be mitigated through JSQ($d$) policies.
Results in~\cite{Turner98} and the more recent papers
\cite{KMM17,MKMG15,MMG15,XDLS15} indicate that JSQ($d$) policies
provide similar \emph{power-of-choice} gains for loss probabilities.
It may be shown though that the optimal performance of the JSQ policy
cannot be matched for any fixed value of~$d$.

Motivated by these observations, we explore the trade-off between
performance and communication overhead for infinite-server dynamics.
We will demonstrate that the optimal performance of the JSQ policy
can be asymptotically retained while drastically reducing the
communication burden, mirroring the universality properties described
in Section~\ref{univ} for single-server queues.
The results presented in the remainder of the section are extracted
from~\cite{MBLW16-4} where also the complete proofs are provided,
unless indicated otherwise.

\subsection{Fluid limit for JSQ policy}
\label{ssec:jsqfluid-infinite}

Analogous to the single-server case, we represent the state of the
$N$-th system by the vector $\QQ^N(t) := (Q_1^N(t), Q_2^N(t), \ldots)$
with $Q_i^N(t)$ denoting the number of server pools
with $i$ or more active tasks at time~$t$, and the fluid-scaled occupancy
state is denoted by $\qq^N(t) := (q_1^N(t), q_2^N(t), \ldots)$,
with $q_i^N(t) = Q_i^N(t) / N$ for $i \geq 1$. 
Also, as in Subsection~\ref{ssec:jsqfluid}, for any fluid state
$\qq \in \cS$, denote by $m(\qq) = \min\{i\geq 0: q_{i + 1} < 1\}$ the
minimum number of active tasks among all server pools
with the convention that $q_{B+1} = 0$ if $B<\infty$.
Now if $m(\qq) = 0$, then define $p_0(\qq) = 1$ and $p_i(\qq) = 0$
for all $i = 1, 2, \ldots$. 
Otherwise, in case $m(\qq) > 0$, define
\begin{equation}
\label{eq:fluid-prob-infinite}
p_{i}(\qq) =
\begin{cases}
\min\big\{m(\qq)(1 - q_{m(\qq) + 1})/\lambda,1\big\} & \quad \mbox{ for }
\quad i=m(\qq)-1, \\
1 - p_{ m(\qq) - 1}(\qq) & \quad \mbox{ for } \quad i=m(\qq), \\
0 & \quad \mbox{ otherwise.}
\end{cases}
\end{equation}
{\em Any weak limit of the sequence of processes $\{\qq^N(t)\}_{t \geq 0}$
is given by a deterministic system $\{\qq(t)\}_{t \geq 0}$ satisfying
the following of differential equations
\begin{equation}
\label{eq:fluid-infinite}
\frac{\dif^+ q_i(t)}{\dif t} =
\lambda p_{i-1}(\qq(t)) - i (q_i(t) - q_{i+1}(t)),
\quad i = 1, 2, \dots, B
\end{equation}
where $\dif^+/\dif t$ denotes the right-derivative.}

Equations~\eqref{eq:fluid-prob-infinite} and \eqref{eq:fluid-infinite}
are to be contrasted with Equations~\eqref{eq:fluid-gen} and~\eqref{eq:fluid}.
While the form of the evolution equations~\eqref{eq:fluid-infinite}
of the limiting dynamical system remains similar to~\eqref{eq:fluid},
the rate of decrease of $q_i$ is now $i (q_i - q_{i+1})$,
reflecting the infinite-server dynamics.

Let $K := \lfloor \lambda \rfloor$ and $f := \lambda - K$ denote the
integral and fractional parts of~$\lambda$, respectively.
Assuming $\lambda < B$, the unique fixed point of the dynamical system
in~\eqref{eq:fluid-infinite} is given by
\begin{equation}
\label{eq:fixed-point-infinite}
q_i^\star = \left\{\begin{array}{ll} 1 & i = 1, \dots, K \\
f & i = K + 1 \\
0 & i = K + 2, \dots, B, \end{array} \right.
\end{equation}
and thus $\sum_{i = 1}^{B} q_i^\star = \lambda$.
This is consistent with the results in \cite{MKMG15,MMG15,XDLS15}
for fixed~$d$, where taking $d \to \infty$ yields the same fixed point.
However, the results in \cite{MKMG15,MMG15,XDLS15} cannot be directly
used to handle joint scalings, and do not yield the universality
of the entire fluid-scaled sample path for arbitrary initial states.
The fixed point in~\eqref{eq:fixed-point-infinite}, in conjunction
with an interchange of limits argument, indicates that in stationarity
the fraction of server pools with at least $K + 2$ and at most $K - 1$
active tasks is negligible as $N \to \infty$.

\subsection{Diffusion limit for JSQ policy}
\label{ssec:jsq-diffusion-infinite}

As it turns out, the diffusion-limit results may be qualitatively different,
depending on whether $f = 0$ or $f > 0$,
and we will distinguish between these two cases accordingly.
Observe that for any assignment scheme, in the absence of overflow events,
the total number of active tasks evolves as the number of jobs
in an M/M/$\infty$ system with arrival rate $\lambda(N)$ and unit
service rate, for which the diffusion limit is well-known~\cite{Robert03}.
For the JSQ policy we can establish, for suitable initial conditions,
that the total number of server pools with $K - 2$ or less and $K + 2$
or more tasks is negligible on the diffusion scale.
If $f > 0$, the number of server pools with $K - 1$ tasks is negligible
as well, and the dynamics of the number of server pools with $K$
or $K + 1$ tasks can then be derived from the known diffusion limit
of the total number of tasks mentioned above.
In contrast, if $f = 0$, the number of server pools with $K - 1$ tasks
is not negligible on the diffusion scale, and the limiting behavior is
qualitatively different, but can still be characterized.

\subsubsection{Diffusion-limit results for non-integral
\texorpdfstring{$\boldsymbol{\lambda}$}{lambda}}

We first consider the case $f > 0$, and define $f(N) := \lambda(N) - K N$.
Based on the above observations,
we define the following centered and scaled processes:
\begin{equation}
\label{eq:inf-diff-scale-1}
\begin{split}
\bar{Q}^N_i(t)&=N-Q^N_i(t)\geq 0\quad \mathrm{for}\quad i\leq K-1,\\
\bar{Q}_{K}^N(t)&:=\frac{N-Q_K^N(t)}{\log (N)}\geq 0,\\
\bar{Q}_{K+1}^N(t)&:=\frac{Q^N_{K+1}(t)-f(N)}{\sqrt{N}}\in\R,\\
\bar{Q}^N_i(t)&:=Q^N_i(t)\geq 0\quad \mathrm{for}\quad i\geq K+2.
\end{split}
\end{equation}

\begin{theorem}[{Diffusion limit for JSQ policy; $f>0$}]
\label{th:diffusion-infinite}
Assume $\bar{Q}^N_i(0)$ converges to $\bar{Q}_i(0)$ in $\R$, $i \geq 1$,
and $\lambda(N)/N \to \lambda > 0$ as $N \to \infty$. Then
\begin{enumerate}[{\normalfont(i)}]
\item $\lim_{N\to\infty}\Pro{\sup_{t\in[0,T]}\bar{Q}_{K-1}^N(t)\leq 1}=1$,
and $\big\{\bar{Q}^N_i(t)\big\}_{t\geq 0}$ converges weakly
to $\big\{\bar{Q}_i(t)\big\}_{t\geq 0}$, where $\bar{Q}_i(t)\equiv 0$,
provided  $\lim_{N\to\infty}\Pro{\bar{Q}_{K-1}^N(0)\leq 1}=1$,
and $\bar{Q}_i^N(0)\pto 0$ for $i\leq K-2$.
\item $\big\{\bar{Q}^N_K(t)\big\}_{t\geq 0}$ is
a stochastically bounded sequence of processes.
\item $\big\{\bar{Q}^N_{K+1}(t)\big\}_{t\geq 0}$ converges weakly
to $\big\{\bar{Q}_{K+1}(t)\big\}_{t\geq 0}$,
where $\bar{Q}_{K+1}(t)$ is given by the Ornstein-Uhlenbeck process
satisfying the following stochastic differential equation:
$$d\bar{Q}_{K+1}(t)=-\bar{Q}_{K+1}(t)dt+\sqrt{2\lambda}dW(t),$$
where $W(t)$ is the standard Brownian motion,
provided $\bar{Q}_{K+1}^N(0)$ converges to $\bar{Q}_{K+1}(0)$ in $\mathbb{R}$.
\item For $i\geq K+2$, $\big\{\bar{Q}^N_i(t)\big\}_{t\geq 0}$ converges weakly
to $\big\{\bar{Q}_i(t)\big\}_{t\geq 0}$,
where $\bar{Q}_i(t)\equiv 0$, provided $\bar{Q}_i^N(0)$ converges to 0 in $\R$. \\
\end{enumerate}
\end{theorem}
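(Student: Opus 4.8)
The plan is to reduce all four claims to two ingredients: the classical diffusion limit for the \emph{total} number of tasks, and a state-space collapse showing that under JSQ almost all occupancy concentrates on levels $K$ and $K+1$. Write $S^N(t):=\sum_{i\geq 1}Q_i^N(t)$ for the total number of tasks. The crucial observation is that, because service times are unit-mean exponential and every task is served simultaneously (infinite-server dynamics), the total departure rate is exactly $S^N(t)$ regardless of the assignment policy; as long as no pool is saturated (automatic if $B=\infty$, and negligible otherwise by claim~(iv)), $S^N$ therefore evolves \emph{exactly} as the number in an M/M/$\infty$ queue with arrival rate $\lambda(N)$ and unit service rate. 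By the well-known diffusion limit for infinite-server queues~\cite{Robert03}, together with $\lambda(N)/N\to\lambda$, the centered and scaled total $(S^N(t)-\lambda(N))/\sqrt{N}$ converges weakly on $[0,T]$ to the Ornstein--Uhlenbeck process $\dif X(t)=-X(t)\,\dif t+\sqrt{2\lambda}\,\dif W(t)$. This is the only place the OU limit enters.

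The second, and main, step is to prove the concentration claims: (a) the number of pools with fewer than $K$ tasks is $\Op(\log N)$, which is exactly claim~(ii); (b) among these, at most one lies below level $K-1$ and the deeper levels $i\leq K-2$ are empty in the limit, which is claim~(i); and (c) the number of pools with at least $K+2$ tasks is $\op(\sqrt{N})$, strengthening claim~(iv). Here I would exploit the time-scale separation of the QED regime: arrivals occur at rate $\Theta(N)$ whereas each pool sheds a task only at rate $O(1)$, so under JSQ any pool dropping below $K$ is refilled almost instantaneously. Concretely, I would sandwich the low-occupancy levels between dominating and dominated reference systems, in the spirit of the stack-ordering arguments of Section~\ref{univ}, and reduce the count of sub-$K$ pools to a fast birth--death-type process whose drift toward zero is of order $N$; a Lyapunov/generator estimate for this fast subsystem then yields the logarithmic equilibrium height in (a) and the ``at most one pool below $K-1$'' statement in (b). The vanishing upper tail (c) follows by an analogous argument at the top of the profile, where the decrease rate $i(q_i-q_{i+1})$ provides a strong restoring force, upgraded to $\op(\sqrt N)$ via stochastic boundedness of the total overshoot beyond level $K+1$.

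With these ingredients, claim~(iii) follows from an exact bookkeeping identity. Substituting $Q_i^N=N-\bar Q_i^N$ for $i\leq K-1$, $Q_K^N=N-\bar Q_K^N\log N$, and $Q_i^N=\bar Q_i^N$ for $i\geq K+2$ into $S^N=\sum_i Q_i^N$, and using $KN+f(N)=\lambda(N)$, gives
\[
\bar Q_{K+1}^N(t)=\frac{S^N(t)-\lambda(N)}{\sqrt{N}}+\frac{1}{\sqrt{N}}\Big(\sum_{i=1}^{K-1}\bar Q_i^N(t)+\bar Q_K^N(t)\log N-\sum_{i\geq K+2}\bar Q_i^N(t)\Big).
\]
By Step~2 the bracketed correction is $\op(\sqrt{N})$ (being $\Op(\log N)$ from the lower levels plus an $\op(\sqrt{N})$ upper-tail contribution), so the entire correction term is $\op(1)$ uniformly on $[0,T]$, while by Step~1 the leading term converges to the OU process; matching of the initial value $\bar Q_{K+1}^N(0)\to\bar Q_{K+1}(0)$ is guaranteed by the stated hypotheses. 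A converging-together (Slutsky) argument then transfers the OU limit to $\bar Q_{K+1}^N$, proving~(iii), while (i), (ii) and (iv) are precisely the outputs of Step~2.

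The hard part will be Step~2, and in particular pinning down the exact $\log N$ order in (ii): neither the fluid scale nor the $\sqrt{N}$-diffusion scale resolves this intermediate layer, so the argument must quantify how the $\Theta(N)$ refill rate competes with the $\Theta(N)$ rate at which level-$K$ pools drop to $K-1$ and extract the resulting logarithmic equilibrium height. Showing that this layer does not leak onto the diffusion scale—so that it contributes only $\op(1)$ to the identity above—is exactly what makes the clean OU limit for $\bar Q_{K+1}^N$ possible.
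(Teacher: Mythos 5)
Your proposal follows essentially the same route as the paper's proof: the total task count $S^N$ coincides (absent overflow events) with an M/M/$\infty$ queue, so $(S^N-\lambda(N))/\sqrt{N}$ converges to the OU process by Theorem~\ref{th:robert-book-mmn}; the sub-$K$ layers are controlled by birth--death comparisons --- the number of pools below level $K$ increases at rate $\approx K(1-f)N$ (departures from pools at or below level $K$) and, when positive, decreases at rate $\lambda(N)\approx(K+f)N$ (arrivals, by the JSQ property), and since $f>0$ its finite-horizon maximum is $O(\log N)$, after which the count below level $K-1$ faces birth rate $O(\log N)$ against death rate $\Theta(N)$; and part (iii) follows by transferring the OU limit through the occupancy bookkeeping. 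Your explicit identity for $\bar Q_{K+1}^N$ in Step 3 is exactly the algebra the paper leaves implicit, and your Steps 2(a)--(b) are the paper's argument for parts (i)--(ii) in different words.

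The genuine gap is Step 2(c), i.e.\ claim (iv). You assert that the number of pools with at least $K+2$ tasks is $\op(\sqrt N)$ and call this a ``strengthening'' of claim (iv), but the implication runs the other way: in the theorem's scaling, $\bar Q_i^N=Q_i^N$ is \emph{unscaled} for $i\geq K+2$, so claim (iv) asserts $\sup_{t\in[0,T]}Q_i^N(t)\pto 0$, i.e.\ with high probability \emph{no} pool ever reaches level $K+2$ on $[0,T]$ --- far stronger than $\op(\sqrt N)$. Your weaker bound does suffice for the correction term in the Step-3 identity, so your proof of (iii) stands, but as written you never prove (iv), and the ``restoring force plus stochastic boundedness of the overshoot'' sketch does not obviously yield an $\Op(1)$, let alone a vanishing, bound. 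The missing argument is in fact simpler, and is what the paper uses: under the JSQ policy a task can be routed to a pool with at least $K+1$ tasks only if \emph{all} $N$ pools hold at least $K+1$ tasks, and since $Q_{K+1}^N/N\to f<1$ (equivalently $\lambda<K+1$), this event has vanishing probability on any finite horizon; consequently, given $Q_i^N(0)\pto 0$, the processes $Q_i^N$, $i\geq K+2$, with high probability never increase and thus converge to the zero process.
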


Theorem~\ref{th:diffusion-infinite} implies that for suitable initial states,
for large~$N$, there will be almost no server pool with $K - 2$ or less tasks
and $K + 2$ or more tasks on any finite time interval. 
Also, the number of server pools having fewer than $K$~tasks is
of order $\log(N)$, and there are $f N + O_P(\sqrt{N})$ server pools
with precisely $K + 1$ active tasks.
Below we present some high-level intuition behind the scaling limits
in Theorem~\ref{th:diffusion-infinite}.

\paragraph{High-level proof idea.}
Observe that $\sum_{i=1}^{K} (N-Q_i^N(\cdot))$ increases by one at rate 
\[
\sum_{i=1}^{K} i (Q_i(t)-Q_{i+1}(t)) =
\sum_{i=1}^{K} (Q_i(t)-Q_{K+1}(t)) \approx K (1 - f) N,
\] 
which is when there is a departure from some server pool with at most
$K$~active tasks, and if positive, decreases by one at constant rate
$\lambda(N) = (K + f) N + o(N)$, which is whenever there is an arrival.
Thus, $\sum_{i=1}^{K} (N - Q_i^N(\cdot))$ roughly behaves as a birth-and-death
process with birth rate $K (1 - f) N$ and death rate $(K + f) N$. 
Since $f > 0$, we have $K + f > K (1 - f)$, and on any finite time interval
the maximum of such a birth-and-death process scales as $\log(N)$.

Similar to the argument above, the process $\sum_{i=1}^{K-1} \bQ_i^N(\cdot)$
increases by one at rate 
\begin{align*}
\sum_{i=1}^{K-1} i (Q^N_i(t)-Q^N_{i+1}(t)) &=
\sum_{i=1}^{K-1} Q^N_i(t) - (K-1) Q_K^N(t) \\
&\leq (K-1) (N-Q_K^N(t)) = O(\log(N)),
\end{align*}
which is when there is a departure from some server pool with at most
$K - 1$ active tasks, and if positive, decreases by one at rate $\lambda(N)$,
which is whenever there is an arrival.
Thus, $\sum_{i=1}^{K-1} \bQ_i^N(\cdot)$ roughly behaves as a birth-and-death
process with birth rate $O(\log(N))$ and death rate $O(N)$.
This leads to the asymptotic result for $\sum_{i=1}^{K-1} \bQ_i^N(\cdot)$,
and in particular for $\bQ_{K-1}^N(\cdot)$.
This completes the proof of Parts~(i) and~(ii)
of Theorem~\ref{th:diffusion-infinite}.

Furthermore, since $\lambda < K + 1$, the number of tasks that are assigned
to server pools with at least $K + 1$ tasks converges to zero
in probability and this completes the proof of Part (iv)
of Theorem~\ref{th:diffusion-infinite}.
 
Finally, all the above combined also means that on any finite time interval
the total number of tasks in the system behaves with high probability
as the total number of jobs in an M/M/$\infty$ system. 
Therefore with the help of the diffusion limit result
for the M/M/$\infty$ system in~\cite[Theorem 6.14]{Robert03},
we conclude the proof of Part (iii) of Theorem~\ref{th:diffusion-infinite}.

\subsubsection{Diffusion-limit results for integral
\texorpdfstring{$\boldsymbol{\lambda}$}{lambda}}

We now turn to the case $f = 0$, and assume that
\begin{equation}
\label{eq:f=0}
\frac{K N - \lambda(N)}{\sqrt{N}} \to \beta \in \R \quad \mbox{ as }
\quad N \to \infty,
\end{equation} 
which can be thought of as an analog of the Halfin-Whitt regime
in~\eqref{eq:HW}.
We now consider the following scaled quantities:
\begin{equation}
\label{eq:inf-diff-scale-2}
\begin{split}
\zeta_1^N(t) := \frac{1}{\sqrt{N}}\sum_{i = 1}^{K} (N - Q_i^N(t)), \qquad
\zeta_2^N(t) := \frac{Q_{K+1}^N(t)}{\sqrt{N}}.
\end{split}
\end{equation}

\begin{theorem}
\label{th: f=0 diffusion}
Assuming the convergence of initial states, the process
$\big\{(\zeta_1^N(t), \zeta_2^N(t))\big\}_{t \geq 0}$ converges weakly
to the process $\big\{(\zeta_1(t), \zeta_2(t))\big\}_{t \geq 0}$
governed by the system of SDEs
\begin{align*}
\dif\zeta_1(t) &= \sqrt{2K} \dif W(t) - (\zeta_1(t) + K \zeta_2(t)) +
\beta \dif t + \dif V_1(t) \\
\dif\zeta_2(t) &= \dif V_1(t) - (K + 1) \zeta_2(t),
\end{align*}
where $W$ is the standard Brownian motion, and $V_1(t)$ is the unique
continuous non-decreasing process satisfying
$\int_0^t \ind{\zeta_1(s) > 0} \dif V_1(s) = 0$ and $V_1(0) =0$.
\end{theorem}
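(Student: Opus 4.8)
\emph{Proof proposal.} The plan is to derive this limit directly by a martingale functional central limit theorem combined with a one-sided Skorokhod reflection, rather than through the coupling constructions used for the universality statements; here the object of interest is the genuine JSQ occupancy process itself, and the system parallels the single-server limit in~\eqref{eq:diffusionjsq}. First I would set up the semimartingale representation of the unscaled coordinates. Writing $S^N(t) := \sum_{i=1}^{K}(N - Q_i^N(t)) = \sqrt{N}\,\zeta_1^N(t)$, the key exact identity is that the total departure rate out of pools with at most $K$ tasks equals $\sum_{i=1}^{K} i\,(Q_i^N - Q_{i+1}^N) = \sum_{i=1}^{K} Q_i^N - K Q_{K+1}^N = KN - \sqrt{N}\,\zeta_1^N - K\sqrt{N}\,\zeta_2^N$, so that the drift of $S^N$ closes up exactly in $(\zeta_1^N,\zeta_2^N)$ with no dependence on how mass below level $K$ is distributed. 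Under JSQ an arrival lowers $S^N$ by one precisely when the shortest queue is below $K$, i.e.\ on the event $\{\zeta_1^N(t)>0\}$, and otherwise it raises $Q_{K+1}^N$. Collecting the arrival flux that occurs while $\zeta_1^N=0$ into the scaled, nondecreasing process $V_1^N(t) := A_{=}^N(t)/\sqrt{N}$, where $A_{=}^N$ counts arrivals assigned to a pool with exactly $K$ tasks, I obtain the Doob decompositions $\dif\zeta_1^N = \dif M_1^N + \big(\tfrac{KN-\lambda(N)}{\sqrt{N}} - \zeta_1^N - K\zeta_2^N\big)\dif t + \dif V_1^N$ and $\dif\zeta_2^N = \dif M_2^N + \dif V_1^N - (K+1)\zeta_2^N\,\dif t + (\text{error})$, in which $V_1^N$ re-adds the boundary arrivals so that $\zeta_1^N$ stays nonnegative and increases only when $\zeta_1^N=0$, mirroring the complementarity $\int \mathbbm{1}_{[\zeta_1>0]}\dif V_1 = 0$ in the limit.

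Second, I would dispose of the remaining coordinates and identify the coefficients. The decisive structural fact under JSQ is that a pool can acquire a $(K+2)$-th task only when the shortest queue already has $K+1$ tasks, i.e.\ when $Q_{K+1}^N = N$; since I will show $Q_{K+1}^N = O_P(\sqrt{N}) \ll N$ on $[0,T]$, a stopping-time/bootstrap argument pins $Q_i^N$ for $i\ge K+2$ at its initial (negligible) value and kills the ``error'' term above. The term $\tfrac{KN-\lambda(N)}{\sqrt{N}}\dif t$ converges to $\beta\,\dif t$ by the integral scaling~\eqref{eq:f=0}. The predictable brackets come from the event rates: $[M_1^N]$ accumulates at rate $\approx KN + \lambda(N) \approx 2KN$ on $\{\zeta_1^N>0\}$, so after the $1/\sqrt{N}$ scaling $[M_1^N]\to 2Kt$ and $M_1^N\Rightarrow\sqrt{2K}\,W$, whereas the jumps feeding $\zeta_2^N$ (the $O(\sqrt{N})$ boundary arrivals and the $O(\sqrt{N})$ departures from $(K+1)$-pools) contribute vanishing bracket, so $M_2^N\pto 0$ and $\zeta_2$ carries no Brownian term; the Lindeberg condition is immediate since all jumps have size $1/\sqrt{N}$.

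Third, using uniform-in-$N$ control of the drifts near the fixed point~\eqref{eq:fixed-point-infinite}, I would establish $C$-tightness of $(\zeta_1^N,\zeta_2^N,M_1^N)$ together with tightness of the nondecreasing family $V_1^N$ (bounded on $[0,T]$ by the total scaled arrival count), and then characterize every subsequential limit. Passing to the limit in the two decompositions yields the stated SDEs with a nondecreasing regulator $V_1$, and the complementarity $\int_0^t \mathbbm{1}_{[\zeta_1(s)>0]}\dif V_1(s)=0$ is inherited because $V_1^N$ grew only while $\zeta_1^N=0$ and $\zeta_1^N$ is uniformly close to $\zeta_1$. Finally, since the limiting system is a two-dimensional affine SDE with a single one-sided reflection of $\zeta_1$ at the origin, coupled into $\zeta_2$ through $V_1$, the associated Skorokhod map is Lipschitz and the drift is linear, so pathwise uniqueness — hence uniqueness in law — holds and pins down the weak limit.

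I expect the reflection/boundary analysis to be the main obstacle. Near $\zeta_1=0$ the prelimit drift is genuinely discontinuous, since arrivals switch from depleting $\zeta_1$ to feeding $\zeta_2$, and one must show both that the accumulated boundary flux $V_1^N$ converges to an honest Skorokhod regulator (rather than dissipating or exploding) and that the complementarity survives the limit despite the boundary being visited for vanishing Lebesgue time. A secondary but necessary hurdle is making the ``no pool ever reaches $K+2$ tasks'' bootstrap rigorous, and controlling the levels $i\le K-1$ enough to guarantee that the shortest queue is exactly $K$ whenever $\zeta_1^N=0$, so that the boundary arrivals indeed land on $K$-pools and raise $Q_{K+1}^N$.
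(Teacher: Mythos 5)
Your proposal is correct and follows essentially the same route as the paper: the paper establishes Theorem~\ref{th: f=0 diffusion} via the reflection argument of Eschenfeldt \& Gamarnik~\cite{EG15}, which is exactly what you describe --- a semimartingale decomposition whose drift closes in $(\zeta_1^N,\zeta_2^N)$, a martingale FCLT producing the $\sqrt{2K}\,W$ term, and identification of the boundary arrival flux as the Skorokhod regulator $V_1$, followed by uniqueness of the reflected linear SDE. The hurdles you flag (vanishing boundary occupation time, the $Q_{K+2}^N$ bootstrap, and passing the complementarity condition to the limit) are precisely the technical ingredients of that argument.
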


Unlike the $f > 0$ case, the above theorem says that if $f = 0$,
then over any finite time horizon, there will be $O_P(\sqrt{N})$
server pools with fewer than $K$ or more than $K$~active tasks,
and hence most of the server pools have precisely $K$~active tasks.
The proof of Theorem~\ref{th: f=0 diffusion} uses the reflection
argument developed in~\cite{EG15}.
Indeed, the proof follows by observing that the dynamics
of $\big\{(\zeta_1^N(t), \zeta_2^N(t))\big\}_{t \geq 0}$ resembles
the dynamics of the JSQ policy in the Halfin-Whitt regime.

\subsection{Universality of JSQ(d) policies in infinite-server dynamics}
\label{ssec:univ-infinite}

As in Section~\ref{univ}, we now further explore the trade-off
between performance and communication overhead as a function
of the diversity parameter~$d(N)$, in conjunction with the load.
We will specifically investigate what growth rate of $d(N)$ is required,
depending on the scaling behavior of $\lambda(N)$,
in order to asymptotically match the optimal performance of the JSQ policy.

\begin{theorem}[Universality of fluid limit for JSQ($d(N)$) and infinite-server dynamics]
\label{fluidjsqd-infinite}
If $d(N) \to \infty$ as $N \to \infty$, then any (subsequential) fluid limit
of the JSQ$(d(N))$ scheme coincides with that of the ordinary JSQ policy,
and in particular, satisfies the system of differential equations
in~\eqref{eq:fluid-infinite}. 
Consequently, the stationary occupancy states converge to the unique fixed point
as in~\eqref{eq:fixed-point-infinite}.
\end{theorem}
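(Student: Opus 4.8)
The plan is to replicate the three-step coupling strategy that proved the single-server universality result in Theorem~\ref{fluidjsqd}, after first re-establishing the stochastic-comparison machinery of Section~\ref{univ} for the infinite-server departure dynamics. The essential new feature is that the departure rate from a server pool holding $i$ tasks is now $i$ rather than $1$, so the S-coupling—which previously synchronized the departure clock of the $k$-th longest queue—must be redefined. I would model each pool as a collection of unit-rate service slots and couple departures rank-by-rank: at each ordered position $k$, the ensemble of smaller height drives the common departures, while the surplus departures in the taller ensemble (at rate equal to the height difference, which is nonnegative by the ordering) are generated independently and only shrink the taller ensemble. With this refinement, the purely deterministic stack ordering of Proposition~\ref{prop:det ord}, and hence Proposition~\ref{prop:stoch-ord} and Corollary~\ref{cor:bound}, continue to hold verbatim, since those statements merely record that items are added to the left in $\cA$ and removed from the same ordered position in both ensembles; the height-dependence of the rates enters only the construction of the coupling, not the pathwise inequality \eqref{eq:det ord} it yields. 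Propositions~\ref{prop:stoch-ord2} and~\ref{prop:differ}, which count how often two schemes differ in decision, transfer unchanged because they concern arrival assignments alone.

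With the comparison results in hand, I would carry out the three steps. First (Step~(i)), I would show that the $\MJSQ(n(N))$ scheme—which always routes to the $(n(N)+1)$-th ordered pool and is therefore JSQ run on $N-n(N)$ pools—has the fluid limit \eqref{eq:fluid-infinite} whenever $n(N)/N\to 0$; this amounts to verifying that deleting a vanishing fraction of pools does not perturb the limiting occupancy dynamics, and reuses the fluid-limit derivation for JSQ against arrival rate $\lambda(N)$. Second (Step~(ii)), Corollary~\ref{cor:bound}, now valid in the infinite-server setting, sandwiches the occupancy process of any scheme in $\CJSQ(n(N))$ between those of $\JSQ$ and $\MJSQ(n(N))$, forcing it to share the common fluid limit \eqref{eq:fluid-infinite} as long as $n(N)/N\to 0$.

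For Step~(iii), I would introduce the $\JSQ(n(N),d(N))$ scheme exactly as in Section~\ref{univ}: sample $d(N)$ pools, route to the sampled shortest provided it lies among the $n(N)+1$ globally shortest, and otherwise route into those $n(N)+1$ pools. Since $d(N)\to\infty$, I can choose $n(N)$ with $n(N)\to\infty$ and $n(N)/N\to 0$ such that $(1-n(N)/N)^{d(N)}\to 0$; Proposition~\ref{prop:differ} then forces the expected number of decision-differences between $\JSQ(d(N))$ and $\JSQ(n(N),d(N))$ on $[0,T]$ to be $\op(N)$, and Proposition~\ref{prop:stoch-ord2} converts this into $\sup_i|Q_i^{\JSQ(d(N))}(t)-Q_i^{\JSQ(n(N),d(N))}(t)|=\op(N)$ uniformly on $[0,T]$. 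As $\JSQ(n(N),d(N))\in\CJSQ(n(N))$ inherits the fluid limit \eqref{eq:fluid-infinite} from Step~(ii), so does $\JSQ(d(N))$. The stationary statement then follows from an interchange-of-limits argument, combining the established fluid limit with asymptotic stability of the fixed point \eqref{eq:fixed-point-infinite} and tightness of the stationary occupancy measures.

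The main obstacle I anticipate is the adaptation of the coupling itself. In the single-server model the departure of the unique task at a busy server cleanly identifies the $k$-th ordered queue with one unit-rate clock, whereas here a pool emits departures at rate equal to its height and may change rank immediately after a completion. Setting up the slot-level S-coupling so that the deterministic ordering of Proposition~\ref{prop:det ord} genuinely survives such reorderings—and checking that the surplus, unmatched departures created by the differing heights in the two ensembles never violate \eqref{eq:det ord}—is the delicate point; once this is secured, the remaining steps run essentially in parallel with the single-server argument.
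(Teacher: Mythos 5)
Your plan stands or falls on the claim that a rank-by-rank ``slot'' coupling preserves the deterministic ordering of Proposition~\ref{prop:det ord}, and that claim is false; this is precisely the obstruction that forces the paper onto a different route (see Remark~\ref{rem:contrast}: ``the departures of the ordered server pools cannot be coupled, mainly since the departure rate at the $m$-th ordered server pool depends on its number of active tasks''). Concretely: let both systems start with two pools in state $(0,1)$, and let an arrival come. JSQ (ensemble $\cA$) assigns it to the empty pool, giving $(1,1)$; a CJSQ$(1)$ scheme (ensemble $\cB$) may assign it to the busy pool, giving $(0,2)$. The ordering \eqref{eq:det ord} holds at this point ($m=1$: $2\leq 2$; $m=2$: $0\leq 1$). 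Under your coupling, at rank $2$ the $\cB$-pool (height $2$) is taller than the $\cA$-pool (height $1$), so $\cB$ alone incurs surplus departures at rate $1$. The moment one fires, $\cB$ becomes $(0,1)$ while $\cA$ remains $(1,1)$, and the $m=1$ instance of \eqref{eq:det ord} reads $2\leq 1$ --- violated. The failure is structural, not fixable by cleverness in the construction: whenever the heights at a common rank differ, the marginal laws force unmatched departures in exactly one ensemble, and an unmatched departure from the dominating ensemble destroys the tail-sum ordering. Hence Proposition~\ref{prop:stoch-ord} and Corollary~\ref{cor:bound} do not carry over ``verbatim,'' and Steps (i)--(ii) of your argument have no foundation. (Your Proposition~\ref{prop:stoch-ord2} transfer is also not automatic, since its proof uses that departures never increase $\sum_i|Q_i^{\Pi_1}-Q_i^{\Pi_2}|$, a property of the coupling, not of arrivals alone.)

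The paper's actual proof abandons server-rank coupling entirely and introduces a \emph{task-based} T-coupling: arrivals are synchronized, a single departure clock runs at rate $M(t)=\max\bigl\{\sum_i Q_i^{\Pi_1}(t),\sum_i Q_i^{\Pi_2}(t)\bigr\}$, tasks are colored green (common), blue, and red, and each departure removes either one uniformly chosen green task from both systems or the $m$-th smallest red and blue tasks from the respective systems. This yields Lemma~\ref{lem:majorization}, a two-sided bound on \emph{head} sums $\sum_{i=1}^{k}Q_i$ (the $k$ lowest horizontal bars, not the tail sums your Corollary~\ref{cor:bound} controls), which immediately gives the deterministic estimate $\sup_t|Q_k^{\CJSQ}(t)-Q_k^{\JSQ}(t)|\leq k\,n(N)$ in Proposition~\ref{prop: modified JSL} --- with no loss terms, no MJSQ intermediary, and no need to establish any fluid limit for MJSQ (the paper stresses that in the infinite-server setting the MJSQ results are a \emph{consequence} of the CJSQ results, the reverse of the single-server logic you are copying). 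Combined with Proposition~\ref{prop: power of d} for the JSQ$(n(N),d(N))$-versus-JSQ$(d(N))$ comparison, this proves Theorem~\ref{th:pwr of d}, whose Corollary~\ref{cor:fluid} together with the JSQ fluid limit of Subsection~\ref{ssec:jsqfluid-infinite} yields Theorem~\ref{fluidjsqd-infinite}. Your Step (iii) and the final interchange-of-limits step are fine in spirit, but without a valid replacement for the comparison machinery the proof does not go through.
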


In order to state the universality result on diffusion scale,
define in case $f > 0$,
\begin{equation}
\label{eq:inf-diff-scale-3}
\begin{split}
\bar{Q}_i^{d(N)}(t) &:= \dfrac{N - Q_i^{d(N)}(t)}{\sqrt{N}} \geq 0,
\quad i \leq K, \\  
\bar{Q}_{K+1}^{d(N)}(t) &:= \dfrac{Q_{K+1}^{d(N)}(t) - f(N)}{\sqrt{N}}
\in \R, \\ 
\bar{Q}_i^{d(N)}(t) &:= \frac{Q_i^{d(N)}(t)}{\sqrt{N}}\geq 0,
\quad \text{ for } \quad i \geq K + 2,
\end{split}
\end{equation}
and otherwise, if $f = 0$,
\begin{equation}
\label{eq:inf-diff-scale-4}
\begin{split}
\hQ_{K-1}^{d(N)}(t) &:=
\sum_{i=1}^{K-1} \dfrac{N - Q_i^{d(N)}(t)}{\sqrt{N}} \geq 0, \\
\hQ_K^{d(N)}(t) &:= \dfrac{N - Q_K^{d(N)}(t)}{\sqrt{N}} \geq 0, \\
\hQ_i^{d(N)}(t) &:= \dfrac{Q_i^{d(N)}(t)}{\sqrt{N}} \geq 0, \quad
\text{ for } \quad i \geq K + 1.
\end{split}
\end{equation}
The scaling in Equations~\eqref{eq:inf-diff-scale-3}
and~\eqref{eq:inf-diff-scale-4} should be contrasted
with Equations~\eqref{eq:inf-diff-scale-1} and~\eqref{eq:inf-diff-scale-2},
respectively.

\begin{theorem}[Universality of diffusion limit for JSQ($d(N)$) and infinite-server dynamics]
\label{diffusionjsqd-infinite}
Assume $d(N) / (\sqrt{N} \log N) \to \infty$. 
Under suitable initial conditions
\begin{enumerate}[{\normalfont (i)}]
\item If $f>0$, then $\bQ_i^{d(N)}(\cdot)$ converges to the zero process
for $i\neq K+1$, and $\bQ^{d(N)}_{K+1}(\cdot)$ converges weakly
to the Ornstein-Uhlenbeck process satisfying the SDE
$$d\bar{Q}_{K+1}(t)=-\bar{Q}_{K+1}(t)dt+\sqrt{2\lambda}dW(t),$$
where $W(t)$ is the standard Brownian motion.
\item If $f=0$, then $\hQ_{K-1}^{d(N)}(\cdot)$ converges weakly
to the zero process, and $(\hQ_{K}^{d(N)}(\cdot), \hQ_{K+1}^{d(N)}(\cdot))$ converges weakly
to $(\hQ_{K}(\cdot), \hQ_{K+1}(\cdot))$, described by the unique solution of the system of SDEs
\begin{align*}
\dif\hQ_{K}(t) &= \sqrt{2 K} \dif W(t) - (\hQ_K(t) + K \hQ_{K+1}(t)) +
\beta \dif t + \dif V_1(t) \\
\dif\hQ_{K+1}(t) &= \dif V_1(t) - (K + 1) \hQ_{K+1}(t),
\end{align*}
where $W$ is the standard Brownian motion, and $V_1(t)$ is the unique
continuous non-decreasing process satisfying
$\int_0^t \ind{\hQ_K(s)\geq 0} \dif V_1(s) = 0$ and $V_1(0) = 0$.
\end{enumerate}
\end{theorem}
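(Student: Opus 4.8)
The plan is to transport the three-step coupling strategy behind Theorems~\ref{fluidjsqd} and~\ref{diffusionjsqd} to the infinite-server setting, replacing the single-server stack coupling by one that respects the occupancy-proportional departure rates, and feeding in the JSQ diffusion limits of Theorems~\ref{th:diffusion-infinite} (for $f>0$) and~\ref{th: f=0 diffusion} (for $f=0$) in place of the single-server limit~\eqref{eq:diffusionjsq}. Concretely, I would reuse the same intermediate hierarchy, comparing $\mathrm{JSQ}(d(N))$ with $\mathrm{JSQ}(n(N),d(N))\in\CJSQ(n(N))$, itself bracketed by $\JSQ$ and $\MJSQ(n(N))$, and then argue: (i) $\MJSQ(n(N))$ has the JSQ diffusion limit whenever $n(N)/\sqrt N\to 0$; (ii) by the sandwich of step (i) and the ordering propositions, every scheme in $\CJSQ(n(N))$ shares that limit; (iii) for $d(N)/(\sqrt N\log N)\to\infty$ one can pick $n(N)$ so that $\mathrm{JSQ}(d(N))$ and $\mathrm{JSQ}(n(N),d(N))$ agree on diffusion scale. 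Step (i) is essentially free: $\MJSQ(n(N))$ is exactly JSQ run on $N-n(N)$ pools, and since $n(N)/\sqrt N\to 0$ the heavy-traffic parameter is undisturbed---$\bigl(K(N-n(N))-\lambda(N)\bigr)/\sqrt N\to\beta$ in the regime~\eqref{eq:f=0}, and $f(N)$ shifts by $o(\sqrt N)$ when $f>0$---so Theorems~\ref{th:diffusion-infinite} and~\ref{th: f=0 diffusion} apply verbatim to the reduced system.

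The quantitative heart of step (iii) is unchanged from the single-server case and fixes the threshold. The number of arrivals on $[0,T]$ is $A(T)=O(N)$, and Proposition~\ref{prop:differ} gives $\E[\Delta(T)\mid A(T)]\le A(T)(1-n/N)^d$, a purely combinatorial sampling bound identical for infinite-server dynamics. Combined with the infinite-server analogue of Proposition~\ref{prop:stoch-ord2}, the diffusion-scaled discrepancy between $\mathrm{JSQ}(d(N))$ and $\mathrm{JSQ}(n(N),d(N))$ is controlled by $\Delta(T)/\sqrt N$, and $\Pro{\Delta(T)\ge\varepsilon\sqrt N}\le\varepsilon^{-1}O(\sqrt N)\,e^{-n(N)d(N)/N}$. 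This vanishes precisely when $n(N)d(N)/N-\tfrac12\log N\to\infty$, i.e.\ $n(N)\gg N\log N/d(N)$; since $d(N)\gg\sqrt N\log N$ forces $N\log N/d(N)=o(\sqrt N)$, there is a nonempty window $N\log N/d(N)\ll n(N)\ll\sqrt N$, which simultaneously makes steps (i)--(ii) work and explains why the same rate $\sqrt N\log N$ as in Theorem~\ref{diffusionjsqd} is the correct one.

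The genuinely new ingredient---and the main obstacle---is the stochastic-coupling layer, i.e.\ the infinite-server replacements for Propositions~\ref{prop:det ord}, \ref{prop:stoch-ord}, \ref{prop:stoch-ord2} and Corollary~\ref{cor:bound}. The arrival half is as before: ordering the pools by height and placing the incoming task further to the left under the dominated scheme preserves the tail-sum order $\sum_{i\ge m}Q_i$. The difficulty is the departure half: a pool of height $h$ loses a task at rate $h$, not at rate $\1[h\ge 1]$, so the rank-synchronized removal of Proposition~\ref{prop:det ord} no longer preserves dominance---a removal clock tied to a high layer of the dominating ensemble can fire a departure there while the dominated ensemble, being shorter at that rank, does not, momentarily reversing the inequality. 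The fix is a task-level coupling: maintain a matching between the tasks of the two systems, let matched tasks share a rate-$1$ departure clock (so each pool departs at exactly its height and the systems stay coupled), and build the matching so that each task sits in a pool at least as tall in the dominating system; equivalently, couple departures along the horizontal layers of the diagram in Figure~\ref{figB} rather than along the vertical ranks. Verifying that such a matching survives arrivals, matched departures, and the re-sorting they induce---so that both the sandwich $R_m^{\JSQ}\le R_m^{\CJSQ}\le R_m^{\MJSQ}$ and the bound $\sum_i|Q_i^{\Pi_1}-Q_i^{\Pi_2}|\le 2\Delta$ persist---is the crux, and is the construction carried out in~\cite{MBLW16-4}.

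Finally, with the coupling bounds in hand I would close the two cases separately. For $f>0$ the limit is a one-dimensional Ornstein--Uhlenbeck process for the $(K+1)$-st coordinate with all other coordinates collapsing, so it suffices to push the $o_P(\sqrt N)$ coupling error through the continuous map extracting $\bar Q_{K+1}$ and invoke Theorem~\ref{th:diffusion-infinite}. For $f=0$ the limit $(\hQ_K,\hQ_{K+1})$ is the two-dimensional reflected system of Theorem~\ref{th: f=0 diffusion}, and the remaining care is to show the coupling error does not disturb the reflection: because the regulator $V_1$ is a continuous functional of the driving path via the Skorokhod-type map used in~\cite{EG15}, an $o_P(\sqrt N)$ perturbation of the prelimit yields the same reflected limit. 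I expect this propagation through the reflection map, together with the matching construction above, to be where essentially all of the work lies.
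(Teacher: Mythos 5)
Your overall reduction matches the paper's in outline: the $\CJSQ(n(N))$ class, the hybrid JSQ$(n(N),d(N))$ scheme, the sampling estimate $A(T)(1-n/N)^{d}$, and the resulting window $N\log N/d(N)\ll n(N)\ll\sqrt{N}$ are exactly what appear in Propositions~\ref{prop: modified JSL}--\ref{prop: power of d} and Theorem~\ref{th:pwr of d} (via Corollary~\ref{cor-diff}), and you correctly identify the key obstacle, namely that the single-server S-coupling breaks down because departure rates are occupancy-dependent. The gap lies in the coupling layer you erect on top of that observation. You retain the single-server architecture --- bracketing $\CJSQ$ between $\JSQ$ and $\MJSQ$ in tail sums $R_m=\sum_{i\geq m}Q_i$, proving a separate diffusion limit for $\MJSQ(n(N))$ as ``JSQ on $N-n(N)$ pools'', and then differencing tail sums as in Corollary~\ref{cor:bound} --- and you assert that a height-respecting task matching preserving this sandwich ``is the construction carried out in~\cite{MBLW16-4}''. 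It is not. The coupling actually constructed there (the T-coupling of Subsection~\ref{ssec:univ-infinite}: a single departure clock of rate $M(t)=\max$ of the two totals, green/blue/red task coloring, paired removals of unmatched tasks) yields Lemma~\ref{lem:majorization}, a direct two-sided \emph{deterministic} comparison of $\JSQ$ with any $\CJSQ(n(N))$ scheme on partial sums of the $k$ \emph{lowest} rows, $\sum_{i=1}^{k}Q_i^{\JSQ}-kn(N)\leq\sum_{i=1}^{k}Q_i^{\CJSQ}\leq\sum_{i=1}^{k}Q_i^{\JSQ}$, whence $\sup_t|Q_k^{\CJSQ}(t)-Q_k^{\JSQ}(t)|\leq kn(N)$ componentwise (Proposition~\ref{prop: modified JSL}). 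No MJSQ bracket, no tail sums, and no separate MJSQ limit enter anywhere; Remark~\ref{rem:contrast} stresses that in the infinite-server setting the MJSQ results are \emph{deduced from} the CJSQ results --- the reverse of the logical order you propose --- and that avoiding tail sums is precisely what makes the argument go through, since bounds of Corollary~\ref{cor:bound} type would additionally force you to prove diffusion-scale convergence of tail sums of both the JSQ and the MJSQ occupancy processes (each of which is $\Theta(N)$ here, with $K$ macroscopically occupied levels) before any componentwise conclusion could be extracted.

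So your proof has an unproven lemma at exactly the point you yourself call the crux: that a task-level matching survives arrivals, coupled departures, and re-sorting so as to preserve $R_m^{\JSQ}\leq R_m^{\CJSQ}\leq R_m^{\MJSQ}$ for all $m$ simultaneously. This is not bookkeeping that can be delegated to the reference, because the reference proves a different invariant; and the delicacy is real --- the orderings the T-coupling does produce run, after conjugation, \emph{opposite} to the classical majorization results for JSQ (Remark~\ref{rem:novelty}), which is a warning that rank-based and tail-sum-based dominance interact badly with occupancy-proportional departures. Your remaining ingredients (the observation that $\MJSQ(n(N))$ with $n(N)=o(\sqrt N)$ is JSQ on $N-n(N)$ pools with undisturbed heavy-traffic parameter, the discrepancy bound of Proposition~\ref{prop:differ}, and pushing an $o_P(\sqrt N)$ error through the reflection map in the $f=0$ case) are sound and consistent with the proofs of Theorems~\ref{th:diffusion-infinite} and~\ref{th: f=0 diffusion}, but without either a proof of your sandwich lemma or a switch to the paper's lowest-partial-sum invariant, the argument does not close.
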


Having established the asymptotic results for the JSQ policy
in Sections~\ref{ssec:jsqfluid-infinite}
and~\ref{ssec:jsq-diffusion-infinite}, the proofs of the asymptotic
results for the JSQ$(d(N))$ scheme in Theorems~\ref{fluidjsqd-infinite}
and~\ref{diffusionjsqd-infinite} involve establishing a universality
result which shows that the limiting processes for the JSQ$(d(N))$
scheme are `asymptotically equivalent' to those for the ordinary JSQ
policy for suitably large values of~$d(N)$.
The notion of asymptotic equivalence between different schemes is
formalized in the next definition.

\begin{definition}
Let $\Pi_1$ and $\Pi_2$ be two schemes parameterized by the number
of server pools~$N$.
For any positive function $g:\N\to\R_+$,
we say that $\Pi_1$ and $\Pi_2$ are `$g(N)$-alike'
if there exists a common probability space,
such that for any fixed $T\geq 0$, for all $i\geq 1$,
$$\sup_{t\in[0,T]}(g(N))^{-1}|Q_i^{\Pi_1}(t)-Q_i^{\Pi_2}(t)|\pto 0\quad \mathrm{as}\quad N\to\infty.$$
\end{definition}

\noindent
Intuitively speaking, if two schemes are $g(N)$-alike,
then in some sense, the associated system occupancy states are
indistinguishable on $g(N)$-scale. 
For brevity, for two schemes $\Pi_1$ and $\Pi_2$ that are $g(N)$-alike,
we will often say that $\Pi_1$ and $\Pi_2$ have the same process-level
limits on $g(N)$-scale.
The next theorem states a sufficient criterion for the JSQ$(d(N))$
scheme and the ordinary JSQ policy to be $g(N)$-alike, and thus,
provides the key vehicle in establishing the universality result.

\begin{theorem}
\label{th:pwr of d}
Let $g: \N \to \R_+$ be a function diverging to infinity.
Then the JSQ policy and the JSQ$(d(N))$ scheme are $g(N)$-alike,
with $g(N) \leq N$, if 
\begin{align}
\label{eq:fNalike cond1}
\mathrm{(i)}&\quad d(N) \to \infty, \quad \text{for} \quad g(N) = \OO(N), \\
\mathrm{(ii)}&\quad d(N)
\left(\frac{N}{g(N)}\log\left(\frac{N}{g(N)}\right)\right)^{-1} \to \infty,
\quad \text{for} \quad g(N) = \oo(N).
\label{eq:fNalike cond2}
\end{align}
\end{theorem}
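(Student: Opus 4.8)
\emph{Proof plan.} The plan is to mirror the three-step reduction used for Theorems~\ref{fluidjsqd} and~\ref{diffusionjsqd}, but to track the comparison on the finer $g(N)$-scale and to adapt every stochastic-comparison tool to the infinite-server dynamics. Fix $T\geq 0$ and $i\geq 1$. I interpose two intermediate schemes between the ordinary JSQ policy and the JSQ$(d(N))$ scheme: the scheme $\MJSQ(n(N))$ (equivalently, any member of the class $\CJSQ(n(N))$), and the scheme JSQ$(n(N),d(N))\in\CJSQ(n(N))$, for a reservation level $n(N)$ to be chosen. By the triangle inequality it suffices to produce a common probability space on which (a) $\sup_{t\le T}g(N)^{-1}|Q_i^{\JSQ}(t)-Q_i^{\CJSQ}(t)|\pto 0$ for every scheme in $\CJSQ(n(N))$, and (b) $\sup_{t\le T}g(N)^{-1}|Q_i^{d(N)}(t)-Q_i^{(n(N),d(N))}(t)|\pto 0$, where $Q_i^{(n,d)}$ denotes the occupancy under JSQ$(n,d)$.

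For (a) I would use the infinite-server analog of Corollary~\ref{cor:bound}, which (with $L\equiv 0$ in the infinite-buffer case) sandwiches $Q_m^{\CJSQ}(t)$ between $\sum_{i\ge m}Q_i^{\JSQ}(t)-\sum_{i>m}Q_i^{\MJSQ}(t)$ and $\sum_{i\ge m}Q_i^{\MJSQ}(t)-\sum_{i>m}Q_i^{\JSQ}(t)$. Writing $S_m^{\Pi}=\sum_{i\ge m}Q_i^{\Pi}$, these bounds give $|Q_m^{\CJSQ}(t)-Q_m^{\JSQ}(t)|\le\max\{S_m^{\MJSQ}(t)-S_m^{\JSQ}(t),\,S_{m+1}^{\MJSQ}(t)-S_{m+1}^{\JSQ}(t)\}$, both terms being nonnegative by Proposition~\ref{prop:stoch-ord}. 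A key lemma is then that reserving $n(N)$ servers perturbs the cumulative occupancy only by $O_P(n(N))$, i.e.\ $\sup_{t\le T}(S_m^{\MJSQ}(t)-S_m^{\JSQ}(t))=O_P(n(N))$; dividing by $g(N)$ yields (a) as soon as $n(N)=o(g(N))$.

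For (b) I would invoke the infinite-server analogs of Propositions~\ref{prop:stoch-ord2} and~\ref{prop:differ}. The former bounds $\sum_{i\ge 1}|Q_i^{d(N)}(t)-Q_i^{(n(N),d(N))}(t)|$ by twice the number $\Delta(t)$ of epochs at which JSQ$(d(N))$ and JSQ$(n(N),d(N))$ differ in decision; the latter (which depends only on the sampling mechanism and so transfers verbatim) gives $\Pro{\Delta(T)\ge M \given A(T)}\le \frac{A(T)}{M}(1-n(N)/N)^{d(N)}$. Since the arrivals form a Poisson process of rate $\lambda(N)=O(N)$, $A(T)=O_P(N)$, whence $\Delta(T)=O_P\big(N(1-n(N)/N)^{d(N)}\big)=O_P\big(N\ee^{-n(N)d(N)/N}\big)$. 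Thus (b) holds provided $N\ee^{-n(N)d(N)/N}=o(g(N))$, i.e.\ $n(N)d(N)/N\ge \log(N/g(N))+\omega(1)$. Combining the two requirements, it remains to exhibit $n(N)$ with $\frac{N}{d(N)}\log(N/g(N))\ll n(N)\ll g(N)$; such a choice exists precisely when $d(N)\gg \frac{N}{g(N)}\log(N/g(N))$, which is condition~\eqref{eq:fNalike cond2} when $g(N)=o(N)$, and degenerates to $d(N)\to\infty$ (condition~\eqref{eq:fNalike cond1}) in the regime $g(N)=\Theta(N)$, where $\log(N/g(N))=O(1)$.

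The main obstacle is the first, conceptual step: justifying the infinite-server analogs of the stack-ordering results. The S-coupling of Section~\ref{univ} synchronizes the departures of the $k$-th longest queue, which is tailored to single-server queues where that rate is constant; under infinite-server dynamics the departure rate of a pool equals its occupancy, so the coupling must be rebuilt so that departures act on matching ordered pools while respecting the occupancy-dependent rates (e.g.\ via a rank-and-level family of synchronized Poisson clocks). The payoff needed is that, under this coupling, the combinatorial ordering of Proposition~\ref{prop:det ord} is preserved and departures are non-distance-increasing, so that the analogs of Corollary~\ref{cor:bound} and Proposition~\ref{prop:stoch-ord2} hold. Establishing the $O_P(n(N))$ discrepancy bound for $S_m^{\MJSQ}-S_m^{\JSQ}$ used in step (a) is the other substantive piece; I expect it to follow from the same coupling together with a direct analysis of the $n(N)$ reserved pools, analogous to the argument that $\MJSQ(n(N))$ shares the JSQ fluid and diffusion limits when $n(N)$ is of smaller order than the relevant scale.
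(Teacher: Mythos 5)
Your top-level architecture agrees with the paper's: interpose the class $\CJSQ(n(N))$ and the hybrid JSQ$(n(N),d(N))$ between JSQ and JSQ$(d(N))$, bound the number of differing decisions by $A(T)(1-n(N)/N)^{d(N)}$ (Proposition~\ref{prop:differ} does transfer verbatim, and your step (b) is in substance Proposition~\ref{prop: power of d}), and finally exhibit $n(N)$ with $\frac{N}{d(N)}\log(N/g(N)) \ll n(N) \ll g(N)$; this last optimization is exactly the paper's proof of Theorem~\ref{th:pwr of d}, including the degenerate treatment of the $g(N)=\Theta(N)$ case.

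The genuine gap is in step (a), i.e., in how you compare $\CJSQ(n(N))$ with JSQ. You propose to port Corollary~\ref{cor:bound} --- the tail-sum sandwich through $\MJSQ(n(N))$ --- by rebuilding the S-coupling with ``rank-and-level'' synchronized departure clocks acting on matching ordered pools. Such a coupling does not preserve the tail-sum (highest-bars) ordering on which Corollary~\ref{cor:bound} rests. Concretely, take two ensembles of two pools each with occupancies $(1,1)$ and $(0,2)$; then $\sum_{i\geq m}Q_i^{\cA} \leq \sum_{i\geq m}Q_i^{\cB}$ for every $m$, but when the clock attached to rank $2$, level $2$ rings, only the second ensemble loses a task, and the ordering at $m=1$ is violated. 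This is precisely why the paper states (Remark~\ref{rem:contrast}) that in infinite-server dynamics the departures of the ordered server pools cannot be coupled, and why it introduces a different construction, the T-coupling: arrivals synchronized, a single departure clock run at the rate $M(t)=\max\bigl\{\sum_i Q_i^{\Pi_1}(t), \sum_i Q_i^{\Pi_2}(t)\bigr\}$, and departures matched at the level of individual tasks via the green/blue/red coloring. Its payoff (Lemma~\ref{lem:majorization}) is an ordering of the $k$ \emph{lowest} horizontal bars, $\sum_{i=1}^k Q_i^{\JSQ}(t)-kn(N) \leq_{st} \sum_{i=1}^k Q_i^{\CJSQ}(t) \leq_{st} \sum_{i=1}^k Q_i^{\JSQ}(t)$, which immediately yields the deterministic single-component bound $\sup_t |Q_k^{\CJSQ}(t)-Q_k^{\JSQ}(t)| \leq kn(N)$ (Proposition~\ref{prop: modified JSL}) --- no MJSQ, no tail sums, no limit theory for any auxiliary scheme. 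Your route additionally requires the unproven quantitative lemma $\sup_{t\leq T}\bigl(\sum_{i\geq m}Q_i^{\MJSQ}(t)-\sum_{i\geq m}Q_i^{\JSQ}(t)\bigr)=O_P(n(N))$; nothing in the single-server analysis gives a bound of this strength (there, $\MJSQ(n(N))$ is only shown to share the fluid and diffusion \emph{limits}), and in the infinite-server setting the paper deliberately reverses your logical order: properties of MJSQ follow as a consequence of those of the CJSQ class, because there is no direct handle on MJSQ. The same concern applies to your step (b), which silently assumes the rebuilt coupling is non-expansive for departures in the $\ell_1$ sense needed for the analog of Proposition~\ref{prop:stoch-ord2}; the T-coupling has this property, but your proposed coupling would need it verified. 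So the missing idea is the task-based coupling and the switch from highest-bar to lowest-bar comparisons; without it, step (a) as proposed would not go through.
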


Theorem~\ref{th:pwr of d} yields the next two immediate corollaries.
\begin{corollary}
\label{cor:fluid}
If $d(N) \to \infty$ as $N \to \infty$, then the JSQ$(d(N))$ scheme
and the ordinary JSQ policy are $N$-alike.
\end{corollary}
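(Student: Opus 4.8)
The plan is to obtain Corollary~\ref{cor:fluid} as an immediate specialization of Theorem~\ref{th:pwr of d}, taking the scaling function to be $g(N) = N$. First I would check that this choice is admissible: the constant-rate function $g(N) = N$ diverges to infinity, which is what is required for the notion of being $g(N)$-alike to be meaningful and to invoke the theorem, and it obeys the constraint $g(N) \le N$ (here with equality). Thus the setup of Theorem~\ref{th:pwr of d} applies verbatim with this $g$.

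Next I would identify which branch of the hypothesis is relevant. Since $g(N) = N$ is trivially $O(N)$, we fall into case~(i) of Theorem~\ref{th:pwr of d}, whose sole requirement is that $d(N) \to \infty$ as $N \to \infty$. This is exactly the assumption of the corollary, so the hypotheses of the theorem are met and its conclusion gives directly that the ordinary JSQ policy and the JSQ$(d(N))$ scheme are $N$-alike. Unwinding the definition, this asserts the existence of a common probability space on which $\sup_{t \in [0,T]} N^{-1} |Q_i^{\JSQ}(t) - Q_i^{d(N)}(t)| \pto 0$ for every fixed $T \ge 0$ and every $i \ge 1$, i.e.\ the fluid-scaled occupancy states of the two schemes coincide in the limit, which is precisely the content claimed.

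For the corollary itself there is essentially no obstacle; all the substance resides in Theorem~\ref{th:pwr of d}, which in turn rests on the stochastic-coupling machinery built from the intermediate $\CJSQ(n(N))$ and $\MJSQ(n(N))$ schemes together with the decision-discrepancy bounds of Propositions~\ref{prop:stoch-ord2} and~\ref{prop:differ}. The one point worth emphasizing is that $g(N) = N$ sits exactly at the borderline (fluid) scale where branch~(i) is in force, so the very mild growth condition $d(N) \to \infty$ suffices, in contrast to the strictly faster rate demanded on finer scales through branch~(ii). If a self-contained argument were preferred over quoting the theorem, I would instead mirror the three-step proof-sketch of Theorem~\ref{fluidjsqd}: show that $\MJSQ(n(N))$ matches JSQ on fluid scale whenever $n(N)/N \to 0$, transfer this via Corollary~\ref{cor:bound} to every scheme in $\CJSQ(n(N))$, and finally use Propositions~\ref{prop:stoch-ord2} and~\ref{prop:differ} to couple JSQ$(d(N))$ to the embedded scheme $\JSQ(n(N),d(N)) \in \CJSQ(n(N))$ for a suitable $n(N) \to \infty$ with $n(N)/N \to 0$, which exists precisely because $d(N) \to \infty$.
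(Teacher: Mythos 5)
Your core derivation is correct and is precisely the paper's own proof: the paper states Corollary~\ref{cor:fluid} as an immediate consequence of Theorem~\ref{th:pwr of d}, obtained by taking $g(N)=N$, so that $g(N)=O(N)$ places you in branch~(i), whose hypothesis $d(N)\to\infty$ is exactly the corollary's assumption, and the conclusion is $N$-alikeness as defined.

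However, your final paragraph misidentifies the machinery behind Theorem~\ref{th:pwr of d}, and the ``self-contained'' alternative you sketch would not go through in this setting. Corollary~\ref{cor:fluid} lives in the infinite-server (server-pool) scenario of Section~\ref{bloc}, where Theorem~\ref{th:pwr of d} is proved via the T-coupling: Lemma~\ref{lem:majorization} and Proposition~\ref{prop: modified JSL} relate the JSQ policy to the $\CJSQ(n(N))$ class, and Proposition~\ref{prop: power of d} relates JSQ$(n(N),d(N))$ to JSQ$(d(N))$. The objects you cite instead --- $\MJSQ(n(N))$, Corollary~\ref{cor:bound}, Propositions~\ref{prop:stoch-ord2} and~\ref{prop:differ}, and the three-step proof of Theorem~\ref{fluidjsqd} --- constitute the S-coupling apparatus of the single-server setting in Section~\ref{univ}. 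As Remark~\ref{rem:contrast} explains, that coupling cannot be transplanted to infinite-server dynamics: the departure rate at the $m$-th ordered server pool depends on its number of active tasks, so departures at ordered server pools cannot be synchronized, and in Section~\ref{bloc} the results for the MJSQ$(n(N))$ scheme are in fact consequences of those for the $\CJSQ(n(N))$ class rather than inputs to them. A genuinely self-contained proof of Corollary~\ref{cor:fluid} would therefore have to reproduce Propositions~\ref{prop: modified JSL} and~\ref{prop: power of d} (choosing, for instance, $n(N)=N/\log d(N)$ as in the paper's proof of Theorem~\ref{th:pwr of d}), not mirror the Section~\ref{univ} argument.
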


\begin{corollary}
\label{cor-diff}
If $d(N) / (\sqrt{N} \log(N)) \to \infty$ as $N \to \infty$, then the
JSQ$(d(N))$ scheme and the ordinary JSQ policy are $\sqrt{N}$-alike.
\end{corollary}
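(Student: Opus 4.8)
The plan is to reproduce, at the level of the scale $g(N)$, the three-step coupling strategy used for Theorems~\ref{fluidjsqd} and~\ref{diffusionjsqd}, working throughout with the S-coupling adapted to the infinite-server (state-dependent) departures, for which the stack-comparison inequalities of Propositions~\ref{prop:stoch-ord}, \ref{prop:stoch-ord2} and~\ref{prop:differ} and Corollary~\ref{cor:bound} continue to hold. The intermediate objects are the class $\CJSQ(n(N))$, its extremal member $\MJSQ(n(N))$, and the bridge scheme $\mathrm{JSQ}(n(N),d(N))\in\CJSQ(n(N))$. The whole argument then reduces to exhibiting a sequence $n(N)$ with $n(N)/N\to0$ such that (a)~$n(N)/g(N)\to0$, which will make $\JSQ$ and every $\CJSQ(n(N))$ scheme $g(N)$-alike, and (b)~$\tfrac{N}{g(N)}\big(1-n(N)/N\big)^{d(N)}\to0$, which will make $\mathrm{JSQ}(d(N))$ and $\mathrm{JSQ}(n(N),d(N))$ $g(N)$-alike.

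First I would fix $n(N)$. In case~(i), where $g(N)=\Theta(N)$ and only $d(N)\to\infty$ is assumed, take $n(N)=\lfloor N/\sqrt{d(N)}\rfloor$: then $n(N)/g(N)=\Theta(d(N)^{-1/2})\to0$, while $(1-n(N)/N)^{d(N)}\le\e^{-\sqrt{d(N)}}$ and $N/g(N)=\Theta(1)$ give~(b). In case~(ii), where $g(N)=o(N)$, put $L(N)=\log(N/g(N))\to\infty$ and $r(N)=d(N)g(N)/(N L(N))$; the divergence $r(N)\to\infty$ is precisely hypothesis~\eqref{eq:fNalike cond2}. Taking $n(N)=\lfloor g(N)/\sqrt{r(N)}\rfloor$ gives $n(N)/g(N)=r(N)^{-1/2}\to0$ and $n(N)d(N)/N=\sqrt{r(N)}\,L(N)$, whence $\tfrac{N}{g(N)}(1-n(N)/N)^{d(N)}\le\e^{L(N)(1-\sqrt{r(N)})}\to0$. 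In both cases $n(N)/N\to0$ and $n(N)d(N)/N\to\infty$.

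With $n(N)$ fixed, the comparison of $\mathrm{JSQ}(d(N))$ with $\mathrm{JSQ}(n(N),d(N))$ is immediate: under the S-coupling Proposition~\ref{prop:stoch-ord2} gives $\sum_i|Q_i^{\mathrm{JSQ}(d(N))}(t)-Q_i^{\mathrm{JSQ}(n(N),d(N))}(t)|\le2\Delta(T)$ for all $t\le T$, while Proposition~\ref{prop:differ} and a (conditional) Markov argument yield $\E[\Delta(T)]\le\lambda(N)T\,(1-n(N)/N)^{d(N)}\le NT\,(1-n(N)/N)^{d(N)}$. Hence for each fixed $i$, $\E\big[\sup_{t\le T}(g(N))^{-1}|Q_i^{\mathrm{JSQ}(d(N))}(t)-Q_i^{\mathrm{JSQ}(n(N),d(N))}(t)|\big]\le2T\,\tfrac{N}{g(N)}(1-n(N)/N)^{d(N)}\to0$ by~(b), so the two schemes are $g(N)$-alike. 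For the $\CJSQ$ comparison, writing $R_m(t)=\sum_{i\ge m}(Q_i^{\MJSQ(n(N))}(t)-Q_i^{\JSQ}(t))\ge0$, Corollary~\ref{cor:bound} gives the deterministic sandwich $-R_{m+1}(t)\le Q_m^{\Pi}(t)-Q_m^{\JSQ}(t)\le R_m(t)$ for every $\Pi\in\CJSQ(n(N))$ (with the $L$-augmented aggregates when $B<\infty$), so $|Q_m^{\Pi}-Q_m^{\JSQ}|\le R_m+R_{m+1}$; thus controlling the single gap process $R_m$ controls all $\CJSQ(n(N))$ schemes at once.

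The crux is therefore Step~1: the bound $\sup_{t\le T}R_m(t)=\Op(n(N))$. Here the differ-in-decision count of Proposition~\ref{prop:stoch-ord2} is useless, since $\JSQ$ and $\MJSQ(n(N))$ disagree at essentially every arrival, so I would instead analyse $R_m$ directly along the coupling, exploiting that $\MJSQ(n(N))$ is $\JSQ$ run with $n(N)$ pools withheld from assignment. The only mechanism feeding $R_m$ for $m\ge2$ is the event that $\MJSQ$ must stack a task because fewer than $n(N)+1$ pools sit at the current minimum level, an excess that the withheld-server interpretation caps at order $n(N)$; the remaining task is to turn this heuristic into a uniform-in-$t$ stochastic bound by a monotonicity argument on the idle-pool deficit (this is where the infinite-server, rate-$i$ departures make the estimate more delicate than in the single-server case, and I expect it to be the main obstacle). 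Granting $\sup_{t\le T}R_m(t)=\Op(n(N))=\op(g(N))$ by~(a), Step~1 yields $\JSQ\sim_{g(N)}\MJSQ(n(N))$ and, through the sandwich, $\JSQ\sim_{g(N)}\Pi$ for every $\Pi\in\CJSQ(n(N))$, in particular for $\mathrm{JSQ}(n(N),d(N))$. Chaining this with the previous estimate via the (sup-norm) triangle inequality for $g(N)$-alikeness gives $\JSQ\sim_{g(N)}\mathrm{JSQ}(d(N))$, which is the assertion.
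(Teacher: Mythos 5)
There is a genuine gap, and it sits at the foundation of your argument rather than in its details. Corollary~\ref{cor-diff} belongs to the \emph{infinite-server} setting of Section~\ref{bloc}, and your plan is to run the single-server machinery --- the S-coupling together with Propositions~\ref{prop:stoch-ord}, \ref{prop:stoch-ord2}, \ref{prop:differ} and Corollary~\ref{cor:bound} --- ``adapted to the infinite-server departures''. But no such adaptation exists: the S-coupling synchronizes the departure clocks of the $k$-th ordered servers in the two systems, which is consistent with both marginal laws only because in the single-server model every busy server completes work at the same rate~$1$. In the infinite-server model a pool with $i$ active tasks departs at rate $i$, so the $k$-th ordered pools in two coupled systems generally hold different numbers of tasks and hence have different departure rates; their clocks cannot be synchronized without distorting at least one of the two laws. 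This is precisely the point of Remark~\ref{rem:contrast}. Consequently the stack inequalities you invoke are simply not available here, and your MJSQ sandwich $-R_{m+1}(t)\le Q_m^{\Pi}(t)-Q_m^{\JSQ}(t)\le R_m(t)$ has no proof behind it. Indeed, you yourself flag the uniform bound $\sup_{t\le T}R_m(t)=\Op(n(N))$ as the ``main obstacle'' and leave it as a heuristic; that is exactly where the argument breaks, and it cannot be repaired within your framework --- the paper notes that in this setting the results for $\MJSQ(n(N))$ are \emph{deduced from} those for the $\CJSQ(n(N))$ class, not the other way around.

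What the paper does instead is replace the server-based coupling by a task-based one (the T-coupling): arrivals are synchronized, a single departure clock runs at rate $M(t)$, the maximum of the two total task counts, and departures are matched task-by-task via the green/blue/red decomposition. This coupling yields Lemma~\ref{lem:majorization}, a two-sided \emph{deterministic} bound which immediately gives $\sup_{t}|Q_k^{\CJSQ}(t)-Q_k^{\JSQ}(t)|\le k\,n(N)$ (Proposition~\ref{prop: modified JSL}) --- no MJSQ, no tail sums, no lost-task terms. Combining this with Proposition~\ref{prop: power of d} and a suitable choice of $n(N)$ gives Theorem~\ref{th:pwr of d}, and the corollary is then just the instance $g(N)=\sqrt{N}$, using $\log\sqrt{N}=\tfrac12\log N$. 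To your credit, the combinatorial part of your proposal is sound: your condition (b) is equivalent to~\eqref{eq:condition-same}, and your choices $n(N)=\lfloor g(N)/\sqrt{r(N)}\rfloor$ (and $n(N)=\lfloor N/\sqrt{d(N)}\rfloor$ in case (i)) verify it just as well as the paper's $n(N)=g(N)/\log(h(N))$; likewise your Markov bound on $\Delta(T)$ mirrors Proposition~\ref{prop: power of d}. But those estimates are attached to couplings that do not exist in this model; the missing idea is the T-coupling itself.
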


Observe that Corollaries~\ref{cor:fluid} and~\ref{cor-diff}
together with the asymptotic results for the JSQ policy
in Sections~\ref{ssec:jsqfluid-infinite}
and~\ref{ssec:jsq-diffusion-infinite} imply
Theorems~\ref{fluidjsqd-infinite} and~\ref{diffusionjsqd-infinite}.
The rest of the section will be devoted to the proof
of Theorem~\ref{th:pwr of d}.
The proof crucially relies on a novel coupling construction,
which will be used to (lower and upper) bound the difference
of occupancy states of two arbitrary schemes.

\paragraph{The coupling construction.}
Throughout the description of the coupling, we fix~$N$, and suppress
the superscript~$N$ in the notation. 
Let $Q_i^{\Pi_1}(t)$ and $Q_i^{\Pi_2}(t)$ denote the number of server
pools with at least $i$~active tasks at time~$t$ in two systems
following schemes $\Pi_1$ and $\Pi_2$, respectively.
With a slight abuse of terminology, we occasionally use $\Pi_1$ and $\Pi_2$
to refer to systems following schemes $\Pi_1$ and $\Pi_2$, respectively.
To couple the two systems, we synchronize the arrival epochs 
and maintain a single exponential departure clock with instantaneous rate
at time $t$ given by $M(t) := \max\left\{\sum_{i=1}^{B} Q_i^{\Pi_1}(t),
\sum_{i=1}^{B} Q_i^{\Pi_2}(t)\right\}$.
We couple the arrivals and departures in the various server pools as follows:

\noindent
\textit{Arrival:}
At each arrival epoch, assign the incoming task in each system to one
of the server pools according to the respective schemes.

\noindent
\textit{Departure:} 
Define $$H(t) :=
\sum_{i=1}^{B} \min\left\{Q_i^{\Pi_1}(t), Q_i^{\Pi_2}(t)\right\}$$
and $$p(t):=
\begin{cases}
\dfrac{H(t)}{M(t)}, & \quad \text{if} \quad M(t) > 0, \\
0, & \quad \text{otherwise.}
\end{cases}
$$
At each departure epoch~$t_k$ (say), draw a uniform$[0,1]$ random
variable $U(t_k)$. 
The departures occur in a coupled way based upon the value of $U(t_k)$.
In either of the systems, assign an active task index $(i,j)$,
if it is the $j$-th task (in the order of arrival) of the $i$-th ordered server pool. 
Let $\mathcal{A}_1(t)$ and $\mathcal{A}_2(t)$ denote the set of all task
indices present at time~$t$ in systems $\Pi_1$ and $\Pi_2$, respectively. 
Color the indices (or tasks) in $\mathcal{A}_1 \cap \mathcal{A}_2$,
$\mathcal{A}_1 \setminus \mathcal{A}_2$
and $\mathcal{A}_2 \setminus \mathcal{A}_1$, green, blue and red,
respectively, and note that $|\mathcal{A}_1 \cap \mathcal{A}_2| = H(t)$. 
Define a total order on the set of indices as follows:
$(i_1,j_1)<(i_2,j_2)$ if $i_1<i_2$, or $i_1=i_2$ and $j_1<j_2$.
Now, if $U(t_k)\leq p(t_k-)$, then select one green index uniformly
at random and remove the corresponding tasks from both systems.
Otherwise, if $U(t_k)> p(t_k-)$, then choose one integer~$m$,
uniformly at random from all the integers between~$1$
and $M(t) - H(t) = M(t) (1 - p(t))$, and remove the tasks corresponding
to the $m$-th smallest (according to the order defined above)
red and blue indices in the corresponding systems.
If the number of red (or blue) tasks is less than~$m$,
then do nothing in the corresponding system.

\begin{figure}
\begin{subfigure}{.4\textwidth}
\centering
\includegraphics[scale=.8]{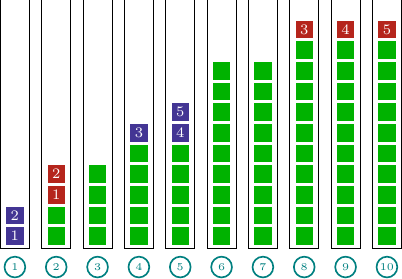}
\caption{T-coupling}
\label{fig:tcoupling}
\end{subfigure}
\begin{subfigure}{.6\textwidth}
\begin{center}
  \begin{tikzpicture}[scale=.6]
  
  \draw(14,1) node[mybox, text width = 2.5cm,text centered]  {%
   {\scriptsize JSQ$(n(N),d(N))$}};
   
  \draw(22,1) node[mybox, text width = 2.5cm,text centered]  {%
   {\scriptsize  $\CJSQ(n(N))$}};
   
   \draw(14,6) node[mybox, text width = 2.5cm,text centered]  {%
   {\scriptsize  JSQ$(d(N))$}};
   
   \draw(22,6) node[mybox, text width = 2.5cm,text centered]  {%
   {\scriptsize  JSQ}};

\draw[doublearr] (16.5,6) to (19.5,6);
\draw[doublearr2] (14,2) to (14,5);
\draw[doublearr2] (16.5,1) to (19.5,1);
\draw[doublearr2] (22,2) to (22,5);

\node  at (18,6.5) {\tiny Theorem~\ref{th:pwr of d}};

\node [rotate=270] at (21.5,3.5) {\tiny Proposition~\ref{prop: modified JSL}};
\node [rotate=270] at (22.5,3.5) {\tiny Suitable $n(N)$};

\node [rotate=90] at (14.5,3.5) {\tiny Proposition~\ref{prop: power of d}};
\node [rotate=90] at (13.5,3.5) {\tiny Suitable $d(N)$};

\draw[text width=1.2cm, align=center] (18,1.75) node {\tiny Belongs to};
\draw[text width=1.1cm, align=center] (18,1.35) node {\tiny the class};
\end{tikzpicture}
\caption{Asymptotic equivalence relations}
\label{fig:sfigrelation-inf}
\end{center}
\end{subfigure}
\caption{(a) Superposition of the occupancy states at some particular time instant,
of  schemes $\Pi_1$ and $\Pi_2$ when the server pools in both systems are arranged
in nondecreasing order of the number of active tasks.
The $\Pi_1$ system is the union of the green and blue tasks,
and the $\Pi_2$ system is the union of the green and red tasks.
(b) The equivalence structure is depicted for various intermediate
load balancing schemes to facilitate the comparison
between the JSQ$(d(N))$ scheme and the ordinary JSQ policy.}
\end{figure}

The above coupling has been schematically represented
in Figure~\ref{fig:tcoupling}, and will henceforth be referred to as
T-coupling, where T stands for `task-based'. 
Now we need to show that, under the T-coupling, the two systems,
considered independently, evolve according to their respective marginal statistical laws. 
This can be seen in several steps.
Indeed, the T-coupling basically uniformizes the departure rate
by the maximum number of tasks present in either of the two systems. 
Then informally speaking, the green region signifies the common portion
of tasks, and the red and blue regions represent the separate contributions. 
Without loss of generality, we assume that  $|\mathcal{A}_1| \geq |\mathcal{A}_2|$.
Observe that
\begin{enumerate}[{\normalfont (i)}]
\item The total departure rate from $\Pi_i$ is 
\begin{align*}
& M(t) \left[p(t)+(1-p(t)) \frac{|\mathcal{A}_i\setminus\mathcal{A}_{3-i}|}{M(t)-H(t)}\right]
= |\mathcal{A}_1\cap\mathcal{A}_2|+|\mathcal{A}_i\setminus\mathcal{A}_{3-i}|
=|\mathcal{A}_i|, \quad i = 1, 2.
\end{align*}
\item Since $|\mathcal{A}_1| \geq |\mathcal{A}_2|$, each task in $\Pi_1$
is equally likely to depart.
\item Each task in $\Pi_2$ within $\mathcal{A}_1\cap \mathcal{A}_2$
and each task within $\mathcal{A}_2 \setminus \mathcal{A}_1$
is equally likely to depart, and the probabilities of departures
are proportional to $|\mathcal{A}_1 \cap \mathcal{A}_2|$
and $|\mathcal{A}_2 \setminus \mathcal{A}_1|$, respectively.
\end{enumerate}

The T-coupling can be used to derive several stochastic inequality results
that will play an instrumental role in proving Theorem~\ref{th:pwr of d}.
Recall the CJSQ($n(N)$) class of schemes from Section~\ref{ssec:pf-idea-1}.
From a high-level perspective, the proof follows a somewhat similar structure
as in Section~\ref{ssec:pf-idea-1}.

\paragraph{Step~1. Condition for $g(N)$-alikeness of schemes
in CJSQ($n(N)$) class.}
The next lemma uses T-coupling to compare the occupancy processes
of the JSQ policy with any scheme from the CJSQ($n(N)$) class.

\begin{lemma}
\label{lem:majorization}
Let $Q_i^{\Pi_1}(t)$ and $Q_i^{\Pi_2}(t)$ denote the number of server pools
with at least $i$~tasks in two T-coupled systems under the JSQ policy
and a scheme in the $\CJSQ(n(N))$ class, respectively.
Then, for any $k \in \big\{1, 2, \ldots, B\big\}$,
\begin{equation}
\label{eq:majorization}
\left\{\sum_{i=1}^{k} Q_i^{\Pi_1}(t) - k n(N)\right\}_{t \geq 0} \leq
\left\{\sum_{i=1}^{k} Q_i^{\Pi_2}(t)\right\}_{t \geq 0} \leq
\left\{\sum_{i=1}^{k} Q_i^{\Pi_1}(t)\right\}_{t \geq 0},
\end{equation}
provided the two systems start from the same occupancy states at $t = 0$.
In particular, for all $k \geq 1$, 
\begin{equation}
\label{eq:upper-lower-occ}
\sup_{t\geq 0} \big|Q_k^{\Pi_2}(t) - Q_k^{\Pi_1}(t)\big| \leq k n(N)
\end{equation}
\end{lemma}

\begin{remark}
\label{rem:novelty}
{\normalfont
The stochastic ordering in Lemma~\ref{lem:majorization} is to be contrasted
with the weak majorization results
in \cite{Winston77,towsley,Towsley95,Towsley1992,W78} in the context
of the ordinary JSQ policy in the single-server queueing scenario,
and in~\cite{STC93,J89,M87,MS91} in the scenario of state-dependent
service rates, non-decreasing with the number of active tasks.
In the current infinite-server scenario, the results
in \cite{STC93,J89,M87,MS91} imply that for any non-anticipating
scheme~$\Pi$ taking assignment decisions based on the number of active
tasks only, for all $t \geq 0$,
\begin{align}
\label{eq: towsley}
\sum_{m=1}^{\ell} X_{(m)}^{\rm JSQ}(t) & \leq_{st}
\sum_{m=1}^\ell X_{(m)}^{\Pi}(t), \mbox{ for } \ell = 1, 2, \ldots, N, \\
\left\{L^{\rm JSQ}(t)\right\}_{t \geq 0} &\leq_{st}
\left\{L^{\Pi}(t)\right\}_{t \geq 0},
\end{align}
where $X_{(m)}^\Pi(t)$ is the number of tasks in the $m$-th ordered
server pool at time~$t$ in the system following scheme~$\Pi$
and $L^{\Pi}(t)$ is the total number of overflow events
under policy~$\Pi$ up to time~$t$. 
Observe that $X_{(m)}^\Pi$ can be visualized as the $m$-th largest
(rightmost) vertical bar (or stack) in Figure~\ref{figB}.
Thus~\eqref{eq: towsley} says that the sum of the lengths
of the $\ell$~largest \emph{vertical} stacks in a system following
any scheme~$\Pi$ is stochastically larger than or equal to that
following the ordinary JSQ policy for any $\ell = 1, 2, \ldots, N$.
Mathematically, this ordering can be equivalently written as
\begin{equation}
\label{eq:equiv-ord}
\sum_{i = 1}^{B} \min\big\{\ell, Q_i^{\rm JSQ}(t)\big\} \leq_{st}
\sum_{i = 1}^{B} \min\big\{\ell, Q_i^{\Pi}(t)\big\},
\end{equation}
for all $\ell = 1, \dots, N$.
In contrast, in order to show asymptotic equivalence on various scales,
we need to both upper and lower bound the occupancy states of the
$\CJSQ(n(N))$ schemes in terms of the JSQ policy, and therefore need
a much stronger hold on the departure process.
The T-coupling provides us just that, and has several useful properties
that are crucial for our proof technique.
For example, T-coupling has an important feature that
if two systems are T-coupled, then departures cannot increase the sum
of the absolute differences of the $Q_i$-values, which is not true
for the coupling considered in the above-mentioned literature.
The left stochastic ordering in~\eqref{eq:majorization} also does not
remain valid in those cases.
Furthermore, observe that the right inequality in~\eqref{eq:majorization}
(i.e., $Q_i$'s) implies the stochastic inequality is \emph{reversed}
in~\eqref{eq:equiv-ord}, which is counter-intuitive in view of the
well-established optimality properties of the ordinary JSQ policy.
In the current infinite-server dynamics where there is no queueing,
this can be understood from the intuition that a better LBA has more customers
in service instead of less customers in queue.
The fundamental distinction between the two coupling techniques is
also reflected by the fact that the T-coupling does not allow
for arbitrary nondecreasing state-dependent departure rate functions,
unlike the couplings in \cite{STC93,J89,M87,MS91}.}
\end{remark}

\begin{remark}[Comparison of T-coupling and S-coupling]
\label{rem:contrast}
\normalfont
As briefly mentioned earlier, in the current infinite-server scenario,
the departures of the ordered server pools cannot be coupled,
mainly since the departure rate at the $m^{\rm th}$ ordered server pool,
for some $m = 1, 2, \ldots, N$, depends on its number of active tasks.
It is worthwhile to mention that the T-coupling in the current section
is stronger than the S-coupling used in Section~\ref{univ} in the
single-server queueing scenario.
Observe that due to Lemma~\ref{lem:majorization}, the absolute difference
of the occupancy states of the JSQ policy and any scheme
from the CJSQ class at any time point can be bounded deterministically
(without any terms involving the cumulative number of lost tasks).
It is worth emphasizing that the universality result on some specific scale,
stated in Theorem~\ref{th:pwr of d}, does not depend on the behavior
of the JSQ policy on that scale, whereas in the single-server queueing scenario
it does, mainly because the upper and lower bounds
in Corollary~\ref{cor:bound} involve tail sums of two different policies.
More specifically, in the single-server queueing scenario the fluid
and diffusion limit results of CJSQ($n(N)$) class crucially use those
of the MJSQ($n(N)$) scheme, while in the current scenario it does not --
the results for the MJSQ($n(N)$) scheme comes as a consequence
of those for the CJSQ($n(N)$) class of schemes.
Also, the bounds in Lemma~\ref{lem:majorization} do not depend on~$t$,
and hence, apply in the steady state as well.
Moreover, the S-coupling compares the $k$ \emph{highest} horizontal bars,
whereas the T-coupling in the current section compares the $k$
\emph{lowest} horizontal bars.
As a result, the bounds on the occupancy states established
in Corollary~\ref{cor:bound} involve tail sums of the occupancy states
of the ordinary JSQ policy, which necessitates proving the convergence
of tail sums of the occupancy states of the ordinary JSQ policy. 
In contrast, the bound in the infinite-server scenario involves
only a single component (see Equation~\eqref{eq:upper-lower-occ}),
and thus, proving convergence of each component suffices.
\end{remark}

The goal in the first step is to show that for a suitable choice
of $n(N)$, the schemes in the CJSQ($n(N)$) class are indistinguishable
on suitable scales.
This is formalized in Proposition~\ref{prop: modified JSL} below,
which follows immediately from Lemma~\ref{lem:majorization}.
\begin{proposition}
\label{prop: modified JSL}
For any function $g:\N\to\R_+$ diverging to infinity,
if $n(N)/ g(N)\to 0$ as $N \to \infty$, then the JSQ policy and the
$\CJSQ(n(N))$ schemes are $g(N)$-alike.
\end{proposition}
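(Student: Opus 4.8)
The plan is to run the whole argument on the single probability space furnished by the T-coupling, on which the two stochastic inequalities of Lemma~\ref{lem:majorization} hold \emph{pathwise}, and then to transfer the control from the cumulative occupancy sums to the individual components $Q_i$ by telescoping. Throughout, set $\Pi_1 = \JSQ$ and $\Pi_2 = \CJSQ(n(N))$, couple the two systems via the T-coupling started from a common occupancy state at $t=0$, and take this to be the common probability space demanded by the definition of $g(N)$-alike. Writing $S_k^\Pi(t) := \sum_{i=1}^k Q_i^\Pi(t)$, the key observation is that since the sandwich in Lemma~\ref{lem:majorization} is realized on a \emph{single} sample path, the difference $D_k(t) := S_k^{\JSQ}(t) - S_k^{\CJSQ}(t)$ satisfies, simultaneously for every $k$ and every $t \geq 0$,
\[
0 \leq D_k(t) \leq k\, n(N) \qquad \text{almost surely.}
\]

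Next I would telescope. Since $Q_i^\Pi(t) = S_i^\Pi(t) - S_{i-1}^\Pi(t)$ with $S_0^\Pi \equiv 0$, for each fixed $i \geq 1$ we have $Q_i^{\JSQ}(t) - Q_i^{\CJSQ}(t) = D_i(t) - D_{i-1}(t)$; using $D_i(t) \in [0, i\,n(N)]$ and $D_{i-1}(t) \in [0, (i-1)n(N)]$ this yields the deterministic bound $\sup_{t \in [0,T]} | Q_i^{\JSQ}(t) - Q_i^{\CJSQ}(t) | \leq i\, n(N)$, uniform in $t$. Dividing by $g(N)$ and invoking the hypothesis $n(N)/g(N) \to 0$ gives, for each fixed $i \geq 1$,
\[
(g(N))^{-1}\sup_{t \in [0,T]} \big| Q_i^{\JSQ}(t) - Q_i^{\CJSQ}(t) \big| \leq \frac{i\, n(N)}{g(N)} \longrightarrow 0
\]
as $N \to \infty$. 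Because this bound is deterministic on the coupled space, the convergence $\pto 0$ required by the definition of $g(N)$-alike follows at once (indeed one even gets almost-sure convergence), which establishes that $\JSQ$ and $\CJSQ(n(N))$ are $g(N)$-alike.

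The genuine difficulty of the proposition is entirely absorbed into Lemma~\ref{lem:majorization}: obtaining a \emph{two-sided} sandwich for the cumulative sums on one common probability space is exactly what the T-coupling delivers, and I expect this to be the only nontrivial ingredient. As emphasized in Remark~\ref{rem:novelty}, the classical weak-majorization couplings would supply only a one-sided comparison and would not keep the two systems close in both directions, so the pathwise lower bound $D_k(t) \geq 0$ together with the matching upper bound $D_k(t) \leq k\,n(N)$ is essential; without the upper bound the per-level difference could not be squeezed. Once this two-sided pathwise control is in hand, the telescoping and scaling steps are elementary and require no further probabilistic input, so the remaining care is purely bookkeeping — verifying that the bound is applied for each fixed $i$ (the definition asks for convergence per level, not uniformly in $i$) and that the common initial occupancy state assumed in Lemma~\ref{lem:majorization} is consistent with the coupling used here.
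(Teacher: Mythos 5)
Your proof is correct and is essentially the paper's own argument: the paper likewise invokes Lemma~\ref{lem:majorization} to get the two-sided pathwise sandwich $0 \leq \sum_{i=1}^{k} Q_i^{\JSQ}(t) - \sum_{i=1}^{k} Q_i^{\CJSQ}(t) \leq k\,n(N)$ on a common probability space, telescopes to obtain $\sup_t |Q_k^{\JSQ}(t) - Q_k^{\CJSQ}(t)| \leq k\,n(N)$, and concludes by dividing by $g(N)$. The only difference is notational (your $D_k$ bookkeeping versus the paper's direct manipulation of the sums), and your observation that the resulting convergence is in fact deterministic, hence almost sure, matches what the paper's bound also yields.
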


\paragraph{Step~2. $g(N)$-alikeness of JSQ($d(N)$) and a scheme in CJSQ($n(N)$).}
Next we compare the CJSQ($n(N)$) schemes with the JSQ($d(N)$) scheme. 
The comparison follows a somewhat similar line of argument
as in Section~\ref{ssec:pf-idea-1}, and involves a JSQ$(n(N),d(N))$
scheme which is an intermediate blend between the $\CJSQ(n(N))$
schemes and the JSQ$(d(N))$ scheme.
Specifically, the JSQ$(n(N),d(N))$ scheme selects a candidate server
pool in the exact same way as the JSQ$(d(N))$ scheme.
However, it only assigns the task to that server pool if it belongs
to the $n(N)+1$ lowest ordered ones,
and to a randomly selected server pool among these otherwise.
By construction, the JSQ$(n(N),d(N))$ scheme belongs to the class $\CJSQ(n(N))$.

The next proposition establishes a sufficient criterion on $d(N)$
in order for the JSQ$(d(N))$ scheme and JSQ$(n(N),d(N))$ scheme
to be close in terms of $g(N)$-alikeness.

\begin{proposition}
\label{prop: power of d}
Assume, $n(N) / g(N) \to 0$ as $N \to \infty$ for some function
$g: \N\to \R_+$ diverging to infinity.
The JSQ$(n(N),d(N))$ scheme and the JSQ($d(N)$) scheme are $g(N)$-alike
if the following condition holds:
\begin{equation}
\label{eq:condition-same}
\frac{n(N)}{N}d(N) - \log\frac{N}{g(N)} \to \infty, \quad \text{as}
\quad N \to \infty.
\end{equation}
\end{proposition}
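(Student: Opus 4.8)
The plan is to reproduce, in the infinite-server/T-coupling setting, the two-ingredient argument that drove the single-server comparison in Section~\ref{univ}: a pathwise bound on the occupancy discrepancy in terms of the number of \emph{differing decisions}, followed by a tail bound on that number. Write $\Pi_1$ for the JSQ$(d(N))$ scheme and $\Pi_2$ for the JSQ$(n(N),d(N))$ scheme, and run both on the common probability space furnished by the T-coupling (synchronized arrivals; departures coupled through the green/blue/red mechanism). Let $A(T)$ be the number of arrivals in $[0,T]$ and let $\Delta(t)$ be the cumulative number of arrival epochs up to time~$t$ at which $\Pi_1$ and $\Pi_2$ send the incoming task to \emph{distinct} ordered server pools.

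First I would establish the T-coupling analogue of Proposition~\ref{prop:stoch-ord2}, namely the almost-sure pathwise bound
\[
\sum_{i=1}^{B} |Q_i^{\Pi_1}(t) - Q_i^{\Pi_2}(t)| \leq 2\Delta(t), \qquad t \geq 0,
\]
valid because the two systems start from a common occupancy state. Tracking the left-hand side $\Phi(t)$ across the three event types, one uses: (a) at an arrival where the schemes \emph{agree}, both systems add a task to their respective $k$-th ranked pool, an operation that leaves $\Phi$ unchanged; (b) at an arrival where they \emph{differ}, each system increments one of its $Q$-values, raising $\Phi$ by at most~$2$; and (c) at every coupled departure $\Phi$ does not increase — this is precisely the defining virtue of the T-coupling flagged in Remark~\ref{rem:novelty}, and follows by checking that removing a matched (green) task from both systems, or removing one unmatched (blue/red) task from each, cannot enlarge the sum of absolute differences.

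Next I would bound $\Delta(T)$. By construction $\Pi_2$ deviates from $\Pi_1$ at an arrival exactly when the candidate produced by the shared sampling step — the shortest of the $d(N)$ sampled pools — fails to lie among the $n(N)+1$ lowest-ordered pools; equivalently, when none of the $d(N)$ sampled pools belongs to that set. Since this candidate-selection mechanism is identical to the one in Section~\ref{univ}, the same combinatorial count (sampling without replacement) applies and yields, exactly as in Proposition~\ref{prop:differ},
\[
\Pro{\Delta(T) \geq M \given A(T)} \leq \frac{A(T)}{M}\Big(1 - \frac{n(N)}{N}\Big)^{d(N)} \leq \frac{A(T)}{M}\,\e^{-n(N)d(N)/N}.
\]
Taking $M = \eps g(N)$ and using that $A(T)$ is Poisson with mean $\lambda(N)T$, so $A(T) = \Op(N)$, I would split on the event $\{A(T) \leq 2\lambda N T\}$ to get
\[
\Pro{\Delta(T) > \eps g(N)} \leq \Pro{A(T) > 2\lambda N T} + \frac{2\lambda T}{\eps}\exp\Big(\log\tfrac{N}{g(N)} - \tfrac{n(N)d(N)}{N}\Big).
\]
The first term vanishes by the law of large numbers, and under condition~\eqref{eq:condition-same} the exponent tends to $-\infty$; hence $\Delta(T)/g(N) \pto 0$.

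Combining the two steps, for every fixed $i \geq 1$ and $T \geq 0$,
\[
\sup_{t \in [0,T]} (g(N))^{-1} |Q_i^{\Pi_1}(t) - Q_i^{\Pi_2}(t)| \leq 2\,\frac{\Delta(T)}{g(N)} \pto 0,
\]
which is exactly the assertion that $\Pi_1$ and $\Pi_2$ are $g(N)$-alike. I expect the main obstacle to be the pathwise bound of the first step, and within it the departure case~(c): unlike the S-coupling of Section~\ref{univ}, the ordered server pools cannot be departure-coupled directly once service rates depend on the number of active tasks, so one must argue through the task-based green/blue/red construction that the coupled removals never increase $\Phi$. The counting and the Markov estimate are then routine, and the condition~\eqref{eq:condition-same} enters only to force the exponent above to diverge.
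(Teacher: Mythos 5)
Your proposal is correct and follows essentially the same route as the paper: run the two schemes under the T-coupling, bound the occupancy discrepancy pathwise by twice the number of differing decisions (the T-coupling analogue of Proposition~\ref{prop:stoch-ord2}, whose validity rests on the fact that coupled green and blue/red removals never increase $\sum_i |Q_i^{\Pi_1}-Q_i^{\Pi_2}|$), then bound the differing decisions exactly as in Proposition~\ref{prop:differ} and apply Markov's inequality with $M=\varepsilon g(N)$ so that condition~\eqref{eq:condition-same} drives the exponent to $-\infty$. The only small imprecision is in your arrival case (a): when the two systems' $k$-th ranked pools hold different numbers of tasks, an agreeing arrival does not leave the discrepancy unchanged but can only decrease it (a $-1$ at one level offsetting a possible $+1$ at another), which is all the argument needs.
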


Finally, Proposition~\ref{prop: power of d} in conjunction
with Proposition~\ref{prop: modified JSL} yields Theorem~\ref{th:pwr of d}.
The overall proof strategy as described above, is schematically
represented in Figure~\ref{fig:sfigrelation-inf}.

\section{Load balancing in graph topologies}
\label{networks}

In this section we return to the single-server queueing dynamics,
and extend the universality properties to network scenarios,
where the $N$~servers are assumed to be inter-connected by some
underlying graph topology~$G_N$.
Tasks arrive at the various servers as independent Poisson processes
of rate~$\lambda$, and each incoming task is assigned to whichever
server has the smallest number of tasks amongst the one where it arrives
and its neighbors in~$G_N$. 
Ties are broken arbitrarily.
Thus, in case $G_N$ is a clique, each incoming task is assigned to the
server with the shortest queue across the entire system,
and the behavior is equivalent to that under the JSQ policy.
The stochastic optimality properties of the JSQ policy thus imply that
the queue length process in a clique will be better balanced
and smaller (in a majorization sense) than in an arbitrary graph~$G_N$.

As stated in the introduction, network scenarios are not only of mathematical
interest but also of major relevance from an application perspective.
For example, they emerge in modeling connectivity properties,
geographic restrictions and proximity relations in spatial network settings.
Besides capturing such physical concepts in infrastructure networks,
network scenarios also arise due to `logical relationships',
in particular so-called affinity notions and compatibility constraints
between tasks and servers.
Such features are increasingly common in data centers and cloud networks
due to heterogeneity and data locality issues, see for instance~\cite{WZS20,RM21},
and also relate to the scalability considerations that are important
in load balancing, as further explained below.\\

\noindent
{\bf Sparse graph topologies.}
Besides the prohibitive communication overhead discussed earlier,
a further scalability issue of the JSQ policy arises when executing
a task involves the use of some data.
Storing such data for all possible tasks on all servers will typically
require an excessive amount of storage capacity.
These two burdens can be effectively mitigated in sparser graph
topologies where tasks that arrive at a specific server~$i$ are only
allowed to be forwarded to a subset of the servers ${\mathcal N}_i$.
For the tasks that arrive at server~$i$, queue length information
then only needs to be obtained from servers in ${\mathcal N}_i$,
and it suffices to store replicas of the required data on the servers
in ${\mathcal N}_i$.
The subset ${\mathcal N}_i$ containing the peers of server~$i$
can be naturally viewed as its neighbors in some graph topology~$G_N$.
Here we consider the case of undirected graphs, but most of the analysis
can be extended to directed graphs.

While sparser graph topologies relieve the scalability issues
associated with a clique, the queue length process will be worse
(in the majorization sense) because of the limited connectivity.
Surprisingly, however, even quite sparse graphs can asymptotically
match the optimal performance of a clique, provided they are
suitably random, as we will further describe below.

The above model has been studied in \cite{Gast15,Turner98}, focusing
on certain fixed-degree graphs and in particular ring topologies
for which \cite{Mitzenmacher96} had already presented simulation results.
The results demonstrate that the flexibility to forward tasks
to a few neighbors, or even just one, with possibly shorter queues
significantly improves the performance in terms of the waiting time
and tail distribution of the queue length.
This resembles the \emph{power-of-choice} gains observed for JSQ($d$)
policies in complete graphs.

However, the results in \cite{Gast15,Turner98} also establish that
the performance sensitively depends on the underlying graph topology,
and that selecting from a fixed set of $d - 1$ neighbors typically
does not match the performance of re-sampling $d - 1$ alternate
servers for each incoming task from the entire population,
as in the power-of-$d$ scheme in a complete graph.
Further interesting results for the performance load balancing algorithms 
in a network context, with a focus on tail asymptotics, may be found
in~\cite{FM01,MT00}. \\

\noindent
{\bf Supermarket model on graphs.}
When each arriving task is routed to the shortest of $d \geq 2$
randomly selected neighboring queues, the process-level convergence
over any finite time interval has been established recently in~\cite{BMW17}.
In this work, the authors analyze the evolution of the queue length process
at an arbitrary tagged server as the system size becomes large.
The main ingredient is a careful analysis of local occupancy measures
associated with the neighborhood of each server and to argue that
under suitable conditions their asymptotic behavior is the same for all servers.
Under mild conditions on the graph topology $G_N$ (diverging minimum degree
and the ratio between minimum degree and maximum degree in each connected
component converges to~$1$), for a suitable initial occupancy measure,
\cite[Theorem~2.1]{BMW17} establishes that for any fixed $d \geq 2$,
the global occupancy state process for the JSQ($d$) scheme on $G_N$
has the same weak limit in~\eqref{fluid:standard} as that on a clique,
as the number of vertices $N$ becomes large.
Further, a propagation of chaos property was shown to hold for this system,
in the sense that the  queue lengths at any finite collection
of tagged servers are asymptotically independent, and the queue length
process for each server converges in distribution (in the path space)
to a certain McKean-Vlasov process~\cite[Theorem~2.2]{BMW17}.
Furthermore, when the graph sequence is random, with the $N$-th graph
given as an Erd\H{o}s-R\'enyi random graph (ERRG) on $N$~vertices
with average degree $d(N)$, note that there are two types of randomness
that drive the dynamics of the process: one being the randomness
of the underlying graph and the other being the randomness
of the arrival/departure processes given the graph.
This setup comes under the framework of random processes in random environment.
Here one is typically interested in two types of convergence results: 
(1)~\emph{Annealed convergence}, where one looks at the dynamics
of the sequence of occupancy processes averaged over the randomness
of the underlying graph, and
(2) \emph{Quenched convergence}, where one samples a sequence
of random graphs with increasing $N$ and given that sequence,
considers the dynamics of the sequence of occupancy process.
In~\cite{BMW17} annealed convergence is is established
under the condition $d(N) \to \infty$, and the quenched convergence
is shown under a stronger condition $d(N) / \log N \to \infty$. \\

\noindent
{\bf Asymptotic optimality on graphs.}
We return to the case when each incoming task is assigned
to whichever server has the smallest number of tasks
among the one where it arrives and its neighbors in~$G_N$.
The results presented in the remainder of the section are based
on~\cite{MBL17} where also full proofs are provided, unless indicated otherwise.
As mentioned earlier, the queue length process in a clique
will be better balanced and smaller (in a majorization sense)
than in an arbitrary graph~$G_N$.
Accordingly, a graph $G_N$ is said to be $N$-optimal or $\sqrt{N}$-optimal
when the queue length process on $G_N$ is equivalent
to that on a clique on an $N$-scale or $\sqrt{N}$-scale, respectively.
Roughly speaking, a graph is $N$-optimal if the \emph{fraction} of nodes
with $i$~tasks, for $i = 0, 1, \ldots$, behaves as in a clique as $N \to \infty$.
The fluid-limit results for the JSQ policy discussed
in Section~\ref{ssec:jsqfluid} imply that the latter fraction is
zero in the limit for all $i \geq 2$ in a clique in stationarity, i.e.,
the fraction of servers with two or more tasks vanishes in any graph
that is $N$-optimal, and consequently the mean waiting time vanishes
as well as $N \to \infty$.
Furthermore, the diffusion-limit results of~\cite{EG15} for the JSQ policy
discussed in Section~\ref{ssec:diffjsq} imply that the number of nodes
with zero tasks and that with two tasks both scale as $\sqrt{N}$ as $N \to \infty$.
Again loosely speaking, a graph is $\sqrt{N}$-optimal
if in the heavy-traffic regime the number of nodes with zero tasks
and that with two tasks when scaled by $\sqrt{N}$ both evolve
as in a clique as $N \to \infty$.
Formal definitions of asymptotic optimality on an $N$-scale or $\sqrt{N}$-scale
will be introduced in Definition~\ref{def:opt} below.

As one of the main results, we will demonstrate that, remarkably,
asymptotic optimality can be achieved in quite sparse ERRGs.
We prove that a sequence of ERRGs indexed by the number of vertices~$N$
with $d(N) \to \infty$ as $N \to \infty$, is $N$-optimal.
We further establish that the latter growth condition for the average degree 
is in fact necessary in the sense that any graph sequence that contains
$\Theta(N)$ bounded-degree vertices cannot be $N$-optimal.
This implies that a sequence of ERRGs with finite average degree cannot
be $N$-optimal.
The growth rate condition is more stringent for optimality
on $\sqrt{N}$-scale in the heavy-traffic regime.
Specifically, we prove that a sequence of ERRGs indexed by the number
of vertices $N$ with $d(N) / (\sqrt{N} \log(N)) \to \infty$
as $N \to \infty$, is $\sqrt{N}$-optimal.

The above results demonstrate that the asymptotic optimality of cliques
on an $N$-scale and $\sqrt{N}$-scale can be achieved in far sparser graphs,
where the number of connections is reduced by nearly a factor~$N$
and $\sqrt{N}/\log(N)$, respectively,
provided the topologies are suitably random in the ERRG sense.
This translates into equally significant reductions
in communication overhead and storage capacity,
since both are roughly proportional to the number of connections. \\

\noindent
{\bf Arbitrary graph topologies.}
The key challenge in the analysis of load balancing on arbitrary graph
topologies is that one needs to keep track of the evolution of number of tasks
at each vertex along with their corresponding neighborhood relationship.
This creates a major problem in constructing a tractable Markovian
state descriptor, and renders a direct analysis of such processes
highly intractable, as already alluded to in~\cite{Mitzenmacher96}.
Consequently, even asymptotic results for load balancing processes
on an arbitrary graph have remained scarce so far.
We take a radically different approach and aim to compare the load
balancing process on an arbitrary graph with that on a clique.
Specifically, rather than analyze the behavior for a given class
of graphs or degree value, we explore for what types of topologies
and degree properties the performance is asymptotically similar
to that in a clique. \\

\noindent
{\bf Stochastic coupling for graphs.}
Our proof arguments build on the stochastic coupling constructions
developed in Section~\ref{univ} for JSQ($d$) policies.
Specifically, we view the load balancing process on an arbitrary graph
as a `sloppy' version of that on a clique,
and thus construct several other intermediate sloppy versions.
By constructing novel couplings, we develop a method of comparing
the load balancing process on an arbitrary graph and that on a clique. 
In particular, we bound the difference between the fraction of vertices
with $i$ or more tasks in the two systems for $i = 1, 2, \dots$,
to obtain asymptotic optimality results.
From a high-level viewpoint, conceptually related graph conditions
for asymptotic optimality were examined using quite different techniques
in \cite{TX11,TX17} in a dynamic scheduling framework
(as opposed to the load balancing context). \\

\noindent
{\bf Notation.}
For $k = 1, \ldots, N$, denote by $X_k(G_N, t)$ the queue length
at the $k$-th server at time~$t$ (including the task possibly in service),
and by $X_{(k)}(G_N, t)$ the queue length at the $k$-th ordered server
at time~$t$ when the servers are arranged in  non-decreasing order
of their queue lengths
(ties can be broken in some way that will be evident from the context).
Let $Q_i(G_N, t)$ denote the number of servers with queue length
at least~$i$ at time~$t$ and $q_i(G_N, t) = Q_i(G_N, t)/N$, $i = 1, 2, \ldots$.
It is important to note that $(q_i(G_N, t))_{i \geq 1}$
is itself \emph{not} a Markov process.
Given the graph $G_N$, the queue-length process $(X_{k}(G_N, t))_{k = 1}^{N}$
is Markovian under the model assumptions, and $(q_i(G_N, t)_{i\geq 1})$
is a function of $(X_k(G_N, t))_{k=1}^N$.
Also, in the Halfin-Whitt heavy-traffic regime~\eqref{eq:HW},
define the centered and scaled processes 
\begin{equation}
\label{eq:HWOcc}
\bar{Q}_1(G_N,t) = - \frac{N - Q_1(G_N,t)}{\sqrt{N}}, \qquad
\bar{Q}_i(G_N,t) = \frac{ Q_i(G_N,t)}{\sqrt{N}},
\end{equation}
analogous to~\eqref{eq:diffscale}. \\

\noindent
{\bf Asymptotic optimality.}
In general, the optimality of the clique topology is strict,
but it turns out that near-optimality can be achieved asymptotically
in a broad class of other graph topologies.
Therefore, we now introduce two notions of \emph{asymptotic optimality},
which will be useful to characterize the performance in large-scale systems. 

\begin{definition}[{Asymptotic optimality}]
\label{def:opt}
A graph sequence $\GG = \{G_N\}_{N \geq 1}$ is called `asymptotically
optimal on $N$-scale' or `$N$-optimal', if for any $\lambda < 1$,
the process $(q_1(G_N, \cdot), q_2(G_N, \cdot), \ldots)$ converges weakly,
on any finite time interval, to a process
$(q_1(\cdot), q_2(\cdot),\ldots)$ satisfying~\eqref{eq:fluid}.

Moreover, a graph sequence $\GG = \{G_N\}_{N \geq 1}$ is called
`asymptotically optimal on $\sqrt{N}$-scale' or `$\sqrt{N}$-optimal',
if in the Halfin-Whitt heavy-traffic regime~\eqref{eq:HW},
on any finite time interval, the process
$(\bQ_1(G_N, \cdot), \bQ_2(G_N, \cdot), \ldots)$ as in~\eqref{eq:HWOcc}
converges weakly to the process $(\bQ_1(\cdot), \bQ_2(\cdot), \ldots)$
given by~\eqref{eq:diffusionjsq}.
\end{definition}

Intuitively speaking, if a graph sequence is $N$-optimal
or $\sqrt{N}$-optimal, then in some sense, the associated occupancy
processes are indistinguishable from those of the sequence of cliques
on $N$-scale or $\sqrt{N}$-scale.
In other words, on any finite time interval their occupancy processes
can differ from those in cliques by at most $\oo(N)$ or $\oo(\sqrt{N})$,
respectively. 
We will interchangeably use the terms \emph{fluid scale} and \emph{diffusion scale}
to refer to $N$-scale and $\sqrt{N}$-scale, respectively.
In particular, exploiting interchange of the stationary ($t \to \infty$)
and many-server ($N \to \infty$) limits, we obtain that for any
$N$-optimal graph sequence $\{G_N\}_{N \geq 1}$, as $N \to \infty$
\[
q_1(G_N, \infty) \to \lambda \quad \mbox{ and} \quad
q_i(G_N, \infty) \to 0 \quad \mbox{ for all } i = 2, \dots, B,
\]
implying that the stationary fraction of servers with queue length two
or larger and the mean waiting time vanish.
It is worthwhile to point out that the above interchange of limits
requires the ergodicity of the queue length process for each fixed~$N$,
a certain tightness of the sequence
$\{(q_1(G_N, \infty), q_2(G_N, \infty), \ldots)\}_{N\geq 1}$,
and the global stability of the fluid limits.

\subsection{Asymptotic optimality criteria for deterministic graph sequences}

We now proceed to develop a criterion for asymptotic optimality
of an arbitrary deterministic graph sequence on different scales.
Next this criterion will be leveraged to establish optimality
of a sequence of random graphs.
We start by introducing some useful notation, and two measures
of \emph{well-connectedness}.
Let $G = (V, E)$ be any graph.
For a subset $U \subseteq V$, define $\com(U) := |V\setminus N[U]|$
to be the cardinality of the set of all vertices that are disjoint
from $U$ and its immediate neighbors, where
$N[U] := U\cup \{v \in V:\ \exists\ u \in U \mbox{ with } (u, v) \in E\}$.
For any fixed $\varepsilon > 0$ define
\begin{equation}
\label{def:dis}
\dis_1(G,\varepsilon) := \sup_{U\subseteq V, |U|\geq \varepsilon |V|}\com(U),
\qquad
\dis_2(G,\varepsilon) := \sup_{U\subseteq V, |U|\geq \varepsilon \sqrt{|V|}}\com(U).
\end{equation}

The next theorem provides sufficient conditions for asymptotic optimality
on $N$-scale and $\sqrt{N}$-scale in terms
of the above two well-connectedness measures.

\begin{theorem}
\label{th:det-seq}
For any graph sequence $\GG = \{G_N\}_{N \geq 1}$,
\begin{enumerate}[{\normalfont (i)}]
\item $\GG$ is $N$-optimal if for any $\varepsilon > 0$, 
$\dis_1(G_N, \varepsilon) / N \to 0$ as $N \to \infty$.
\item $\GG$ is $\sqrt{N}$-optimal if for any $\varepsilon > 0$, 
$\dis_2(G_N, \varepsilon) / \sqrt{N} \to 0$ as $N \to \infty$.
\end{enumerate}
\end{theorem}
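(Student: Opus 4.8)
\noindent\emph{Proof proposal for Theorem~\ref{th:det-seq}.}
The plan is to view the load balancing process on $G_N$ as a ``sloppy'' version of the JSQ policy on the clique and to control the discrepancy between the two occupancy processes with the stack-based coupling machinery of Section~\ref{univ}. Concretely, I would S-couple the $G_N$-system with a JSQ system --- synchronizing the Poisson arrival epochs and the departure clocks of the ordered servers --- and track $\Delta_N(t)$, the cumulative number of arrivals up to time $t$ for which the graph-based assignment lands on a server whose queue length exceeds the minimum. Whenever a task is sent to a minimum-length server the graph scheme agrees with JSQ, so the argument behind Proposition~\ref{prop:stoch-ord2} yields the pathwise bound
\begin{equation*}
\sum_{i=1}^{B}\big|Q_i(G_N,t)-Q_i^{\JSQ}(t)\big|\le 2\,\Delta_N(t),\qquad t\ge 0.
\end{equation*}
Everything then reduces to controlling the number of ``misrouted'' tasks $\Delta_N$ on the relevant scale: $o(N)$ for $N$-optimality and $o(\sqrt N)$ for $\sqrt N$-optimality.

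The heart of the matter is to estimate the instantaneous misrouting rate through the well-connectedness measures in~\eqref{def:dis}. Since arrivals are uniform across the $N$ servers, a task misroutes precisely when it arrives outside the closed neighbourhood $N[U(t)]$ of the current minimum-length set $U(t)$, an event of conditional probability $\com(U(t))/N$. To sidestep the genuinely delicate case in which $U(t)$ is a vanishing fraction of the system (where $\com(U(t))$ need not be controlled), I would not insist on matching the exact minimum but instead fix $\varepsilon>0$ and take $U(t)$ to be the $\lceil\varepsilon N\rceil$ lowest-ordered servers. Then $|U(t)|\ge\varepsilon N$ forces $\com(U(t))\le\dis_1(G_N,\varepsilon)=o(N)$, and every task arriving in $N[U(t)]$ is necessarily assigned to a server of queue length at most $X_{(\lceil\varepsilon N\rceil)}(G_N,t)$, i.e.\ to one of the $\lceil\varepsilon N\rceil$ lowest-ordered servers. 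A Markov-type estimate exactly as in Proposition~\ref{prop:differ}, with per-task misrouting probability bounded by $\dis_1(G_N,\varepsilon)/N$, then gives that the cumulative misrouting count $\Delta_N^{(\varepsilon)}(T)=o(N)$ with high probability on any finite horizon $[0,T]$.

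Up to these $o(N)$ misrouted tasks the $G_N$-process therefore coincides with a companion scheme in $\CJSQ(\lceil\varepsilon N\rceil)$, the companion being S-coupled to $G_N$ so as to mirror its decisions for every task arriving in $N[U(\cdot)]$; Proposition~\ref{prop:stoch-ord2} bounds their occupancy gap by $2\Delta_N^{(\varepsilon)}=o(N)$. I would then invoke Proposition~\ref{prop:stoch-ord} to sandwich the companion, in the tail-sum order, between JSQ and $\MJSQ(\lceil\varepsilon N\rceil)$. Since $\MJSQ(\lceil\varepsilon N\rceil)$ is simply JSQ run on $(1-\varepsilon)N$ servers, its fluid limit converges to the JSQ fluid limit~\eqref{eq:fluid} as $\varepsilon\downarrow0$. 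Passing to the limit $N\to\infty$ with $\varepsilon$ fixed and then sending $\varepsilon\downarrow0$ squeezes the $G_N$ fluid limit onto~\eqref{eq:fluid}, proving part~(i); convergence of the stationary occupancy to the fixed point~\eqref{eq:fpjsq} follows from the asymptotic stability recorded in Subsection~\ref{ssec:jsqfluid}.

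For part~(ii) the same blueprint runs on $\sqrt N$-scale, now with $\dis_2$ and low sets of size $\varepsilon\sqrt N$: in the Halfin--Whitt regime~\eqref{eq:HW} the minimal set is either the $\Theta(\sqrt N)$ idle servers or the $\approx N$ servers of queue length one, so it comfortably exceeds $\varepsilon\sqrt N$ and the misrouting count becomes $o(\sqrt N)$, whence the scaled processes $\bar Q_i(G_N,\cdot)$ of~\eqref{eq:HWOcc} and $\bar Q_i^{\JSQ}(\cdot)$ share a diffusion limit. The remaining task is to upgrade this closeness to weak convergence to the two-dimensional reflected system~\eqref{eq:diffusionjsq}, for which I would transfer the martingale representation and Skorokhod/reflection arguments of Eschenfeldt \& Gamarnik~\cite{EG15} to $G_N$ through the $o(\sqrt N)$ coupling error. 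I expect this diffusion step to be the main obstacle: one must verify that an $o(\sqrt N)$ bound on misrouted tasks is strong enough to preserve the fragile non-elliptic limit --- in particular the local-time term $U_1$ --- and this sharper requirement, together with the need to keep the small-minimum-set regime under uniform control, is exactly what forces the stronger $\sqrt N$-growth condition encoded in $\dis_2$. \hfill$\qed$
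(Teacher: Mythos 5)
Your skeleton --- S-couple the graph process with a ``sloppy JSQ'' companion, bound the occupancy gap by twice the number of misrouted tasks as in Proposition~\ref{prop:stoch-ord2}, and control the misrouting rate through $\com(\cdot)$ --- is exactly the paper's strategy (Propositions~\ref{prop:cjsq}, \ref{prop:stoch-ord-new} and~\ref{prop:dis-new}, with your companion playing the role of the scheme $I(G_N,n)$). Where you genuinely deviate is in the size of the low set, and this creates a gap. You take the low set to be the $\lceil\varepsilon N\rceil$ (resp.\ $\lceil\varepsilon\sqrt{N}\rceil$) lowest-ordered servers, so your companion lies in $\CJSQ(\lceil\varepsilon N\rceil)$ (resp.\ $\CJSQ(\lceil\varepsilon\sqrt{N}\rceil)$), whose sloppiness is a \emph{positive fraction} of the relevant scale. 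Proposition~\ref{prop:cjsq} therefore does not apply to it, and you are forced into a double limit ($N\to\infty$, then $\varepsilon\downarrow 0$) through the $\MJSQ$ sandwich of Proposition~\ref{prop:stoch-ord} and Corollary~\ref{cor:bound}. On fluid scale this detour is probably repairable, since $\MJSQ(\lceil\varepsilon N\rceil)$ is JSQ at load $\lambda/(1-\varepsilon)$ and the fluid limit~\eqref{eq:fluid} depends continuously on the load, although you do not carry out this interchange. On diffusion scale, however, your argument breaks: the claim that the set of minimum-queue-length servers ``comfortably exceeds $\varepsilon\sqrt{N}$'' is false. Under~\eqref{eq:diffusionjsq} the number of idle servers is a \emph{reflected} diffusion on $\sqrt{N}$-scale that hits zero; on time sets of positive measure there are strictly between $0$ and $\varepsilon\sqrt{N}$ idle servers, and during those periods a $\CJSQ(\lceil\varepsilon\sqrt{N}\rceil)$ scheme may route arrivals to servers with one task that JSQ would route to idle servers. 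The best the sandwich then yields is a diffusion limit perturbed by $O(\varepsilon)$ --- $\MJSQ(\lceil\varepsilon\sqrt{N}\rceil)$ corresponds to Halfin--Whitt parameter $\beta-\varepsilon$ --- which does not vanish for fixed $\varepsilon$. To conclude you would need the diffusion limit of $\MJSQ(\lceil\varepsilon\sqrt{N}\rceil)$, continuity in $\beta$ of the reflected two-dimensional system~\eqref{eq:diffusionjsq}, and a squeeze uniform in $\varepsilon$; none of this is in your sketch, and you flag it yourself as ``the main obstacle'' without resolving it.

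The missing idea that lets the paper bypass all of this is a duality property of $\com$ hidden in the proof of Proposition~\ref{prop:dis-new}: if $n(N) > \dis_1(G_N,\varepsilon')$, then $\sup_{U\subseteq V_N,\,|U|\geq n(N)}\com(U)\leq \varepsilon' N$. Indeed, if some $U$ with $|U|\geq n(N)$ had $\com(U)>\varepsilon' N$, then $W=V_N\setminus N[U]$ would satisfy $|W|>\varepsilon' N$ and, by symmetry of the adjacency relation, $U\cap N[W]=\emptyset$, whence $\com(W)\geq |U|>\dis_1(G_N,\varepsilon')$, contradicting the definition of $\dis_1$ in~\eqref{def:dis}; the same argument applies to $\dis_2$ on $\sqrt{N}$-scale. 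Consequently the paper can take the low set of size $n_{\varepsilon'}(N)=\dis_1(G_N,\varepsilon')+1$, which is $o(N)$ \emph{by hypothesis}, and still bound the instantaneous misrouting rate by $\lambda\varepsilon' N$: choosing $\varepsilon'=\varepsilon/(2\lambda T)$ makes the misrouted count at most $\varepsilon N$ on $[0,T]$ with high probability, while the companion $I(G_N,n_{\varepsilon'})$ now lies in $\CJSQ(o(N))$ (resp.\ $\CJSQ(o(\sqrt{N}))$ with $m_{\varepsilon'}(N)=\dis_2(G_N,\varepsilon')+1$) and is \emph{exactly} $N$-optimal (resp.\ $\sqrt{N}$-optimal) by Proposition~\ref{prop:cjsq}. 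The arbitrariness of $\varepsilon$ then enters only through the pathwise distance between the graph process and an already-optimal companion, where letting $\varepsilon\downarrow 0$ is harmless --- rather than through the companion's own scaling limit, where, on diffusion scale, it is not.
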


From a high-level perspective, the conditions in Theorem~\ref{th:det-seq}
(i) and (ii) require that neighborhoods of any $\Theta(N)$
and $\Theta(\sqrt{N})$ vertices contain at least $N - \oo(N)$
and $N - \oo(\sqrt{N})$ vertices, respectively.
As we will see below in Theorems~\ref{th:inhom} and~\ref{th:reg},
the conditions in Theorem~\ref{th:det-seq} impose suitable levels
of connectivity in the graph topology
in order for it to be asymptotically optimal on fluid and diffusion scales,
while significantly reducing the total number of connections.
The next corollary is an immediate consequence of Theorem~\ref{th:det-seq}.

\begin{corollary}
Let $\GG = \{G_N\}_{N \geq 1}$ be any graph sequence.
Then {\rm(i)} if  the minimum degree in $G_N$ equals $N - \oo(N)$,
then $\GG$ is $N$-optimal,
and {\rm(ii)} if  the minimum degree in $G_N$ equals $N - \oo(\sqrt{N})$,
then $\GG$ is $\sqrt{N}$-optimal.
\end{corollary}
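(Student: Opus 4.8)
The plan is to derive both parts directly from Theorem~\ref{th:det-seq} by showing that a large minimum degree forces the well-connectedness measures $\dis_1$ and $\dis_2$ to be small. The crucial observation is that $\com(U)$ can be controlled \emph{uniformly} over all nonempty $U$ — not merely over large $U$ — by inspecting a single vertex, so that the parameter $\varepsilon$ entering the definitions in~\eqref{def:dis} plays no role in this sufficient condition.

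First I would fix $N$, write $G_N = (V, E)$ with $|V| = N$, and abbreviate $\psi(N) := N - d_{\min}(G_N)$. For any nonempty $U \subseteq V$, pick an arbitrary $u \in U$. Since $N[U] \supseteq N[\{u\}]$ and $|N[\{u\}]| = 1 + \deg(u) \geq 1 + d_{\min}(G_N)$, I obtain
\begin{equation*}
\com(U) = |V \setminus N[U]| \leq N - |N[\{u\}]| \leq N - 1 - d_{\min}(G_N) = \psi(N) - 1.
\end{equation*}
This bound is independent of $U$ and of $\varepsilon$. Since the constraint set $\{U : |U| \geq \varepsilon N\}$ (resp.\ $\{U : |U| \geq \varepsilon \sqrt{N}\}$) always contains the full vertex set $V$ and every member of it is nonempty, taking the supremum yields $\dis_1(G_N, \varepsilon) \leq \psi(N)$ and $\dis_2(G_N, \varepsilon) \leq \psi(N)$ for every $\varepsilon > 0$.

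Next I would substitute the hypotheses. Under (i), $d_{\min}(G_N) = N - o(N)$ means $\psi(N) = o(N)$, hence $\dis_1(G_N, \varepsilon)/N \leq \psi(N)/N \to 0$ for every $\varepsilon > 0$, and Theorem~\ref{th:det-seq}(i) then gives $N$-optimality. Under (ii), $d_{\min}(G_N) = N - o(\sqrt{N})$ means $\psi(N) = o(\sqrt{N})$, so $\dis_2(G_N, \varepsilon)/\sqrt{N} \leq \psi(N)/\sqrt{N} \to 0$ for every $\varepsilon > 0$, and Theorem~\ref{th:det-seq}(ii) yields $\sqrt{N}$-optimality.

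There is no substantive obstacle here — the corollary is genuinely immediate once the single-vertex bound is spotted. The only point worth flagging is that the argument in fact proves something slightly stronger than what the definitions demand: it bounds $\com(U)$ for \emph{all} nonempty $U$, and never uses that the sets $U$ are large, which is precisely why the $\varepsilon$-dependence in $\dis_1$ and $\dis_2$ becomes irrelevant under a minimum-degree hypothesis.
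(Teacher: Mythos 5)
Your proof is correct and is exactly the argument the paper has in mind: the paper states the corollary as an ``immediate consequence'' of Theorem~\ref{th:det-seq}, and your single-vertex bound $\com(U) \leq N - 1 - d_{\min}(G_N)$ for any nonempty $U$ is precisely the observation that makes it immediate. Nothing further is needed.
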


The rest of the subsection is devoted to a discussion
of the main proof arguments for Theorem~\ref{th:det-seq},
focusing on the proof of $N$-optimality. 
The proof of $\sqrt{N}$-optimality follows along similar lines.
We establish in Proposition~\ref{prop:cjsq} that if a system is able
to assign each task to a server in the set, denoted by $\cS^N(n(N))$,
of the $n(N)+1$ nodes with shortest queues,
where $n(N)$ is $\oo(N)$, then it is $N$-optimal. 
Since the underlying graph is not a clique however
(otherwise there is nothing to prove), for any $n(N)$ not every arriving task
can be assigned to a server in $\cS^N(n(N))$.
Hence we further prove in Proposition~\ref{prop:stoch-ord-new}
a stochastic comparison property implying that if on any finite time interval
of length~$t$, the number of tasks $\Delta^N(t)$ that are not assigned
to a server in $\cS^N(n(N))$ is $o_P(N)$, then the system is $N$-optimal as well.
The $N$-optimality can then be concluded when $\Delta^N(t)$ is $o_P(N)$,
which we establish in Proposition~\ref{prop:dis-new}
under the condition that $\dis_1(G_N, \varepsilon) / N \to 0$ as $N \to \infty$
as stated in Theorem~\ref{th:det-seq}.

To further explain the idea described in the above proof outline,
it is useful to adopt a slightly different point of view towards
load balancing processes on graphs.
From a high-level viewpoint, a load balancing process can be thought of as follows:
there are $N$~servers, which are assigned incoming tasks by some scheme.
The assignment scheme can arise from some topological structure,
in which case we will call it \emph{topological load balancing},
or it can arise from some other property of the occupancy process,
in which case we will call it \emph{non-topological load balancing}.
As mentioned earlier, the JSQ policy or the clique is optimal
among the set of all non-anticipating schemes,
irrespective of being topological or non-topological.
Also, load balancing on graph topologies other than a clique
can be thought of as a `sloppy' version of that on a clique, when each server
only has access to partial information on the occupancy state.
Below we first introduce a different type of sloppiness in the task
assignment scheme, and show that under a limited amount of sloppiness
optimality is retained on a suitable scale.
Next we will construct a scheme which is a hybrid of topological
and non-topological schemes, whose behavior is simultaneously close to
both the load balancing process on a suitable graph and that on a clique.

\paragraph{A class of sloppy load balancing schemes.}
Fix some function $n: \N \to \N$, and recall the set $\cS^N(n(N))$
as before as well as the class $\CJSQ(n(N))$
from Section~\ref{ssec:pf-idea-1}, where each arriving task is assigned
to one of the servers in $\cS^N(n(N))$. 
It should be emphasized that for any scheme in $\CJSQ(n(N))$,
we are not imposing any restrictions on \emph{how} the incoming task
should be assigned to a server in $\cS^N(n(N))$.
The scheme only needs to ensure that the arriving task is assigned to some
server in $\cS^N(n(N))$ with respect to \emph{some} tie breaking mechanism.
Observe that using Corollary~\ref{cor:bound} and following the arguments
as in the proof of Theorems~\ref{fluidjsqd} and~\ref{diffusionjsqd},
we obtain the next proposition, which provides a sufficient criterion
for asymptotic optimality of any scheme in $\CJSQ(n(N))$.

\begin{proposition}
\label{prop:cjsq}
For $0 \leq n(N) < N$, let $\Pi \in \CJSQ(n(N))$ be any scheme.
{\rm(i)} If $n(N) / N \to 0$ as $N \to \infty$, then $\Pi$ is $N$-optimal,
and
{\rm(ii)} If $n(N) / \sqrt{N} \to 0$ as $N \to \infty$, then $\Pi$ is
$\sqrt{N}$-optimal.
\end{proposition}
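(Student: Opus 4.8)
The plan is to recognize that Proposition~\ref{prop:cjsq} is essentially the content of Steps~(i)--(ii) in the proof-sketch of Theorems~\ref{fluidjsqd} and~\ref{diffusionjsqd}: the class $\CJSQ(n(N))$ invoked here is the same non-topological class of schemes, so the $S$-coupling of three systems under $\JSQ$, an arbitrary $\CJSQ(n(N))$ scheme, and $\MJSQ(n(N))$ is available, and Corollary~\ref{cor:bound} furnishes almost-sure two-sided bounds that sandwich $Q_m^{\CJSQ}$ between tail sums of the $\JSQ$ and $\MJSQ$ occupancies. First I would restrict to the infinite-buffer setting relevant to Definition~\ref{def:opt}, so that the loss terms $L^{\JSQ},L^{\MJSQ}$ in Corollary~\ref{cor:bound} vanish and the bounds become
\[
\sum_{i=m}^{B} Q_i^{\JSQ}(t) - \sum_{i=m+1}^{B} Q_i^{\MJSQ}(t) \;\leq\; Q_m^{\CJSQ}(t) \;\leq\; \sum_{i=m}^{B} Q_i^{\MJSQ}(t) - \sum_{i=m+1}^{B} Q_i^{\JSQ}(t).
\]

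For part~(i) I would first establish that $\MJSQ(n(N))$ shares the $\JSQ$ fluid limit~\eqref{eq:fluid} whenever $n(N)/N\to 0$. This is immediate from the characterization (made in Section~\ref{univ}) that $\MJSQ(n(N))$ keeps $n(N)$ servers idle and runs $\JSQ$ on the remaining $N-n(N)$ servers at relative load $\lambda N/(N-n(N))\to\lambda$; since the reserved fraction $n(N)/N\to 0$ is negligible on fluid scale, $q_i^{\MJSQ}(\cdot)$ and $q_i^{\JSQ}(\cdot)$ have the common weak limit $q_i(\cdot)$ of~\eqref{eq:fluid}. Dividing the displayed bounds by $N$ and letting $N\to\infty$, both the lower and upper bounds telescope to $\sum_{i\geq m} q_i(t) - \sum_{i\geq m+1} q_i(t) = q_m(t)$, forcing $q_m^{\CJSQ}(\cdot)\to q_m(\cdot)$ for every $m$; by Definition~\ref{def:opt} this is exactly $N$-optimality.

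Part~(ii) runs along identical lines on diffusion scale, using the centered and scaled coordinates of~\eqref{eq:diffscale}. Here I would verify that $\MJSQ(n(N))$ matches the $\JSQ$ diffusion limit~\eqref{eq:diffusionjsq} when $n(N)/\sqrt{N}\to 0$, the $n(N)$ reserved idle servers now contributing only $n(N)/\sqrt{N}\to 0$ to $-\bar{Q}_1$. Rescaling the sandwich of Corollary~\ref{cor:bound} by $\sqrt{N}$ (with the loss terms again absent), the lower and upper bounds both collapse to $\bar{Q}_m(\cdot)$ given by~\eqref{eq:diffusionjsq}, yielding $\sqrt{N}$-optimality in the sense of Definition~\ref{def:opt}.

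The main obstacle in both parts is not the telescoping itself but the passage from componentwise convergence of $Q_i/N$ (resp.\ $Q_i/\sqrt{N}$) to convergence of the \emph{tail sums} $\sum_{i\geq m} Q_i^{\JSQ}$ and $\sum_{i\geq m} Q_i^{\MJSQ}$ appearing in the bounds; as flagged in Remark~\ref{rem:contrast}, the $S$-coupling bounds inescapably involve these tail sums. Controlling them requires a uniform-integrability/tightness estimate ensuring no occupancy mass escapes to arbitrarily long queues: on fluid scale this follows from conservation of total mass ($\sum_i q_i\leq\lambda<\infty$), while on diffusion scale it is considerably more delicate, since one must show $\sum_{i\geq 3} Q_i^{\JSQ}/\sqrt{N}\to 0$ uniformly on compact time intervals. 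This is precisely the tail-sum convergence for the ordinary $\JSQ$ policy that the proofs of Theorems~\ref{fluidjsqd} and~\ref{diffusionjsqd} already supply, and it is the ingredient I would import rather than reprove.
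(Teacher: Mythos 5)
Your proposal is correct and takes essentially the same route as the paper: the paper proves Proposition~\ref{prop:cjsq} precisely by invoking Corollary~\ref{cor:bound} and repeating Steps~(i)--(ii) of the proof-sketch of Theorems~\ref{fluidjsqd} and~\ref{diffusionjsqd} (first $\MJSQ(n(N))$ is shown to share the JSQ fluid/diffusion limit under $n(N)/N\to 0$, resp.\ $n(N)/\sqrt{N}\to 0$, and then the S-coupling sandwich transfers this to every scheme in $\CJSQ(n(N))$). Your identification of the tail-sum convergence for the ordinary JSQ policy as the ingredient to import rather than reprove is exactly the point flagged in Remark~\ref{rem:contrast}.
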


\paragraph{A bridge between topological and non-topological load balancing.}
For any graph $G_N$ and $n \leq N$, we first construct a scheme called
$I(G_N, n)$, which is an intermediate blend between the topological
load balancing process on $G_N$ and some kind of non-topological
load balancing on $N$~servers.
The choice of $n = n(N)$ will be clear from the context.

To describe the scheme $I(G_N, n)$, first synchronize the arrival epochs
at server~$v$ in both systems, $v = 1, 2, \ldots, N$.
Further, synchronize the departure epochs at the $k$-th ordered server
with the $k$-th smallest number of tasks in the two systems,
$k = 1, 2, \ldots, N$.
When a task arrives at server~$v$ at time~$t$ say, it is assigned
in the graph $G_N$ to a server $v' \in N[v]$ according to its own
statistical law.
For the assignment under the scheme $I(G_N,n)$, first observe that if
\begin{equation}
\label{eq:criteria}
\min_{u \in N[v]} X_u(G_N,t) \leq \max_{u \in \cS(n)} X_u(G_N,t),
\end{equation}
then there exists \emph{some} tie-breaking mechanism for which
$v' \in N[v]$ belongs to $\cS(n)$ under $G_N$.
Pick such an ordering of the servers, and assume that $v'$ is the
$k$-th ordered server in that ordering, for some $k \leq n+1$.
Under $I(G_N,n)$ assign the arriving task to the $k$-th ordered server
(breaking ties arbitrarily in this case).
Otherwise, if \eqref{eq:criteria} does not hold, then the task is
assigned to one of the $n+1$ servers with minimum queue lengths
under $G_N$ uniformly at random.

Denote by $\Delta^N(I(G_N, n), T)$ the cumulative number of arriving
tasks up to time $T \geq 0$ for which Equation~\eqref{eq:criteria} is
violated under the above coupling.
The next proposition shows that the load balancing process under the
scheme $I(G_N,n)$ is close to that on the graph $G_N$ in terms of the
random variable $\Delta^N(I(G_N, n), T)$.

\begin{proposition}
\label{prop:stoch-ord-new}
The following inequality is preserved almost surely
\begin{equation}
\label{eq:stoch-ord-new}
\sum_{i=1}^{B} |Q_i(G_N, t) - Q_i(I(G_N, n), t)| \leq
2 \Delta^N(I(G_N, n), t), \qquad \forall\ t \geq 0,
\end{equation}
provided the two systems start from the same occupancy state at $t = 0$.
\end{proposition}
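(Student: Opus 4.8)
The plan is to recognize \eqref{eq:stoch-ord-new} as an instance of the general S-coupling comparison already established in Proposition~\ref{prop:stoch-ord2}, applied to the pair of schemes $\Pi_1 = $ topological load balancing on $G_N$ and $\Pi_2 = I(G_N,n)$. First I would verify that these two systems are genuinely S-coupled in the required sense: the construction of $I(G_N,n)$ synchronizes the per-server arrival epochs and the departure epochs of the $k$-th ordered server across the two systems, which is exactly the server-based coupling underlying Proposition~\ref{prop:stoch-ord2}. The fact that arrivals here occur at individual servers rather than at a single dispatcher is immaterial, since the discrepancy argument only uses that both systems see a common stream of arrival events, each depositing one task, and a common stream of ordered-position departures. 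Because both systems start from the same occupancy state at $t=0$, Proposition~\ref{prop:stoch-ord2} yields
\[
\sum_{i=1}^{B} |Q_i(G_N,t) - Q_i(I(G_N,n),t)| \leq 2\Delta_{G_N, I(G_N,n)}(t) \qquad \forall\ t \geq 0,
\]
where $\Delta_{G_N,I(G_N,n)}(t)$ counts the arrival epochs at which the two systems \emph{differ in decision}, i.e.\ route the incoming task to ordered servers of different rank.

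It then remains to bound $\Delta_{G_N,I(G_N,n)}(t)$ by $\Delta^N(I(G_N,n),t)$, the number of arrivals up to time~$t$ violating \eqref{eq:criteria}. This is where the definition of $I(G_N,n)$ enters. I would argue the contrapositive: whenever \eqref{eq:criteria} holds, the server $v'$ to which $G_N$ routes the task lies in $\cS(n)$ under a suitable ordering, say as the $k$-th ranked server with $k \leq n+1$, and by construction $I(G_N,n)$ then assigns the task to its own $k$-th ordered server; thus both systems act on the common ordered rank $k$ and do \emph{not} differ in decision. Consequently every differ-in-decision epoch is an epoch at which \eqref{eq:criteria} fails, so $\Delta_{G_N,I(G_N,n)}(t) \leq \Delta^N(I(G_N,n),t)$, and chaining this with the displayed inequality gives exactly \eqref{eq:stoch-ord-new}. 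The only point requiring care here is that the two systems carry different occupancy states, so the ``$k$-th ordered server'' refers to different physical servers in the two systems; what matters for decision-agreement is the coincidence of the ordered \emph{rank} $k$, not of the physical server, and confirming this coincidence from the construction is the crux of this step.

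If instead one prefers a self-contained derivation rather than quoting Proposition~\ref{prop:stoch-ord2}, I would track $D(t) := \sum_{i=1}^{B} |Q_i(G_N,t) - Q_i(I(G_N,n),t)|$, which starts at $0$, and analyze its jumps event by event: an arrival changes at most one occupancy coordinate in each system by one, so an arrival at which the systems agree in decision leaves $D$ unchanged while a differ-in-decision arrival increases $D$ by at most $2$, and a synchronized departure does not increase $D$; summing these increments over $[0,t]$ reproduces the bound. The \textbf{main obstacle} is the departure step: a departure at the $k$-th ordered server decrements $Q_{m_1}$ in $G_N$ and $Q_{m_2}$ in $I(G_N,n)$, where $m_1,m_2$ are the possibly unequal lengths of that ordered server in the two systems, and showing that this cannot increase $D$ needs the ordering/stack structure behind Proposition~\ref{prop:det ord} rather than a one-line estimate. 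This delicate ingredient is precisely what Proposition~\ref{prop:stoch-ord2} already packages, which is why invoking it directly is the cleaner route.
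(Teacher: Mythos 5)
Your reduction is correct and matches the paper's own strategy: the survey's source proof establishes \eqref{eq:stoch-ord-new} by exactly the argument packaged in Proposition~\ref{prop:stoch-ord2}, applied to the S-coupled pair (topological scheme on $G_N$, scheme $I(G_N,n)$), together with the observation that by construction of $I(G_N,n)$ every differ-in-decision epoch is an epoch at which \eqref{eq:criteria} is violated, so $\Delta_{G_N,I(G_N,n)}(t)\leq\Delta^N(I(G_N,n),t)$. Your identification of the two delicate points — that agreement means coincidence of ordered \emph{rank} rather than physical server, and that the departure step is where the stack-ordering structure is genuinely needed (noting only that same-rank events cannot increase the discrepancy, not that they leave it unchanged) — is precisely what the paper's coupling construction is designed to deliver.
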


In order to conclude optimality on $N$-scale or $\sqrt{N}$-scale,
it remains to be shown that the term $\Delta^N(I(G_N, n), T)$ is
sufficiently small.
The next proposition provides suitable asymptotic bounds for
$\Delta^N(I(G_N, n), T)$ under the conditions on $\dis_1(G_N, \varepsilon)$
and $\dis_2(G_N, \varepsilon)$ stated in Theorem~\ref{th:det-seq}.
For $N$-optimality, the idea is that since for all $\varepsilon>0$,
$\dis_1(G_N, \varepsilon)$ is $o(N)$, one can show that there is a number
$n_{\varepsilon}(N) = o(N)$, such that  $\com(U) =o(N)$ uniformly
over all $U\subseteq V_N$ with $|U|\geq n_{\varepsilon}(N)$. 
Consequently, this can be used to show that on any finite time interval,
`most of the tasks' will be assigned to one of the $n_{\varepsilon}(N)$
servers with smallest queue lengths.
This enables us to couple the system with a scheme from the class $\CJSQ(n_{\varepsilon}(N))$.
The idea is similar when we consider $\sqrt{N}$-optimality.

\begin{proposition}
\label{prop:dis-new}
\begin{enumerate}[{\normalfont (i)}]
\item For any $\varepsilon>0$, there exists $\varepsilon'>0$
and $n_{\varepsilon'}(N)$ with $n_{\varepsilon'}(N)/N\to 0$ as $N\to\infty$,
such that if $\dis_1(G_N,\varepsilon')/N\to 0$ as $N\to\infty$,
then for all $T>0$, $$\Pro{\Delta^N(I(G_N,n_{\varepsilon'}),T)/N>\varepsilon}\to 0.$$ 
\item For any $\varepsilon>0$, there exists $\varepsilon'>0$
and $m_{\varepsilon'}(N)$ with $m_{\varepsilon'}(N)/\sqrt{N}\to 0$ as $N\to\infty$,
such that if $\dis_2(G_N,\varepsilon')/\sqrt{N}\to 0$ as $N\to\infty$, then for all $T>0$, $$\Pro{\Delta^N(I(G_N,m_{\varepsilon'}),T)/\sqrt{N}>\varepsilon}\to 0.$$
\end{enumerate}
\end{proposition}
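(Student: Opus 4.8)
The plan is to prove both parts from a single template, differing only in the scale ($N$ versus $\sqrt N$) and the connectivity measure ($\dis_1$ versus $\dis_2$); I describe part~(i) in detail and indicate the changes for part~(ii). First I would translate the violation event in~\eqref{eq:criteria} into a purely graph-theoretic statement. Write $\ell(t) := \max_{u\in\cS(n)}X_u(G_N,t)$ for the $n$-th smallest queue length at time $t$, and $W(t) := \{u: X_u(G_N,t)\leq \ell(t)\}$, so that $\cS(n)\subseteq W(t)$ and $|W(t)|\geq n$. An arriving task at server~$v$ violates~\eqref{eq:criteria} precisely when $\min_{u\in N[v]}X_u(G_N,t) > \ell(t)$, i.e.\ when $N[v]\cap W(t)=\emptyset$; by symmetry of the adjacency relation this is the same as $v\notin N[W(t)]$. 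Hence at each instant the set of servers at which an arrival would be counted in $\Delta^N(I(G_N,n),\cdot)$ has cardinality exactly $\com(W(t)) = |V\setminus N[W(t)]|$.

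Second, I would exploit the monotonicity of $\com$ together with the hypothesis on $\dis_1$. Since $\com(U)$ is non-increasing in $U$, it suffices to exhibit a large subset of $W(t)$. Granting for the moment that with suitable initial data one can fix $\varepsilon'>0$ (say $\varepsilon'=(1-\lambda)/2$) and $n=n_{\varepsilon'}(N)=o(N)$ with $n\leq\varepsilon' N$ such that, with high probability, the number of idle servers in the $G_N$-system stays at least $\varepsilon' N$ throughout $[0,T]$ (this is the crux, addressed below), one argues as follows: on that event $\cS(n)$ consists entirely of idle servers, so $\ell(t)=0$, $W(t)$ is exactly the set of idle servers, and $|W(t)|\geq \varepsilon' N$. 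Monotonicity then gives $\com(W(t))\leq \dis_1(G_N,\varepsilon')=:D_N$, which is $o(N)$ by assumption.

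Third, I would convert this instantaneous bound on the number of bad servers into a bound on the cumulative count. Because tasks arrive at each server as an independent rate-$\lambda$ Poisson process and at every time there are at most $D_N$ bad servers, the counting process of violations is stochastically dominated by a Poisson process of rate $\lambda D_N$; hence $\E{\Delta^N(I(G_N,n),T)}\leq \lambda T D_N=o(N)$ on the good event, and Markov's inequality yields $\Pro{\Delta^N(I(G_N,n),T)/N>\varepsilon}\to 0$. For part~(ii) the identical scheme applies, now with $m=m_{\varepsilon'}(N)=o(\sqrt N)$, $W(t)$ the idle set of size $\geq \varepsilon'\sqrt N$, and $D_N=\dis_2(G_N,\varepsilon')=o(\sqrt N)$, giving $\Delta^N(I(G_N,m),T)/\sqrt N\pto 0$.

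The main obstacle is the deferred claim of the second step: that the relevant low-queue set $W(t)$ is large uniformly over $[0,T]$. On the fluid scale one can close this by a bootstrap. The comparison scheme $I(G_N,n)\in\CJSQ(n)$ is itself $N$-optimal by Proposition~\ref{prop:cjsq}, so its number of idle servers is $(1-\lambda+o(1))N$ uniformly on $[0,T]$, and Proposition~\ref{prop:stoch-ord-new} (taking $i=1$) bounds the idle count of $G_N$ below by that of $I(G_N,n)$ minus $2\Delta^N$. Introducing the stopping time $\tau=\inf\{t: \#\text{idle}_{G_N}(t)<\varepsilon' N\}$ and running the Poisson bound of step three up to $\tau\wedge T$ shows $\Delta^N(\tau\wedge T)=o_P(N)$, which feeds back to keep the idle count above $\varepsilon' N$ and forces $\tau>T$ with high probability. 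On the diffusion scale the same self-consistency is far more delicate, since the idle count is only $\Theta(\sqrt N)$ and the limiting coordinate $\bar Q_1$ of~\eqref{eq:diffusionjsq} returns arbitrarily close to the origin; the uniform lower bound $\geq\varepsilon'\sqrt N$ may fail on short excursions, and one must control the violations accumulated during those excursions, leaning on the fine path properties of the JSQ diffusion limit of~\cite{EG15} inherited by $I(G_N,m)$ to show their aggregate contribution is still $o(\sqrt N)$. A routine check finally disposes of the tie-breaking ambiguity in the definition of $\cS(n)$, which only concerns servers whose queue length equals $\ell(t)$ and hence never affects whether $N[v]\cap W(t)$ is empty.
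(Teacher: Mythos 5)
Your opening move coincides with the paper's: a violation of \eqref{eq:criteria} at a vertex $v$ forces $N[v]\cap\cS(n)=\emptyset$, so at any instant the set of servers where an arrival would be counted in $\Delta^N$ has cardinality at most $\com$ of the current low-queue set, and the cumulative count is dominated by a Poisson variable with mean $\lambda T$ times a uniform bound on that $\com$. The divergence is in how that uniform bound is produced, and this is where your argument has a genuine gap. You try to prove that the low-queue set has size at least $\varepsilon' N$ (resp.\ $\varepsilon'\sqrt N$) uniformly on $[0,T]$, identifying it with the idle set and bootstrapping via Propositions~\ref{prop:cjsq} and~\ref{prop:stoch-ord-new}. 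The paper never needs this: it uses only the fact that $|\cS(n)|\geq n$ \emph{deterministically}, combined with a purely graph-theoretic duality that you are missing. Namely, suppose $n>\dis_1(G_N,\varepsilon')$, $|U|\geq n$, and $\com(U)>\varepsilon' N$; then $W:=V_N\setminus N[U]$ satisfies $|W|>\varepsilon' N$, hence $\com(W)\leq\dis_1(G_N,\varepsilon')$; but $W\cap N[U]=\emptyset$ implies, by undirectedness, $U\subseteq V_N\setminus N[W]$, so $|U|\leq\com(W)\leq\dis_1(G_N,\varepsilon')<n\leq |U|$, a contradiction. Thus with $\varepsilon'=\varepsilon/(2\lambda T)$ and $n_{\varepsilon'}(N)=\dis_1(G_N,\varepsilon')+1=o(N)$, \emph{every} set of size at least $n_{\varepsilon'}(N)$ has $\com$ at most $\varepsilon' N$, so the violation rate is at most $\lambda\varepsilon' N$ at every instant, in every state and from every initial condition, and the law of large numbers for the dominating Poisson process finishes the proof. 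The identical three lines with $\sqrt N$ in place of $N$ and $\dis_2$ in place of $\dis_1$ give part~(ii).

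The consequences of skipping this step are not cosmetic. For part~(i), your premise that the idle count stays above $\varepsilon' N$ throughout $[0,T]$ is false for legitimate initial states: if every server starts with exactly one task, the fluid idle fraction is $0$ at $t=0$ and grows only like $(1-\lambda)t$, so no fixed $\varepsilon'>0$ works on a full interval, and your stopping-time bootstrap has nothing to start from ($\tau=0$). Since Definition~\ref{def:opt} and Theorem~\ref{th:det-seq} concern arbitrary initial occupancy states, what you prove is strictly weaker than what is needed. For part~(ii) the failure is structural, as you half-acknowledge: the limit $\bar Q_1$ in \eqref{eq:diffusionjsq} is reflected at the origin, so the diffusion-scaled idle count repeatedly drops below any $\varepsilon'\sqrt N$ (indeed touches zero), the uniform lower bound you need simply does not hold, and no crude bound rescues the excursions — during a time interval of positive length with few idle servers, the trivial violation rate bound is of order $\lambda N$, which is fatally larger than $o(\sqrt N)$. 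The duality observation above is exactly what removes the need for any path control, and without it part~(ii) remains unproven.
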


The proof of Theorem~\ref{th:det-seq} then readily follows
by combining Propositions~\ref{prop:cjsq}-\ref{prop:dis-new}
and observing that the scheme $I(G_N,n)$ belongs to the class $\CJSQ(n)$
by construction.

From the conditions of Theorem~\ref{th:det-seq} it follows that
if for all $\varepsilon > 0$, $\dis_1(G_N, \varepsilon)$
and $\dis_2(G_N, \varepsilon)$ are $\oo(N)$ and $\oo(\sqrt{N})$,
respectively, then the total number of edges in $G_N$ must be
$\omega(N)$ and $\omega(N\sqrt{N})$, respectively.
Theorem~\ref{th:bdd-deg} below states that the \emph{super-linear}
growth rate of the total number of edges is not only sufficient,
but also necessary in the sense that any graph with $\OO(N)$ edges
is asymptotically sub-optimal on $N$-scale.

\begin{theorem}
\label{th:bdd-deg}
Let $\GG = \{G_N\}_{N \geq 1}$ be any graph sequence,
such that there exists a fixed integer $M < \infty$ with 
\begin{equation}
\label{eq:bdd-deg}
\limsup_{N \to \infty} \dfrac{\#\big\{v \in V_N: d_v \leq M\big\}}{N} > 0,
\end{equation}
where $d_v$ is the degree of the vertex~$v$.
Then $\GG$ is sub-optimal on $N$-scale.
\end{theorem}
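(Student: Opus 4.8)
The plan is to argue by contradiction: assume $\GG$ is $N$-optimal and derive that a positive fraction of servers must carry two or more tasks in steady state, contradicting the conclusion $q_i(G_N,\infty)\to 0$ ($i\ge 2$) recorded after Definition~\ref{def:opt}. Since $N$-optimality demands the correct limit for \emph{every} $\lambda<1$, I am free to fix one convenient value of $\lambda$ close to $1$; the scarcity of idle servers in heavy traffic is what drives the argument. Write $X_v$ for the stationary queue length at server $v$, call $v$ \emph{busy} if $X_v\ge 1$, and let $L_N=\{v:d_v\le M\}$, so that along a subsequence $|L_N|\ge\delta N$ for some $\delta>0$ by~\eqref{eq:bdd-deg}. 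Under the optimality hypothesis the interchange of limits gives $\E[Q_2(G_N,\infty)]=o(N)$ and $\E[\#\{u:X_u=0\}]=(1-\lambda)N+o(N)$.

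First I would reduce the problem to a single geometric quantity. By a level-crossing (rate-conservation) identity for the set $\{\text{queue}\ge 2\}$, the stationary rate of $1\to 2$ transitions equals $\E[\#\{v:X_v=2\}]\le\E[Q_2]$. On the other hand, whenever all servers in the closed neighbourhood $N[v]$ of a low-degree vertex $v$ are busy, an arrival at $v$ (rate $\lambda$) is forced onto a server of $N[v]$ whose queue is the minimum, creating a $1\to 2$ transition unless that minimum already exceeds $1$ (an event contained in $\{X_v\ge 2\}$). Summing these disjoint arrival events over $v\in L_N$ yields
\begin{equation}
\E[Q_2]\ \ge\ \frac{\lambda}{1+\lambda}\,S,\qquad S:=\sum_{v\in L_N}\P\big(\text{all servers in }N[v]\text{ are busy}\big).
\end{equation}
It therefore suffices to prove $S\ge cN$ for a constant $c>0$.

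To bound $S$ from below I would control the complementary sum $\sum_{v\in L_N}\P\big(\exists\,u\in N[v]:X_u=0\big)$. Covering each such $v$ by an idle neighbour gives
\begin{equation}
\sum_{v\in L_N}\P\big(\exists\,u\in N[v]:X_u=0\big)\ \le\ \sum_{u}\P(X_u=0)\,\big|N[u]\cap L_N\big|.
\end{equation}
The contributions of idle vertices that are themselves low-degree, or that have few low-degree neighbours, are harmless: they are dominated by $M\cdot\E[\#\{u:X_u=0\}]=M(1-\lambda)N+o(N)$, which is $o(|L_N|)$ once $\lambda$ is close enough to $1$. The delicate term is that of \emph{high-degree} vertices $u$ with many low-degree neighbours (the star centre being the extreme case). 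Here I would invoke a capacity estimate: by level-crossing at $u$, the rate at which $u$ leaves the idle state equals $\P(X_u=1)\le 1$, while each busy low-degree neighbour $v$ with $u\in N[v]$, upon an arrival, routes to a minimal (hence idle) server of its \emph{small} neighbourhood and thus selects $u$ with probability at least $1/(M+1)$. This gives $\sum_{v\in N(u)\cap L_N}\P(X_u=0,\,X_v\ge 1)\le \tfrac{M+1}{\lambda}\P(X_u=1)$, so a vertex with $m_u:=|N(u)\cap L_N|$ large is idle only rarely. Splitting the vertices according to whether $m_u\le K$ or $m_u>K$, and using $\sum_u m_u=\sum_{v\in L_N}d_v\le M|L_N|$ to bound the number of high-$m_u$ vertices, I can choose $K=K(M,\lambda)$ and then $\lambda$ sufficiently close to $1$ so that the whole right-hand side is at most $\tfrac58|L_N|$. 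Consequently $S\ge\tfrac38|L_N|\ge\tfrac38\delta N$, and the reduction above forces $\E[Q_2]/N$ to stay bounded away from $0$, contradicting $N$-optimality.

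The main obstacle is precisely the capacity estimate for high-degree, low-degree-rich vertices: one must rule out that idle servers concentrate on exactly the vertices that help many bounded-degree servers at once. The mechanism that saves the argument is that the \emph{helpers are accessed through low-degree neighbourhoods}, so each offload hits a prescribed idle server with probability bounded below by $1/(M+1)$; this, combined with the unit service rate that caps how fast any server can recycle its idleness, is what converts ``many low-degree neighbours'' into ``rarely idle''. A secondary technical point is the tie-breaking rule: the bound $1/(M+1)$ is immediate for uniform random tie-breaking, and for adversarial ties one argues instead at the level of the total number of idle servers consumed within $\bigcup_{v\in L_N}N[v]$, which only affects constants.
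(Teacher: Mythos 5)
Your proposal is correct in its essential structure, but it takes a genuinely different route from the paper. The paper's own argument is purely \emph{transient}: starting from the all-empty state, for each bounded-degree vertex $u$ one considers the local event that sufficiently many arrivals occur at $u$ itself while no departures occur in $N[u]$ on $[0,t]$ (probability bounded below uniformly in $N$), so that $\Theta(N)$ low-degree vertices have their entire closed neighborhood busy; from that point each arrival at such a $u$ forces a $1\to 2$ transition, a drift estimate then gives $\E[Q_2(G_N,t)]=\Theta(N)$ for fixed $t>0$, and this contradicts the fluid limit~\eqref{eq:fluid} of the clique started empty, for which $q_2(t)\equiv 0$. That argument works for \emph{every} $\lambda\in(0,1)$, needs no steady-state machinery, and contradicts the defining (process-level) property in Definition~\ref{def:opt} directly. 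Your argument instead disproves the \emph{stationary} consequence recorded after Definition~\ref{def:opt}, taking $\lambda$ close to $1$, and its two ingredients check out: the rate-conservation step $\E[Q_2]\ge\frac{\lambda}{1+\lambda}S$ is sound (the correction for the event $\{\min_{N[v]}X\ge 2\}\subseteq\{X_v\ge2\}$ is exactly what produces the $1+\lambda$), and the capacity estimate $\sum_{v\in N(u)\cap L_N}\P(X_u=0,X_v\ge1)\le\frac{M+1}{\lambda}\P(X_u=1)$ combined with $\sum_u|N(u)\cap L_N|\le (M+1)|L_N|$ correctly rules out idle servers concentrating on star-center-like vertices; the correlation term $\P(X_u=0,X_v=0)$ that appears when converting this into a bound on $m_u\P(X_u=0)$ is absorbed into the $(M+1)\E[\#\mathrm{idle}]$ term you already classify as harmless. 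What each approach buys: yours yields a quantitative steady-state statement ($\E[Q_2(\infty)]=\Theta(N)$ for $\lambda$ near $1$), whereas the paper's is more elementary and self-contained.

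Three caveats on your version, none fatal. First, it leans on the implication ``$N$-optimal $\Rightarrow$ $q_1(G_N,\infty)\to\lambda$, $q_i(G_N,\infty)\to 0$'', which rests on a nontrivial interchange-of-limits argument (global stability of the JSQ fixed point plus tightness); the paper asserts this, so you may cite it, but the paper's own proof of Theorem~\ref{th:bdd-deg} deliberately avoids needing it. Second, your stationary quantities presuppose existence of (and convergence to) a stationary distribution for each $N$, which is automatic for finite buffers but requires a stability argument when $B=\infty$; this should be stated. Third, the order of choices should be fixed to avoid the apparent circularity in ``$K=K(M,\lambda)$ and then $\lambda$'': take $K=\lceil 8(M+1)^2/\delta\rceil$ depending only on $M$ and $\delta$ (using $\lambda\ge 1/2$ crudely), and only then choose $\lambda$ with $(K+M+1)(1-\lambda)\le\delta/4$; together with the uniform tie-breaking convention for the $1/(M+1)$ bound, this closes the argument.
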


To prove Theorem~\ref{th:bdd-deg}, we show that starting
from an all-empty state, in finite time, a positive fraction
of servers in $G_N$ will have at least two tasks. 
This establishes that the occupancy processes when scaled by~$N$
cannot agree with those in the sequence of cliques,
and hence $\{G_N\}_{N \geq 1}$ cannot be $N$-optimal.
The idea of the proof can be explained as follows: If a system contains
$\Theta(N)$ bounded-degree vertices, then starting from an all-empty state,
in any finite time interval there will be $\Theta(N)$ servers~$u$ say,
for which all the servers in $N[u]$ have at least one task.
For all such servers an arrival at~$u$ must produce a server
with queue length two.
It follows that the instantaneous rate at which servers of queue
length two are formed is bounded away from zero, and hence $\Theta(N)$
servers of queue length two are produced in finite time. 

\subsection{Asymptotic optimality of random graph sequences}

Next we investigate how the load balancing process behaves
on random graph topologies. 
Specifically, we aim to understand what types of graphs are asymptotically
optimal in the presence of randomness (i.e., in an average-case sense).
Theorem~\ref{th:inhom} below establishes sufficient conditions
for asymptotic optimality of a sequence of inhomogeneous random graphs.
Recall that a graph $G' = (V', E')$ is called a supergraph
of $G = (V, E)$ if $V = V'$ and $E \subseteq E'$.
\footnote{Reviewer: It troubles me that Thm 6.2 and its following comment have
a ‘perturbation of a complete graph’ flavor made explicit in Cor 6.3, whereas Thms 6.8 and 6.9
have more of a ‘sampling proportional to the size of the network’ aspect.
This is due to the randomness, but the mechanism is unclear, and it is not intuitively clear for me for instance
that conditions (i) and (ii) in Theorem 6.8 should be enough to imply conditions (i) and (ii) in Theorem 6.2.
I must be missing something.
Could you briefly comment on this?}

\begin{theorem}
\label{th:inhom}
Let $\GG= \{G_N\}_{N \geq 1}$ be a graph sequence such that for each~$N$,
$G_N = (V_N, E_N)$ is a supergraph of the inhomogeneous random graph $G_N'$
where any two vertices $u, v \in V_N$ share an edge with probability $p_{uv}^N$,
independently of each other.
\begin{enumerate}[{\normalfont (i)}]
\item If $\inf\ \{p^N_{uv}: u, v\in V_N\}$ is $\omega(1/N)$,
then $\GG$ is $N$-optimal.
\item If $\inf\ \{p^N_{uv}: u, v\in V_N\}$ is $\omega(\log(N)/\sqrt{N})$,
then $\GG$ is $\sqrt{N}$-optimal.
\end{enumerate}
\end{theorem}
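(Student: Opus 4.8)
The plan is to derive both parts from the deterministic criterion of Theorem~\ref{th:det-seq} by checking, for every fixed $\varepsilon>0$, that the well-connectedness measures satisfy $\dis_1(G_N,\varepsilon)/N\to 0$ in part~(i) and $\dis_2(G_N,\varepsilon)/\sqrt{N}\to 0$ in part~(ii). The first step is a monotonicity reduction: since $G_N$ is a supergraph of $G_N'$, the closed neighborhood in $G_N$ contains that in $G_N'$, so $\com_{G_N}(U)=|V_N\setminus N[U]|\le|V_N\setminus N'[U]|=\com_{G_N'}(U)$ for every $U$, whence $\dis_i(G_N,\varepsilon)\le\dis_i(G_N',\varepsilon)$. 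It therefore suffices to bound the dis-measures of the inhomogeneous random graph $G_N'$ itself. Writing $p:=\inf\{p_{uv}^N:u,v\in V_N\}$ and using that $\com(U)$ is non-increasing under enlarging $U$, the supremum defining $\dis_1$ (resp.\ $\dis_2$) is attained at the minimal admissible size $|U|=\lceil\varepsilon N\rceil$ (resp.\ $\lceil\varepsilon\sqrt N\rceil$).

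For part~(i), fix $\delta>0$ and set $k=\lceil\varepsilon N\rceil$, $\ell=\lceil\delta N\rceil$. If $\com(U)>\delta N$ for some $U$ with $|U|=k$, then $V_N\setminus N[U]$ contains a set $W$ with $|W|=\ell$ that is disjoint from $U$ and has no $G_N'$-edges to $U$. Taking a union bound over the pairs $(U,W)$ and using edge-independence together with $1-p\le\e^{-p}$,
\[
\P\big(\dis_1(G_N',\varepsilon)>\delta N\big)\le\binom{N}{k}\binom{N}{\ell}(1-p)^{k\ell}\le 4^N\,\e^{-p\varepsilon\delta N^2}=\exp\big(N(\log 4-p\varepsilon\delta N)\big).
\]
Because $p=\omega(1/N)$ gives $pN\to\infty$, the exponent tends to $-\infty$, so $\dis_1(G_N,\varepsilon)/N\pto 0$ for every $\varepsilon>0$, and Theorem~\ref{th:det-seq}(i) yields $N$-optimality.

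For part~(ii) the identical two-set union bound applies with $k=\lceil\varepsilon\sqrt N\rceil$ and $\ell=\lceil\delta\sqrt N\rceil$, giving, up to a negligible $\e^{O(\log N)}$ factor,
\[
\P\big(\dis_2(G_N',\varepsilon)>\delta\sqrt N\big)\le N^{k+\ell}\,\e^{-p\varepsilon\delta N}\le\exp\big((\varepsilon+\delta)\sqrt N\log N-p\varepsilon\delta N\big).
\]
The essential difference from part~(i) is that the entropy of selecting two sets of size $\Theta(\sqrt N)$ is $\Theta(\sqrt N\log N)$ rather than $\Theta(N)$; this is exactly what forces the stronger threshold, as the exponent diverges to $-\infty$ precisely when $pN\gg\sqrt N\log N$, i.e.\ $p=\omega(\log N/\sqrt N)$. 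Hence $\dis_2(G_N,\varepsilon)/\sqrt N\pto 0$ and Theorem~\ref{th:det-seq}(ii) gives $\sqrt N$-optimality.

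I expect the first-moment estimates above to be routine; the genuinely delicate point is the passage from conditions that hold \emph{with high probability} to the weak-convergence optimality of Definition~\ref{def:opt}, since Theorem~\ref{th:det-seq} is stated for deterministic sequences. The resolution is to note that the proof underlying Theorem~\ref{th:det-seq} runs through Propositions~\ref{prop:stoch-ord-new}–\ref{prop:dis-new}, whose conclusion is already a convergence-in-probability control of $\Delta^N(I(G_N,n),T)$; working conditionally on a realization of $G_N'$, the coupling $I(G_N,n)$ and the bound $\sum_i|Q_i(G_N,t)-Q_i(I(G_N,n),t)|\le 2\Delta^N(I(G_N,n),t)$ remain valid, so the in-probability smallness of $\dis_1/N$ (resp.\ $\dis_2/\sqrt N$) established above transfers to in-probability smallness of $\Delta^N$ and thence to the desired weak limit. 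A secondary point deserving care is uniformity over $\varepsilon$: the criterion requires the bound for \emph{every} fixed $\varepsilon>0$ with $\delta$ arbitrary, which the displays deliver since their exponents are eventually negative for all $\varepsilon,\delta>0$.
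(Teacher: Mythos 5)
Your proposal is correct and follows essentially the same route as the paper: the survey's proof of Theorem~\ref{th:inhom} consists precisely of reducing to Theorem~\ref{th:det-seq} by verifying that conditions (i) and (ii) on $\dis_1$ and $\dis_2$ hold under the stated edge-probability assumptions. Your union-bound estimates (which, given their exponential decay, could also be upgraded to almost-sure statements via Borel--Cantelli) and the conditioning argument for passing from with-high-probability bounds to the deterministic criterion simply supply the details that the survey defers to the cited reference~\cite{MBL17}.
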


The proof of Theorem~\ref{th:inhom} relies on Theorem~\ref{th:det-seq}.
Specifically, if $G_N$ satisfies conditions~(i) and~(ii)
in Theorem~\ref{th:inhom}, then the corresponding conditions~(i) and~(ii)
in Theorem~\ref{th:det-seq} hold.

As an immediate corollary of Theorem~\ref{th:inhom} we obtain
an optimality result for the sequence of ERRGs.
Let $\ERRG(N, p(N))$ denote a graph on $N$ vertices, such that any pair
of vertices share an edge with probability $p(N)$.

\begin{corollary}
\label{cor:errg}
Let $\GG = \{G_N\}_{N \geq 1}$ be a graph sequence such that for each~$N$,
$G_N$ is a super-graph of $\ERRG(N, p(N))$, and $d(N) = (N-1) p(N)$.
Then
{\normalfont (i)}
If $d(N) \to \infty$ as $N \to \infty$, then $\GG$ is $N$-optimal.
{\normalfont (ii)}
If $d(N) / (\sqrt{N} \log(N)) \to \infty$ as $N \to \infty$,
then $\GG$ is $\sqrt{N}$-optimal.
\end{corollary}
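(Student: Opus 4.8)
The plan is to recognize Corollary~\ref{cor:errg} as a direct specialization of Theorem~\ref{th:inhom}, so the entire argument reduces to matching the hypotheses. First I would observe that $\ER_N(p(N))$ is precisely the inhomogeneous random graph $G_N'$ of Theorem~\ref{th:inhom} in the homogeneous case $p_{uv}^N \equiv p(N)$ for all pairs $u,v \in V_N$. Under the standing assumption that each $G_N$ is a supergraph of $\ER_N(p(N))$, the setup of Corollary~\ref{cor:errg} is therefore exactly the setup of Theorem~\ref{th:inhom}, with the infimum of edge probabilities degenerating to the constant value $\inf\{p_{uv}^N : u,v \in V_N\} = p(N)$.

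The only substantive step is to rewrite the degree hypotheses as the edge-probability hypotheses required by Theorem~\ref{th:inhom}. Using $d(N) = (N-1)p(N)$ and $(N-1)p(N) \sim N p(N)$, the condition $d(N) \to \infty$ is equivalent to $N p(N) \to \infty$, i.e.\ to $p(N)$ being $\omega(1/N)$; this is exactly hypothesis (i) of Theorem~\ref{th:inhom}. Likewise, since $(N-1)/\sqrt{N} \sim \sqrt{N}$, the condition $d(N)/(\sqrt{N}\log N) \to \infty$ is equivalent to $\sqrt{N}\,p(N)/\log N \to \infty$, i.e.\ to $p(N)$ being $\omega(\log(N)/\sqrt{N})$, which is precisely hypothesis (ii). With these equivalences in hand, parts (i) and (ii) of Theorem~\ref{th:inhom} yield $N$-optimality and $\sqrt{N}$-optimality of $\GG$, respectively.

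There is no genuine obstacle here: the corollary is immediate once the ERRG is identified as the constant-probability inhomogeneous random graph and the asymptotic equivalences $(N-1)p(N) \sim N p(N)$ and $(N-1)/\sqrt{N} \sim \sqrt{N}$ are used to translate the growth conditions. The only point requiring minor care is checking that these equivalences genuinely preserve the $\omega(\cdot)$ relations (so that the threshold conditions are not altered by the shift from $N-1$ to $N$), which is routine.
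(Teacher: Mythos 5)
Your proposal is correct and is exactly the paper's route: the paper derives Corollary~\ref{cor:errg} as an ``immediate corollary'' of Theorem~\ref{th:inhom}, obtained by taking $p_{uv}^N \equiv p(N)$ so that the infimum of edge probabilities is $p(N)$ itself. Your translation of the degree conditions, namely $d(N)=(N-1)p(N)\to\infty$ being equivalent to $p(N)=\omega(1/N)$ and $d(N)/(\sqrt{N}\log N)\to\infty$ being equivalent to $p(N)=\omega(\log(N)/\sqrt{N})$, is the same routine matching of hypotheses the paper leaves implicit.
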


Theorem~\ref{th:det-seq} can be further leveraged to establish the
optimality of the following sequence of random graphs.
For any $N \geq 1$ and $d(N) \leq N-1$ such that $N d(N)$ is even,
construct the \emph{erased random regular} graph on $N$~vertices as follows:
Initially, attach $d(N)$ \emph{half-edges} to each vertex. 
Call all such half-edges \emph{unpaired}.
At each step, pick one half-edge arbitrarily, and pair it to another
half-edge uniformly at random among all unpaired half-edges
to form an edge, until all the half-edges have been paired.
Thus, note that there can be more than one edge between two vertices
(i.e., multi-edge) or a half-edge of a vertex can be paired
with another half-edge of the same vertex (self-loops).
Such a graph is known as a regular multi-graph.
In fact, it is known~\cite[Proposition 7.7]{remco-book-1}
that the above pairing procedure results in a random graph
that has a uniform distribution over all regular multi-graph with degree $d(N)$.
Now the erased random regular graph is formed by erasing all the
self-loops and collapsing the multiple edges to a single edge,
which thus produces a simple graph.

\begin{theorem}
\label{th:reg}
Let $\GG = \{G_N\}_{N \geq 1}$ be a sequence of erased random regular
graphs with degree $d(N)$.
Then
{\normalfont (i)}
If $d(N) \to \infty$ as $N \to \infty$, then $\GG$ is $N$-optimal.
{\normalfont (ii)}
If $d(N) / (\sqrt{N} \log(N)) \to \infty$ as $N \to \infty$,
then $\GG$ is $\sqrt{N}$-optimal.
\end{theorem}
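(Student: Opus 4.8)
The plan is to deduce both parts from the deterministic criterion in Theorem~\ref{th:det-seq}, so that the whole argument reduces to controlling the well-connectedness measures $\dis_1$ and $\dis_2$ from~\eqref{def:dis} for the erased random regular graph. First I would record a structural observation that dispenses with the erasure step altogether. Let $M_N$ denote the random regular multigraph produced by the configuration model (the uniform pairing of half-edges), and let $G_N$ be the simple graph obtained from $M_N$ by deleting self-loops and collapsing each multiple edge to a single edge. For two distinct vertices $u \neq v$ one has $u \sim_{G_N} v$ if and only if there is at least one $u$--$v$ edge in $M_N$, i.e.\ $u \sim_{M_N} v$: collapsing parallel edges preserves adjacency, and self-loops never affect adjacency between distinct vertices. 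Hence the closed neighbourhoods coincide, $N[U]=N_{M_N}[U]=N_{G_N}[U]$ for every $U \subseteq V_N$, so that $\com(U)$, $\dis_1(G_N,\varepsilon)$ and $\dis_2(G_N,\varepsilon)$ are literally identical whether computed in $G_N$ or in $M_N$. This lets me carry out all probabilistic estimates directly in the configuration model, where the pairing structure is tractable.

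Next I would set up a first-moment (union-bound) estimate for the bad event. Fix $\varepsilon,\delta>0$. If some $U$ with $|U|\ge \varepsilon N$ satisfies $\com(U)\ge \delta N$, then $W:=V_N\setminus N[U]$ is disjoint from $U$, has $|W|\ge \delta N$, and carries no edge to $U$; passing to subsets I may assume $|U|=\lceil \varepsilon N\rceil$ and $|W|=\lceil\delta N\rceil$. For fixed disjoint $U,W$, exposing the pairing by revealing the partners of the $|U|d(N)$ half-edges of $U$ one at a time, each such half-edge avoids the $|W|d(N)$ half-edges of $W$ with conditional probability at most $1-|W|/N$, and at least $|U|d(N)/2$ such matchings occur, so
\[
\P\big(\text{no edge between } U,W\big)\le \Big(1-\tfrac{|W|}{N}\Big)^{|U|d(N)/2}.
\]
A union bound over the at most $\binom{N}{|U|}\binom{N}{|W|}$ choices of $(U,W)$ then controls $\P(\dis_1(G_N,\varepsilon)\ge\delta N)$ and, with the analogous set sizes, $\P(\dis_2(G_N,\varepsilon)\ge\delta\sqrt N)$.

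Finally I would run the two regimes. For $N$-optimality take $|U|=\lceil\varepsilon N\rceil$, $|W|=\lceil\delta N\rceil$, so the per-pair bound is $(1-\delta)^{\varepsilon N d(N)/2}$ while the number of pairs is at most $2^{2N}$; since $\log 2^{2N}=O(N)$ and $\log(1-\delta)^{\varepsilon N d(N)/2}=-\Theta(N d(N))$, the union bound tends to $0$ as soon as $d(N)\to\infty$, giving $\dis_1(G_N,\varepsilon)/N\pto 0$ for every $\varepsilon>0$ and hence, via Theorem~\ref{th:det-seq}(i), part~(i). For $\sqrt N$-optimality take $|U|=\lceil\varepsilon\sqrt N\rceil$, $|W|=\lceil\delta\sqrt N\rceil$; using $(1-x)^n\le \e^{-xn}$ the per-pair probability is at most $\exp(-\tfrac12\varepsilon\delta\, d(N))$, whereas $\log\binom{N}{\lceil\varepsilon\sqrt N\rceil}=O(\sqrt N\log N)$, so the union bound reads $\exp\big(O(\sqrt N\log N)-\Theta(d(N))\big)$ and vanishes precisely when $d(N)/(\sqrt N\log N)\to\infty$, which with Theorem~\ref{th:det-seq}(ii) yields part~(ii). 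The point requiring most care, and the place where the sharp $\sqrt N\log N$ threshold is forced, is exactly this last balance: on diffusion scale there are $\exp(\Theta(\sqrt N\log N))$ candidate separating pairs, so the exponential-in-$d(N)$ gain from a single pair must outrun that combinatorial entropy, which is precisely the stated growth condition. A minor technical step I would also address is that Theorem~\ref{th:det-seq} is phrased for deterministic sequences, so the in-probability bounds above must be transferred to the random realizations, e.g.\ by conditioning on the high-probability event that the dispersion estimates hold simultaneously for a countable collection of pairs $(\varepsilon,\delta)$, exactly as in the proof of Theorem~\ref{th:inhom}.
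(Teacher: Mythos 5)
Your proposal is correct and follows essentially the same route as the paper: reduce Theorem~\ref{th:reg} to the deterministic well-connectedness criterion of Theorem~\ref{th:det-seq}, and verify that $\dis_1(G_N,\varepsilon)/N$ and $\dis_2(G_N,\varepsilon)/\sqrt{N}$ vanish with high probability via first-moment (union-bound) estimates carried out in the configuration model, using the fact that erasure of self-loops and multiple edges does not change closed neighbourhoods. The pairing-exposure bound $\left(1-|W|/N\right)^{|U|d(N)/2}$ and the entropy-versus-exponent balance you identify (which forces the $\sqrt{N}\log N$ threshold in part (ii)) are exactly the ingredients behind the argument the paper invokes.
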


Note that due to Theorem~\ref{th:bdd-deg}, we can conclude that the
growth rate condition for $N$-optimality in Corollary~\ref{cor:errg}~(i)
and Theorem~\ref{th:reg}~(i) is not only sufficient, but necessary as well.
Thus informally speaking, $N$-optimality is achieved under the minimum
condition required as long as the underlying topology is suitably random.

\section{Token-based load balancing}
\label{token}

While a zero waiting time can be achieved in the limit by sampling only
$d(N) = \oo(N)$ servers as Sections~\ref{univ} and~\ref{networks} showed,
even in network scenarios, the amount of communication overhead
in terms of $d(N)$ must still grow with~$N$.
As mentioned earlier, this can be avoided by introducing memory at the
dispatcher, in particular maintaining a record of only vacant servers,
and assigning tasks to idle servers, if there are any,
or to a uniformly at random selected server otherwise.
This so-called Join-the-Idle-Queue (JIQ) scheme \cite{BB08,LXKGLG11}
can be implemented through a simple token-based mechanism generating
at most one message per task.
Remarkably enough, even with such low communication overhead,
the mean waiting time and the probability of a non-zero waiting time
vanish under the JIQ scheme in both the fluid and diffusion regimes,
as we will discuss in the next two subsections.
It is worth emphasizing though that the JIQ scheme is \emph{not}
optimal in the non-degenerate slow-down regime, which was introduced
in Section~\ref{asym} and will be further discussed
in Section~\ref{nondegenerate}.

\subsection{Fluid-level optimality of JIQ scheme}
\label{ssec:fluidjiq}

We first consider the fluid limit of the JIQ policy.
It is not hard to show that the number of busy servers under the JIQ scheme
is stochastically larger (in the path space) than that for the JSQ($1$) policy
(tasks assigned uniformly at random).
Consequently, the JIQ scheme is stable whenever $\lambda<1$.
Recall that $q_i^N(\infty)$ denotes a random variable denoting the process
$q_i^N(\cdot)$ in steady state.
Under significantly more general conditions (in the presence
of finitely many heterogeneous server pools and for general service
time distributions with decreasing hazard rate) it was proved
in~\cite{Stolyar15} that under the JIQ scheme
\begin{equation}
\label{eq:fpjiq}
q_1^N(\infty) \to \lambda, \qquad q_i^N(\infty) \to 0 \quad
\mbox{ for all } i \geq 2, \qquad \mbox{ as } \quad N \to \infty.
\end{equation}
The above equation in conjunction with the PASTA property yields that
the steady-state probability of a non-zero wait vanishes as $N \to \infty$,
thus exhibiting asymptotic optimality of the JIQ policy on fluid scale.

\paragraph{High-level outline of proof idea.}
Loosely speaking, the proof of~\eqref{eq:fpjiq} consists of three
principal components:
\begin{enumerate}[{\normalfont (i)}]
\item Starting from an all-empty state, the asymptotic rate of increase
of~$q_1$ is given by the arrival rate~$\lambda$. 
Also, the rate of decrease is~$q_1$. 
Thus, on a small time interval $\dif t$, the rate of change of~$q_1$
is given by 
\begin{equation}
\label{eq:fluidjiq}
\frac{\dif q_1(t)}{\dif t} = \lambda - q_1(t).
\end{equation}
Under the above dynamics, the system occupancy states converge
to the unique fixed point of the above ODE, given by the point
$(\lambda, 0, 0, \ldots)$.
\item The occupancy process is monotone, in the sense that
(a) starting from an all-empty state, the occupancy process is
componentwise stochastically nondecreasing in time
(in the sense of stochastic dominance), and 
(b) the occupancy process at any fixed time~$t$, starting
from an arbitrary state, componentwise stochastically dominates
the occupancy process at time~$t$, starting from an all-empty state.
\item Under the JIQ scheme, the system is stable,
and hence the occupancy process is ergodic.
Since $q_1(t)$ is the instantaneous rate of departure from the system,
ergodicity implies that in steady state there can be {\em at most}
$\lambda$ fraction of busy servers (containing at least one task).
In fact, it further establishes that the steady-state fraction
of servers with more than one tasks vanishes asymptotically.
\end{enumerate}

Points~(i) and~(ii) above imply that starting from any state the
system must have at least $\lambda$ fraction of busy servers,
and finally this along with Point (iii) establishes that the
steady-state occupancy process must converge to $(\lambda, 0, 0, \ldots)$. 
 
\subsection{Diffusion-level optimality of JIQ scheme}

We now turn to the diffusion limit of the JIQ scheme established
in~\cite{MBLW16-1}.
Recall the centered and scaled occupancy process as in~\eqref{eq:diffscale}, 
and the Halfin-Whitt heavy-traffic regime in~\eqref{eq:HW}.

\begin{theorem}[Diffusion limit for JIQ]
\label{diffusionjiq}
Assume that $\lambda(N)$ satisfies~\eqref{eq:HW}.
Under suitable initial conditions the weak limit of the sequence
of centered and diffusion-scaled occupancy process in~\eqref{eq:diffscale}
coincides with that of the ordinary JSQ policy, and in particular,
is given by the system of SDEs in~\eqref{eq:diffusionjsq}.
\end{theorem}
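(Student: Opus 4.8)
The plan is to prove Theorem~\ref{diffusionjiq} by an S-coupling of the JIQ system with a JSQ system started from the same occupancy state, and to show that on any finite horizon their diffusion-scaled occupancy processes become indistinguishable. The structural starting point is that the JIQ scheme routes an arriving task to an idle server precisely when one is available, which is exactly what the JSQ policy does as well, since the shortest queue is empty whenever any server is idle. Hence, as long as the system contains at least one idle server, both schemes increment $Q_1$ in lockstep and leave $Q_i$, $i\ge 2$, unchanged, so the occupancy \emph{measures} can only come apart at an arrival that finds all $N$ servers busy. Even then, because the minimum queue length in a fully occupied system equals one whenever $Q_2^N<N$ (which holds throughout on diffusion scale, as $Q_2^N=\bar Q_2^N\sqrt N\ll N$), JSQ sends the task to a one-task server and increments $Q_2$, and so does JIQ \emph{unless} its uniformly sampled server happens to carry two or more tasks. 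Thus the occupancy measures of the two coupled systems can diverge only at the rare events that are simultaneously full-system arrivals and land on a server with at least two tasks.

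First I would make this precise by introducing a refined discrepancy counter $\tilde\Delta^N(t)$ that records only those arrivals at which the two coupled systems alter \emph{different} occupancy components $Q_i$, and prove, in the spirit of Proposition~\ref{prop:stoch-ord2}, the almost-sure bound
\[
\sum_{i=1}^{\infty}\big|Q_i^{\mathrm{JIQ}}(t)-Q_i^{\mathrm{JSQ}}(t)\big|\le 2\,\tilde\Delta^N(t),\qquad t\ge 0.
\]
The point is that S-coupling of departures by ordered position never increases the left-hand side, and every arrival at which both schemes increment the \emph{same} component $Q_i$ (in particular every full-system arrival routed by both to a one-task server, even if to different individual servers) leaves the difference unchanged; only an event counted by $\tilde\Delta^N$ can raise it, and by at most two. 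This refinement is essential: a direct application of Proposition~\ref{prop:stoch-ord2} would count \emph{all} full-system arrivals, since JIQ and JSQ then generally select distinct servers, yielding only an $O_{\mathbb{P}}(\sqrt N)$ bound, which is too weak on diffusion scale.

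The heart of the argument is then to show $\tilde\Delta^N(T)=o_{\mathbb{P}}(\sqrt N)$. The cumulative number of full-system arrivals on $[0,T]$ in the coupled JSQ system is $O_{\mathbb{P}}(\sqrt N)$, since it is governed by the boundary regulator whose scaled version converges to the finite process $U_1$ in the JSQ diffusion limit~\eqref{eq:diffusionjsq} of Eschenfeldt \& Gamarnik~\cite{EG15}. Conditioned on such an arrival, the probability that the uniformly sampled server in JIQ holds two or more tasks equals $Q_2^N/N$. Invoking the already established fluid optimality of JIQ in~\eqref{eq:fpjiq}, which gives $Q_2^N/N=o_{\mathbb{P}}(1)$, the expected number of occupancy-altering events is of order $o_{\mathbb{P}}(1)\cdot O_{\mathbb{P}}(\sqrt N)=o_{\mathbb{P}}(\sqrt N)$, and a martingale (or second-moment) estimate upgrades this to $\tilde\Delta^N(T)=o_{\mathbb{P}}(\sqrt N)$. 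Combining with the displayed bound yields $\sup_{t\le T}\sum_i|Q_i^{\mathrm{JIQ}}(t)-Q_i^{\mathrm{JSQ}}(t)|=o_{\mathbb{P}}(\sqrt N)$, hence $\sup_{t\le T}|\bar Q_i^{\mathrm{JIQ}}(t)-\bar Q_i^{\mathrm{JSQ}}(t)|\to 0$ in probability for every $i$, so the JIQ diffusion-scaled process~\eqref{eq:diffscale} inherits the weak limit~\eqref{eq:diffusionjsq} of JSQ under~\eqref{eq:HW}.

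I expect the main obstacle to be the apparent circularity in the last step: bounding $\tilde\Delta^N$ uses control on both $Q_2^N$ and the full-system arrival count of the very JIQ process whose diffusion behaviour is still to be pinned down, and once the measures differ even the notion of ``full system'' is no longer shared by the two systems. The clean way around this is a bootstrapping/stopping-time argument: run the coupling until the first time the accumulated discrepancy exceeds a slowly growing threshold, use the fluid bound $Q_2^N/N=o_{\mathbb{P}}(1)$ (which holds uniformly and independently of the diffusion statement) together with the $O_{\mathbb{P}}(\sqrt N)$ full-system arrival count on the coupled JSQ side to show this threshold is not reached with high probability, and let the threshold be $o(\sqrt N)$. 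A secondary technical point, as in~\cite{EG15,Braverman18}, is the tightness and asymptotic vanishing of the higher components $\bar Q_i^N$ for $i\ge 3$, which must be controlled so that the limit is genuinely the two-dimensional system~\eqref{eq:diffusionjsq}.
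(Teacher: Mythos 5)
Your heuristic is exactly the paper's stated intuition, and your refined discrepancy bound (a component-level sharpening of Proposition~\ref{prop:stoch-ord2}, under which S-coupled departures never increase $\sum_i|Q_i^{\mathrm{JIQ}}(t)-Q_i^{\mathrm{JSQ}}(t)|$) is sound. The genuine gap is in the step bounding $\tilde\Delta^N(T)$. Your estimate $O_{\mathbb{P}}(\sqrt{N})\cdot o_{\mathbb{P}}(1)$ counts only discrepancies at arrivals that find the coupled \emph{JSQ} system full. But the S-coupling gives no pathwise ordering between the two occupancy processes, so there are also discrepancy events at arrivals that find JIQ full while JSQ still has idle servers (JSQ then increments $Q_1$, JIQ increments some $Q_i$, $i\geq 2$), and these are not controlled by the JSQ-full-arrival count. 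Your bootstrap does not repair this: with a stopping time at discrepancy threshold $K_N$, before the stopping time ``JIQ full'' only implies $N-Q_1^{\mathrm{JSQ}}(t)\leq 2K_N$, and the number of arrivals on $[0,T]$ while JSQ is within $2K_N$ of full is of order $\lambda(N)$ times the occupation time of $\bar{Q}_1^{\mathrm{JSQ}}$ in $[-2K_N/\sqrt{N},0]$, i.e.\ of order $K_N\sqrt{N}$ --- which is $\Theta(\sqrt{N})$ even for a constant threshold and $\omega(\sqrt{N})$ for any $K_N\to\infty$, never $o(\sqrt{N})$. Since the discrepancy count certainly cannot be assumed to vanish (full-system arrivals are $\Theta_{\mathbb{P}}(\sqrt{N})$ and each differs with probability of order $Q_2/N$), the threshold cannot be taken $o(1)$, so the stopping-time argument never closes. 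The deeper obstruction is feedback: once the measures differ, S-coupled departures (from the $k$-th longest queue, which may hold one task in one system and two in the other) create new $Q_1$-discrepancies, so discrepancies open ``one-full-one-not'' windows which generate further discrepancies; union-bound counting is structurally lossy against this.

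The paper's proof is engineered precisely to avoid this circularity, and proceeds quite differently. It embeds JIQ and JSQ in the class $\Pi^{(N)}$ and uses a different coupling --- a single random permutation per arrival plus a uniform ordered-index departure clock --- under which Proposition~\ref{prop: stoch_ord}~(i) gives \emph{componentwise domination along the coupled paths}: in the buffer-truncated systems, $Q_i^{\Pi(N,1)}(t)\leq Q_i^{\Pi(N,N)}(t)$ for all $t$, so the JIQ-type system is full only when the JSQ-type system is full. This is exactly the property your coupling lacks. Moreover, the truncation at buffer size $B=2$ converts all deviations into \emph{lost} tasks $L^{\Pi_2}(t)$, which leave the system and cannot feed back into the dynamics; the triangle-inequality bound \eqref{eq: bound}, $|Q_i^{\Pi}(t)-Q_i^{\Pi_2}(t)|\leq 2L^{\Pi_2}(t)$, together with Lemma~\ref{lem: tight} (tightness, i.e.\ $O_{\mathbb{P}}(1)$ losses, obtained because the loss rate is dominated by that of the $\Pi(N,N)$ system whose diffusion behavior is known from \cite{EG15}) then yields the theorem for every scheme in $\Pi^{(N)}$ simultaneously. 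If you want to salvage your S-coupling route, you would need to prove a pathwise inclusion of the form $\{\text{JIQ full}\}\subseteq\{\text{JSQ full}\}$, or otherwise control the occupation time of the symmetric difference of the two full events; that is essentially what the permutation coupling and the buffer truncation are designed to deliver.
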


The above theorem implies that for suitable states,
on any finite time interval, the occupancy process of a system
under the JIQ policy is indistinguishable from that under the JSQ policy.

\paragraph{High-level outline of proof idea.}
The proof of Theorem~\ref{diffusionjiq} relies on a novel coupling
construction introduced in~\cite{MBLW16-1} as described below in detail.
The idea is to compare the occupancy processes of two systems
following JIQ and JSQ policies, respectively. 
Comparing the JIQ and JSQ policies is facilitated when viewed as follows:
(i) If there is an idle server in the system, both JIQ and JSQ perform
similarly,
(ii)~Also, when there is no idle server and only $\OO(\sqrt{N})$
servers with queue length two or more, JSQ assigns the arriving task
to a server with queue length one. 
In that case, since JIQ assigns at random, the probability that the
task will land on a server with queue length two or more and thus JIQ acts
differently than JSQ is $\OO(1/\sqrt{N})$.
Since on any finite time interval the number of times an arrival finds
all servers busy is at most $\OO(\sqrt{N})$, all the arrivals except
$\OO(1)$ of them are assigned in exactly the same manner in both JIQ
and JSQ, which then leads to the same scaling limit for both policies. \\

The diffusion limit result in Theorem~\ref{diffusionjiq} is in fact true
for an even broader class of load balancing schemes.
As in Section~\ref{ssec:pf-idea-1}, let $B$ denote the buffer capacity
(possibly infinite) of each server, and in case $B < \infty$, if a task is assigned
to a server with $B$~outstanding tasks, it is instantly discarded.
For an LBA $\Pi$, we will denote the total number of tasks lost
up to time $t$ by $L^\Pi(t)$.
Define the class of schemes
$$\Pi^{(N)} := \{\Pi(d_0, d_1, \ldots, d_{B-1}): d_0 = N, 1 \leq d_i \leq N,
1 \leq i \leq B-1, B \geq 2\},$$
where  in the scheme $\Pi(d_0, d_1, \ldots, d_{B-1})$ with buffer capacity~$B$,
the dispatcher assigns an incoming task to the server with the minimum queue length
among $d_k$ (possibly function of~$N$) servers selected uniformly at random
when the minimum queue length across the system is~$k$, $k = 0, 1, \ldots, B - 1$.
The system analyzed in~\cite{EG15} (JSQ with $B = 2$) can be written
as $\Pi(N,N)$, JIQ can be expressed as $\Pi(N, 1, 1, \ldots)$,
and JIQ with a buffer capacity $B = 2$ is $\Pi(N, 1)$.

The crux of the argument in proving diffusion-level optimality
for any scheme in $\Pi^{(N)}$ goes as follows: 
First the occupancy process under the scheme $\Pi(N, d_1, \ldots, d_{B-1})$
is sandwiched between those under $\Pi(N, 1)$ and $\Pi(N, d_1)$.
More specifically, the $\ell_1$-distance between the occupancy processes
under $\Pi(N, d_1, \ldots, d_{B-1})$ and $\Pi(N, 1)$ is bounded
by the number of items lost due to full buffers.
Next, this loss is bounded using the number of servers with queue length~$2$
in $\Pi(N,N)$.
This allows the use of the results in~\cite{EG15}, and yields that
on any finite time interval with high probability an $\OO(1)$ number of items
are lost due to full buffers, which is negligible on $\sqrt{N}$ scale.
Specifically, this shows that for suitable initial states,
the schemes $\Pi(N, 1)$ and $\Pi(N, d_1)$, along with any scheme in the class
$\Pi^{(N)}$ has the same diffusion limit in the Halfin-Whitt heavy-traffic regime.
We conclude this subsection by describing the coupling construction
stating the stochastic inequalities, and a brief proof sketch
for Theorem~\ref{diffusionjiq}.

\paragraph{The coupling construction.}
We now construct a coupling between two systems following any two schemes,
say $\Pi_1 = \Pi(l_0, l_1, \ldots, l_{B-1})$
and $\Pi_2 = \Pi(d_0, d_1, \ldots, d_{B'-1})$ in $\Pi^{(N)}$, respectively,
to establish the desired stochastic ordering results.
Note that $\Pi_1$ and $\Pi_2$ have (possibly different) buffer
capacities~$B$ and~$B'$, respectively.
With slight abuse of notation we will denote by $\Pi_i$ the system
following scheme $\Pi_i$, $i = 1, 2$.

For the arrival process we couple the two systems as follows. First we synchronize the  arrival epochs of the two systems. Now assume that in the systems $\Pi_1$ and $\Pi_2$, the minimum queue lengths are $k$ and $m$, respectively, for some $k\leq B-1$, $m\leq B'-1$. Therefore, when a task arrives, the dispatchers in $\Pi_1$ and $\Pi_2$ have to select $l_k$ and $d_m$ servers, respectively, and then have to send the task to the one having the minimum queue length among the respectively selected servers. Since the servers are being selected uniformly at random we can assume without loss of generality, as in the stack construction, that the servers are arranged in non-decreasing order of their queue lengths and are indexed in increasing order. 
Hence, observe that when a few server indices are selected, the server having the minimum of those indices will be the server with the minimum queue length among these. 
Thus, in this case the dispatchers in $\Pi_1$ and $\Pi_2$ select $l_k$ and $d_m$ random numbers (without replacement) from $\{1,2,\ldots,N\}$ and then send the incoming task to the servers having indices to be the minimum of those selected numbers. 
Now, note that selecting $l_k$ (or $d_m$) random servers is equivalent to selecting a random permutation of $\{1,2,\ldots,N\}$, say $(\sigma_1, \sigma_2,\ldots,\sigma_N)$, and selecting first $l_k$ (or $d_m$) indices.
To couple the assignment decisions of the two systems, at each arrival epoch a \emph{single} random permutation of $\{1,2,\ldots,N\}$ is drawn, denoted by $\boldsymbol{\Sigma}^{(N)}:=(\sigma_1, \sigma_2,\ldots,\sigma_N)$. Define $\sigma_{(i)}:= \min_{j\leq i}\sigma_j$. Then observe that system $\Pi_1$ sends the task to the server with the index $\sigma_{(l_k)}$ and system $\Pi_2$ sends the task to the server with the index $\sigma_{(d_m)}$. Since at each arrival epoch both systems use a common random permutation, they take decisions in a coupled manner.

For the potential departure process, couple the service completion times of the $k^{th}$ queue in both scenarios, $k= 1,2,\ldots,N$. More precisely, for the potential departure process assume that we have a single synchronized exp($N$) clock independent of arrival epochs for both systems. Now when this clock rings, a number $k$ is uniformly selected from $\{1,2,\ldots,N\}$ and a potential departure occurs from the $k^{th}$ queue in both systems. If at a potential departure epoch an empty queue is selected, then we do nothing. 
Since the service time requirements are i.i.d.~exponentially distributed, the memoryless property ensures that the two schemes, considered independently, still evolve according to their appropriate statistical laws under the above coupling.

\begin{proposition}
\label{prop: stoch_ord}
For two schemes $\Pi_1=\Pi(l_0,l_1,\ldots,l_{B-1})$
and $\Pi_2=\Pi(d_0, d_1,\ldots, d_{B'-1})$ with $B\leq B'$
assume $l_0=\ldots=l_{B-2}=d_0=\ldots=d_{B-2}=d$,
$l_{B-1}\leq d_{B-1}$ and either $d=N$ or $d\leq d_{B-1}$.
Then the following holds:
\begin{enumerate}[{\normalfont (i)}]
\item\label{component_ordering} $\{Q^{ \Pi_1}_i(t)\}_{t\geq 0}\leq_{st}\{Q^{ \Pi_2}_i(t)\}_{t\geq 0}$
for $i=1,2,\ldots,B$,
\item\label{upper bound} $\{\sum_{i=1}^B Q^{ \Pi_1}_i(t)+L^{ \Pi_1}(t)\}_{t\geq 0}\geq_{st} \{\sum_{i=1}^{B'} Q^{ \Pi_2}_i(t)+L^{ \Pi_2}(t)\}_{t\geq 0}$,
\item\label{delta_ineq} $\{\Delta(t)\}_{t\geq 0}\geq \{\sum_{i=B+1}^{B'}Q_i^{ \Pi_2}(t)\}_{t\geq 0}$
almost surely under the coupling defined above,
\end{enumerate}
for any fixed $N\in\mathbbm{N}$ where $\Delta(t):=L^{ \Pi_1}(t)-L^{ \Pi_2}(t)$,
provided that at time $t=0$ the above ordering holds.
\end{proposition}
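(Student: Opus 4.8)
\textbf{The plan} is to upgrade all three statements to \emph{almost sure} inequalities holding simultaneously for every $t\ge 0$ on the common probability space carrying the coupling described above; the stochastic orderings (i)--(ii) and the pathwise bound (iii) then follow at once. Since the coupled process is a pure-jump Markov process whose only transitions are the synchronized arrival epochs and the synchronized potential-departure epochs, I would argue by induction over these epochs: assuming (i)--(iii) hold just before an epoch, I show they persist just after, the base case being the hypothesis at $t=0$. A useful reduction is that (iii) is an \emph{algebraic consequence} of (i) and (ii): rewriting (ii) as $L^{\Pi_1}(t)-L^{\Pi_2}(t)\ge \sum_{i=1}^{B}\big(Q_i^{\Pi_2}(t)-Q_i^{\Pi_1}(t)\big)+\sum_{i=B+1}^{B'}Q_i^{\Pi_2}(t)$ and discarding the first (nonnegative, by (i)) sum yields exactly (iii). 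Hence it suffices to preserve (i) and (ii).

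Inequality (ii) is the easier one. At an arrival epoch the incoming task in $\Pi_1$ either joins a server with fewer than $B$ tasks, raising $\sum_{i=1}^B Q_i^{\Pi_1}$ by one, or is discarded, raising $L^{\Pi_1}$ by one; either way the left-hand side increases by exactly one, and the identical reasoning (now up to height $B'$) shows the right-hand side increases by exactly one, so arrivals leave (ii) intact. At a potential-departure epoch the chosen ordered index $k$ produces a genuine departure in $\Pi_1$ only if its $k$-th shortest queue is nonempty, i.e. $Q_1^{\Pi_1}\ge N-k+1$; by (i) this forces $Q_1^{\Pi_2}\ge N-k+1$, so the left-hand side can decrease only when the right-hand side does, and (ii) is preserved. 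Thus everything reduces to maintaining the componentwise ordering (i).

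For (i) I would pass to the equivalent ordered-queue formulation $X_{(j)}^{\Pi_1}(t)\le X_{(j)}^{\Pi_2}(t)$ for all $j$ (equivalent to $Q_i^{\Pi_1}\le Q_i^{\Pi_2}$ for $i\le B$ since $\Pi_1$ never exceeds height $B$). Synchronized departures from a common ordered index preserve this by the standard same-rank removal argument, and $\Pi_1$-losses are harmless (they change no $Q_i^{\Pi_1}$), so the heart is the arrival step. Writing $k,m$ for the minimum queue lengths in $\Pi_1,\Pi_2$, inequality (i) first yields $m\ge k$; and since $\sigma_{(\cdot)}$ is nonincreasing, the landing positions $P_1=\sigma_{(l_k)}$ and $P_2=\sigma_{(d_m)}$ obey $P_1\ge P_2$ whenever $l_k\le d_m$. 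I then split on $m$. If $m\le B-2$ both systems sample $d$ and deposit at the \emph{same} ordered position $P_1=P_2$: when the two heights there agree both raise the same coordinate, and when they differ a direct count shows strict slack in $Q$ at the coordinate $\Pi_1$ raises, so (i) survives. If $m=B-1$ then $\Pi_2$ is saturated up to level $B-1$, so $Q_i^{\Pi_2}=N$ for $i\le B-1$ and only level $B$ can be threatened; here the contiguous block of length-$B$ servers sits at the top of both orders, so under equality $Q_B^{\Pi_1}=Q_B^{\Pi_2}$ the relation $P_2\le P_1$ forces $\Pi_2$ to raise $Q_B$ whenever $\Pi_1$ does, while under strict inequality slack covers the increment. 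If $m\ge B$ then $Q_i^{\Pi_2}=N\ge Q_i^{\Pi_1}$ for all $i\le B$ and (i) is immediate.

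\textbf{The main obstacle} is precisely the near-saturated regime $m\ge B-1$, where the two schemes sample different numbers of servers ($l_k$ or $l_{B-1}$ versus $d_{B-1}$) and may route the task to different ordered positions, so that without control on the relative sampling sizes $\Pi_1$ could push its top occupied coordinate past that of $\Pi_2$. This is exactly where the hypotheses $l_{B-1}\le d_{B-1}$ together with ``$d=N$ or $d\le d_{B-1}$'' enter: they guarantee $P_1\ge P_2$ (save for the benign subcase $d=N$, $k\le B-2$, in which $\Pi_1$ deposits at its global shortest queue and touches only a level $<B$), and combined with the top-block structure and the re-sorting bookkeeping after each increment they close the case analysis. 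I expect checking that these conditions are exactly sharp enough to rule out every boundary configuration to be the most delicate part. Once (i) and (ii) are secured as almost sure invariants, (iii) follows from the algebraic identity above, completing the proof.
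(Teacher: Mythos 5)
Your proposal is correct and takes essentially the approach the paper intends: the survey states Proposition~7.1 without a detailed proof (deferring to the cited reference), but the coupling it constructs --- synchronized arrivals via a common random permutation with landing positions $\sigma_{(l_k)}$, $\sigma_{(d_m)}$, and same-rank potential departures --- is exactly what you exploit, and your epoch-by-epoch induction establishing pathwise versions of (i) and (ii), with (iii) obtained as an algebraic consequence of the two, is the standard way this coupling yields the stated orderings. Your case analysis at arrival epochs, in particular the identification of the near-saturated regime $m \ge B-1$ as the place where the hypotheses $l_{B-1}\le d_{B-1}$ and ``$d=N$ or $d\le d_{B-1}$'' are needed (with the benign subcase $d=N$, $k\le B-2$ handled by the fact that $\Pi_1$ then only touches a level below $B$, where $\Pi_2$ is saturated), is sound.
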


\begin{proof}[Proof of Theorem~\ref{diffusionjiq}]
Let $\Pi = \Pi(N, d_1, \ldots, d_{B-1})$ be a load balancing scheme
in the class $\Pi^{(N)}$.
Denote by $\Pi_1$ the scheme $\Pi(N,d_1)$ with buffer size $B = 2$
and let $\Pi_2$ denote the JIQ policy $\Pi(N,1)$ with buffer size $B = 2$.

Observe that from Proposition~\ref{prop: stoch_ord} we have
under the coupling defined above,
\begin{equation}
\label{eq: bound}
\begin{split}
|Q_i^\Pi(t) - Q_i^{\Pi_2}(t)| & \leq |Q_i^{\Pi}(t) - Q_i^{\Pi_1}(t)| +
|Q_i^{\Pi_1}(t) - Q_i^{\Pi_2}(t)| \\ & \leq
|L^{\Pi_1}(t) - L^{\Pi}(t)| + |L^{\Pi_2}(t) - L^{\Pi_1}(t)| \leq
2 L^{\Pi_2}(t),
\end{split}
\end{equation}
for all $i \geq 1$ and $t \geq 0$ with the understanding that $Q_j(t) = 0$
for all $j > B$, for a scheme with buffer capacity~$B$.
The third inequality above is
due to Proposition~\ref{prop: stoch_ord}\eqref{delta_ineq},
which in particular says that
$\{L^{\Pi_2}(t)\}_{t \geq 0} \geq \{L^{\Pi_1}(t)\}_{t \geq 0} \geq
\{L^{\Pi}(t)\}_{t \geq 0}$ almost surely under the coupling.
Now we have the following lemma.

\begin{lemma}
\label{lem: tight}
For all $t\geq 0$, under the assumptions of Theorem~\ref{diffusionjiq},
$\{L^{\Pi_2}(t)\}_{N \geq 1}$ forms a tight sequence.
\end{lemma}

Since $L^{\Pi_2}(t)$ is non-decreasing in~$t$, the above lemma
in particular implies that
\begin{equation}
\label{eq: conv 0}
\sup_{t\in[0,T]}\frac{L^{\Pi_2}(t)}{\sqrt{N}} \pto 0.
\end{equation}
For any scheme $\Pi \in \Pi^{(N)}$, from~\eqref{eq: bound} we know that
$$\{Q_i^{\Pi_2}(t) - 2 L^{\Pi_2}(t)\}_{t \geq 0} \leq
\{Q_i^{\Pi}(t)\}_{t \geq 0} \leq \{Q_i^{\Pi_2}(t) +
2 L^{\Pi_2}(t)\}_{t \geq 0}.$$
Combining~\eqref{eq: bound} and~\eqref{eq: conv 0} shows that
if the weak limits under the $\sqrt{N}$ scaling exist,
they must be the same for all the schemes in the class $\Pi^{(N)}$. 
Also, as described in Section~\ref{sec:powerofd}, the weak limit
for $\Pi(N, N)$ exists and the common weak limit can be described
by the unique solution of the SDEs in~\eqref{eq:diffusionjsq}.
Hence, the proof of Theorem~\ref{diffusionjiq} is complete.
\end{proof}

\subsection{Multiple dispatchers}
\label{multiple}

\newcommand{\Nb}{\bar{x}_0}

\begin{figure}\centering
\includegraphics[width=.5\linewidth]{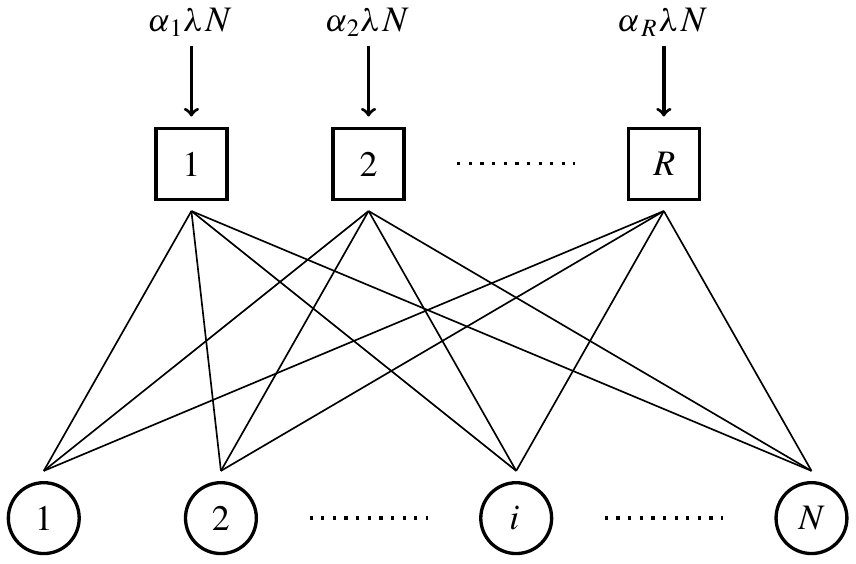}
\caption{Schematic view of the model with $R$ dispatchers and $N$ servers.}
\label{modelfigure}
\end{figure}

So far we have focused on a basic scenario with a single dispatcher,
but it is not uncommon for LBAs to operate across multiple dispatchers.
While the presence of multiple dispatchers does not affect
the queueing dynamics of JSQ($d$) policies, it does matter
for the JIQ scheme which uses memory at the dispatcher.
In order to examine the impact, we consider in this subsection
a scenario with $N$ parallel identical servers as before
and $R \geq 1$ dispatchers, as depicted in Figure~\ref{modelfigure}.
Tasks arrive at dispatcher~$r$ as a Poisson process of rate
$\alpha_r \lambda N$, with $\alpha_r > 0$, $r = 1, \dots, R$,
$\sum_{r = 1}^{R} \alpha_r = 1$, and $\lambda$ denoting the task
arrival rate per server.
For conciseness, we denote $\alpha = (\alpha_1, \dots, \alpha_R)$,
and without loss of generality we assume that the dispatchers are
indexed such that $\alpha_1 \geq \alpha_2 \geq \dots \geq \alpha_R$.

When a server becomes idle, it sends a token to one of the dispatchers
selected uniformly at random, advertising its availability.
When a task arrives at a dispatcher which has tokens available,
one of the tokens is selected, and the task is immediately forwarded
to the corresponding server.

We distinguish two scenarios when a task arrives at a dispatcher
which has no tokens available, referred to as the {\em blocking\/}
and {\em queueing\/} scenario respectively.
In the blocking scenario, the incoming task is blocked and instantly
discarded.
In the queueing scenario, the arriving task is forwarded to one
of the servers selected uniformly at random.
If the selected server happens to be idle, then the outstanding
token at one of the other dispatchers is revoked.

In the queueing scenario we assume $\lambda < 1$, which is not only
necessary but also sufficient for stability.
It is not difficult to show that the joint queue length process is
stochastically majorized by a scheme that assigns each task
to a server chosen uniformly at random.
In the latter case, the system decomposes into $N$~independent
M/M/1 queues, each of which has load $\lambda < 1$ and is stable.

Scenarios with multiple dispatchers have received limited attention
in the literature, and the scant papers that exist
\cite{LXKGLG11,Mitzenmacher16,Stolyar17} almost exclusively assume
that the loads at the various dispatchers are strictly equal, i.e.,
$\alpha_1 = \dots = \alpha_R = 1 / R$.
In these cases the fluid limit, for suitable initial states, is the
same as in Equation~\eqref{eq:fluidjiq} for a single dispatcher,
and in particular the fixed point is the same,
hence, the JIQ scheme continues to achieve asymptotically optimal
delay performance with minimal communication overhead.
The results in~\cite{Stolyar17} in fact show that the JIQ scheme
remains asymptotically optimal even when the servers are heterogeneous,
while it is readily seen that JSQ($d$) policies cannot even provide
maximum stability (i.e.~achieve stability whenever feasible at all)
in that case for any fixed value of~$d$.
As one of the few exceptions, \cite{BBL17} allows the loads at the
various dispatchers to be different.
It is not uncommon for such skewed load patterns to arise for example
when the various dispatchers receive tasks from external sources
making it difficult to perfectly balance the task arrival rates.

\paragraph{Results for blocking scenario.}
For the blocking scenario, denote by $B(R, N, \lambda, \alpha)$
the steady-state blocking probability of an arbitrary task.
It is established in ~\cite{BBL17} that,
\[
B(R, N, \lambda, \alpha) \to \max\{1 - R \alpha_R, 1 - 1 / \lambda\}
\mbox{ as } N \to \infty.
\]
This result shows that in the many-server limit the system performance
in terms of blocking is either determined by the relative load
of the least-loaded dispatcher, or by the aggregate load.
This may be informally explained as follows.
Let $\Nb$ be the expected fraction of busy servers in steady state,
so that each dispatcher receives tokens on average at a rate $\Nb N/R$.
We distinguish two cases, depending on whether a positive fraction
of the tokens reside at the least-loaded dispatcher~$R$ in the limit or not.
If that is the case, then the task arrival rate $\alpha_R \lambda N$
at dispatcher~$R$ must equal the rate $\Nb N / R$
at which it receives tokens, i.e., $\Nb / R = \alpha_R \lambda$.
Otherwise, the task arrival rate $\alpha_R \lambda N$
at dispatcher~$R$ must be no less the rate $\Nb N / R$
at which it receives tokens, i.e., $\Nb / R \leq \alpha_R \lambda$.
Since dispatcher~$R$ is the least-loaded, it then follows that
$\Nb / R \leq \alpha_r \lambda$ for all $r = 1, \dots, R$,
which means that the task arrival rate at all the dispatchers
is higher that the rate at which tokens are received.
Thus the fraction of tokens at each dispatcher is zero in the limit,
i.e., the fraction of idle servers is zero, implying $\Nb = 1$.
Combining the two cases, and observing that $\Nb \leq 1$,
we conclude $\Nb = \min\{R \alpha_R \lambda, 1\}$.
Because of Little's law, $\Nb$ is related to the blocking probability~$B$
as $\Nb = \lambda (1 - B)$.
This yields $1 - B = \min\{R \alpha_R \lambda, 1 / \lambda\}$,
or equivalently, $B = \max\{1 - R \alpha_R, 1 - 1 / \lambda\}$.

The above explanation also reveals that, somewhat counter-intuitively,
it is the least-loaded dispatcher that throttles tokens and leaves
idle servers stranded, thus acting as bottleneck.
Specifically, in the limit dispatcher~$R$ (or the set of least-loaded
dispatchers in case of ties) inevitably ends up with all the available
tokens, if any.
The accumulation of tokens hampers the visibility of idle servers
to the heavier-loaded dispatchers, and leaves idle servers stranded
while tasks queue up at other servers.

\paragraph{Results for queueing scenario.}
For the queueing scenario, denote by $W(R, N, \lambda, \alpha)$
a random variable with the steady-state waiting-time distribution
of an arbitrary task. 
It is shown in~\cite{BBL17} that, for a fixed $\lambda < 1$ and $N \to \infty$,
\[
\mathbb{E}[W(R, N, \lambda, \alpha)] \to
\frac{\lambda_2(R, \lambda, \alpha)}{1 - \lambda_2(R, \lambda, \alpha)},
\]
where 
\[
\lambda_2(R, \lambda, \alpha) =
1 - \frac{1 - \lambda \sum_{i=1}^{r^*} \alpha_i}{1 - \lambda r^* / R}
\]
with 
\[
r^* = \sup\big\{r \big| \alpha_r > \frac{1}{R}
\frac{1 - \lambda\sum_{i = 1}^{r}\alpha_i}{1 - \lambda r/R}\big\}
\]
and the convention that $r^* = 0$ if $\alpha_1 = \hdots = \alpha_R = 1/R$.
In particular,
\[
\lambda_2(2, \lambda, (1 - \alpha_2, \alpha_2)) =
\lambda \frac{1 - 2 \alpha_2}{2 - \lambda},
\]
so that
\[
\mathbb{E}[W(2, N, \lambda, (1 - \alpha_2, \alpha_2))] \rightarrow
\frac{\lambda (1 - 2 \alpha_2)}{2 - 2 \lambda (1 - \alpha_2)}.
\]
Here $\lambda_2$ can be interpreted as the rate at which tasks are
forwarded to randomly selected servers.
Furthermore, dispatchers $1, \hdots, r^*$ receive tokens at a lower
rate than the incoming tasks, and in particular $\lambda_2^* = 0$
if and only if $r^* = 0$.

When the arrival rates at all dispatchers are strictly equal, i.e.,
$\alpha_1 = \dots = \alpha_R = 1 / R$, the above results indicate that
the stationary blocking probability and the mean waiting time
asymptotically vanish as $N \to \infty$, which is in agreement with
the observations in~\cite{Stolyar17} mentioned above.
However, when the arrival rates at the various dispatchers are not
perfectly equal, so that $\alpha_R < 1 / R$, the blocking probability
and mean waiting time are strictly positive in the limit,
even for arbitrarily low overall load and an arbitrarily small degree
of skewness in the arrival rates.
Thus, the ordinary JIQ scheme fails to achieve asymptotically optimal
performance for heterogeneous dispatcher loads.

\paragraph{Enhancements.}
In order to counter the above-described performance degradation
for asymmetric dispatcher loads, \cite{BBL17} proposes two enhancements.

\newtheorem{myenhancement}{Enhancement}

\begin{myenhancement}[Non-uniform token allotment]\label{alg1}
When a server becomes idle, it sends a token to dispatcher~$r$
with probability~$\beta_r$.
\end{myenhancement}

\begin{myenhancement}[Token exchange mechanism]\label{alg2}
Any token is transferred to a uniformly randomly selected dispatcher at rate~$\nu$.
\end{myenhancement}

Note that the token exchange mechanism only creates a constant
communication overhead per task as long as the rate~$\nu$ does not depend
on the number of servers~$N$, and thus preserves the scalability
of the basic JIQ scheme.
The above enhancements can achieve asymptotically optimal performance
for suitable values of the $\beta_r$ parameters and the exchange rate~$\nu$.

\paragraph{Large number of dispatchers.}
In the above set-up we assumed the number of dispatchers to remain fixed
as the number of servers grows large, but a further natural scenario
would be for the number of dispatchers $R(N)$ to scale with the number
of servers as considered in~\cite{Mitzenmacher16}.
He analyzes the case $R(N) = r N$ for some constant~$r$, so that the
relative load of each dispatcher is $\lambda r$. 
The term `I-queue' is used for the queue of (idle) servers that
is known by one of the dispatchers.
A server is added to an I-queue when it becomes idle.
With fluid limits and fixed-point calculations, the analysis
in~\cite{Mitzenmacher16} determines the fraction of I-queues
with $i$~queued servers and the fraction of servers with $i$~tasks
in queue that are in the $j$-th position in one of the I-queues.
The fixed point can be computed numerically.

\paragraph{Anticipation.}
In~\cite{Mitzenmacher16} it is also proposed to have servers issue
their availability tokens to the dispatchers already before they are idle,
e.g. when they have just one task remaining.
This appears beneficial at very high load when there are (on average)
fewer idle servers than dispatchers, and tasks would frequently be assigned
to uniformly at random selected servers otherwise. 
Two variants are introduced.
First, an LCFS-scheme in which the server that is in the I-queue
the least amount of time is chosen for the incoming task.
Second, a server that became idle, may probe $d$~I-queues
after which it chooses the least loaded amongst the~$d$ selected servers.
Both variants lead to small performance improvements.

\subsection{Joint load balancing and auto-scaling}

Besides delay performance and implementation overhead, a further key attribute
in the context of large-scale cloud networks and data centers is energy consumption.
So-called auto-scaling algorithms have emerged as a popular mechanism
for adjusting service capacity in response to varying demand levels
so as to minimize energy consumption while meeting performance targets,
but have mostly been investigated in settings with a centralized queue,
and queue-driven auto-scaling techniques have been widely investigated in the literature
\cite{ALW10,GDHS13,LCBWGWMH12,LLWLA11a,LLWLA11b,LLWA12,LWAT13,PP16,UKIN10,WLT12}.
In systems with a centralized queue it is common to put servers
to `sleep' while the demand is low, since servers in sleep mode
consume much less energy than active servers.
Under Markovian assumptions, the behavior of these mechanisms can be described
in terms of various incarnations of M/M/$N$ queues with setup times.
There are several further recent papers which examine on-demand server
addition/removal in a somewhat different vein \cite{PS16,NS16}. 
Unfortunately, data centers and cloud networks with massive numbers
of servers are too complex to maintain any centralized queue,
as it involves a prohibitively high communication burden to obtain
instantaneous state information.

Motivated by these observations, the authors of~\cite{MDBL17}
propose a joint load balancing and auto-scaling strategy,
which retains the excellent delay performance and low implementation
overhead of the ordinary JIQ scheme, and at the same time minimizes
the energy consumption.
The strategy is referred to as TABS (Token-Based Auto-Balance Scaling)
and operates as follows:
\begin{itemize}
\item When a server becomes idle, it sends a `green' message to the dispatcher, waits for an $\exp(\mu)$ time (standby period), and turns itself off by sending a `red' message to the dispatcher (the corresponding green message is destroyed).
\item When a task arrives, the dispatcher selects a green message at random if there are any, and assigns the task to the corresponding server (the corresponding green message is replaced by a `yellow' message). 
Otherwise, the task is assigned to an arbitrary busy  server, and if at that arrival epoch there is a red message at the dispatcher, then it selects one at random, and the setup procedure of the corresponding server is initiated, replacing its red message by an `orange' message.
Setup procedure takes $\exp(\nu)$ time after which the server becomes active. 
\item Any server which activates due to the latter event, sends a green message to the dispatcher (the corresponding orange message is replaced), waits for an $\exp(\mu)$ time for a possible assignment of a task, and again turns itself off by sending a red message to the dispatcher.
\end{itemize}

\begin{figure}
\begin{center}
\includegraphics[scale=1]{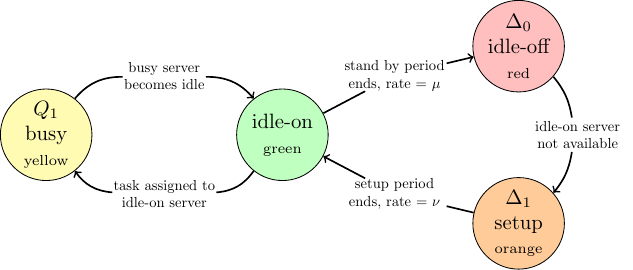}
\end{center}
\caption{Illustration of server on-off decision rules in the TABS scheme,
along with message colors and state variables.}
\label{fig:scheme}
\end{figure}

The TABS scheme gives rise to a distributed operation in which servers
are in one of four states (busy, idle-on, idle-off or standby),
and advertize their state to the dispatcher via exchange of tokens.
Figure~\ref{fig:scheme} illustrates this token-based exchange protocol.
Note that setup procedures are never aborted and continued even when
idle-on servers do become available.
Recently, dynamic scaling and load balancing with variable service
capacity and on-demand agents has been further examined in~\cite{GFPJ17}.

To describe systems under the TABS scheme,
we use $\mathbf{Q}^N(t) := (Q_1^N(t), Q_2^N(t), \dots, Q_B^N(t))$
to denote the system occupancy state at time~$t$ as before, where $B\geq 1$ is a finite buffer capacity.
Also, let $\Delta_0^N(t)$ and $\Delta_1^N(t)$ denote the number
of idle-off servers and servers  in setup mode at time~$t$, respectively. 
The fluid-scaled quantities are denoted by the respective small letters,
\emph{viz.} $q_i^{N}(t) := Q_i^{N}(t) / N$,
$\delta_0^N(t) = \Delta_0^N(t) / N$,
and $\delta_1^N(t) = \Delta_1^N(t) / N$.
For brevity in notation, we will write
$\mathbf{q}^N(t) = (q_1^N(t), \dots, q_B^N(t))$
and $\boldsymbol{\delta}^N(t) = (\delta_0^N(t), \delta_1^N(t))$.
The results presented in the remainder of the section are extracted
from~\cite{MDBL17}, unless indicated otherwise.

\paragraph{Fluid limit.}
Under suitable initial conditions, on any finite time interval,
with probability~$1$, any sequence in $N$ has a further subsequence
along which the sequence of processes $(\qq^N(\cdot), \dd^N(\cdot))$
converges to a deterministic limit $(\qq(\cdot), \dd(\cdot))$
that satisfies the following system of ODEs
\begin{equation}
\label{eq:tabsfluid}
\begin{split}
\frac{\dif^+ q_i(t)}{\dif t} &= \lambda(t) p_{i-1}(\qq(t),\dd(t),\lambda(t))
 - (q_i(t) - q_{i+1}(t)),\ i = 1, \ldots, B, \\
\frac{\dif^+\delta_0(t)}{\dif t} &= u(t) - \frac{\dif^+\xi(t)}{\dif t}, \qquad
\frac{\dif^+\delta_1(t)}{\dif t} = \frac{\dif^+\xi(t)}{\dif t} - \nu \delta_1(t),
\end{split}
\end{equation}
where by convention $q_{B+1}(\cdot) \equiv 0$, and
\begin{align*}
u(t) &= 1 - q_1(t) - \delta_0(t) - \delta_1(t), \quad
\frac{\dif^+\xi(t)}{\dif t} =
\lambda(t) (1 - p_0(\qq(t), \dd(t), \lambda(t))) \ind{\delta_0(t) > 0}.
\end{align*}
For any $(\qq, \dd)$ and $\lambda > 0$,
$(p_i(\qq, \dd, \lambda))_{i \geq 0}$ are given by 
\begin{align*}
p_0(\qq,\dd,\lambda) &= 
\begin{cases}
& 1 \qquad \text{ if } \qquad u = 1 - q_1 - \delta_0 - \delta_1 > 0, \\
& \min\{\lambda^{-1}(\delta_1 \nu + q_1 - q_2), 1\},\quad \text{otherwise,}
\end{cases} \\
\quad p_i(\qq, \dd, \lambda) &=
(1 - p_0(\qq, \dd, \lambda)) (q_i - q_{i+1})q_1^{-1},\ i = 1, \ldots, B.
\end{align*}

We now provide an intuitive explanation of the fluid limit stated above.
The term $u(t)$ corresponds to the asymptotic fraction of idle-on servers
in the system at time~$t$, and $\xi(t)$ represents the
asymptotic cumulative number of server setups (scaled by~$N$)
that have been initiated during $[0,t]$.
The coefficient $p_i(\qq, \dd, \lambda)$ can be interpreted as
the instantaneous fraction of incoming tasks that are assigned to some server
with queue length~$i$, when the fluid-scaled occupancy state is $(\qq, \dd)$
and the scaled instantaneous arrival rate is~$\lambda$.
Observe that as long as $u > 0$, there are idle-on servers,
and hence all the arriving tasks will join idle servers. 
This explains that if $u > 0$, $p_0(\qq, \dd, \lambda) = 1$
and $p_i(\qq, \dd, \lambda)=0$ for $i = 1, \ldots, B-1$.
If $u = 0$, then observe that servers become idle at rate $q_1 - q_2$,
and servers in setup mode turn on at rate $\delta_1 \nu$.
Thus the  idle-on servers are created at a total rate
$\delta_1 \nu + q_1 - q_2$.
If this rate is larger than the arrival rate~$\lambda$,
then almost all the arriving tasks can be assigned to idle servers.
Otherwise, only a fraction $(\delta_1 \nu + q_1 - q_2) / \lambda$
of arriving tasks join idle servers. 
The rest of the tasks are distributed uniformly among busy servers,
so a proportion $(q_i - q_{i+1}) q_1^{-1}$ are assigned to servers
having queue length~$i$.
For any $i = 1, \ldots, B$, $q_i$ increases when there is an arrival
to some server with queue length $i - 1$, which occurs at rate
$\lambda p_{i-1}(\qq, \dd, \lambda)$, and it decreases when there is
a departure from some server with queue length~$i$,
which occurs at rate $q_i - q_{i-1}$. 
Since each idle-on server turns off at rate~$\mu$, the fraction
of servers in the off mode increases at rate $\mu u$.
Observe that if $\delta_0 > 0$, for each task that cannot be assigned
to an idle server, a setup procedure is initiated  at one idle-off server. 
As noted above, $\xi(t)$ captures the (scaled) cumulative number
of setup procedures initiated up to time~$t$.
Therefore the fraction of idle-off servers and the fraction of servers
in setup mode decreases and increases by $\xi(t)$, respectively,
during $[0, t]$.
Finally, since each server in setup mode becomes idle-on at rate~$\nu$,
the fraction of servers in setup mode decreases at rate $\nu \delta_1$.

\paragraph{Fixed point and global stability.}
In case of a constant arrival rate $\lambda(t) \equiv \lambda < 1$,
any fluid sample path in~\eqref{eq:tabsfluid} has a unique fixed point:
\begin{equation}
\label{eq:fixed point}
\delta_0^* = 1 - \lambda, \qquad \delta_1^* = 0, \qquad
q_1^* = \lambda \quad \mbox{ and } \quad q_i^* = 0,
\end{equation}
for $i = 2, \ldots, B$.
Indeed, it can be verified that $p_0(\qq^*, \dd^*, \lambda) = 1$
and $u^* = 0$ for $(\qq^*, \dd^*)$ given by~\eqref{eq:fixed point}
so that the derivatives of~$q_i$, $i = 1, \dots, B$, $\delta_0$,
and~$\delta_1$ become zero, and that these cannot be zero at any other
fluid-scaled occupancy state.
Note that, at the fixed point, a fraction $\lambda$ of the servers
have exactly one task while the remaining fraction have zero tasks,
independently of the values of the parameters~$\mu$ and~$\nu$.

In order to establish the convergence of the sequence of steady states,
we need the global stability of the fluid limit, i.e.,
starting from any fluid-scaled occupancy state, any fluid sample path
described by~\eqref{eq:tabsfluid} converges to the unique fixed
point~\eqref{eq:fixed point} as $t \to \infty$.
More specifically, irrespective of the starting state,
\begin{equation}
\label{eq:globalstab}
(\qq(t), \dd(t)) \to (\qq^*, \dd^*), \quad \mbox{ as } \quad t \to \infty,
\end{equation}
where $(\qq^*, \dd^*)$ is as defined in~\eqref{eq:fixed point}.

\paragraph{Interchange of limits.}
The global stability can be leveraged to show that the steady-state
distribution of the $N$-th system, for large~$N$,
can be well approximated by the fixed point of the fluid limit
in~\eqref{eq:fixed point}. 
Specifically, it justifies the interchange of the many-server
($N \to \infty$) and stationary ($t \to \infty$) limits.
Since the buffer capacity~$B$ at each server is supposed to be finite,
for every~$N$, the Markov process $(\QQ^N(t), \Delta_0^N(t), \Delta_1^N(t))$
is irreducible, has a finite state space, and thus has a unique
steady-state distribution.
Let $\pi^N$ denote the steady-state distribution of the $N$-th system, i.e.,
$$\pi^{N}(\cdot) = \lim_{t \to \infty}
\mathbb{P}\ \big(\qq^{N}(t) = \cdot, \dd^N(t) = \cdot\big).$$ 
The fluid limit result and the global stability thus yield that $\pi^N$
converges weakly to~$\pi$ as $N \to \infty$, where $\pi$ is given
by the Dirac mass concentrated upon $(\qq^*,\dd^*)$ defined
in~\eqref{eq:fixed point}.

\begin{remark}\normalfont
Note that the above interchange of limits result was obtained
under the assumption that the queues have finite buffers,
and analysis of the infinite-buffer scenario was left open.
The key challenge in the latter case stems from the fact that the system
stability under the usual subcritical load assumption is not automatic.
In fact as explained in~\cite{MS19}, when the number of servers $N$ is fixed,
the stability may {\em not} hold even under a subcritical load assumption.
In~\cite{MS19} the stability issue of the TABS scheme has been addressed
and the convergence of the sequence of steady states was shown
for the infinite-buffer scenario.
In particular, it was established that for a fixed choice of parameters
$\lambda < 1$, $\mu > 0$, and $\nu > 0$, the system with $N$~servers
under the TABS scheme is stable for large enough~$N$.
There the authors introduce an induction-based approach that uses both
the conventional fluid limit (in the sense of a large starting state)
and the mean-field fluid limit (when $N \to \infty$) in an intricate fashion
to prove the large-$N$ stability of the system.
\end{remark}

\paragraph{Performance metrics.}
As mentioned earlier, two key performance metrics are the expected
waiting time of tasks $\expect{W^N}$ and energy consumption $\expect{P^N}$
for the $N$-th system in steady state.
In order to quantify the energy consumption, we assume that the energy usage
of a server is $P_{\full}$ when busy or in set-up mode,
$P_{\idle}$ when idle-on, and zero when turned off.
Evidently, for any value of~$N$, at least a fraction $\lambda$
of the servers must be busy in order for the system to be stable,
and hence $\lambda P_{\full}$ is the minimum mean energy usage
per server needed for stability.
We will define $\expect{Z^N} = \expect{P^N} - \lambda P_{\full}$
as the relative energy wastage accordingly.
The interchange of limits result can be leveraged to obtain that
asymptotically the expected waiting time and energy consumption
for the TABS scheme vanish in the limit, for any strictly positive
values of~$\mu$ and~$\nu$.
More specifically, for a constant arrival rate  $\lambda(t) \equiv \lambda < 1$,
for any $\mu > 0$, $\nu > 0$, as $N \to \infty$,
\begin{enumerate}[{\normalfont (a)}]
\item Zero mean waiting time: $\expect{W^N} \to 0$,
\item Zero energy wastage: $\expect{Z^N} \to 0$.
\end{enumerate}
The key implication is that the TABS scheme, while only involving
constant communication overhead per task, provides performance
in a distributed setting that is as good at the fluid level
as can possibly be achieved, even in a centralized queue,
or with unlimited information exchange.

\paragraph{Comparison to ordinary JIQ policy.}
Consider again a constant arrival rate $\lambda(t) \equiv \lambda$. 
It is worthwhile to observe that the component $\qq$ of the fluid limit
as in~\eqref{eq:tabsfluid} coincides with that for the ordinary JIQ policy
where servers always remain on, when the system following the TABS scheme
starts with all the servers being idle-on, and $\lambda + \mu < 1$. 
To see this, observe that the component $\qq$ depends on~$\dd$
only through $(p_{i-1}(\qq, \dd))_{i \geq 1}$. 
Now, $p_0 = 1$, $p_i = 0$, for all $i \geq 1$,
whenever $q_1 + \delta_0 + \delta_1 < 1$,
irrespective of the precise values of $(\qq, \dd)$. 
Moreover, starting from the above initial state, $\delta_1$ can increase
only when $q_1 + \delta_0 = 1$. 
Therefore, the fluid limit of $\qq$ in~\eqref{eq:tabsfluid}
and the ordinary JIQ scheme are identical if the system parameters
$(\lambda, \mu, \nu)$ are such that $q_1(t) + \delta_0(t) < 1$,
for all $t \geq 0$.
Let $y(t) = 1 - q_1(t) - \delta_0(t)$.
The solutions to the differential equations
\[
\frac{\dif q_1(t)}{\dif t} = \lambda - q_1(t), \quad
\frac{\dif y(t)}{\dif t} = q_1(t) - \lambda - \mu y(t),
\]
$y(0) = 1$, $q_1(0) = 0$ are given by 
\[
q_1(t) = \lambda (1 - \ee^{- t}), \quad
y(t) = \frac{\ee^{- (1 + \mu) t}}{\mu-1} \big(\ee^t(\lambda + \mu - 1) -
\lambda \ee^{\mu t}\big).
\]
Notice that if $\lambda + \mu < 1$, then $y(t) > 0$ for all $t \geq 0$
and thus, $q_1(t) + \delta_0(t) < 1$, for all $t \geq 0$.
The fluid-level optimality of the JIQ scheme was described
in Section~\ref{ssec:fluidjiq}. 
This observation thus establishes the optimality of the fluid-limit
trajectory under the TABS scheme for suitable parameter values
in terms of response time performance.
From the energy usage perspective, under the ordinary JIQ policy,
since the asymptotic steady-state fraction of busy servers ($q_1^*$)
and idle-on servers are given by~$\lambda$ and $1 - \lambda$, respectively,
the asymptotic steady-state (scaled) energy usage is given by 
\begin{align*}
\expect{P^{\mathrm{JIQ}}} = \lambda P_{\full} + (1 - \lambda) P_{\idle} =
\lambda P_{\full}(1 + (\lambda^{- 1} - 1) f),
\end{align*}
where $f = P_{\idle}/P_{\full}$ is the relative energy consumption
of an idle server.
As described earlier, the asymptotic steady-state (scaled) energy usage
under the TABS scheme is $\lambda P_{\full}$.
Thus the TABS scheme reduces the asymptotic steady-state energy usage
by $\lambda P_{\full}(\lambda^{- 1} - 1) f = (1 - \lambda) P_{\idle}$,
which amounts to a relative saving
of $(\lambda^{- 1} - 1) f / (1 + (\lambda^{- 1} - 1) f)$.
In summary, the TABS scheme performs as well as the ordinary JIQ policy
in terms of the waiting time and communication overhead while providing
a significant energy saving.

\section{Redundancy policies and alternative scaling}
\label{miscellaneous}

In this section we discuss somewhat related redundancy policies, 
alternative scaling regimes, and some additional performance metrics
of interest.

\subsection{Redundancy-$d$ policies}

So-called redundancy-$d$ policies involve a somewhat similar operation
as JSQ($d$) policies, and also share the primary objective of ensuring
low delays \cite{AGSS13,VGMSRS13}.
In a redundancy-$d$ policy, $d \geq 2$ candidate servers are selected
uniformly at random (with or without replacement) for each arriving task,
just like in a JSQ($d$) policy.
Rather than forwarding the task to the server with the shortest queue
however, replicas are dispatched to all sampled servers.
Note that the initial replication to $d$ servers selected uniformly
at random does not entail any communication burden, but the abortion
of redundant copies at a later stage does involve a significant amount
of information exchange and complexity.

Two common options can be distinguished for abortion of redundant clones.
In the first variant, as soon as the first replica starts service,
the other clones are abandoned.
In this case, a task gets executed by the server which had the smallest
workload at the time of arrival (and which may or may not have had the
shortest queue length) amongst the sampled servers.
This may be interpreted as a power-of-$d$ version of the Join-the-Smallest
Workload (JSW) policy discussed in Section~\ref{ssec:jsw}.
The optimality properties of the JSW policy mentioned in that subsection
suggest that redundancy-$d$ policies should outperform JSQ($d$) policies,
which appears to be supported by simulation experiments.

In the second option the other clones of the task are not aborted
until the first replica has completed service
(which may or may not have been the first replica to start service).
While a task is only handled by one of the servers in the former case, 
it may be processed by several servers in the latter case.
When the service times are exponentially distributed and independent for
the various clones, the aggregate amount of time spent by all the servers
until completion remains exponentially distributed with the same mean.
An exact analysis of the delay distribution in systems with $N = 2$
or $N = 3$ servers is provided in \cite{GZDHHS15,GZDHHS16},
and exact expressions for the mean delay with an arbitrary number
of servers are established in~\cite{GZHS16}.
The limiting delay distribution in the many-server regime (ii)
is derived in \cite{GZVHS16,GHSVZ17}
based on an asymptotic independence assumption among the servers.
In general, the mean aggregate amount of time devoted to a task
and the resulting delay may be larger or smaller for less or more
variable service time distributions, also depending on the number
of replicas per task \cite{PC16,SLR16,WJW14,WJW15}.
In particular, for heavy-tailed service time distributions,
the mean aggregate time spent on a task may be considerably reduced
by virtue of the redundancy.
Indeed, even if the first replica to start service has an extremely long
service time, that is not likely to be case for the other clones as well.
In spite of the extremely long service time of the first replica,
it is therefore unlikely for the aggregate amount of time spent on the
task or its waiting time to be large.
This provides a significant performance benefit to redundancy-$d$
policies over JSQ($d$) policies, and has also motivated a strong interest
in adaptive replication schemes \cite{APS17,Joshi17a, Joshi17b}.

A further closely related model is where $k$ of the replicas need to
complete service, $1 \leq k \leq d$, in order for the task to finish 
which is relevant in the context of storage systems with coding
and MapReduce tasks \cite{JSW15a,JSW17}.
The special case where $k = d = N$ corresponds to a classical
fork-join system.
The authors of~\cite{HBVH19} present a unified approach
for analyzing the stability and performance of a broad class
of workload-dependent task assignment and replication policies
based on considering the so-called cavity process
in a many-server regime with $N \to \infty$.
This class of policies includes both versions of the redundancy-$d$
policy as well as the above-mentioned $k$-out-$d$ system.

\subsection{Conventional heavy traffic}
\label{classical}

In this subsection we briefly discuss a few asymptotic results for LBAs
in the classical heavy-traffic regime as described in Section~\ref{asym}
where the number of servers~$N$ is fixed and the relative load tends
to one in the limit.

The papers \cite{Foschini77,FS78,Reiman84,ZHW95} establish diffusion
limits for the JSQ policy in a sequence of systems with Markovian
characteristics as in our basic model set-up, but where in the $K$-th
system the arrival rate is $K \lambda + \hat\lambda \sqrt{K}$, while
the service rate of the $i$-th server is $K \mu_i + \hat\mu_i \sqrt{K}$,
$i = 1, \dots, N$, with $\lambda = \sum_{i = 1}^{N} \mu_i$,
inducing critical load as $K \to \infty$.
It is proved that for suitable initial conditions the queue lengths
are of the order O($\sqrt{K}$) over any finite time interval
and exhibit a state-space collapse property.
In particular, a properly scaled version of the joint queue length
process lives in a one-dimensional rather than $N$-dimensional space,
reflecting that the various queue lengths evolve in lock-step,
with the relative proportions remaining virtually identical in the limit,
while the aggregate queue length varies.

Atar {\em et al.}~\cite{AKM17} investigate a similar scenario,
and establish diffusion limits for three policies: the JSQ($d$) policy,
the redundancy-$d$ policy (where the redundant clones are abandoned
as soon as the first replica starts service), and a combined policy
called Replicate-to-Shortest-Queues (RSQ)
where $d$~replicas are dispatched to the $d$-shortest queues.
Note that the latter policy requires instantaneous knowledge of all
the queue lengths, and hence involves a similar excessive communication
overhead as the ordinary JSQ policy, besides the substantial
information exchange associated with the abortion of redundant copies.
Conditions are derived for the values of the relative service rates
$\mu_i$, $i = 1, \dots, N$, in conjunction with the diversity
parameter~$d$, in order for the queue lengths under the JSQ($d$)
and redundancy-$d$ policies to be of the order O($\sqrt{K}$)
over any finite time interval and exhibit state-space collapse.
The conditions for the two policies are distinct, but in both cases
they are weaker for larger values of~$d$, as intuitively expected.
While the conditions for the values of~$\mu_i$ depend on~$d$,
whenever they are met, the actual diffusion-scaled queue length
processes do not depend on the exact value of~$d$ in the limit,
showing a certain resemblance with the universality property
as identified in Section~\ref{ssec:powerd} for the conventional
large-capacity and Halfin-Whitt regimes.

The authors of~\cite{ZWTSS17} consider a slightly different model
set-up with a time-slotted operation, and identify a class~$\Pi$ of LBAs
that not only provide throughput-optimality (or maximum stability, i.e.,
keep the queues stable in a suitable sense whenever feasible to do so at all),
but also achieve heavy-traffic delay optimality, in the sense that
the properly scaled aggregate queue length is the same as that
in a centralized queue where all the resources are pooled
as the load tends to one.
As it turns out, the class~$\Pi$ includes JSQ($d$) policies with $d \geq 2$,
but does \emph{not} include the JIQ scheme, which tends to degenerate 
into a random assignment policy when idle servers are rarely available.
The authors further propose a threshold-based policy which has low
implementation complexity like the JIQ scheme, but \emph{does} belong
to the class~$\Pi$, and hence achieves heavy-traffic delay optimality.
A later paper~\cite{ZTS19} establishes both necessary and sufficient
conditions for threshold-based task assignment policies to achieve
heavy-traffic optimality in terms of mean delay.

\subsection{Non-degenerate slowdown}
\label{nondegenerate}

In this subsection we briefly discuss a few of the scarce asymptotic results
for LBAs in the so-called non-degenerate slow-down regime
described in Section~\ref{asym} where $N - \lambda(N) \to \gamma > 0$,
as the number of servers~$N$ grows large.
In a centralized queue the process tracking the evolution of the number
of waiting tasks, suitably accelerated and normalized by~$N$,
converges in this regime to a Brownian motion with drift $- \gamma$
reflected at zero as $N \to \infty$, as demonstrated in~\cite{Atar12}.
In stationarity, the number of waiting tasks, normalized by~$N$,
converges in this regime to an exponentially distributed random variable
with parameter~$\gamma$ as $N \to \infty$.
Hence, the mean number of waiting tasks must be at least of the order $N / \gamma$,
and the waiting time cannot vanish as $N \to \infty$ under any policy.

The authors of~\cite{GW19} characterize the diffusion-scaled queue
length process under the JSQ policy in this asymptotic regime.
They further compare the diffusion limit for the JSQ policy with that
for a centralized queue as described above as well as several LBAs
such as the JIQ scheme and a refined version called Idle-One-First (I1F),
where a task is assigned to a server with exactly one task if no idle
server is available and to a randomly selected server otherwise.

It is proved that the diffusion limit for the JIQ scheme is no longer
asymptotically equivalent to that for the JSQ policy in this asymptotic regime,
and the JIQ scheme fails to achieve asymptotic optimality in that respect,
as opposed to the behavior in the conventional large-capacity
and Halfin-Whitt heavy-traffic regimes discussed in Section~\ref{ssec:jiq}.
In contrast, the I1F scheme does preserve the asymptotic equivalence
with the JSQ policy in terms of the diffusion-scaled queue length process,
and thus retains asymptotic optimality in that sense.

These results provide further indication that the amount and accuracy
of queue length information needed to achieve asymptotic equivalence
with the JSQ policy depend not only on the scale dimension (e.g.~fluid
or diffusion), but also on the load regime.
Put differently, the finer the scale and the higher the load,
the more strictly one can distinguish various LBAs in terms
of the relative performance compared to the JSQ policy.

\subsection{Sparse-feedback regime}
\label{sparse}

As described in Section~\ref{ssec:jiq}, the JIQ scheme involves 
a communication overhead of at most one message per task, and yet
achieves optimal delay performance in the fluid and diffusion regimes.
However, even just one message per task may still be prohibitive,
especially when tasks do not involve big computational tasks,
but small data packets which require little processing.
In such situations the sheer message exchange in providing queue
length information may be disproportionate to the actual amount
of processing required.

Motivated by the above issues, \cite{BBL19} proposes and examines
a novel class of LBAs which also leverage memory at the dispatcher,
but allow the communication overhead to be seamlessly adapted
and reduced below that of the JIQ scheme.
Specifically, in the proposed schemes, the various servers provide
occasional queue status notifications to the dispatcher,
either in a synchronous or asynchronous fashion.
The dispatcher uses these reports to maintain queue estimates,
and forwards incoming tasks to the server with the lowest queue estimate.
The queue estimate for a server is incremented for every task
assigned, and set to the true queue length at update moments,
but never lowered in between updates.
Note that when the update frequency per server is $\delta$,
the number of messages per task is $d = \delta / \lambda$,
with $\lambda < 1$ denoting the arrival rate per server.

The results in~\cite{BBL19} demonstrate that the proposed schemes
markedly outperform JSQ($d$) policies with the same number
of $d \geq 1$ messages per task and they can achieve a vanishing
waiting time in the many-server limit when the update
frequency~$\delta$ exceeds $\lambda / (1 - \lambda)$.
In case servers only report zero queue lengths and suppress updates
for non-zero queues, the update frequency required for a vanishing
waiting time can in fact be lowered to just~$\lambda$,
matching the one message per task involved in the JIQ scheme.

From a scalability viewpoint, the most pertinent regime is $d < 1$
where only very sparse server feedback is required.
It is shown in~\cite{BBL19} that the proposed schemes then outperform
the corresponding sparsified versions of the JIQ scheme where idle
servers only provide notifications to the dispatcher with probability~$d$.
In order to further explore the performance for $d < 1$ in the
many-server limit, \cite{BBL19} investigates fluid limits for the
synchronous case as well as the asynchronous case with exponential
update intervals.
The fixed point of the fluid limit are leveraged to derive the
stationary queue length distribution as function of the update frequency.

Additionally, \cite{BBL19} examines the performance in the ultra-low
feedback regime where the update frequency~$\delta$ goes to zero,
and in particular establishes a somewhat counter-intuitive dichotomy.
In the synchronous case, the behavior of each of the individual
queues approaches that of a single-server queue with
a near-deterministic arrival process and exponential service times,
with the mean waiting time tending to a finite constant.
In contrast, in the asynchronous case, the individual queues
experience saw-tooth behavior with oscillations and waiting times
that grow without bound.

In order to achieve a vanishing waiting time, the dispatcher must assign each incoming task to an idle server with high probability, and thus be able to identify on average at least one idle server for every incoming task.  When the amount of memory at the dispatcher is limited, the dispatcher may in fact have to identify more idle servers on average to ensure that at least one is available with high probability for each incoming task, as also reflected in the results of~\cite{GTZ16,GTZ18,GTZ20}.  These conditions, in conjunction with the fact that the fraction of idle servers in equilibrium is $1 - \lambda$, translate into a minimum required communication overhead for various families of algorithms.  For example, if the dispatcher samples a server at random, it will find that server idle with probability $1 - \lambda$, so in the absence of any memory it will need to sample a number of servers that grows with $N$ for each incoming task, while with unlimited memory, it will need to sample on average $1 / (1 - \lambda)$ servers per incoming task.  Likewise, if servers report their queue status to the dispatcher, then an arbitrary server will report to be idle with probability $1 - \lambda$, so they all need to do that every $\lambda / (1 - \lambda)$ time units on average, i.e.,~$1 / (1 - \lambda)$ times on average per incoming task.  When only idle servers report their status to the dispatcher, as in the JIQ algorithm, they only need to do so at most once per incoming task.  When servers report their status asynchronously rather than all simultaneously, or idle servers only after some delay, the associated memory requirement at the dispatcher can be reduced.

\subsection{Scaling of maximum queue length}
\label{ballsbins}

So far we have focused on the asymptotic behavior of LBAs in terms
of the number of servers with a certain queue length, either on fluid scale
or diffusion scale, in various regimes as $N \to \infty$.
A related but different performance metric is the maximum queue length
$M(N)$ among all servers as $N \to \infty$.
The authors of~\cite{LM06} showed that for fixed $d \geq 2$
the stationary maximum queue length $M(N)$ in a system
under the JSQ($d$) policy is concentrated on at most two adjacent values
which are $\log(\log(N)) / \log(d) + \OO(1)$,
whereas for purely random assignment ($d = 1$),
it scales as $\log(N) / \log(1/\lambda)$
and does not concentrate on a bounded range of values.
This is yet a further manifestation of the power-of-choice effect.

An earlier paper~\cite{LM05} had already shown a similar result
for the maximum bin occupancy under a power-of-$d$ policy
in a balls-and-bins context where arriving items (balls) do not get
served and never depart but simply accumulate in bins,
so that (stationary) queue lengths are not meaningful.
The maximum bin occupancy under purely random assignment, however,
scales as $\log(N)/\log(\log(N))$, and \textit{does} concentrate on two
adjacent values, in contrast with the queueing scenario mentioned above.

In fact, the very notion of randomized load balancing and power-of-$d$
strategies was introduced in a balls-and-bins setting in the seminal
paper~\cite{ABKU94}.
Several further variations and extensions in that context have been considered
in \cite{V99,ACMR95,BCSV00,BCSV06, CMS95,DM93,FPSS05,PR04, P05, G81}.
One of the earliest papers on graph-based load balancing was also
concerned with a balls-and-bins setting~\cite{KP06}.

As alluded to above, there are natural parallels between the balls-and-bins
setup and the queueing scenario that we have focused on so far.
These commonalities are for example reflected in the fact that
power-of-$d$ strategies yield similar dramatic performance improvements
over purely random assignment in both settings.
However, there are also quite fundamental differences between the
balls-and-bins setup and the queueing scenario, besides the obvious
contrasts in the performance metrics.
This distinction is already reflected in the different scaling behavior
under purely random assignment of the maximum queue length in a queueing scenario
and the maximum bin occupancy in a balls-and-bins setting as mentioned above.
A further manifestation of is provided by the fact that a simple
Round-Robin strategy produces a perfectly balanced allocation in
a balls-and-bins setup but is far from optimal in a queueing scenario
as observed in Section~\ref{random}.
In particular, the stationary fraction of servers with two or more tasks
under a Round-Robin strategy remains positive in the limit as $N \to \infty$,
whereas it vanishes under the JSQ policy.
On a related account, since tasks get served and eventually depart
in a queueing scenario, less balanced allocations with a large
portion of vacant servers will generate fewer service completions
and result in a larger total number of tasks.
Thus different schemes yield not only various degrees of balance,
but also variations in the aggregate number of tasks in the system,
which is not the case in a balls-and-bins set-up.

\section{Extensions and future research directions}
\label{sec:ext}

Throughout most of the paper we have focused on the supermarket model
as a canonical setup and adopted several common assumptions in that context:
(i) all servers are identical;
(ii) the service requirements are exponentially distributed;
(iii) no advance knowledge of the service requirements is available;
(iv) in particular, the service discipline at each server is oblivious
to the actual service requirements.
As mentioned earlier, the stochastic optimality of the JSQ policy,
and hence its central role as an ideal performance benchmark,
critically rely on these assumptions.
The latter also broadly applies to the stochastic coupling techniques
and asymptotic universality properties that we have considered
in the previous sections.

In this section we turn to a brief overview of results for scenarios
where some of the above assumptions are relaxed, in particular allowing
for general service requirement distributions and possibly
heterogeneous servers, along with some broader methodological issues.
In Section~\ref{ssec:powerofd-gen} we focus on the behavior
of JSQ($d$) policies in such scenarios, mainly in the large-$N$ limit,
while also briefly commenting on the JIQ policy.
In Section~\ref{anticipating} we discuss strategies which specifically
exploit knowledge of server speeds or service requirements
of arriving tasks in making task assignment decisions, and may not
necessarily use queue length information, mostly in a fixed-$N$ regime.
While non-exponential service requirement distributions and heterogeneous
settings cover a major share of the extensions beyond the supermarket model,
there are also a plethora of further model variations that have been
considered in the literature.
An exhaustive listing is simply out of reach, but some notable examples
within the realm of scaling laws include \cite{LM15,LMU03,WXHB21,WW20}.

\subsection{JSQ(d) policies with general service requirement distributions}
\label{ssec:powerofd-gen}

The authors of \cite{FC91,FC98} use direct probabilistic methods
and fluid limits to obtain stability conditions for finite-size systems
with a renewal arrival process, a FCFS discipline at each server,
various state-dependent routing policies, including JSQ,
and general service requirement distributions, which may depend on the
task type, the server or both.
Using fluid limits as well as Lyapunov functions,
\cite{Bramson98,Bramson11} show that JSQ($d$) policies achieve
stability for any subcritical load in finite-size systems
with a renewal arrival process, identical servers, non-idling local
service disciplines and general service requirement distributions.
In addition, the author derives uniform bounds on the tails of the marginal
queue length distributions, and uses these to prove relative
compactness of these distributions.

The authors of~\cite{BLP10,BLP12} examine mean-field limits
for JSQ($d$) policies with generally distributed service requirements,
leveraging the above-mentioned tail bounds and relative compactness.
They establish that similar \emph{power-of-choice} benefits occur
as originally demonstrated for exponentially distributed service
requirements in~\cite{Mitzenmacher96,VDK96}, provided a certain `ansatz'
holds asserting that finite subsets of queues become independent
in the large-$N$ limit.
The latter `propagation of chaos' property is shown to hold in several
settings, e.g. when the service requirement distribution has
a decreasing hazard rate and the discipline at each server is FCFS
or when the service requirement distribution has a finite second
moment and the load is sufficiently low.
The ansatz also always holds for the power-of-$d$ version of the JSW
rather than JSQ policy, see Theorem 2.1 in~\cite{BLP12}.

It is further shown in \cite{BLP10,BLP12} that the arrival process
at any given server tends to a state-dependent Poisson process,
and that the queue length distribution becomes insensitive with respect
to the service requirement distribution when the service discipline
is either Processor Sharing or LCFS with preemptive resume.
This may be explained from the insensitivity property of queues
with state-dependent Poisson arrivals and symmetric service disciplines.

There are strong plausibility arguments that a similar asymptotic
insensitivity property should hold for the JIQ policy in a queueing
scenario, even if the discipline at each server is not symmetric
but FCFS for example.
So far, however, this has only been rigorously established for service
requirement distributions with decreasing hazard rate in~\cite{Stolyar15}.
This result was in fact proved for systems with heterogeneous server
pools, and was further extended in~\cite{Stolyar17} to systems
with multiple symmetric dispatchers.
As it turns out, general service requirement distributions
with an increasing hazard rate give rise to major technical challenges
due to a lack of certain monotonicity properties.
This has only allowed a proof of the asymptotic zero-wait property
for the JIQ policy for load values strictly below $1/2$ so far~\cite{FS17}.

A fundamental technical issue associated with any general service
requirement distribution is that the joint queue length no longer
provides a suitable state description, and that the state space
required for a Markovian description is no longer countable.
The authors of \cite{AR17,ALR17} introduce a particle representation
for the state of the system and describe the state dynamics
for a JSQ($d$) policy via a sequence of interacting measure-valued processes.
They prove that as $N$ grows large, a suitably scaled sequence
of state processes converges to a hydrodynamic limit
which is characterized as the unique solution of a countable system
of coupled deterministic measure-valued equations, i.e.,
a system of PDE rather than the usual ODE equations.
They also establish a `propagation of chaos' result, meaning that
finite collections of queues are asymptotically independent.

The authors of \cite{MM14,MM16,MKM16} analyzed the performance
and stability of static probabilistic routing strategies and power-of-$d$
policies in the large-$N$ limit in systems with exponential service
requirement distributions, but heterogeneous server pools
and a Processor-Sharing discipline at each server.
They also considered variants of the JSQ($d$) policy which account
for the server speed in the selection criterion as well as hybrid
combinations of the JSQ($d$) policy with static probabilistic routing.
Related results for heterogeneous loss systems rather than queueing
scenarios are presented in \cite{KMM17,MMG15,MKMG15}.
As the results in \cite{MM14,MM16} reflect, ordinary JSQ($d$) policies
may fail to sample the faster servers sufficiently often in such scenarios,
and therefore fail to achieve maximum stability,
let alone asymptotic optimality.
In~\cite{MKM16} a weighted version of JSQ($d$) policies is presented that
does provide maximum stability, without requiring any specific knowledge
of the underlying system parameters and server speeds in particular.

The authors of \cite{VMM17,VMM18,VMM19} examine mean-field limits
for power-of-$d$ policies in many-server loss systems as well
Processor-Sharing queues with phase-type service requirement distributions.
They observe that the fixed point suggests a similar insensitivity
property of the stationary occupancy distribution as mentioned above.
In view of the insensitivity of loss systems with possibly
state-dependent Poisson arrivals, this may be interpreted as an indirect
indication that the arrival process at any given server pool tends
to a state-dependent Poisson arrival process in the large-$N$ limit.
In a somewhat different strand of work, the authors of~\cite{JP16}
investigate the behavior of blocking probabilities in various load
regimes in systems with many single-server finite-buffer queues,
a Processor-Sharing discipline at each server, and an \emph{insensitive}
routing policy.

\subsection{Heterogeneous servers and knowledge of service requirements}
\label{anticipating}

The bulk of the literature has focused on systems with identical servers,
and scenarios with non-identical server speeds have received
relatively limited attention.
A natural extension of the JSQ policy is to assign jobs to the server
with the normalized shortest queue length, or equivalently,
assuming exponentially distributed service requirements, the shortest
expected delay.
While such a Generalized JSQ (GJSQ) or Shortest Expected Delay (SED)
strategy tends to perform well~\cite{BZ89}, it is not strictly optimal
in general~\cite{EVW80}, and the true optimal strategy may in fact
have a highly complicated structure.

The authors of~\cite{SAK16} present approximations for the performance
of GJSQ policies in a fixed-$N$ regime with generally distributed
service requirements and a Processor-Sharing discipline at each server,
extending the analysis in~\cite{GHSW07} for the ordinary JSQ policy
with homogeneous servers.
In~\cite{HM19} necessary and sufficient conditions are established
for JSQ($d$) policies to be optimal in systems with heterogeneous
server speeds in a classical heavy-traffic regime.
The authors of~\cite{GS19} examine fluid limits for a system
with both `fast' and `slow' servers and task assignment policies which
may only have limited knowledge of the speeds of individual servers.
In~\cite{CBL19} and~\cite{YHH18} fluid limits and heavy-traffic limits
are investigated, respectively, for a somewhat related system
where speeds are also heterogeneous but depend on the combination
of the server and the task type due to affinity relations.

In a separate line of work, the authors of~\cite{FMR05} consider
static dispatching policies in a fixed-$N$ regime with heterogeneous
servers and a FCFS or Processor-Sharing discipline at each server.
The assignment decision may depend on the service requirement of the
arriving task, but not on the actual queue lengths or any other
state information.
In case of FCFS the optimal routing policy is shown to have a nested size
interval structure, generalizing the strict size interval structure
of the task assignment strategies in~\cite{HCM99} which are optimal
for homogeneous servers.
In case of Processor Sharing, the knowledge of the service requirements
of arriving tasks is irrelevant, in the absence of any state information.

The authors of~\cite{AAP11} consider static probabilistic routing
policies in a somewhat similar setup of a fixed-$N$ regime
with multiple task types, servers with heterogeneous speeds,
and a Processor-Sharing discipline at each server.
The routing probabilities are selected so as to either minimize the
global weighted holding cost or the expected holding cost for
an individual task, and may depend on the type of the task and its
service requirement, but not on any other state information.

When knowledge of the service requirements of arriving tasks is available,
it is natural to exploit that for the purpose of local scheduling
at the various servers, and for example use size-based disciplines.
The impact of the local scheduling discipline and server heterogeneity
on the performance and degree of efficiency of load balancing strategies
is examined in~\cite{CMW09}.
The authors of~\cite{GSHB19} show that any given dispatching policy
can be augmented with a so-called `guardrails' feature to ensure
minimization of the mean delay in a classical heavy-traffic regime
in systems where the local scheduling is governed by the
Shortest-Remaining Processing Time (SRPT) policy.
An interesting broader issue concerns the relative benefits provided
by exploiting knowledge of service requirements of arriving tasks versus
using information on queue lengths or workloads at the various servers,
which strongly depend on the service requirement distribution~\cite{HSY09}.

\subsection{Open problems and emerging research directions}

If we now return to scalable load balancing as the central theme
of this survey, and consider the above-described extensions in that light,
it is striking how scant the results are if any of the assumptions
(i)-(iv) as stated at the beginning of Section~\ref{asym} are dropped.
On further thought, the paucity of results from a scalability viewpoint
is perhaps not so surprising since it is not even clear what the optimal
achievable (delay) performance is in the absence of these assumptions,
leaving aside any trade-off with communication overhead.

The graph-based load balancing scenario considered in Section~\ref{networks}
moves beyond assumption~(i) of all servers being identical
as it entails that different incoming tasks can only be served by different
subsets of the servers.
Thus, it is not clear what the optimal assignment policy is,
but since the server speeds are still homogeneous, it can be argued
that the JSQ policy provides a bound for the achievable performance.
The results obtained in~\cite{MBL17} as reviewed in Section~\ref{networks}
establish suitable conditions in terms of the graph for that lower bound
to be asymptotically achievable.

Important extensions of these results are presented in \cite{RM21,WZS20}
which allow for more general compatibility constraints between different
task types and different servers represented in terms of a bipartite graph,
and examine conditions in terms of the latter graph for the achievable performance
to be asymptotically equivalent to that in case of full compatibility.
Informally speaking, both papers establish conditions in terms of the
connectivity properties of the bipartite compatibility graph for similar
performance to be achievable as in a fully flexible system.
More specifically, the authors of~\cite{RM21} focus on scenarios with identical
server speeds and uniform loads across the various job types,
and establish process-level limits indicating convergence of the system
occupancy under JSQ policies to that in the supermarket model with full flexibility.
The authors of~\cite{WZS20} allow for heterogeneous server speeds and arbitrary
load distributions, and use drift methods to prove bounds and demonstrate
that speed-aware extensions of the JSQ and JIQ strategies achieve vanishing
waiting times and minimum expected response times.
Interestingly, the results in \cite{RM21,WZS20} also entail a certain notion
of universality as in~\cite{MBL17}, with similar achievable performance
as in a fully flexible system under relatively sparse compatibility relations.
An open question is what the associated communication overhead
is with these policies, and whether that is close in any sense to the minimum
communication overhead required for asymptotically optimal delay performance.

A further extension of the above two models is where the service rates can depend
in an arbitrary way on the pairwise combination of the task and the server.
In that case it is also open what the minimum required overhead is to achieve
asymptotically optimal performance, and it even remains to be established
what the asymptotically optimal performance is.

Both these questions are also largely open for non-exponential service requirement
distributions, even in the absence of any compatibility constraints.
It is evident that for nearly deterministic service requirements,
a zero mean waiting time can be achieved without any communication overhead
at arbitrarily high sub-critical load (using open-loop policies such as Round Robin).
It might thus be natural to expect that for extremely variable service
requirements correspondingly high communication overhead is needed to achieve
a zero mean waiting time.
However, this is countered by the asymptotic insensitivity of the JIQ policy
which has been proven for service requirement distributions
with decreasing hazard rate as mentioned earlier.
Also, the amount of communication overhead can in fact be reduced
by \textit{not} issuing messages when a server is busy at pre-defined time
instants rather than sending messages when a server is idle~\cite{BZB20}.
All in all, it seems largely open exactly how the amount of communication
overhead required for vanishing waiting time depends on the service requirement
distribution in conjunction with the system load.

Finally, throughout we have tacitly assumed that each task involves a single
processing operation that can be handled by a single server.
In reality however, tasks can have a highly complex structure
and consist of several sub-tasks that can be executed by multiple servers
subject to certain precedence constraints,
see for instance~\cite{HarcholBalter21} for references and further background.
The above questions also seem totally open in these scenarios.

{\small
\bibliographystyle{abbrv}
\bibliography{bibliography}

\begin{thebibliography}{100}

\bibitem{ACMR95}
M.~Adler, S.~Chakrabarti, M.~Mitzenmacher, and L.~Rasmussen.
\newblock {Parallel randomized load balancing}.
\newblock In {\em Proc. STOC '95}, pages 238--247, 1995.

\bibitem{ALR17}
R.~Aghajani, X.~Li, and K.~Ramanan.
\newblock {The PDE method for the analysis of randomized load balancing
  networks}.
\newblock {\em Proc. ACM Meas. Anal. Comput. Syst.}, 1(2):1--28, 2017.

\bibitem{AR17}
R.~Aghajani and K.~Ramanan.
\newblock {The hydrodynamic limit of a randomized load balancing network}.
\newblock {\em Ann. Appl. Probab.}, 29(4):2114--2174, 2019.

\bibitem{APS17}
M.~Aktas, P.~Peng, and E.~Soljanin.
\newblock {Effective straggler mitigation: Which clones should attack and
  when?}
\newblock {\em ACM Perform. Eval. Rev.}, 45(2):12--14, 2017.

\bibitem{AAP11}
E.~Altman, U.~Ayesta, and B.~J. Prabhu.
\newblock {Load balancing in processor sharing systems}.
\newblock {\em Telecommun. Syst.}, 47(1):35--48, 2011.

\bibitem{AGSS13}
G.~Ananthanarayanan, A.~Ghodsi, S.~Shenker, and I.~Stoica.
\newblock {Effective straggler mitigation: Attack of the clones}.
\newblock In {\em Proc. NSDI '13}, pages 185--198, 2013.

\bibitem{ALW10}
L.~L.~H. Andrew, M.~Lin, and A.~Wierman.
\newblock {Optimality, fairness, and robustness in speed scaling designs}.
\newblock {\em ACM SIGMETRICS Perform. Eval. Rev.}, 38(1):37--48, 2010.

\bibitem{Atar12}
R.~Atar.
\newblock {A diffusion regime with nondegenerate slowdown}.
\newblock {\em Oper. Res.}, 60(2):490--500, 2012.

\bibitem{AKM17}
R.~Atar, I.~Keslassy, and G.~Mendelson.
\newblock {Randomized load balancing in heavy traffic}.
\newblock {\em Preprint}, 2017.

\bibitem{ABKU94}
Y.~Azar, A.~Z. Broder, A.~R. Karlin, and E.~Upfal.
\newblock {Balanced allocations}.
\newblock In {\em Proc. STOC '94}, pages 593--602, 1994.

\bibitem{BB08}
R.~Badonnel and M.~Burgess.
\newblock {Dynamic pull-based load balancing for autonomic servers}.
\newblock In {\em Proc. IEEE NOMS}, pages 751--754, 2008.

\bibitem{BZ89}
S.~A. Banawan and J.~Zahorjan.
\newblock {Load sharing in heterogeneous queueing systems}.
\newblock In {\em Proc. IEEE INFOCOM '89}, pages 731--739 vol.2, 1989.

\bibitem{BM19a}
S.~Banerjee and D.~Mukherjee.
\newblock {Join-the-shortest queue diffusion limit in Halfin-Whitt regime: Tail
  asymptotics and scaling of extrema}.
\newblock {\em Ann. Appl. Probab.}, 29(2):1262--1309, 2019.

\bibitem{BM19b}
S.~Banerjee and D.~Mukherjee.
\newblock {Join-the-shortest queue diffusion limit in Halfin-Whitt regime:
  Sensitivity on the heavy traffic parameter}.
\newblock {\em Ann. Appl. Probab.}, 30(1):80--144, 2020.

\bibitem{BLB08}
M.~Bena{\"{i}}m and J.-Y. {Le Boudec}.
\newblock {A class of mean field interaction models for computer and
  communication systems}.
\newblock {\em Perform. Eval.}, 65(11-12):823--838, 2008.

\bibitem{BLB11}
M.~Bena{\"{i}}m and J.-Y. {Le Boudec}.
\newblock {On mean field convergence and stationary regime}.
\newblock {\em arXiv:1111.5710}, 2011.

\bibitem{BCSV00}
P.~Berenbrink, A.~Czumaj, A.~Steger, and B.~V{\"{o}}cking.
\newblock {Balanced allocaton: The heavily loaded case}.
\newblock In {\em Proc. STOC '00}, pages 745--754, 2000.

\bibitem{BCSV06}
P.~Berenbrink, A.~Czumaj, A.~Steger, and B.~V{\"{o}}cking.
\newblock {Balanced allocations: The heavily loaded case}.
\newblock {\em SIAM J. Comput.}, 35(6):1350--1385, 2006.

\bibitem{BBD20}
S.~Bhamidi, A.~Budhiraja, and M.~Dewaskar.
\newblock Near equilibrium fluctuations for supermarket models with growing
  choices.
\newblock {\em arXiv:2006.03621}, 2020.

\bibitem{Bramson98}
M.~Bramson.
\newblock {Stability of two families of queueing networks and a discussion of
  fluid limits}.
\newblock {\em Queueing Syst.}, 28(1/3):7--31, 1998.

\bibitem{Bramson11}
M.~Bramson.
\newblock {Stability of join the shortest queue networks}.
\newblock {\em Ann. Appl. Probab.}, 21(4):1568--1625, 2011.

\bibitem{BLP10}
M.~Bramson, Y.~Lu, and B.~Prabhakar.
\newblock {Randomized load balancing with general service time distributions}.
\newblock In {\em Proc. ACM SIGMETRICS 2010}, pages 275--286, 2010.

\bibitem{BLP12}
M.~Bramson, Y.~Lu, and B.~Prabhakar.
\newblock {Asymptotic independence of queues under randomized load balancing}.
\newblock {\em Queueing Syst.}, 71(3):247--292, 2012.

\bibitem{Braverman20}
A.~Braverman.
\newblock {Steady-state analysis of the join-the-shortest-queue model in the
  Halfin-Whitt regime}.
\newblock {\em Math.\ Oper.\ Res.}, 45:1069--1103, 2020.

\bibitem{BFL18}
G.~Brightwell, M.~Fairthorne, and M.~J. Luczak.
\newblock {The supermarket model with bounded queue lengths in equilibrium}.
\newblock {\em J. Stat. Phys.}, pages 1--46, 2018.

\bibitem{BDFR15}
A.~Budhiraja, P.~Dupuis, M.~Fischer, and K.~Ramanan.
\newblock Limits of relative entropies associated with weakly interacting
  particle systems.
\newblock {\em Electr.\ J. Probab.}, 20, 2015.

\bibitem{BF17}
A.~Budhiraja and E.~Friedlander.
\newblock {Diffusion approximations for load balancing mechanisms in cloud
  storage systems}.
\newblock {\em Adv. Appl. Probab.}, 51(1):41--86, 2019.

\bibitem{BMW17}
A.~Budhiraja, D.~Mukherjee, and R.~Wu.
\newblock {Supermarket model on graphs}.
\newblock {\em Ann. Appl. Probab.}, 29(3):1740--1777, 2019.

\bibitem{CBL19}
E.~Cardinaels, S.~C. Borst, and J.~S.~H. van Leeuwaarden.
\newblock {Job assignment in large-scale service systems with affinity
  relations}.
\newblock {\em Queueing Syst.}, 93(3-4):227--268, 2019.

\bibitem{CMW09}
H.-L. Chen, J.~R. Marden, and A.~Wierman.
\newblock {On the impact of heterogeneity and back-end scheduling in load
  balancing designs}.
\newblock In {\em Proc. IEEE INFOCOM '09}, pages 2267--2275, 2009.

\bibitem{CMS95}
A.~Czumaj, F.~{Meyer auf der Heide}, and V.~Stemann.
\newblock {Shared memory simulations with triple-logarithmic delay}.
\newblock In {\em Lecture Notes in Computer Science}, pages 46--59. Springer,
  Berlin, Heidelberg, 1995.

\bibitem{DN08}
R.~W.~R. Darling and J.~R. Norris.
\newblock {Differential equation approximations for Markov chains}.
\newblock {\em Probab. Surveys}, 5:37--79, 2008.

\bibitem{DM93}
M.~Dietzfelbinger and F.~{Meyer auf der Heide}.
\newblock {Simple, efficient shared memory simulations}.
\newblock In {\em Proc. SPAA '93}, pages 110--119, 1993.

\bibitem{EVW80}
A.~Ephremides, P.~Varaiya, and J.~Walrand.
\newblock {A simple dynamic routing problem}.
\newblock {\em IEEE Trans. Autom. Control}, 25(4):690--693, 1980.

\bibitem{EG16}
P.~Eschenfeldt and D.~Gamarnik.
\newblock {Supermarket queueing system in the heavy traffic regime. Short queue
  dynamics}.
\newblock {\em arXiv:1610.03522}, 2016.

\bibitem{EG15}
P.~Eschenfeldt and D.~Gamarnik.
\newblock {Join the shortest queue with many servers. The heavy
  traffic-asymptotics}.
\newblock {\em Math. Oper. Res.}, 43(3):867--886, 2018.

\bibitem{FMR05}
H.~Feng, V.~Misra, and D.~Rubenstein.
\newblock {Optimal state-free, size-aware dispatching for heterogeneous
  M/G/-type systems}.
\newblock {\em Perform. Eval.}, 62(1):475--492, 2005.

\bibitem{FM01}
R.~Foley and R.~McDonald.
\newblock {Join the shortest queue: stability and exact asymptotics}.
\newblock {\em Ann.\ Appl.\ Prob.}, 11(3):569--607, 2001.

\bibitem{FS78}
G.~Foschini and J.~Salz.
\newblock {A basic dynamic routing problem and diffusion}.
\newblock {\em IEEE Trans. Commun.}, 26(3):320--327, 1978.

\bibitem{Foschini77}
G.~J. Foschini.
\newblock {On heavy traffic diffusion analysis and dynamic routing in packet
  switched networks}.
\newblock {\em Comp. Perf.}, pages 499--513, 1977.

\bibitem{FC91}
S.~G. Foss and N.~I. Chernova.
\newblock {Ergodicity of partially accessible multichannel communication
  systems}.
\newblock {\em Probl. Peredachi Inf.}, 27(2):9--14, 1991.

\bibitem{FC98}
S.~G. Foss and N.~I. Chernova.
\newblock {On the stability of a partially accessible multi‐station queue
  with state‐dependent routing}.
\newblock {\em Queueing Syst.}, 29(1):55--73, 1998.

\bibitem{FC01}
S.~G. Foss and N.~I. Chernova.
\newblock {On optimality of the FCFS discipline in multiserver queueing systems
  and networks.}
\newblock {\em Siberian Math. J.}, 42(2):372--385, 2001.

\bibitem{FS17}
S.~G. Foss and A.~L. Stolyar.
\newblock {Large-scale Join-Idle-Queue system with general service times}.
\newblock {\em J. Appl. Probab.}, 54(4):995--1007, 2017.

\bibitem{FPSS05}
D.~Fotakis, R.~Pagh, P.~Sanders, and P.~Spirakis.
\newblock {Space efficient hash tables with worst case constant access time}.
\newblock {\em Theory Comput. Syst.}, 38(2):229--248, 2005.

\bibitem{FKL14}
B.~Fralix, C.~Knessl, and J.~S.~H. van Leeuwaarden.
\newblock {First passage times to congested states of many-server systems in
  the Halfin-Whitt regime}.
\newblock {\em Stoch. Models}, 30(2):162--186, 2014.

\bibitem{FG16}
C.~Fricker and N.~Gast.
\newblock {Incentives and redistribution in homogeneous bike-sharing systems
  with stations of finite capacity}.
\newblock {\em EURO J. Transp. Logist.}, 5(3):261--291, 2016.

\bibitem{GG13a}
D.~Gamarnik and D.~A. Goldberg.
\newblock {On the rate of convergence to stationarity of the M/M/$N$ queue in
  the Halfin-Whitt regime}.
\newblock {\em Ann. Appl. Probab.}, 23(5):1879--1912, 2013.

\bibitem{GG13b}
D.~Gamarnik and D.~A. Goldberg.
\newblock {Steady-state GI/GI/N queue in the Halfin-Whitt regime}.
\newblock {\em Ann. Appl. Probab.}, 23(6):2382--2419, 2013.

\bibitem{GTZ16}
D.~Gamarnik, J.~Tsitsiklis, and M.~Zubeldia.
\newblock {Delay, memory and messaging tradeoffs in distributed service
  systems}.
\newblock In {\em Proc. ACM SIGMETRICS 2016/Performance 2016}, pages 1--12,
  2016.

\bibitem{GTZ18}
D.~Gamarnik, J.~Tsitsiklis, and M.~Zubeldia.
\newblock {Delay, memory and messaging tradeoffs in distributed service
  systems}.
\newblock {\em Stochastic Systems}, 8:45--74, 2018.

\bibitem{GTZ20}
D.~Gamarnik, J.~Tsitsiklis, and M.~Zubeldia.
\newblock A lower bound on the queueing delay in resource constrained load
  balancing.
\newblock {\em Ann.\ Appl.\ Prob.}, 30:870--901, 2020.

\bibitem{GZ06}
D.~Gamarnik and A.~Zeevi.
\newblock {Validity of heavy traffic steady-state approximations in generalized
  Jackson networks}.
\newblock {\em Ann. Appl. Probab.}, 16(1):56--90, 2006.

\bibitem{GDHS13}
A.~Gandhi, S.~Doroudi, M.~Harchol-Balter, and A.~Scheller-Wolf.
\newblock {Exact analysis of the M/M/k/setup class of Markov chains via
  recursive renewal reward}.
\newblock In {\em Proc. ACM SIGMETRICS 2013}, pages 153--166, 2013.

\bibitem{GHSVZ17}
K.~Gardner, M.~Harchol-Balter, A.~Scheller-Wolf, M.~Velednitsky, and
  S.~Zbarsky.
\newblock {Redundancy-$d$: The power of $d$ choices for redundancy}.
\newblock {\em Oper. Res.}, 65(4):1078--1094, 2017.

\bibitem{GS19}
K.~Gardner and C.~Stephens.
\newblock {Smart dispatching in heterogeneous systems}.
\newblock {\em ACM orm. Eval. Rev.}, 47(2):12--14, 2016.

\bibitem{GZDHHS15}
K.~Gardner, S.~Zbarsky, S.~Doroudi, M.~Harchol-Balter, E.~Hyyti{\"{a}}, and
  A.~Scheller-Wolf.
\newblock {Reducing latency via redundant requests}.
\newblock In {\em Proc. ACM SIGMETRICS 2015}, pages 347--360, 2015.

\bibitem{GZDHHS16}
K.~Gardner, S.~Zbarsky, S.~Doroudi, M.~Harchol-Balter, E.~Hyyti{\"{a}}, and
  A.~Scheller-Wolf.
\newblock {Queueing with redundant requests: exact analysis}.
\newblock {\em Queueing Syst.}, 83(3-4):227--259, 2016.

\bibitem{GZHS16}
K.~Gardner, S.~Zbarsky, M.~Harchol-Balter, and A.~Scheller-Wolf.
\newblock {The power of $d$ choices for redundancy}.
\newblock In {\em Proc. ACM SIGMETRICS 2016/Performance 2016}, pages 409--410,
  2016.

\bibitem{GZVHS16}
K.~Gardner, S.~Zbarsky, M.~Velednitsky, M.~Harchol-Balter, and
  A.~Scheller-Wolf.
\newblock {Understanding response time in the redundancy-d system}.
\newblock {\em ACM Perform. Eval. Rev.}, 44(2):33--35, 2016.

\bibitem{Gast15}
N.~Gast.
\newblock {The power of two choices on graphs: the pair-approximation is
  accurate}.
\newblock {\em ACM Perform. Eval. Rev.}, 43(1):69--71, 2015.

\bibitem{Gast17}
N.~Gast.
\newblock {Expected values estimated via mean-field approximation are
  1/$N$-accurate}.
\newblock {\em Proc. ACM Meas. Anal. Comput. Syst.}, 1(1):17, 2017.

\bibitem{GBT19}
N.~Gast, L.~Bortolussi, and M.~Tribastone.
\newblock {Size expansions of mean field approximation: Transient and
  steady-state analysis}.
\newblock {\em Perform. Eval.}, 129:60--80, 2019.

\bibitem{GLM18}
N.~Gast, D.~Latella, and M.~Massink.
\newblock {A refined mean field approximation for synchronous population
  processes}.
\newblock {\em ACM SIGMETRICS Perform. Eval. Rev.}, 46(2):30--32, 2019.

\bibitem{GH18}
N.~Gast and B.~{Van Houdt}.
\newblock {A refined mean field approximation}.
\newblock {\em Proc. ACM Meas. Anal. Comput. Syst.}, 1(2):33, 2017.

\bibitem{GHK90}
R.~J. Gibbens, P.~Hunt, and F.~P. Kelly.
\newblock {Bistability in communication networks}.
\newblock In {\em {Disorder in Physical Systems}}, pages 113--128, 1990.

\bibitem{GFPJ17}
D.~Goldsztajn, A.~Ferragut, F.~Paganini, and M.~Jonckheere.
\newblock {Controlling the number of active instances in a cloud environment}.
\newblock In {\em Proc. IFIP Performance 2017}, pages 15--20, 2017.

\bibitem{G81}
G.~H. Gonnet.
\newblock {Expected length of the longest probe sequence in hash code
  searching}.
\newblock {\em J. ACM}, 28(2):289--304, 1981.

\bibitem{Graham00}
C.~Graham.
\newblock {Chaoticity on path space for a queueing network with selection of
  the shortest queue among several}.
\newblock {\em J. Appl. Probab.}, 37(1):198--211, 2000.

\bibitem{Graham05}
C.~Graham.
\newblock {Functional central limit theorems for a large network in which
  customers join the shortest of several queues}.
\newblock {\em Probab. Theory Relat. Fields}, 131(1):97--120, 2005.

\bibitem{GSHB19}
I.~Grosof, Z.~Scully, and M.~Harchol-Balter.
\newblock {Load balancing guardrails: Keeping your heavy traffic on the road to
  low response times}.
\newblock {\em Proc. ACM Meas. Anal. Comput. Syst.}, 3(2), 2019.

\bibitem{GHSW07}
V.~Gupta, M.~Harchol-Balter, K.~Sigman, and W.~Whitt.
\newblock {Analysis of join-the-shortest-queue routing for web server farms}.
\newblock {\em Perform. Eval.}, 64(9-12):1062--1081, 2007.

\bibitem{GW19}
V.~Gupta and N.~Walton.
\newblock {Load balancing in the non-degenerate slowdown regime}.
\newblock {\em Oper. Res.}, 67(1):281--294, 2019.

\bibitem{HW81}
S.~Halfin and W.~Whitt.
\newblock {Heavy-traffic limits for queues with many exponential servers}.
\newblock {\em Oper. Res.}, 29(3):567--588, 1981.

\bibitem{HarcholBalter21}
M.~Harchol-Balter.
\newblock Open problems in queueing theory inspired by datacenter computing.
\newblock {\em Queueing Systems}, 97:3--37, 2020.

\bibitem{HCM99}
M.~Harchol-Balter, M.~E. Crovella, and C.~D. Murta.
\newblock {On choosing a task assignment policy for a distributed server
  system}.
\newblock {\em J. Parallel Distr. Comput.}, 59(2):204--228, 1999.

\bibitem{HSY09}
M.~Harchol-Balter, A.~Scheller-Wolf, and A.~R. Young.
\newblock {Surprising results on task assignment in server farms with
  high-variability workloads}.
\newblock In {\em Proc. ACM SIGMETRICS '09/Performance '09}, pages 287--298,
  2009.

\bibitem{HBVH19}
T.~Hellemans, T.~Bodas, and B.~{Van Houdt}.
\newblock {Performance analysis of workload dependent load balancing policies}.
\newblock {\em Proc. ACM Meas. Anal. Comput. Syst.}, 3(2):1--35, 2019.

\bibitem{Hunt95}
P.~Hunt.
\newblock {Loss networks under diverse routing: the symmetric star network}.
\newblock {\em Adv.\ Appl.\ Probab.}, 27(1):255--272, 1995.

\bibitem{HK94}
P.~Hunt and T.~Kurtz.
\newblock {Large loss networks}.
\newblock {\em Stoch.\ Proc.\ Appl.}, 53(2):363--378, 1994.

\bibitem{HM19}
D.~Hurtado-Lange and S.~mauguluri.
\newblock Throughput and delay optimality of power-of-$d$ choices in
  homogeneous load balancing systems.
\newblock {\em arXiv:1904.10096}, 2019.

\bibitem{J89}
P.~K. Johri.
\newblock {Optimality of the shortest line discipline with state-dependent
  service rates}.
\newblock {\em Eur. J. Oper. Res.}, 41(2):157--161, 1989.

\bibitem{JP16}
M.~Jonckheere and B.~J. Prabhu.
\newblock {Asymptotics of insensitive load balancing and blocking phases}.
\newblock In {\em ACM SIGMETRICS 2016/Performance 2016}, pages 311--322, 2016.

\bibitem{Joshi17a}
G.~Joshi.
\newblock {Boosting service capacity via adaptive task replication}.
\newblock {\em ACM Perform. Eval. Rev.}, 45(2):9--11, 2017.

\bibitem{Joshi17b}
G.~Joshi.
\newblock {Synergy via redundancy: Boosting service capacity via adaptive task
  replication}.
\newblock In {\em Proc. IFIP Performance 2017}, 2017.

\bibitem{JSW15a}
G.~Joshi, E.~Soljanin, and G.~Wornell.
\newblock {Efficient replication of queued tasks for latency reduction in cloud
  systems}.
\newblock In {\em Proc. Allerton 2015}, pages 107--114, 2015.

\bibitem{JSW17}
G.~Joshi, E.~Soljanin, and G.~Wornell.
\newblock {Efficient redundancy techniques for latency reduction in cloud
  systems}.
\newblock {\em ACM Trans. Model. Perform. Eval. Comput. Syst.}, 2(2):1--30,
  2017.

\bibitem{KR12}
W.~Kang and K.~Ramanan.
\newblock {Asymptotic approximations for stationary distributions of
  many-server queues with abandonment}.
\newblock {\em Ann. Appl. Probab.}, 22(2), 2012.

\bibitem{KMM17}
A.~Karthik, A.~Mukhopadhyay, and R.~R. Mazumdar.
\newblock {Choosing among heterogeneous server clouds}.
\newblock {\em Queueing Syst.}, 85(1):1--29, 2017.

\bibitem{KP06}
K.~Kenthapadi and R.~Panigrahy.
\newblock {Balanced allocation on graphs}.
\newblock In {\em Proc. SODA '06}, pages 434--443, 2006.

\bibitem{Kurtz78}
T.~G. Kurtz.
\newblock {Strong approximation theorems for density dependent Markov chains}.
\newblock {\em Stoch. Proc. Appl.}, 6(3):223--240, 1978.

\bibitem{LBMDM07}
J.~{Le Boudec}, D.~McDonald, and J.~Mundinger.
\newblock {A generic mean field convergence result for systems of interacting
  objects}.
\newblock In {\em Proc. QUEST '07}, pages 3--18, 2007.

\bibitem{LeBoudec13}
J.-Y. Le~Boudec.
\newblock The stationary behaviour of fluid limits of reversible processes is
  concentrated on stationary points.
\newblock {\em Netw.\ Het.\ Media}, 8:529--540, 2013.

\bibitem{LLWA12}
M.~Lin, Z.~Liu, A.~Wierman, and L.~L.~H. Andrew.
\newblock {Online algorithms for geographical load balancing}.
\newblock In {\em Proc. IGCC'12}, pages 1--10, 2012.

\bibitem{LWAT13}
M.~Lin, A.~Wierman, L.~L.~H. Andrew, and E.~Thereska.
\newblock {Dynamic right-sizing for power-proportional data centers}.
\newblock {\em IEEE/ACM Trans. Netw.}, 21(5):1378--1391, 2013.

\bibitem{LY19b}
X.~Liu and L.~Ying.
\newblock {On universal scaling of distributed queues under load balancing}.
\newblock {\em arXiv:1912.11904}, 2019.

\bibitem{LY19}
X.~Liu and L.~Ying.
\newblock {A simple steady-state analysis of load balancing algorithms in the
  sub-Halfin-Whitt regime}.
\newblock {\em J. Appl. Probab.}, 57(2):578--596, 2020.

\bibitem{LCBWGWMH12}
Z.~Liu, Y.~Chen, C.~Bash, A.~Wierman, D.~Gmach, Z.~Wang, M.~Marwah, and
  C.~Hyser.
\newblock {Renewable and cooling aware workload management for sustainable data
  centers}.
\newblock {\em ACM SIGMETRICS Perform. Eval. Rev.}, 40(1):175--186, 2012.

\bibitem{LLWLA11b}
Z.~Liu, M.~Lin, A.~Wierman, S.~H. Low, and L.~L.~H. Andrew.
\newblock {Geographical load balancing with renewables}.
\newblock {\em ACM SIGMETRICS Perform. Eval. Rev.}, 39(3):62--66, 2011.

\bibitem{LLWLA11a}
Z.~Liu, M.~Lin, A.~Wierman, S.~H. Low, and L.~L.~H. Andrew.
\newblock {Greening geographical load balancing}.
\newblock In {\em Proc. ACM SIGMETRICS 2011}, pages 233--244, 2011.

\bibitem{LXKGLG11}
Y.~Lu, Q.~Xie, G.~Kliot, A.~Geller, J.~R. Larus, and A.~Greenberg.
\newblock {Join-idle-queue: a novel load balancing algorithm for dynamically
  scalable web services}.
\newblock {\em Perform. Eval.}, 68(11):1056--1071, 2011.

\bibitem{LM05}
M.~J. Luczak and C.~McDiarmid.
\newblock {On the power of two choices: Balls and bins in continuous time}.
\newblock {\em Ann.\ Appl.\ Probab.}, 15(3):1733--1764, 2005.

\bibitem{LM06}
M.~J. Luczak and C.~McDiarmid.
\newblock {On the maximum queue length in the supermarket model}.
\newblock {\em Ann.\ Probab.}, 34(2):493--527, 2006.

\bibitem{LM07}
M.~J. Luczak and C.~McDiarmid.
\newblock {Asymptotic distributions and chaos for the supermarket model}.
\newblock {\em Electr.\ J. Prob.}, 12:75--99, 2007.

\bibitem{LM15}
M.~J. Luczak and C.~McDiarmid.
\newblock {Balanced routing of random calls}.
\newblock {\em Ann.\ Appl.\ Probab.}, 25(3):1279--1324, 2015.

\bibitem{LMU03}
M.~J. Luczak, C.~McDiarmid, and E.~Upfal.
\newblock On-line routing of random calls in networks.
\newblock {\em Prob.\ Th.\ Rel.\ Fields}, 125(4):457--482, 2003.

\bibitem{LN05}
M.~J. Luczak and J.~R. Norris.
\newblock {Strong approximation for the supermarket model}.
\newblock {\em Ann.\ Appl.\ Probab.}, 15(3):2038--2061, 2005.

\bibitem{LN13}
M.~J. Luczak and J.~R. Norris.
\newblock {Averaging over fast variables in the fluid limit for Markov chains:
  application to the supermarket model with memory}.
\newblock {\em Ann.\ Appl.\ Probab.}, 23(3):957--986, 2013.

\bibitem{MR18}
D.~Martirosyan and P.~Robert.
\newblock {The equilibrium states of large networks of Erlang queues}.
\newblock {\em arXiv:1811.04763}, 2018.

\bibitem{MT00}
D.~McDonald and S.~Turner.
\newblock {Comparing load balancing algorithms for distributed queueing
  networks}.
\newblock In {\em Analysis of Communication Networks: Call Centres, Traffic and
  Performance}, volume~28, pages 109--133, 2000.

\bibitem{M87}
R.~Menich.
\newblock {Optimality of shortest queue routing for dependent service
  stations}.
\newblock In {\em Proc. CDC '87}, pages 1069--1072, 1987.

\bibitem{MS91}
R.~Menich and R.~F. Serfozo.
\newblock {Optimality of routing and servicing in dependent parallel processing
  systems}.
\newblock {\em Queueing Syst.}, 9(4):403--418, 1991.

\bibitem{Mitzenmacher96}
M.~Mitzenmacher.
\newblock {\em {The power of two choices in randomized load balancing}}.
\newblock PhD thesis, University of California, Berkeley, 1996.

\bibitem{Mitzenmacher01}
M.~Mitzenmacher.
\newblock {The power of two choices in randomized load balancing}.
\newblock {\em IEEE Trans. Parallel Distrib. Syst.}, 12(10):1094--1104, 2001.

\bibitem{Mitzenmacher16}
M.~Mitzenmacher.
\newblock {Analyzing distributed Join-Idle-Queue: A fluid limit approach}.
\newblock In {\em Proc. Allerton 2016}, pages 312--318, 2016.

\bibitem{MBL17}
D.~Mukherjee, S.~Borst, and J.~van Leeuwaarden.
\newblock {Asymptotically optimal load balancing topologies}.
\newblock {\em Proc. ACM Meas. Anal. Comput. Syst.}, 2(1):1--29, 2018.

\bibitem{MBLW2016-5}
D.~Mukherjee, S.~Borst, J.~van Leeuwaarden, and P.~Whiting.
\newblock {Asymptotic optimality of threshold-based load balancing in
  large-scale systems}.
\newblock 2016.

\bibitem{MBLW16-1}
D.~Mukherjee, S.~Borst, J.~van Leeuwaarden, and P.~Whiting.
\newblock {Universality of load balancing schemes on the diffusion scale}.
\newblock {\em J. Appl. Probab.}, 53(4), 2016.

\bibitem{MBLW16-3}
D.~Mukherjee, S.~Borst, J.~van Leeuwaarden, and P.~Whiting.
\newblock {Universality of power-of-$d$ load balancing in many-server systems}.
\newblock {\em Stoch. Syst.}, 8(4):265--292, 2018.

\bibitem{MBLW16-4}
D.~Mukherjee, S.~Borst, J.~van Leeuwaarden, and P.~Whiting.
\newblock {Asymptotic optimality of power-of-$d$ load balancing in large-scale
  systems}.
\newblock {\em Math. Oper. Res.}, 2020.

\bibitem{MDBL17}
D.~Mukherjee, S.~Dhara, S.~Borst, and J.~van Leeuwaarden.
\newblock {Optimal service elasticity in large-scale distributed systems}.
\newblock {\em Proc. ACM Meas. Anal. Comput. Syst.}, 1(1):25, 2017.

\bibitem{MS19}
D.~Mukherjee and A.~Stolyar.
\newblock {Join-Idle-Queue with service elasticity: Large-scale asymptotics of
  a non-monotone system}.
\newblock {\em Stoch. Syst.}, 9(4):338--358, 2019.

\bibitem{MKM16}
A.~Mukhopadhyay, A.~Karthik, and R.~R. Mazumdar.
\newblock {Randomized assignment of jobs to servers in heterogeneous clusters
  of shared servers for low delay}.
\newblock {\em Stoch. Syst.}, 6(1):90--131, 2016.

\bibitem{MKMG15}
A.~Mukhopadhyay, A.~Karthik, R.~R. Mazumdar, and F.~Guillemin.
\newblock {Mean field and propagation of chaos in multi-class heterogeneous
  loss models}.
\newblock {\em Perform. Eval.}, 91:117--131, 2015.

\bibitem{MM14}
A.~Mukhopadhyay and R.~R. Mazumdar.
\newblock {Rate-based randomized routing in large heterogeneous processor
  sharing systems}.
\newblock In {\em Proc. ITC 26}, pages 1--9, 2014.

\bibitem{MM16}
A.~Mukhopadhyay and R.~R. Mazumdar.
\newblock {Analysis of randomized Join-the-Shortest-Queue (JSQ) schemes in
  large heterogeneous processor-sharing systems}.
\newblock {\em IEEE Trans. Control Netw. Syst.}, 3(2):116--126, 2016.

\bibitem{MMG15}
A.~Mukhopadhyay, R.~R. Mazumdar, and F.~Guillemin.
\newblock {The power of randomized routing in heterogeneous loss systems}.
\newblock In {\em Proc. ITC '27}, pages 125--133, 2015.

\bibitem{NS16}
L.~M. Nguyen and A.~L. Stolyar.
\newblock {A service system with randomly behaving on-demand agents}.
\newblock {\em ACM SIGMETRICS Perform. Eval. Rev.}, 44(1):365--366, 2016.

\bibitem{PR04}
R.~Pagh and F.~F. Rodler.
\newblock {Cuckoo hashing}.
\newblock {\em J. Algorithms}, 51(2):122--144, 2004.

\bibitem{PS16}
G.~Pang and A.~L. Stolyar.
\newblock {A service system with on-demand agent invitations}.
\newblock {\em Queueing Syst.}, 82(3-4):259--283, 2016.

\bibitem{P05}
R.~Panigrahy.
\newblock {Efficient hashing with lookups in two memory accesses}.
\newblock In {\em Proc. SODA '05}, pages 830--839, 2005.

\bibitem{PP16}
J.~Pender and T.~Phung-Duc.
\newblock {A law of large numbers for M/M/$c$/delayoff-setup queues with
  nonstationary arrivals}.
\newblock In {\em Proc. ASMTA '16}, pages 253--268, Cham, 2016.

\bibitem{PC16}
F.~Poloczek and F.~Ciucu.
\newblock {Contrasting effects of replication in parallel systems: From
  overload to underload and back}.
\newblock In {\em Proc. ACM SIGMETRICS 2016/Performance 2016}, pages 375--376,
  2016.

\bibitem{Reiman84}
M.~I. Reiman.
\newblock {Some diffusion approximations with state space collapse}.
\newblock In {\em {Modelling and Performance Evaluation Methodology}}, pages
  207--240. Springer, Berlin, Heidelberg, 1984.

\bibitem{Robert03}
P.~Robert.
\newblock {\em {Stochastic Networks and Queues}}.
\newblock Springer Berlin Heidelberg, 2003.

\bibitem{RM21}
D.~Rutten and D.~Mukherjee.
\newblock Load balancing under strict compatibility constraints.
\newblock In {\em Proc.\ ACM SIGMETRICS 2021}, pages 51--52, 2021.

\bibitem{SAK16}
J.~Selen, I.~Adan, and S.~Kapodistria.
\newblock {Approximate performance analysis of generalized join the shortest
  queue routing}.
\newblock In {\em Proc. VALUETOOLS '16}, pages 103--110, 2016.

\bibitem{SLR16}
N.~B. Shah, K.~Lee, and K.~Ramchandran.
\newblock {When do redundant requests reduce latency?}
\newblock {\em IEEE Trans. Commun.}, 64(2):715--722, 2016.

\bibitem{STC93}
P.~D. Sparaggis, D.~Towsley, and C.~G. Cassandras.
\newblock {Extremal properties of the shortest/longest non-full queue policies
  in finite-capacity systems with state-dependent service rates}.
\newblock {\em J. Appl. Probab.}, 30(1):223--236, 1993.

\bibitem{towsley}
P.~D. Sparaggis, D.~Towsley, and C.~G. Cassandras.
\newblock {Sample path criteria for weak majorization}.
\newblock {\em Adv. Appl. Probab.}, 26(1):155--171, 1994.

\bibitem{Stolyar15}
A.~L. Stolyar.
\newblock {Pull-based load distribution in large-scale heterogeneous service
  systems}.
\newblock {\em Queueing Syst.}, 80(4):341--361, 2015.

\bibitem{Stolyar17}
A.~L. Stolyar.
\newblock {Pull-based load distribution among heterogeneous parallel servers:
  the case of multiple routers}.
\newblock {\em Queueing Syst.}, 85(1):31--65, 2017.

\bibitem{Towsley95}
D.~Towsley.
\newblock {Application of majorization to control problems in queueing
  systems}.
\newblock In P.~Chr{\'{e}}tienne, E.~G. Coffman, J.~K. Lenstra, and Z.~Liu,
  editors, {\em Scheduling Theory and its Applications}, chapter~14. John Wiley
  {\&} Sons, Chichester, 1995.

\bibitem{Towsley1992}
D.~Towsley, P.~Sparaggis, and C.~Cassandras.
\newblock {Optimal routing and buffer allocation for a class of finite capacity
  queueing systems}.
\newblock {\em IEEE Trans. Autom. Control}, 37(9):1446--1451, 1992.

\bibitem{TX11}
J.~N. Tsitsiklis and K.~Xu.
\newblock {On the power of (even a little) resource pooling}.
\newblock {\em Stoch. Syst.}, 2(1):1--66, 2012.

\bibitem{TX17}
J.~N. Tsitsiklis and K.~Xu.
\newblock {Flexible queueing architectures}.
\newblock {\em Oper. Res.}, 65(5):1398--1413, 2017.

\bibitem{Turner98}
S.~R. Turner.
\newblock {The effect of increasing routing choice on resource pooling}.
\newblock {\em Probab. Eng. Inf. Sci.}, 12(01):109--124, 1998.

\bibitem{UKIN10}
R.~Urgaonkar, U.~C. Kozat, K.~Igarashi, and M.~J. Neely.
\newblock {Dynamic resource allocation and power management in virtualized data
  centers}.
\newblock In {\em Proc. IEEE NOMS 2010}, pages 479--486, 2010.

\bibitem{BBL17}
M.~van~der Boor, S.~Borst, and J.~van Leeuwaarden.
\newblock {Load balancing in large-scale systems with multiple dispatchers}.
\newblock In {\em Proc. IEEE INFOCOM 2017}, 2017.

\bibitem{BBL19}
M.~van~der Boor, S.~Borst, and J.~van Leeuwaarden.
\newblock {Hyper-scalable JSQ with sparse feedback}.
\newblock {\em Proc. ACM Meas. Anal. Comput. Syst.}, 2019.

\bibitem{BBLM18}
M.~van~der Boor, S.~Borst, J.~van Leeuwaarden, and D.~Mukherjee.
\newblock {Scalable load balancing in networked systems: Universality
  properties and stochastic coupling methods}.
\newblock In B.~Sirakov, P.~N. de~Souza, and M.~Viana, editors, {\em Proc. Int.
  Cong. Math. 2018}, pages 3881--3912, 2018.

\bibitem{BZB20}
M.~van~der Boor, M.~Zubeldia and S.~Borst.
\newblock Zero-wait load balancing with sparse messaging.
\newblock {\em Oper.\ Res.] Lett.}, 48:368--375, 2020.

\bibitem{remco-book-1}
R.~van~der Hofstad.
\newblock {\em {Random Graphs and Complex Networks}}, volume~1.
\newblock Cambridge University Press, Cambridge, 2017.

\bibitem{VH19}
B.~van Houdt.
\newblock {Global attraction of ODE-based mean field models with
  hyperexponential job sizes}.
\newblock {\em Proc. ACM Meas. Anal. Comput. Syst.}, 3(2):23, 2019.

\bibitem{LK11}
J.~van Leeuwaarden and C.~Knessl.
\newblock {Transient behavior of the Halfin-Whitt diffusion}.
\newblock {\em Stoch. Proc. Appl.}, 121(7):1524--1545, 2011.

\bibitem{LK12}
J.~van Leeuwaarden and C.~Knessl.
\newblock {Spectral gap of the Erlang A model in the Halfin-Whitt regime}.
\newblock {\em Stoch. Syst.}, 2(1):149--207, 2012.

\bibitem{LMZ17}
J.~van Leeuwaarden, B.~Mathijsen, and B.~Zwart.
\newblock {Economies-of-scale in resource sharing systems: tutorial and partial
  review of the QED heavy-traffic regime}.
\newblock {\em SIAM Rev.}, 61(3):403--440, 2019.

\bibitem{VMM17}
T.~Vasantam, A.~Mukhopadhyay, and R.~Mazumdar.
\newblock {Mean-field analysis of loss models with mixed-Erlang distributions
  under power-of-$d$ routing}.
\newblock In {\em ITC 29}, pages 250--258, 2017.

\bibitem{VMM18}
T.~Vasantam, A.~Mukhopadhyay, and R.~Mazumdar.
\newblock {The mean-field behavior of processor-sharing systems with general
  job lengths under the SQ($d$) policy}.
\newblock {\em Perform. Eval.}, 127:120--153, 2018.

\bibitem{VMM19}
T.~Vasantam, A.~Mukhopadhyay, and R.~Mazumdar.
\newblock {Insensitivity of the mean-field limit of loss systems under SQ($d$)
  routing}.
\newblock {\em Adv. Appl. Prob.}, 51(4):1027--1066, 2019.

\bibitem{V99}
B.~V{\"{o}}cking.
\newblock {How asymmetry helps load balancing}.
\newblock In {\em Proc. FOCS '99}, pages 131--140, 1999.

\bibitem{VGMSRS13}
A.~Vulimiri, P.~B. Godfrey, R.~Mittal, J.~Sherry, S.~Ratnasamy, and S.~Shenker.
\newblock {Low latency via redundancy}.
\newblock In {\em Proc. CoNEXT '13}, pages 283--294, 2013.

\bibitem{VDK96}
N.~D. Vvedenskaya, R.~L. Dobrushin, and F.~I. Karpelevich.
\newblock {Queueing system with selection of the shortest of two queues: An
  asymptotic approach}.
\newblock {\em Problemy Peredachi Informatsii}, 32(1):20--34, 1996.

\bibitem{WJW14}
D.~Wang, G.~Joshi, and G.~Wornell.
\newblock {Efficient task replication for fast response times in parallel
  computation}.
\newblock In {\em Proc. ACM SIGMETRICS 2014}, number~1, pages 599--600, 2014.

\bibitem{WJW15}
D.~Wang, G.~Joshi, and G.~Wornell.
\newblock {Using straggler replication to reduce latency in large-scale
  parallel computing}.
\newblock In {\em Proc. DCC '15}, number~3, pages 7--11, 2015.

\bibitem{WXHB21}
W.~Wang, Q.~Xie, and M.~Harchol-Balter.
\newblock Zero queueing for multi-server jobs.
\newblock {\em Proc. ACM Meas.\ Anal.\ Comput.\ Syst.}, 5(1):1--25, 2021.

\bibitem{W78}
R.~R. Weber.
\newblock {On the optimal assignment of customers to parallel servers}.
\newblock {\em J. Appl. Probab.}, 15(2):406--413, 1978.

\bibitem{WW20}
W.~Weng and W.~Wang.
\newblock Achieving zero asymptotic queueing delay for parallel jobs.
\newblock {\em Proc.\ ACM Meas.\ Anal.\ Comput.\ Syst.}, 4:1--36, 2020.

\bibitem{WZS20}
W.~Weng, X.~Zhou, and R.~Srikant.
\newblock Optimal load balancing with locality constraints.
\newblock {\em Proc.\ ACM Meas.\ Anal.\ Comput.\ Syst.}, 4:1--37, 2020.

\bibitem{Whitt85}
W.~Whitt.
\newblock {Blocking when service is required from several facilities
  simultaneously}.
\newblock {\em AT{\&}T Bell Laboratories Technical Journal}, 64(8):1807--1856,
  1985.

\bibitem{WLT12}
A.~Wierman, L.~L.~H. Andrew, and A.~Tang.
\newblock {Power-aware speed scaling in processor sharing systems: optimality
  and robustness}.
\newblock {\em Perform. Eval.}, 69(12):601--622, 2012.

\bibitem{Winston77}
W.~Winston.
\newblock {Optimality of the shortest line discipline}.
\newblock {\em J. Appl. Probab.}, 14(1):181--189, 1977.

\bibitem{XDLS15}
Q.~Xie, X.~Dong, Y.~Lu, and R.~Srikant.
\newblock {Power of $d$ choices for large-scale bin packing}.
\newblock In {\em Proc. ACM SIGMETRICS 2015}, pages 321--334, 2015.

\bibitem{YHH18}
A.~Yekkehkhany, A.~Hojjati, and M.~H. Hajiesmaili.
\newblock {GB-PANDAS: Throughput and heavy-traffic optimality analysis for
  affinity scheduling}.
\newblock {\em ACM Perform. Eval. Rev.}, 45(2):2--14, 2018.

\bibitem{Ying16}
L.~Ying.
\newblock {On the approximation error of mean-field models}.
\newblock In {\em Proc. ACM SIGMETRICS 2016/Performance 2016}, pages 285--297,
  2016.

\bibitem{Ying17}
L.~Ying.
\newblock {Stein's method for mean field approximations in light and heavy
  traffic regimes}.
\newblock {\em Proc. ACM Meas. Anal. Comput. Syst.}, 1(1):12, 2017.

\bibitem{YSK15}
L.~Ying, R.~Srikant, and X.~Kang.
\newblock {The power of slightly more than one sample in randomized load
  balancing}.
\newblock In {\em Proc. IEEE INFOCOM 2015}, pages 1131--1139, 2015.

\bibitem{ZHW95}
H.~Zhang, G.-H. Hsu, and R.~Wang.
\newblock {Heavy traffic limit theorems for a sequence of shortest queueing
  systems}.
\newblock {\em Queueing Syst.}, 21(1):217--238, 1995.

\bibitem{ZBM21}
Z.~Zhao, S.~Banerjee, and D.~Mukherjee.
\newblock Many-server asymptotics for join-the-shortest queue in the
  super-halfin-whitt scaling window.
\newblock {\em arXiv:2106.00121}, 2020.

\bibitem{ZTS19}
X.~Zhou, J.~Tan, and N.~Shroff.
\newblock {Heavy-traffic delay optimality in pull-based load balancing systems:
  Necessary and sufficient conditions}.
\newblock {\em Proc. ACM Meas. Anal. Comput. Syst.}, 2(3), 2019.

\bibitem{ZWTSS17}
X.~Zhou, F.~Wu, J.~Tan, Y.~Sun, and N.~Shroff.
\newblock {Designing low-complexity heavy-traffic delay-optimal load balancing
  schemes: Theory to algorithms}.
\newblock {\em Proc. ACM Meas. Anal. Comput. Syst.}, 1(2):39, 2017.

\end{thebibliography}
}

\end{document}